%BeginFileInfo
%%Publisher=ARXIV
%%Project=AOP
%%Manuscript=AOP985
%%Stage=
%%TID=linak
%%Format=latex
%%Distribution=arXiv
%%Destination=PDF
%%PDF type=
%%PDF.Maker=arXiv_tex_pdf
%EndFileInfo
%
% Institute of Mathematical Statistics (IMI)
% Journal "The Annals of Probabability"

%secthm,,secfloat,nameyear,number,noautosecdot
\input ./style/arxiv-general.cfg
\documentclass[aop,MSNbibl,seceqn,dvips]{arximspdf}
\makeatletter
   \@ifpackageloaded{graphicx}{}{\usepackage{graphicx}}
\makeatother
\usepackage{mathbh}

% settings
%

% article settings
\doi{10.1214/14-AOP985}% Updated by VTEXPTS2LaTeX.exe, 30.12.2014 07:35
\volume{44}
\issue{1}
\pubyear{2016}
\firstpage{628}
\lastpage{683}
\docsubty{FLA}

\makeatletter
%\innerskeltrue
\newtheorem{theorem}{Theorem}[section]
\newtheorem{lemma}[theorem]{Lemma}
\newtheorem{lemmaa}{Lemma}[section]
\newtheorem{corollary}[theorem]{Corollary}
\newtheorem{proposition}[theorem]{Proposition}
\newproclaim{definition}[theorem]{Definition}
\newtheorem{property}{Property}
\newproclaim{remark}[theorem]{Remark}

%\newcounter{Listcounter}

%%%%% Alphabet %%%%%
\def\b{\beta}
\def\G{\Gamma}
\def\cE{\mathcal{E}}
\def\D{\Delta}
\def\k{\kappa}
\def\L{\Lambda}
\def\n{\nu}
\def\s{\sigma}
\def\t{\tau}
\def\P{\Psi}
\def\O{\Omega}

\def\ve{\varepsilon}

\def\Z{{\mathbb Z}}
\def\R{{\mathbb R}}
\def\C{{\mathbb C}}
\def\H{{\mathbb H}}
\def\P{{\mathbb P}}
\def\E{{\mathbb E}}

\def\rL{{\mathrm{L}}}

%%%%% Arrows %%%%%

\def\Ra{\Rightarrow}
\def\Uda{\Updownarrow}
\def\Lra{\Leftrightarrow}
\def\La{\Leftarrow}

%%%%% Typography %%%%%

%%%%% Math signs %%%%%

%\let\ge\geqslant\let\le\leqslant
%\def\/{\over}
\def\iy{\infty}
\def\es{\varnothing}
\def\ss{\subset}
\def\pa{\partial}
\def\iint{\int\!\! \int}
\def\1{\mathbh{1}}

%%%%% Math operations %%%%%

\def\Area{\mathop{\mathrm{Area}} }
\def\arg{\mathop{\operatorname{arg}} }
\def\const{\mathop{\operatorname{const}} }
\def\diam{\mathop{\operatorname{diam}} }
\def\dist{\mathop{\operatorname{dist}} }
\def\Im{\mathop{\operatorname{Im}} }
\def\Int{\mathop{\operatorname{Int}} }
\def\Re{\mathop{\operatorname{Re}} }
\def\Length{\mathop{\mathrm{Length}} }

%\mathchardef\gRe="023C \mathchardef\gIm="023D

\newcommand{\textfrac}[2]{{ \frac{#1}{#2}}}

%%%%%%%%%%% Some special things %%%%%%%%%%%%

\def\mesh{\delta}
\renewcommand{\hm}[3]{\omega_#1(#2;#3)}

\def\RWsign{\mathrm{RW}}
\newcommand{\RW}[3]{\RWsign_{#1}(#2;#3)}
\def\ZRWsign{\mathrm Z}
\newcommand{\ZRW}[3]{\ZRWsign_{#1}(#2;#3)}%%%%neisskleista
\def\TTsign{\mathrm T}

\newcommand{\ZRWT}[4]{\ZRWsign_#1[\TTsign_#2](#3;#4)}
\def\RRsign{\mathrm R}
\newcommand{\RR}[4]{\RRsign_#1(#2;#3,#4)}
\def\xYsign{\mathrm Y}
\newcommand{\xY}[5]{\xYsign_#1(#2,#3;#4,#5)}
\def\xXsign{\mathrm X}
\newcommand{\xX}[5]{\xXsign_#1(#2,#3;#4,#5)}
\def\ddsign{\mathrm d}
\newcommand{\dd}[2]{\ddsign_#1(#2)}
\def\BBsign{\mathrm B}
\newcommand{\BB}[2]{\BBsign_#1(#2)}
\def\Asign{\mathrm A}
\newcommand{\A}[2]{\Asign_{#1}(#2)}
\def\Csign{\mathrm C}
\def\ELsign{\mathrm L}
\newcommand{\EL}[3]{\ELsign_{#1}(#2;#3)}

\def\ccdot{ \cdot}
\makeatother

\begin{document}
\begin{frontmatter}

%\dochead{}
\title{Robust discrete complex analysis: A toolbox\thanksref{T1}}
\runtitle{Robust discrete complex analysis: A toolbox}

\begin{aug}
% Corresponding author: Dmitry Chelkak - dchelkak@pdmi.ras.ru% Updated by VTEXPTS2LaTeX.exe, 06.01.2015 07:41
%by VTEXPTS2LaTeX.exe, 30.12.2014 07:35
\author[A]{\fnms{Dmitry}~\snm{Chelkak}\corref{}\ead[label=e1]{dchelkak@pdmi.ras.ru}}
\runauthor{D. Chelkak}
\affiliation{St.~Petersburg Department of Steklov Institute (PDMI RAS)
and Chebyshev Laboratory at St.~Petersburg State University}
\address[A]{Institute for Theoretical Studies\\
ETH Z\"urich\\
Clausiusstrasse 47\\
Building CLV, 8092 Z\"urich\\
Switzerland\\
\printead{e1}}
\end{aug}
\thankstext{T1}{Supported in part by Pierre Deligne's 2004 Balzan
prize in Mathematics (associated research scholarship of the author in
2009--2011), and the Chebyshev Laboratory at St. Petersburg State
University under the Russian Federation Government Grant 11.G34.31.0026
and JSC ``Gazprom Neft.''}

% HISTORY:
%
\received{\smonth{1} \syear{2013}}% Updated by VTEXPTS2LaTeX.exe,
%30.12.2014 07:35
%
\revised{\smonth{8} \syear{2014}}% Updated by VTEXPTS2LaTeX.exe,
%30.12.2014 07:35

% ABSTRACT
%
\begin{abstract}
We prove a number of double-sided estimates relating discrete
counterparts of several classical conformal invariants of a
quadrilateral: cross-ratios, extremal lengths and random walk partition
functions. The results hold true for any simply connected discrete
domain $\O$ with four marked boundary vertices and are uniform with
respect to $\O$'s which can be very rough, having many fiords and
bottlenecks of various widths. Moreover, due to results from
[Boundaries of planar graphs, via circle packings (2013)
Preprint], those estimates are fulfilled for domains drawn on any
infinite ``properly embedded'' planar graph $\Gamma\subset \mathbb C$ (e.g., any
parabolic circle packing) whose vertices have bounded degrees. This
allows one to use classical methods of geometric complex analysis for
discrete domains ``staying on the microscopic level.'' Applications
include a discrete version of the classical Ahlfors--Beurling--Carleman
estimate and some ``surgery technique'' developed for discrete quadrilaterals.
\end{abstract}

% KEYWORDS
% Pirmas kwd is didziosios raides
%
\begin{keyword}[class=AMS]
\kwd[Primary ]{39A12}
%\kwd{}
\kwd[; secondary ]{60G50}
\end{keyword}

\begin{keyword}
\kwd{Planar random walk}
%\kwd{discrete harmonic functions}
\kwd{discrete potential theory}
\end{keyword}
\end{frontmatter}

%s1 #&#
\section{Introduction}
\label{Sect:Intro}

%s1.1 #&#
\subsection{Motivation}

This paper was originally motivated by the recent activity devoted to
the analysis of interfaces arising in the critical 2D lattice models on
regular grids (e.g., see \cite{Smi06,Smi10} and references therein),
particularly the random cluster representation of the Ising model \cite
{KS12,CDCH13,CDCHKS13}. The other contexts where techniques developed
in this paper could be applied are the analysis of random planar graphs
and their limits \cite{BS01,GR13,GN13} or lattice models where some
connection to discrete harmonic measure can be established (or is
already plugged into the model, e.g., as in DLA-type processes).
However, note that below we essentially use the ``uniformly bounded
degrees'' assumption, especially when proving a duality estimate for
(edge) extremal lengths. In particular, all results of this paper hold
true for discrete domains which are subsets of any given parabolic
circle packing with uniformly bounded degrees; see \cite{HS95}.
Nevertheless, some important setups (notably, circle packings of random
planar maps) are not covered, requiring some additional input
(possibly, a kind of a ``surgery'' near high degree vertices; cf. \cite
{GN13}). At the same time, the paper has an independent interest, being
devoted to one of the central objects of discrete potential theory on a
(weighted) graph $\G$ embedded into a complex plane: partition
functions of the random walk running in a discrete simply connected
domain $\O\subset\G$.

Dealing with some 2D lattice model and its scaling limit (an
archetypical example is the Brownian motion in $\O$, which can be
realized, e.g., as the limit of simple random walks on refining square
grids $\mesh\Z^2$), one usually works in the context when the lattice
mesh $\mesh$ tends to zero. Then it can be argued that a discrete
lattice model is sufficiently close to the continuous one, if $\mesh$
is small enough: for example, random walks hitting probabilities (discrete
harmonic measures) converge to those of the Brownian motion (continuous
harmonic measure; cf. \cite{Kak44}) as $\mesh\to0$. After rescaling
the underlying grid by $\mesh^{-1}$, statements of that sort provide an
information about properties of the random walk running in \emph{large}
discrete domains $\O\ss\Z^2$.

Unfortunately, this setup is not sufficient when we are interested in
fine geometric properties of 2D lattice models (e.g., full collection
of interfaces in the random cluster representation of the critical
Ising model): sometimes it turns out that one needs to consider not
only macroscopic $\O$'s but also their subdomains ``\emph{on all
scales}'' (like $\delta^\ve$ or even several lattice steps) simultaneously
in order to gain some macroscopic information. Questions of that kind
are still tractable by classical means if those microscopic parts of
$\O
$ are regular enough (e.g., rectangular-type subsets of $\Z^2$;
cf. \cite{DCHN11,KS12}). Nevertheless, if no such regularity
assumptions can be made due to some monotonicity features of the
particular lattice model, the situation immediately becomes much more
complicated; cf. \cite{CDCH13,CDCHKS13}.

Having in mind the classical geometric complex analysis as a guideline,
in this paper we construct its discrete version ``staying on the
microscopic level'' (i.e., without any passage to the scaling limit or
any coupling arguments) which allows one to handle discrete domains by
more-or-less the same methods as continuous ones. Namely, we prove a
number of \emph{uniform estimates} (a ``toolbox'') which hold true for
any simply connected $\O$, possibly having many {fiords} and
{bottlenecks} of various widths, including very thin (several lattice
steps) ones.

Being interested in estimates rather than convergence, we do not need
any nice ``complex structure'' on the underlying weighted planar graph.
Instead, we assume that the (locally finite) embedding $\G\subset\C$
satisfies the following mild assumptions: neighboring edges have
comparable lengths and angles between them are bounded away from $0$
and do not exceed $\pi- \eta_0$ for some constant $\eta_0>0$; see
Section~\ref{SubSect:GraphAssumptions}. In the very recent paper \cite
{ABGGN} it is shown that these assumptions imply two crucial properties
of the corresponding random walk on $\G$: \ref{PropertyS} the
probability of the event that the random walk started at the center of
a \emph{Euclidean disc} exits this disc through a given boundary arc of
angle $\pi- \eta_0$ is uniformly bounded from below and \ref
{PropertyT} the expected time spent by the random walk in this disc is
uniformly comparable to its area; see Section~\ref
{SubSect:RandomWalkAssumptions} for details. For general properly
embedded graphs $\G$, we base all the considerations on these estimates
from \cite{ABGGN}, using them as a starting point for the analysis of
random walks in \emph{rough domains}. On the other hand, our results
seem to be new even if \mbox{$\G=\Z^2$}, so the reader not interested
in full generality may always think about this, probably the simplest
possible case in which \ref{PropertyS} and \ref{PropertyT} can be
easily derived from standard properties of the simple random walk on
the square grid.

%In our paper we are interested in uniform estimates which are valid
%up to some uniform multiplicative constants.
In order to shorten the presentation, below we widely use the following
notation: assuming that all ``structural parameters'' of a planar graph
$\G$ listed in Section~\ref{Sect:NotationAndPreliminaries} are fixed
once forever (or if we work with some concrete $\G$):
\begin{itemize}
\item by ``$\const$'' we denote \emph{positive}
constants (like $\frac{1}{2\pi}$ or $7^{812}$) which do not depend on
geometric properties (the shape of $\O$, positions of boundary points,
etc.) of the configuration under consideration or additional parameters
we deal with (thus ``$f\le\const$'' means that there
exists a positive constant $C$ such that the inequality $f\le C$ holds
true \emph{uniformly} over all possible configurations);
\item we write ``$ f\asymp g$'' if there exist two
positive constants $C_{1,2}$ such that one has $C_1f\le g\le C_2f$
uniformly over all possible configurations (in other words, $f$ and $g$
are comparable up to some uniform constants which we do not specify);
\item we write, for example, ``\emph{if} $f\ge\const$, then
$g_1\asymp g_2$'' if and only if, for \emph{any} given constant
$c>0$, the estimate $f\ge c$ implies $C_1g_1\le g_2\le C_2g_1$, where
$C_{1,2}=C_{1,2}(c)>0$ may depend on $c$ but are independent of all
other parameters involved.
\end{itemize}

%s1.2 #&#
\subsection{Main results}
The main objects of interest are (discrete) quadrilaterals, that is,
simply connected domains $\O$ with four marked boundary points
$a,b,c,d$ listed counterclockwise. Focusing on quadrilaterals, we have
two motivations. First, in the classical theory this is the ``minimal''
configuration which has a nontrivial conformal invariant (e.g., all
simply connected $\O$'s with three marked boundary points are
conformally equivalent due to the Riemann mapping theorem). Second,
this is an archetypical configuration in the 2D lattice models
theory, where one often needs to estimate probabilities of
crossing-type events in $(\O;a,b,c,d)$.

Note that even if $\G=\Z^2$, there is a crucial difference between
discrete and continuous theories. The latter is essentially based on
conformal mappings and conformal invariance of various quantities,
notably the conformal invariance of extremal lengths; see \cite{Ahl73},
Chapter~4 and \cite{GM05}, Chapter IV. Using conformal
invariance, one typically may rewrite the question originally
formulated in $\O$ as the same question for some canonical domain (unit
circle, half-plane, rectangle, etc.), thus simplifying the problem
drastically; for example, see \cite{GM05}, Theorem IV.5.2. In
particular, up to conformal equivalence, $(\O;a,b,c,d)$ can be
described by a single real parameter (modulus). Therefore, all
conformal invariants of those $\O$'s ({cross-ratios, extremal lengths,
partition functions of the Brownian motion}) are just some concrete
functions of each other.

This picture changes completely when coming down to the discrete level:
for discrete domains (subsets of a \emph{fixed} graph $\G$) we do not
have any reasonable notion of conformal equivalence. Nevertheless, for
a discrete quadrilateral, one can easily introduce natural analogues of
all classical conformal invariants listed above. Namely, let $\ZRWsign_\O=
\ZRW\O{[ab]_\O}{[cd]_\O}$ denote the total \emph
{partition function of random walks} running from the boundary arc
$[ab]_\O\subset\O$ to another arc $[cd]_\O\subset\pa\O$ inside~$\O$.
In the particular case of the simple random walk on $\G=\Z^2$, this means
\[
\ZRWsign_{\O}\bigl([ab]_\O;[cd]_\O\bigr) =\sum
_{\gamma\in S_\O([ab]_\O;[cd]_\O
)}\frac
{1}{4^{\#\gamma}},
\]
where $S_\O([ab]_\O;[cd]_\O)$ denotes the set of all nearest-neighbor
paths connecting $[ab]_\O$ and $[cd]_\O$ inside $\O$, and $\#\gamma
$ is
the length (number of steps) of $\gamma$; see Section~\ref
{SubSect:PartitionFctRW} for further details. Then, we define the \emph
{discrete cross-ratio} $\xYsign_\O=\xY{\O}abcd$ of
boundary points
$a,b,c,d\in\pa\O$ as
\[
\xYsign_\O:=\biggl[\frac{
\ZRWsign_{\O}(a;d)
\ZRWsign_{\O}(b;c)
}{
\ZRWsign_{\O}(a;b)
\ZRWsign_{\O}(c;d)
} \biggr]^{{1}/{2}},
\]
where, for example, $\ZRW{\O}ad$ denotes the similar partition function
of random walks running from $a$ to $d$ in $\O$; see Section~\ref
{Sect:XYandZ} for further details. We also use the classical definition
of \emph{discrete extremal length} (or, equivalently, effective
resistance of the corresponding electrical network) $\ELsign
_\O=\EL
{\O}{[ab]_\O}{[cd]_\O}$ between $[ab]_\O$ and $[cd]_\O$ which goes back
to Duffin \cite{Duf62}; see Section~\ref{Sect:ExtremalLength} for details.

Certainly, one cannot hope that $\ZRWsign_\O,\xYsign_\O$ and
$\ELsign_\O
$ are related by the same \emph{identities} as in the classical theory.
Nevertheless, one may wonder if those can be replaced by some \emph
{double-sided estimates} which do not depend on geometric properties of
$(\O;a,b,c,d)$. One of the main results of our paper, Theorem~\ref
{Thm:DblSidedEstimates}, gives the positive answer to this question.
Namely, it says that, provided $\ELsign_\O\ge\const$, one has
\[
\ZRWsign_\O\asymp\xYsign_\O\quad\mbox{and}\quad\log\bigl(1+
\xYsign_\O ^{-1}\bigr)\asymp\ELsign_\O,
\]
\emph{uniformly} over all possible discrete quadrilaterals. Note that
we use discrete cross-ratio $\xYsign_\O$ as an intermediary that allows
us to relate ``analytic'' partition function $\ZRWsign_\O$ and
``geometric'' extremal length $\ELsign_\O$ in a way which is very
similar to the classical setup.

In order to illustrate a potential of the toolbox developed in our
paper, we include two applications of a different kind. The first,
given in Section~\ref{Sect:Surgery}, is a ``surgery technique'' for
discrete quadrilaterals which is important for the fine analysis of
interfaces in the critical Ising model; see \cite{CDCH13}. Namely, we
show that it is always possible to cut $\O$ along some family of slits
$L_k$ into two parts $\O'_k$ and $\O''_k$ (containing $[ab]_\O$ and
$[cd]_\O$, resp.) so that, for any $k$, one has
\begin{eqnarray*}
\ZRWsign_\O&\asymp& \ZRWsign_{\O'_k}\bigl([ab]_\O;
\rL_k\bigr) \ZRWsign_{\O''_k}\bigl(\rL_k;[cd]_\O
\bigr) \quad\mbox{and}\\
 \ZRWsign_{\O'_k}\bigl([ab]_\O;
\rL_k\bigr) &\asymp& k \ZRWsign_{\O''_k}\bigl(
\rL_k;[cd]_\O\bigr) ;
\end{eqnarray*}
see Theorem~\ref{Thm:Separator} for details. Using discrete
cross-ratios techniques, we prove this result, which is quite natural
from a geometric point of view, without any reference to the actual
geometry of $\O$. As always in our paper, double-sided estimates given
above are {uniform} with respect to $(\O;a,b,c,d)$ and $k$.

Another application, given in Section~\ref{Sect:Estimates}, allows one
to control the discrete harmonic measure $\omega_\mathrm{disc}:=\hm\O{u}{[ab]_\O}$ of a ``far'' boundary arc
\mbox{$[ab]_\O\ss\pa\O$}
via an
appropriate discrete extremal length $\ELsign_\mathrm{disc}$ in $\O$;
see Section~\ref{Sect:Estimates} and Theorem~\ref{Thm:LogHm=ELdiscr}
for details. This should be considered as an analogue of the famous
Ahlfors--Beurling--Carleman estimate; see \cite{GM05}, Theorem IV.5.2,
and \cite{GM05}, page 150, for historical notes. Again, we get a uniform
double-sided bound which, as a byproduct, implies that
\[
\log\bigl(1+\omega_\mathrm{disc}^{-1}\bigr) \asymp
\ELsign_\mathrm{disc} \asymp \ELsign_\mathrm{cont} \asymp\log
\bigl(1+\omega_\mathrm{cont}^{-1}\bigr)
\]
\emph{uniformly} over all possible configurations $(\O;u,a,b)$, where
$\omega_\mathrm{cont}$ denotes the classical harmonic measure of the
boundary arc $[ab]$ seen from $u$ in the polygonal representation of
$\O
$; see Corollary~\ref{Cor:LogHmDisc=LogHmCont} for details. Note that
results of this sort seem to be hardly available by any kind of
coupling arguments. Indeed, dealing with thin fiords we are mostly
focused on exponentially rare events for both discrete random walks and
the (continuous) Brownian motion, which are highly sensitive to widths
of those fiords.

%s1.3 #&#
\subsection{Organization of the paper} \label{SubSect:Organization} In
Section~\ref{Sect:NotationAndPreliminaries} we formulate
assumptions (a)--(d) on the embedding $\G\subset\C$ (Section~\ref
{SubSect:GraphAssumptions}), fix the notation for discrete domains $\O$
(Section~\ref{SubSect:DiscreteDomains}), introduce the {partition
functions} $\ZRWsign_\O$ of the simple random walk in $\O$ and discuss
its relation to the standard notions of discrete harmonic measure and
discrete Green function (Section~\ref{SubSect:PartitionFctRW}).
Further, in Section~\ref{SubSect:RandomWalkAssumptions} we formulate two
crucial properties \ref{PropertyS} and \ref{PropertyT} of the random
walk on $\G$ (namely, uniform estimates for hitting probabilities and
expected exit times for discrete approximations of {Euclidean discs}).
We also list several basic facts of the discrete potential theory
(elliptic Harnack inequality, weak Beurling-type estimates, some
uniform estimates for Green functions) in Section~\ref{SubSect:BasicFacts}.

Section~\ref{Sect:Factorization} is devoted to a uniform (up to
multiplicative constants) \emph{factorization of the three-point
partition function} $\ZRW\O a {[bc]_\O}$ via two-point functions
$\ZRW
\O ab$, $\ZRW\O ac$ and $\ZRW\O bc$. Namely, we prove that (see
{Theorem~\ref{Thm:Zfact}})
\[
\ZRWsign_{\O}\bigl(a;[bc]_\O\bigr)
\asymp\bigl[ {\ZRW\O ab \ZRW\O ac}/{\ZRW\O bc}
\bigr]^{{1}/{2}}
\]
uniformly over all configurations $(\O;a,b,c)$. This is the
{cornerstone} of our paper and the only one place where we involve some
geometric considerations in the proofs.

In Section~\ref{Sect:XYandZ}, we introduce \emph{discrete
cross-ratios} $\xXsign_\O$, $\xYsign_\O$ for a simply connected domain
$\O$ with four marked boundary points $a,b,c,d$ (see Definition~\ref
{Def:CrossRatio}) and deduce from Theorem~\ref{Thm:Zfact} several
double-sided estimates relating $\xXsign_\O$, $\xYsign_\O$ and
$\ZRWsign
_\O$. In particular, we prove that $\xXsign_\O^{-1}\asymp1+\xYsign
_\O
^{-1}$ (see Proposition~\ref{Prop:XwtXestim}), which is an analogue of
the well-known identity for classical cross-ratios, and $\ZRWsign_\O
\asymp\log(1+\xYsign_\O)$ (see Theorem~\ref{Thm:ZlogYestim}),
which is
a precursor of the exponential-type estimate relating $\ZRWsign_\O$ and
$\ELsign_\O$.

Section~\ref{Sect:Surgery} is independent of the rest of the
paper. It shows how one can use Theorem~\ref{Thm:Zfact} and discrete
cross-ratios introduced in Section~\ref{Sect:XYandZ} in order to build
a sort of ``\emph{surgery technique}'' which allows one to effectively
``decouple'' dependence $\ZRWsign_\O$ of the boundary arcs $[ab]_\O$
and $[cd]_\O$ by finding nice discrete cross-cuts in $\O$.

In Section~\ref{Sect:ExtremalLength}, the notion of \emph
{discrete extremal length} $\EL\O{[ab]}{[cd]}$ comes into play. We
recall its definition and prove that $\ELsign_\O$ is always uniformly
comparable to its continuous counterpart, extremal length of the family
of curves connecting $[ab]$ and $[cd]$ in the polygonal representation
of $\O$. In particular, this fact implies the very important \emph
{duality estimate} for discrete extremal lengths; see Corollary~\ref
{Cor:ELduality}. We also prove some simple inequalities relating
$\ZRWsign_\O$ and $\ELsign_\O^{-1}$; see Proposition~\ref{Prop:ZleL-1}.

Section~\ref{Sect:Estimates} summarizes all the estimates for
$\xYsign_\O$, $\ZRWsign_\O$ and $\ELsign_\O$ obtained before into
single Theorem~\ref{Thm:DblSidedEstimates} which is the culmination of
our paper. Then we show how to fit a \emph{discrete harmonic measure}
$\hm\O{u}{[ab]_\O}$ into this context (as $\O\setminus\{u\}$ is not
simply connected, a reduction similar to \cite{GM05}, page 144, is
needed). The result [double-sided estimate of $\hm\O{u}{[ab]_\O}$ via
an appropriate extremal length] is given by Theorem~\ref
{Thm:LogHm=ELdiscr}. As a simple byproduct, we prove Corollary~\ref
{Cor:LogHmDisc=LogHmCont}, which says that the logarithm of a discrete
harmonic measure is uniformly comparable to its continuous counterpart.

In order to make the whole presentation self-contained, in the
\hyperref[app]{Appendix} we derive all the basic facts of the discrete
potential
theory listed in Section~\ref{SubSect:BasicFacts} from properties \ref
{PropertyS} and \ref{PropertyT} of the underlying random walk. In some
sense, our paper uses these properties, formulated for the simplest
possible discrete domains (approximations of Euclidean discs), as
``black box assumptions'' that turn out to be enough to develop uniform
estimates relating $\ZRWsign_\O$, $\xYsign_\O$ and $\ELsign_\O$
for all
simply connected $\O$'s; see also Remark~\ref{Rem:ad=>ST}.

%s2 #&#
\section{Notation, assumptions and preliminaries}
\label{Sect:NotationAndPreliminaries}
%\setcounter{equation}{0}

%s2.1 #&#
\subsection{Graph notation and assumptions}
\label{SubSect:GraphAssumptions} Throughout this paper we work with an
{infinite} undirected weighted planar graph $(\G;\mathrm{E}^\G)$
embedded into
a complex plane $\C$ so that all of its edges are straight segments
(see Figure~\ref{Fig: grid-irreg}), which is assumed to satisfy
assumptions (a)--(d) given below. The notation $\Gamma\ss\C$ is fixed
for the set of vertices which are understood as points in the complex
plane (so $|u-v|$ means the Euclidean distance between $u,v\in\G$), and
$\mathrm{E}^\Gamma$ denotes the corresponding set of edges. Each edge
$e\in\mathrm{E}
^\G$ is equipped with a positive \emph{weight} $\mathrm{w}_e$. Note
that, in general, these weights are \emph{not} related to the way how
$\G$ is embedded into $\C$.
%, rather be assigned to the graph itself.
We assume that $\G$ satisfies:

%f1 #&#
\begin{figure}

\includegraphics{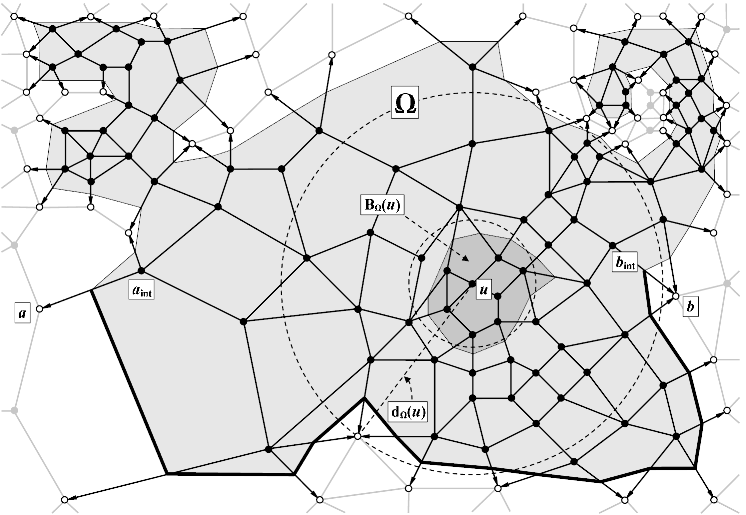}

\caption{An example of a graph $\G$ and a simply connected discrete
domain $\O\ss
\G$. The inner vertices of $\O$ are colored black, the boundary ones
are white. For two boundary edges $(aa_\mathrm{int})$ and $(bb_\mathrm
{int})$, the corresponding counterclockwise boundary arc $[ab]_\O$ is
marked. For an inner vertex $u\in\Int\G$, the distance $\dd\O
{u}=\dist
(u;\O)$ from $u$ to $\pa\O$ and the discrete disc $\BB\O
{u}=\BBsign^\G
_{r}(u)$ of radius $r=\frac{1}{3}\dd\O{u}$ are shown.} \label{Fig:
grid-irreg}
\end{figure}

\begin{longlist}[(a)]
\item[(a)]\emph{uniformly bounded degrees}: there exists a
constant $\varpi_0>0$ such that \mbox{$\mathrm{w}_e\ge\varpi_0$} for
all edges $e\in\mathrm{E}^\G$ and $\mu_v:=\sum_{(vv')\in\mathrm
{E}^\G}\mathrm
{w}_{vv'}\le\varpi_0^{-1}$ for all vertices $v\in\G$.
\end{longlist}
Clearly, this is equivalent to saying that all edge weights $\mathrm
{w}_\mathrm{e}$ are uniformly bounded away from $0$ and $\infty$, and
all degrees of vertices of $\Gamma$ are uniformly bounded as well. We
then denote \emph{random walk transition probabilities} by
%
%e2.1 #&#
\begin{equation}
\label{RWtransprobaDef} {\varpi}_{vv'}:=\frac{\mathrm{w}_{vv'}}{\mu_v}=\frac{\mathrm
{w}_{vv'}}{\sum_{(vv')\in\mathrm{E}^\G}\mathrm{w}_{vv'}}.%, v\in\G
%\cup
%\G^*.
\end{equation}
Note that %$\sum_{(vv')\in\rE^\G}\varpi_{vv'}=1$, $v\in\G$, and
the probabilities $\varpi_{vv'}$ are uniformly bounded below by
$\varpi_0^2>0$.
We now describe the way that $\Gamma$ is \emph{embedded} into $\C$. We
assume that:
\begin{longlist}[(b)]
\item[(b)] \emph{there are no flat angles}: there exists a
constant $\eta_0>0$ such that, for each vertex $v\in\G$, all angles
between neighboring edges of $\G$ incident to $v$ do not exceed $\pi
-\eta_0$;

\item[(c)] \emph{edge lengths are locally comparable}: there
exists a constant $\varkappa_0\ge1$ such that, for each vertex $v\in
\G
$, one has
%
%e2.2 #&#
\begin{equation}
\label{RvDef} \max_{(vv')\in\mathrm{E}^\G}\bigl|v'-v\bigr|\le
\varkappa_0r_v \qquad\mbox{where } r_v:=\min
_{(vv')\in\mathrm{E}^\G}\bigl|v'-v\bigr|
\end{equation}
(below we sometimes call $r_v$ the \emph{local scale size});

\item[(d)] $\G$ \emph{is locally finite} (i.e., it does not
have accumulation points in $\C$).
\end{longlist}
It is worth noting that (b) and (c) also imply that all degrees of
{faces} of $\G$ are uniformly bounded, and all angles between
neighboring edges are uniformly bounded away from $0$. In particular,
the radius of isolation $\min_{v'\in\G}|v' - v|$ of a vertex $v\in
\G$
is always uniformly comparable to $r_v$. Let us emphasize that we do
\emph{not} assume that $r_v$'s are comparable to each other: the local
scale sizes \emph{can} significantly vary from place to place; see
Figure~\ref{Fig: grid-irreg}. Also, we do not assume any quantitative
bound in condition (d).

%re2.1 #&#
\begin{remark}
\label{Rem:LasymptDist}
It is easy to see that for some constant \mbox{$\nu_0=\nu_0(\eta
_0,\varkappa_0)\ge1$} and all $u,v\in\G$, there exists a
nearest-neighbor path $\rL_{uv}=(u_0u_1\cdots u_n)$, $(u_su_{s+1})\in
\mathrm{E}^\G
$, between $u=u_0$ and $v=u_n$ such that
%
%e2.3 #&#
\begin{equation}
\label{LuvBound} \Length(\rL_{uv})=\sum_{s=0}^{n-1}|u_{s+1}
- u_s|\le\nu _0|v - u|.
\end{equation}
\end{remark}

In particular, one can use the following construction (see \cite{ABGGN}
for details). Let $[u;v]\ss\mathbb{C}$ denote a straight segment
between $u$ and $v$ in the plane, $f_1,\ldots,f_m$ be consecutive
faces of
$\G$ that are intersected by $[u;v]$ and let
$[z_{s-1};z_{s}]:=[u;v]\cap f_{s}$. It follows from (b) and (c) that
one can replace each of the subsegments $[z_{s-1};z_{s}]$ by a path
$\ell_{s}$ running along the boundary of $f_{s}$ so that the length of
$\ell_s$ is bounded by $\nu_0|z_{s} - z_{s-1}|$. Concatenating these
$\ell_s$ and erasing repetitions, if necessarily, one gets a proper
path $\rL_{uv}$. It might happen that the result is not the shortest
path between $u$ and $v$ in $\G$. Nevertheless, it has an important
feature which will be used below:
%
%e2.4 #&#
\begin{equation}
\mbox{all vertices of $\rL _{uv}$ belong to faces crossed by the
segment $[u;v]$}.
\end{equation}
In particular, \emph{this} $\rL_{uv}$ does not cross the straight line
passing through $u$ and $v$ outside of $[u;v]$ (note that the shortest
path joining $u$ and $v$ along edges of $\G$ could do so).

%re2.2 #&#
\begin{remark}
\label{Rem:RvBound} Let $u\ne v$ be two vertices of $\Gamma$. It
immediately follows from~(\ref{LuvBound}) that $r_v\le\nu_0|v - u|$.
Moreover, for all edges $(vv')\in\mathrm{E}^\Gamma$, one has $|v' -
v|\le
\varkappa_0 r_v \le\varkappa_0\nu_0|v - u|$. In particular, it
cannot happen that $|v' - u|> (\varkappa_0\nu_0 + 1)\cdot|v - u|$.
\end{remark}

%s2.2 #&#
\subsection{Bounded discrete domains and discrete discs} \label
{SubSect:DiscreteDomains}
We start with a definition of a (bounded) {discrete domain} $\O$; see
Figure~\ref{Fig: grid-irreg}.
Let $(V^\O;\mathrm{E}^\O_\mathrm{int})$ be a bounded \emph
{connected} subgraph
of $(\G;\mathrm{E}^\G)$. In order to make the presentation simpler
and not to
overload the notation, we always assume that $(vv')\in\mathrm{E}^\O
_\mathrm
{int}$ for any two neighboring (in $\G$) vertices $v,v'\in V^\O$ (one
can easily remove this assumption, if necessary). Denote by $E^\O
_\mathrm{bd}$ the set of all \emph{oriented} edges $(a_\mathrm
{int}a)\notin E^\O_\mathrm{int}$ such that $a_\mathrm{int}\in V^\O$
(and $a\notin V^\O$).
%(note that, if $(ab)\notin E^\O$ for two neighboring vertices $a,b
%\in V^\O$, then this edge is counted twice in $E^\O_\mathrm{bd}$,
%according to two possible orientations).
We set $\O:=\Int\O\cup\pa\O$, where
\[
\Int\O:=V^\O, \qquad\pa\O:=\bigl\{\bigl(a;(a_{\mathrm{int}}a)\bigr)
\dvtx(a_{\mathrm
{int}}a)\in E^\O_{\mathrm{bd}}\bigr\}.
\]
Formally, the boundary $\pa\O$ of a discrete domain $\O$ should be
treated as the set of oriented edges $(a_{\mathrm{int}}a)$, but we
usually identify it with the set of corresponding vertices $a$, and
think about $\Int\O$ and $\pa\O$ as subsets of $\G$, if no
confusion arises.

We say that a discrete domain $\O$ is \emph{simply connected} if, for
any cycle in $E^\O_\mathrm{int}$, all edges of $\G$ surrounded by this
cycle also belong to $E^\O_\mathrm{int}$. If $\O$ is simply connected,
then its boundary vertices (or, more precisely, boundary edges) are
naturally cyclically ordered, exactly as in the continuous setting. For
two boundary vertices $a,b\in\pa\O$ of a simply connected $\O$, we
denote a \emph{boundary arc} $[ab]_\O\ss\pa\O$ as the
set of all
boundary vertices lying between $a$ and $b$ (including those two) when
one goes along $\pa\O$ in the \emph{counterclockwise} direction (so
$[ab]_\O\cup[ba]_\O=\pa\O$ and $[ab]_\O\cap[ba]_\O=\{a,b\}$); see
Figure~\ref{Fig: grid-irreg}. We also use the notation $[ab)_\O
:=[ab]_\O
\setminus\{b\}$, $(ab]_\O:=[ab]_\O\setminus\{a\}$, etc.

For a given vertex $u\in\G$ and $r>0$, we denote by $\BBsign
^\G
_r(u)$ the \emph{discrete disc} of radius $r$ around $u$. Namely,
$\Int
\BBsign^\G_r(u)$ is the set of all vertices $v\in\G$ lying in the
connected component of $\G\cap\{v\dvtx|v - u|<r\}$ containing $u$ (e.g.,
$\Int\BBsign^\G_{r_u}(u)=\{u\}$), and $\pa\BBsign^\G_r(u)$ is the set
of their neighbors; see Figure~\ref{Fig: grid-irreg}.

%re2.3 #&#
\begin{remark}
\label{Rem:FiniteVsSmall} Let $u\in\Gamma$ and $r>0$. The following
fact immediately follows from (\ref{LuvBound}):
\[
\mbox{if $v\in\Gamma$ is such that $|v - u|< \nu_0^{-1}r$,
then $v\in \Int\BBsign^\G_r(u)$.}
\]
Combining this with Remark~\ref{Rem:RvBound}, one easily concludes
that, for all $u\in\Gamma$ and $r\ge r_u$,
%
%e2.5 #&#
\begin{equation}
\label{SumRvBound} \sum_{v\in\Int\BBsign^\G_r(u)}r_v^2
\asymp r^2,
\end{equation}
where constants in $\asymp$ depend on $\eta_0,\varkappa_0$ and $\nu
_0$ only.
\end{remark}

Below we also need a stronger version of (\ref{SumRvBound}). Given an
interval $I\subset\R/(2\pi\Z)$ of length $\pi- \eta_0$, let $\Int
_{[I]}\BBsign^\G_{r}(u)$ denote the set of all vertices $v\in\G$ that
can be connected to $u$ by a nearest-neighbor path $(u_0u_1\cdots u_n)$
such that all $u_s$ (including $v=u_n$) satisfy $|u_s-u|<r$ and $\arg
(u_s-u)\in I$. In other words, we restrict ourselves to those $v\in
\BBsign^\G_{r}(u)$ that are connected to $u$ by nearest-neighbor paths
running in a given sector
\[
S(u,r,I):=\bigl\{z\in\C\dvtx|z - u|<r, \arg{(z - u)}\in I\bigr\}.
\]

%le2.4 #&#
\begin{lemma}
\label{Lemma:SumRvBoundI}
For all $u\in\Gamma$, $r\ge r_u$ and intervals $I\ss\R/(2\pi\Z)$ of
length $\pi- \eta_0$, one has
%
%e2.6 #&#
\begin{equation}
\label{SumRvBoundI} \sum_{v\in\Int_{[I]}\BBsign^\G_r(u)}r_v^2
\asymp r^2,
\end{equation}
where constants in $\asymp$ depend on $\eta_0,\varkappa_0$ and $\nu
_0$ only.
\end{lemma}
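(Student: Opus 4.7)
The upper bound is immediate: since $\Int_{[I]}\BBsign^\G_r(u)\subseteq\Int\BBsign^\G_r(u)$, the estimate follows from~\eqref{SumRvBound}.

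For the lower bound, the plan is to exhibit a single discrete disc sitting deep inside the sector that already contributes $\asymp r^2$ to the sum. Let $\theta_0$ denote the midpoint of $I$, so that the central ray $\{u+te^{i\theta_0}:t\ge 0\}$ is separated from both boundary rays of $S(u,r,I)$ by angular margin $\tfrac12(\pi-\eta_0)$. Set $\rho:=r/C_1$ for a large structural constant $C_1=C_1(\eta_0,\varkappa_0,\nu_0)$. Using (b)--(c), I would locate an anchor vertex $v_\star\in\G$ at Euclidean distance at most $\tfrac12 c_\star\rho$ from $u+\rho e^{i\theta_0}$, where $c_\star$ is a small structural constant chosen so that the Euclidean disc $\{z:|z-v_\star|<c_\star\rho\}$ lies entirely in $S(u,r,I)$. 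Then $\BBsign^\G_{c_\star\rho}(v_\star)$ is contained in this Euclidean disc, and~\eqref{SumRvBound} applied at $v_\star$ gives $\sum_{v\in\BBsign^\G_{c_\star\rho}(v_\star)}r_v^2\asymp\rho^2\asymp r^2$.

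To turn this disc into a subset of $\Int_{[I]}\BBsign^\G_r(u)$, I need to join $u$ to every $w\in\BBsign^\G_{c_\star\rho}(v_\star)$ by a nearest-neighbor path whose vertices all sit in $S(u,r,I)$. Inside the disc this is automatic, because $\BBsign^\G_{c_\star\rho}(v_\star)$ is connected in $\G\cap\{|z-v_\star|<c_\star\rho\}\subseteq S(u,r,I)$. To reach $v_\star$ from $u$, I would build a sequence of intermediate anchors $u=v_0,v_1,\ldots,v_N=v_\star$ placed near the central ray at geometrically growing distances from $u$ (with a small step ratio $1+\epsilon_0$, $\epsilon_0$ structural), and then link consecutive anchors by the face-boundary path of Remark~\ref{Rem:LasymptDist} applied to the essentially radial segment $[v_k;v_{k+1}]$.

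The main obstacle lies precisely in the angular control of these linking paths. The face-boundary construction can spread out by a structural multiple of $r_w\le\nu_0|w-u|$, which translates into a nontrivial angular deviation from $\theta_0$ as seen from $u$. Forcing this spread below $\tfrac12(\pi-\eta_0)$ is where the midpoint choice of $\theta_0$ (maximizing the angular safety margin), the smallness of $\epsilon_0$ and $c_\star$ (depending only on $\eta_0,\varkappa_0,\nu_0$), and assumption~(b) near $u$ (guaranteeing, via a maximum-gap argument against the arc $I$ of length $\pi-\eta_0$, the existence of a first step of the path consistent with $I$) all enter quantitatively. Once the spine path is in place, the lemma follows from the single disc contribution above.
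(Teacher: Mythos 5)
Your upper bound is fine, but the lower bound as proposed has two genuine gaps, and the second one (which you flag yourself as ``the main obstacle'') is not fixable by the tools you invoke. First, the anchor vertex $v_\star$ need not exist: assumptions (a)--(d) give no lower bound on the density of vertices at scale $\ll r$, so near the point $u+\rho e^{i\theta_0}$ the graph may consist of a single huge face whose vertices are at distance comparable to $r$, and no vertex lies within $\tfrac12 c_\star\rho$ of that point. Second, and more seriously, the angular control of the linking paths cannot be achieved with the construction of Remark~\ref{Rem:LasymptDist}: the faces crossed by an (essentially radial) segment at distance $d$ from $u$ can have diameter comparable to $\const\cdot d$ (their local scales are only bounded by multiples of the distance to $u$, with constants built from $\varkappa_0,\nu_0\ge 1$), so the face-boundary path deviates from the segment by a \emph{fixed structural fraction} of $d$, i.e.\ by an angle, seen from $u$, that is a fixed constant possibly exceeding $\tfrac12(\pi-\eta_0)$. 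Shrinking $\epsilon_0$ or $c_\star$ does not reduce this constant, and the midpoint choice of $\theta_0$ only buys the margin $\tfrac12(\pi-\eta_0)$ that the deviation may already violate. So the spine you describe is not guaranteed to stay in $S(u,r,I)$, and the inclusion of your disc into $\Int_{[I]}\BBsign^\G_r(u)$ is unproved.

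The paper's proof removes both difficulties with a dichotomy that your argument is missing. If some vertex $v\in\Int_{[I]}\BBsign^\G_r(u)$ has $r_v$ comparable to $r$, then the single term $r_v^2$ already gives the lower bound and no deep disc is needed (this is exactly the regime where your anchor and spine can fail to exist). If instead $r_v\ll r$ for \emph{all} $v\in\Int_{[I]}\BBsign^\G_r(u)$, one does not use Remark~\ref{Rem:LasymptDist} at all: one builds the path step by step from assumption (b), choosing at each current vertex an outgoing edge whose direction is within $\tfrac12(\pi-\eta_0)$ of the bisector of $I$ (such an edge exists since consecutive edge angles are $\le\pi-\eta_0$). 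Every step then has direction in $I$, so the path stays in the sector by construction, and since each step has length $\le\varkappa_0 r_v\ll r$ the path penetrates into the bulk without overshooting, producing a vertex $u'$ with $\Int\BBsign^\G_{r'}(u')\subset\Int_{[I]}\BBsign^\G_r(u)$ for $r'=r\sin(\tfrac12(\pi-\eta_0))/2$; then (\ref{SumRvBound}) applied at $u'$ gives the lower bound. Your ``maximum-gap'' use of (b) near $u$ is the right germ of this idea, but it must be applied at every step of the spine (and only after the dichotomy restricts to the small-local-scale case), not patched onto the face-boundary construction.
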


\begin{pf} The upper bound follows from (\ref{SumRvBound}). To prove
the lower bound, note that if $r_v$ is comparable to $r$ for at least
one vertex $v\in\Int_{[I]}\BBsign^\G_r(u)$, then we are done as the
corresponding term $r_v^2$ of the sum is comparable to $r^2$. On the
other hand, if $r_v\ll r$ for all $v\in\Int_{[I]}\BBsign^\G_r(u)$, then
one can use assumption (b) step by step in order to find a path running
from $u$ in the bulk of the sector $S(u,r,I)$.
In particular, in this case there exists a vertex $u'\in\Gamma$ such
that $\Int\BBsign^\G_{r'}(u')\subset\Int_{[I]}\BBsign^\G_r(u)$, where
$r':=r\cdot\sin(\frac{1}{2}(\pi-\eta_0))/2$. Then the lower bound in
(\ref{SumRvBoundI}) follows from (\ref{SumRvBound}) applied to the disc
$\BBsign^\G_{r'}(u')$.
\end{pf}

%s2.3 #&#
\subsection{Green's function, exit probabilities and partition
functions of the random walk in a discrete domain} \label
{SubSect:PartitionFctRW}

Let $\Omega$ be a (simply connected) discrete domain.
For a real function $H\dvtx\O\to\R$, we define its \emph{discrete
Laplacian} by
\[
[\Delta H](v):=\sum_{(vv')\in\mathrm{E}^\G} \varpi_{vv'}
\bigl(H\bigl(v'\bigr) - H(v)\bigr),\qquad v\in\Int\O,
\]
where the sum is taken over all neighbors of $v$, and $\varpi_{vv'}$
are given by (\ref{RWtransprobaDef}). We say that $H$ is \emph{discrete
harmonic in ${\O}$} if $[\Delta H](v)=0$ for all $v\in\Int\O$.
%Note that, if $H\ge0$ and $[\D H](v)=0$, then $H(v')/H(v)\le
%\varpi_{vv'}^{-1}\le\n_0^2$ (see Sect.~\ref{SubSect:GraphAssumptions}).

Below we often use two basic notions of discrete potential theory. The
first is the discrete \emph{harmonic measure} ${\hm\O uE}$
of a
boundary set \mbox{$E\ss\pa\O$} seen from an (inner) vertex $u\in
\O$.
It can be defined as the unique function which is discrete harmonic
in $\O$ and coincides with $\1_E(\ccdot)$ on $\pa\O$. At the same time,
${\hm\O uE}$ admits a simple probabilistic interpretation: it is the
probability of the event that the random walk (\ref{RWtransprobaDef})
on $\Gamma$ started at $u$ first hits $\pa\O$ on $E$. The second notion
is the (positive) \emph{Green function} $G_\O(v;u)$. It is the
unique function which is discrete harmonic everywhere in $\O$ except at
$u$, vanishes on the boundary $\pa\O$ and such that
\[
\bigl[\Delta G_\O(\ccdot;u)\bigr](u)=-\mu_u^{-1}.
\]
From the probabilistic point of view, $G_\O(v;u)$ is the expected
number of visits at $u$ (divided by $\mu_u$) of random walk (\ref
{RWtransprobaDef}) started at $v$ and stopped when reaching $\pa\O$.
Note that $G_\O$ is symmetric, that is, $G_\O(u;v)\equiv G_\O(v;u)$;
for example, see Remark~\ref{Rem:Z=Harm}(ii). The following notation
generalizes both discrete harmonic measure and Green's function.

%de2.5 #&#
\begin{definition}
Let $\O\ss\G$ be a bounded discrete domain and $x,y\in\O$. We
denote by
${\ZRW{\O}{x}{y}}$ the \emph{partition function of the
random walk}
joining $x$
and $y$ inside $\O$. Namely,
%
%e2.7 #&#
\begin{equation}
\label{ZRWdef} \ZRW{\O}xy:=\sum_{\gamma\in S_\O(x;y)}\mathrm {w}(
\gamma),
\end{equation}
where
\[
\mathrm{w}(\gamma):=\frac{\prod_{s=0}^{n(\gamma)-1}\mathrm
{w}_{u_su_{s+1}}}{\prod_{s=0}^{n(\gamma)}\mu_{u_s}}= \mu _{y}^{-1}\prod
_{s=0}^{n(\gamma)-1}\varpi_{u_su_{s+1}}
\]
and $S_\O(x;y)=\{\gamma= (u_0 \sim u_1 \sim\cdots \sim u_{n(\gamma
)})\dvtx u_0=x; u_1,\ldots,u_{n(\gamma)-1}\in
\Int\O;\break u_{n(\gamma)}=y\}$ is the set of all nearest-neighbor paths
connecting $x$ and $y$ inside
$\O$. Further, for $A,B\ss\O$, we define
\[
\ZRW\O AB:=\sum_{x\in A, y\in B}\ZRW\O xy,
\]
and by $\RW{\O}AB$ we denote a random nearest-neighbor path
$\gamma$ chosen from the set $S_\O(A;B):=\bigcup_{x\in A,y\in B}S_\O(x;y)$
with probabilities %$\mathrm{w}(\gamma)/ \ZRW\O AB$.
proportional to the weights $\mathrm{w}(\gamma)$.
\end{definition}

%Note that $\ZRW{\O}AB$ is necessarily finite, since for any starting
%point $u_0\in\O$ the
%total number of simple random walk paths of length $n$ running inside $
%\O$ grows exponentially
%slower than $4^n$ as $n\to\infty$. Below we also slightly shorten the
%notation by writing
%$\ZRW\O aB:=\ZRW\O{\{a\}}B$ for $a\in\O$ and so on.

%re2.6 #&#
\begin{remark} \label{Rem:Z=Harm}
It is easy to see that:

\begin{longlist}[(ii)]%{(\roman{Listcounter})}
%{\usecounter{Listcounter} \setcounter{Listcounter}{0} \labelsep6pt
%\itemindent0pt}

\item[(i)] if $u\in\Int\O$ and $b\in\pa\O$, then $\ZRW\O{u}b=\mu
_b^{-1}\hm\O{u}b$;

\item[(ii)] if both $u,v\in\Int\O$, then $\ZRW\O{v}{u}=G_\O(v;u)$.
\end{longlist}
\end{remark}

\begin{pf}
(i) Focusing on the first step of $\gamma\in S_\O(u;b)$ in (\ref{ZRWdef}),
one immediately concludes that the function
\[
H(u):= %
\cases{\ZRW\O{u}b, &\quad  $u\in\Int\O$,
\vspace*{2pt}\cr
\mu_b^{-1}
\1[u=b],& \quad $u\in\pa\O,$ } %
\]
is discrete harmonic in $\O$ and coincides with $\mu_b^{-1}\hm\O
\ccdot
b$ on the boundary $\pa\O$. Thus, $\ZRW\O{u}b=H(u)=\mu_b^{-1}\hm
\O{u}b$
for all $u\in\Int\O$.

(ii) As above, it immediately follows from (\ref{ZRWdef})
that the
function
\[
H(v):= %
\cases{\ZRW\O{v} {u}, &\quad $ v\in\Int\O$,
\vspace*{2pt}\cr
0, &\quad $v\in\pa\O$, }
\]
is discrete harmonic everywhere in $\O$, except at $u$ and
\[
H(u)=\mu_u^{-1}+\sum_{(uu')\in E^\G}
\varpi_{uu'}H\bigl(u'\bigr),
\]
where the first term $\mu_u^{-1}$ corresponds to the trivial trajectory
consisting of a single point $u$. Thus $[\Delta H](u)=-\mu_u^{-1}$ and
$\ZRW\O{v}{u}=H(v)=G_\O(v;u)$ for all $v\in\Int\O$.
\end{pf}

%s2.4 #&#
\subsection{Properties \texorpdfstring{\protect\ref{PropertyS}}{(S)} and
\texorpdfstring{\protect\ref{PropertyT}}{(T)} of the random walk on ${\Gamma}$}
\label{SubSect:RandomWalkAssumptions}
Our paper is based on two crucial properties, \ref{PropertyS}, \ref
{PropertyT}, of random walk (\ref{RWtransprobaDef}) on $\G$ that are
formulated below.

%{\newcounter{theoremsaved}
%\setcounter{theoremsaved}{\value{theorem}}
%\setcounter{theorem}{18}

\renewcommand{\theproperty}{(S)}
%pr2.19 #&#
\begin{property}[(``Space''; see \cite{ABGGN}, Theorem~1.4)]
\label{PropertyS}
There exists a constant $c_0=c_0(\varpi_0,\eta_0,\varkappa_0)>0$ such
that, uniformly over all vertices $u\in\G$, radii $r>0$ and intervals
$I\subset\R/(2\pi\Z)$ of length $\pi- \eta_0$, the following is fulfilled:
\[
\omega_{\BBsign^\G_r(u)}\bigl(u;\bigl\{a\in\pa\BBsign^\G_r(u)
\dvtx\arg (a - u)\in I\bigr\}\bigr) \ge c_0.
\]
\end{property}

In other words, the random walk started at the center of
{any} discrete disc $\BBsign^\G_r(u)$ can exit this disc through any
given boundary arc of the angle $\pi- \eta_0$ with probability
uniformly bounded away from $0$. Note that, if $r\le r_u$, then $\Int
\BBsign^\G_r(u)=\{u\}$, and the claim rephrases assumption (b).

%pr2.20 #&#
\renewcommand{\theproperty}{(T)}
\begin{property}[(``Time''; see \cite{ABGGN}, Theorem~1.5)]
\label{PropertyT}
There exists a constant $C_0=C_0(\varpi_0,\eta_0,\varkappa_0)>1$ such
that, uniformly over all vertices $u\in\G$ and radii $r\ge r_u$, the
following is fulfilled:
%
%e2.8 #&#
\begin{equation}
\label{BoundPropertyT} C_0^{-1}r^2 \le\sum
_{v\in\Int\BBsign^\G_r(u)}r_v^2 G_{\BBsign^\G_r(u)}(v;u)
\le C_0r^2.
\end{equation}
\end{property}

Despite the fact that \ref{PropertyT} is formulated in terms of
discrete harmonic functions only (which do not depend on a particular
time parametrization of the underlying random walk), it is natural to
mention the following interpretation: let us consider some time
parametrization such that the (expected) time spent by the walk at a
vertex $v$ before it jumps is of order $r_v^2$ (recall that local
scales $r_v$ can be quite different for different $v$'s). Then we ask
the expected time spent in a discrete disc $\BBsign_r(u)$ by the random
walk started at $u$ before it hits $\pa\BBsign^\G_r(u)$ to be of order
$r^2$, uniformly over all possible discrete discs.

%re2.1 #&#
\begin{remark}
\label{Rem:ad=>ST}
In the first version of this paper, \ref{PropertyS} and \ref{PropertyT}
were presented as additional {``black box assumptions''} and the
following question was posed: do they hold true for any embedding
satisfying (a)--(d) [with some ``quantitative'' version of (d) which
the author, at the time, thought to be necessary] or not? Very
recently, the positive answer to this question was given in \cite{ABGGN},
\[
\mbox{(a)--(d) always imply \ref{PropertyS} and \ref{PropertyT}}.
\]
The proofs in \cite{ABGGN} are based on heat kernel estimates and the
parabolic Harnack inequality; see also a useful discussion in \cite{Koz07},
Section~2.1. We are grateful to the authors of \cite{ABGGN}
for helpful conversations on the subject. Also, it is worth noting that
in some ``integrable'' cases (e.g., for simple random walks on regular
lattices or special random walks on isoradial graphs \cite{CS11}) \ref
{PropertyS} and \ref{PropertyT} can be easily obtained due to nice
``local approximation properties'' of the random walk (\ref
{RWtransprobaDef}). In those cases, all the results of our paper can be
obtained without any further references. In some sense, we consider
\ref
{PropertyS} and \ref{PropertyT} as a ``pointe de la jonction'': being
formulated for simplest possible discrete domains (approximations of
Euclidean discs), they provide a starting point for our toolbox which
is more adapted for very rough $\O$'s.
\end{remark}

%Note that, since $\sum_{v\in\Int\BBsign_r(u)}r_v^2$ is comparable to
%$r^2$ due to (b), (c) and simple geometric reasons,
%\ref{PropertyT} would follow from the following ``analytic'' claim:
%Green's function $G_{\BBsign_r(u)}(v;u)$ %is of order $1$ for those $v
%\in\BBsign_r(u)$ which are away from $u$ and $\pa\BBsign_r(u)$ and the
%total contribution of $v$'s lying close to $u$ (where $G_{
%\BBsign_r(u)}$ blows up) is bounded, see also %Lemma~
%\ref{Lemma:GreenPointwise}.

%s2.5 #&#
\subsection{Basic facts: Elliptic Harnack
inequality, Green's function estimates and Beurling-type estimates}
\label{SubSect:BasicFacts}
In this section we collect several basic facts about discrete harmonic
functions. These statements can be obtained using heat kernel estimates
\`a la \cite{ABGGN}, though to keep the whole presentation
self-contained we also provide direct proofs based on \ref{PropertyS}
and \ref{PropertyT} in the \hyperref[app]{Appendix}.

%pr2.2 #&#
\begin{proposition}[(Elliptic Harnack inequality)]
\label{Proposition:Harnack} For each $\rho>1$, there exists a constant
$c(\rho)=c(\rho,\varpi_0,\eta_0,\varkappa_0)>0$ such that, for any
$u\in
\Gamma$, $r>0$ and any nonnegative harmonic function $H\dvtx\BBsign
^\G
_{\rho r}(u)\to\R_+$, one has
\[
%\begin{equation}\label{ellipticHarnack}
\min_{v\in\Int\BBsign^\G_{r}(u)} H(v) \ge c(\rho)\max
_{v\in\Int
\BBsign
^\G_{r}(u)} H(v).
\]
%
%\end{equation}
\end{proposition}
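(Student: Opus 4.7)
The plan is to reduce the Harnack inequality to a pointwise comparison of hitting distributions on $\pa\BBsign^\G_{\rho r}(u)$ and then to chain a short-range version, with property~\ref{PropertyS} as the only nontrivial input. Setting $D := \BBsign^\G_{\rho r}(u)$ and applying the strong Markov property at the exit time,
\begin{equation*}
H(v) \;=\; \sum_{a \in \pa D} \hm{D}{v}{a}\, H(a), \qquad v \in \Int D,
\end{equation*}
so the claim reduces to $\hm{D}{v}{a} \asymp \hm{D}{u}{a}$ uniformly for $v \in \Int\BBsign^\G_r(u)$ and $a \in \pa D$.

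The core ingredient is a \emph{one-step Harnack lemma}: if $h \ge 0$ is harmonic on $\BBsign^\G_{2s}(w)$, then $h(w') \asymp h(w)$ for every $w' \in \Int\BBsign^\G_s(w)$, with constants depending only on the structural parameters of $\G$. Granting this, I would chain by connecting any two points of $\Int\BBsign^\G_r(u)$ via a sequence of $n = n(\rho)$ overlapping discs $\BBsign^\G_s(w_i) \subset \BBsign^\G_{\rho r}(u)$ with $s \asymp (\rho-1)r$ and $w_{i+1} \in \Int\BBsign^\G_{s/2}(w_i)$. The number of hops is controlled using Remark~\ref{Rem:LasymptDist} applied to the straight segment joining the two endpoints, and each hop contributes a single multiplicative constant, producing $c(\rho) > 0$ after $n(\rho)$ iterations.

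To prove the one-step lemma, I would decompose via the strong Markov property at the first hit of an intermediate circle $\pa\BBsign^\G_{3s/2}(w)$,
\begin{equation*}
\hm{\BBsign^\G_{2s}(w)}{w'}{a} \;=\; \sum_{y \in \pa\BBsign^\G_{3s/2}(w)} \hm{\BBsign^\G_{3s/2}(w)}{w'}{y}\cdot \hm{\BBsign^\G_{2s}(w)}{y}{a},
\end{equation*}
reducing the lemma to a pointwise comparison of the exit distributions of $\BBsign^\G_{3s/2}(w)$ from $w'$ versus from $w$. Upper bounds come from enclosing each target $y$ in an angular sector of aperture $\pi\!-\!\eta_0$ and applying~\ref{PropertyS}; lower bounds come from forcing the walk from $w'$ to first enter a sector directed towards $w$ (again by~\ref{PropertyS} applied near $w'$) and then restarting. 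The main obstacle is converting these \emph{angular} bounds furnished by~\ref{PropertyS} into a \emph{pointwise} comparison on $\pa\BBsign^\G_{3s/2}(w)$: the naive route of bounding $\P_{w'}[\text{hit }w\text{ before exit}]$ from below fails in two dimensions as soon as $r_w \ll s$. To overcome this, my plan is to iterate~\ref{PropertyS} through a dyadic hierarchy of sectors near each target vertex $y$, using Lemma~\ref{Lemma:SumRvBoundI} (whose proof invokes~\ref{PropertyT}) to keep the geometry of discrete sectors under control across scales, so that the comparison ultimately localizes to angular arcs of aperture $\asymp r_y/s$ near $y$, where~\ref{PropertyS} applies in its sharpest form.
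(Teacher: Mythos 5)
Your chaining step (covering the segment between two points of $\Int\BBsign^\G_{r}(u)$ by boundedly many overlapping discs) is fine and matches what the paper does for small $\rho$, but the base case — the ``one-step Harnack lemma'' — is where the proposal has a genuine gap. You propose to prove it via a \emph{pointwise} comparison of exit distributions, $\hm{\BBsign^\G_{3s/2}(w)}{w'}{y}\asymp\hm{\BBsign^\G_{3s/2}(w)}{w}{y}$ for every single boundary vertex $y$. The only quantitative input available, property \ref{PropertyS}, bounds from below the probability of exiting through an arc of the \emph{fixed macroscopic} aperture $\pi\!-\!\eta_0$; it says nothing about the harmonic measure of an individual vertex $y$, which can be arbitrarily small and is highly sensitive to the local geometry near $y$. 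The proposed fix — iterating \ref{PropertyS} ``through a dyadic hierarchy of sectors'' until one reaches arcs of aperture $\asymp r_y/s$ ``where \ref{PropertyS} applies in its sharpest form'' — is not an argument: \ref{PropertyS} simply does not apply to arcs of aperture smaller than $\pi\!-\!\eta_0$, and Lemma~\ref{Lemma:SumRvBoundI} controls sums of $r_v^2$ in a sector, not hitting probabilities of thin arcs. The lower-bound direction is also circular as sketched: after ``forcing the walk from $w'$ into a sector directed towards $w$ and restarting'', the restart point is a random vertex, and to conclude you would need exactly the exit-distribution comparison you are trying to prove. In effect you are trying to establish a Poisson-kernel comparability that is at least as strong as the Harnack inequality itself, so the reduction does not make the problem easier.

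The paper avoids any pointwise boundary control. Let $v_{\max},v_{\min}$ be the points of $\Int\BBsign^\G_r(u)$ where $H$ is maximal/minimal. By the maximum principle there is a nearest-neighbor path $\gamma$ from $v_{\max}$ to $\pa\BBsign^\G_{\rho_0 r}(u)$ along which $H\ge H(v_{\max})$. Lemma~\ref{Lemma:AroundAnnulus} (itself obtained from \ref{PropertyS} by ``driving'' the walk around an annulus with boundedly many moves, uniformly in the local scales $r_v$) shows that the walk started at $v_{\min}$ hits $\gamma$ before leaving $\BBsign^\G_{\rho_0 r}(u)$ with probability at least $\delta_0$; since $H\ge 0$, optional stopping gives $H(v_{\min})\ge\delta_0 H(v_{\max})$ for $\rho\ge\rho_0$, and the case $\rho<\rho_0$ follows by the covering/chaining argument you already have. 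If you want to rescue your outline, replace the pointwise exit-distribution comparison by this ``hit a level set of $H$'' argument (or first prove the annulus-crossing estimate from \ref{PropertyS} and use it as your one-step lemma); as it stands, the key analytic step is missing.
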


\begin{pf}
This result appears in \cite{ABGGN}. In order to keep the presentation
self-contained, we also give a simple proof based on \ref{PropertyS} in
the \hyperref[app]{Appendix}.
\end{pf}

%le2.3 #&#
\begin{lemma}[(Green's function estimates)]
\label{Lemma:GreenInDiscs}For each $\rho>1$, there exist constants
$c_{1,2}(\rho)=c_{1,2}(\rho,\varpi_0,\eta_0,\varkappa_0)>0$ such that,
for any $u\in\Gamma$ and $r>0$, the following holds:
\begin{eqnarray*}
G_{\BBsign^\G_{\rho r}(u)}(v;u)&\ge& c_1(\rho) \qquad \mbox{for all } v\in\Int
\BBsign^\G_r(u);
\\
G_{\BBsign^\G_{\rho r}(u)}(v;u)&\le&
c_2(\rho) \qquad \mbox{for all } v\in\BBsign^\G_{\rho r}(u)
\setminus\Int\BBsign^\G_r(u).
\end{eqnarray*}
\end{lemma}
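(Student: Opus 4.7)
The plan is to derive the two inequalities by combining the maximum/minimum principles with Property~\ref{PropertyT}, the volume estimate~\eqref{SumRvBound}, and the elliptic Harnack inequality (Proposition~\ref{Proposition:Harnack}). Throughout, abbreviate $B:=\BBsign^\G_{\rho r}(u)$ and $G:=G_{B}(\ccdot;u)$. The defining identity $[\D G](u)=-\mu_u^{-1}$ together with $G\ge 0$ already gives the trivial bound $G(u)\ge\mu_u^{-1}\ge\varpi_0$, so in both inequalities one only has to handle vertices distinct from~$u$. In each case the key reduction is the same: by the max/min principle applied to the appropriate sub-domain on which $G$ is harmonic, it suffices to control~$G$ on $\pa\BBsign^\G_r(u)$.

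For the upper bound, $G$ is nonnegative, discrete harmonic on the annulus $B\setminus\Int\BBsign^\G_r(u)$, and vanishes on $\pa B$, so the maximum principle reduces the task to bounding $G$ on $\pa\BBsign^\G_r(u)$. I would fix $v\in\pa\BBsign^\G_r(u)$ and choose $s=s(\rho)>0$ small enough that $\BBsign^\G_s(v)\subset B\setminus\{u\}$ (say $s:=\tfrac14\min\{1,\rho-1\}\cdot r$). Applying Proposition~\ref{Proposition:Harnack} to $G$ on $\BBsign^\G_s(v)$ yields $G(v)\le c(\rho)\,G(w)$ for every $w\in\Int\BBsign^\G_{s/2}(v)$; multiplying by $r_w^2$, summing, and using~\eqref{SumRvBound} for the denominator together with Property~\ref{PropertyT} applied to $B$ for the numerator, one gets
\[
G(v)\ \le\ c(\rho)\cdot\frac{\sum_{w\in\Int\BBsign^\G_{s/2}(v)} r_w^2\,G(w)}{\sum_{w\in\Int\BBsign^\G_{s/2}(v)} r_w^2}\ \le\ \frac{c(\rho)\cdot C_0\rho^2 r^2}{\const\cdot s(\rho)^2}\ =:\ c_2(\rho).
\]

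For the lower bound, the minimum principle applied to $G$ on $\Int\BBsign^\G_r(u)\setminus\{u\}$, together with $G(u)\ge\varpi_0$, reduces the task to a uniform lower bound on~$G$ along $\pa\BBsign^\G_r(u)$. I would set $\wt B:=\BBsign^\G_{(\rho+1)r/2}(u)$ and $\wt G:=G_{\wt B}(\ccdot;u)$; by domain monotonicity $G\ge\wt G$. Property~\ref{PropertyT} applied to $\wt B$ then provides the quantitative lower bound $\sum_{\Int\wt B} r_w^2\,\wt G(w)\ge C_0^{-1}\big(\tfrac{\rho+1}{2}\big)^2 r^2$. Since $\wt G$ is discrete harmonic on the annular region $A:=\{w:r/2<|w-u|<(\rho+1)r/2\}$, covering $A$ by a number (depending only on~$\rho$) of overlapping Euclidean discs contained in $\wt B\setminus\{u\}$ and iterating Proposition~\ref{Proposition:Harnack} shows that $\wt G|_A$ is uniformly comparable up to a multiplicative constant depending only on~$\rho$. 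Hence it suffices to check that the weighted integral of $\wt G$ on $A$ is itself a $\rho$-dependent positive fraction of the total: once this is granted, Harnack upgrades the $A$-average lower bound into the desired pointwise lower bound on $\pa\BBsign^\G_r(u)\subset A$. To bound the complementary ``inner'' contribution $\sum_{\Int\BBsign^\G_{r/2}(u)} r_w^2\,\wt G(w)$, I would run the same weighted-average/Harnack scheme once more on the smaller concentric disc $\BBsign^\G_{r/2}(u)$, using $\wt G|_{\pa\BBsign^\G_{r/2}(u)}$ as boundary data and decomposing $\wt G$ on $\Int\BBsign^\G_{r/2}(u)$ as $G_{\BBsign^\G_{r/2}(u)}(\ccdot;u)$ plus its discrete-harmonic extension.

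The main obstacle is precisely this ``inner'' bootstrap: a na\"ive pointwise estimate like $\wt G(w)\le\wt G(u)$ for $w$ near~$u$ is useless, since already on $\Z^2$ the value $\wt G(u)$ grows like $\log\rho+\log(r/r_u)$ rather than being bounded in terms of~$\rho$. The remedy is to iterate the weighted-average argument on successive dyadic scales around~$u$, which amounts to a descending induction showing that $\wt G$ grows at most logarithmically as $w\to u$---the discrete counterpart of the familiar $|\!\log|v\!-\!u||$ behaviour of the $2$D Green's function, and exactly the microscopic content that Property~\ref{PropertyT} is designed to encode.
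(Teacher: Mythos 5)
The upper half of your plan is workable, but the lower bound — the substantive half of the lemma — has genuine gaps. First, the reduction by the minimum principle to a uniform lower bound on all of $\pa\BBsign^\G_r(u)$ cannot work when $\rho$ is close to $1$: a vertex $v\in\pa\BBsign^\G_r(u)$ is only guaranteed to satisfy $|v-u|<(1+\varkappa_0\nu_0)r$, so for $1<\rho<1+\varkappa_0\nu_0$ it may fail to lie in $\Int\BBsign^\G_{\rho r}(u)$ at all (then $G(v)=0$), and in any case it may sit one long edge away from the outer boundary; thus no uniform positive bound on $\pa\BBsign^\G_r(u)$ exists, even though the statement on $\Int\BBsign^\G_r(u)$ is true. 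Second, the claim that a Harnack chain makes $\wt G$ uniformly comparable on $A=\{r/2<|w-u|<(\rho+1)r/2\}$ is false: $A$ reaches $\pa\wt B$, where $\wt G$ vanishes (already on a fine square grid $\wt G$ decays linearly in the distance to $\pa\wt B$), and near $\pa\wt B$ there are no discs of definite size on which $\wt G$ is harmonic, so Proposition~\ref{Proposition:Harnack} cannot be iterated there. Third, even granting your dyadic logarithmic-growth bound near $u$, the accounting does not close for all $\rho$: the inner contribution is of order $r^2(1+\log\rho)$ with constants involving $C_0$, while \ref{PropertyT} only guarantees a total of at least $C_0^{-1}((\rho+1)/2)^2r^2$, so ``inner $<$ total'' holds only for $\rho$ large compared with the structural constants; and for a \emph{lower} bound one cannot pass from large $\rho$ to small $\rho$ by domain monotonicity (it goes the wrong way). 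As described, your scheme yields $c_1(\rho)$ only for large $\rho$ and leaves the delicate case $\rho\downarrow 1$ untreated.

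Compare with how the paper closes exactly these holes. (i) Instead of Harnack at a possibly boundary-adjacent vertex, it uses Lemma~\ref{Lemma:AroundAnnulus} to get $G_{\BBsign^\G_{\rho r}(u)}(\ccdot;u)\ge\delta_0\max_{\pa\BBsign^\G_r(u)}G$ throughout $\Int\BBsign^\G_{\rho_0^{-1}r}(u)$, i.e.\ the comparison with the maximum on $\pa\BBsign^\G_r(u)$ is made deep inside, never near $\pa\BBsign^\G_{\rho r}(u)$. (ii) In the \ref{PropertyT}-accounting it does use your decomposition $G_{\BBsign^\G_R}\le M+G_{\BBsign^\G_r}$ on $\Int\BBsign^\G_r(u)$, but it chooses the inner radius $r=(2C_0)^{-1}R$ \emph{depending on $C_0$}, so that the correction $C_0r^2\le\frac14C_0^{-1}R^2$ is dominated with no growth estimate for $G$ near $u$ needed; your fixed ratio ($r/2$ inside, $(\rho+1)r/2$ outside) cannot reproduce this. (iii) The remaining case of small $\rho$ is then handled by a Harnack chain \emph{inside} $\BBsign^\G_{\rho r}(u)$ at scale $\asymp\min\{(2C_0\rho_0)^{-1},1-\rho^{-1}\}\cdot r$ joining $v\in\Int\BBsign^\G_r(u)$ to the deep region where the already-proved bound applies — the ingredient missing from your plan. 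Finally, a small repair to your upper bound: with $s=\frac14\min\{1,\rho-1\}r$ the inclusion $\BBsign^\G_s(v)\subset\BBsign^\G_{\rho r}(u)\setminus\{u\}$ can fail for the same boundary reasons; it suffices to run your Harnack-plus-\ref{PropertyT} averaging for $\rho\ge 2(1+\varkappa_0\nu_0)$ and deduce the general case from the monotonicity $G_{\BBsign^\G_{\rho r}(u)}\le G_{\BBsign^\G_{\rho' r}(u)}$, $\rho'\ge\rho$, which is legitimate for the upper (though not the lower) estimate.
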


\begin{pf}
See the \hyperref[app]{Appendix}.
\end{pf}

%le2.4 #&#
\begin{lemma}[(Crossings of annuli)] \label{Lemma:AroundAnnulus}
There exist two constants $\rho_0=\rho_0(\varpi_0,\break \eta_0,\varkappa
_0)>1$ and $\delta_0=\delta_0(\varpi_0,\eta_0,\varkappa_0)>0$ such that
the following is fulfilled: for any $u\in\Gamma$, $r>0$ and any
nearest-neighbor path $\gamma\subset\Gamma$ crossing the annulus
\[
A\bigl(u,\rho_0^{-1}r,r\bigr)=\bigl\{z\in\C\dvtx
\rho_0^{-1}r<|z - u|<r\bigr\},
\]
the probability of the event that the random walk (\ref
{RWtransprobaDef}) crosses $A(u,\rho_0^{-1}r,r)$ without hitting the
path $\gamma$ is bounded from above by $1 - \delta_0$.
\end{lemma}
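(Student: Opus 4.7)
The lemma is a discrete analog of Beurling's projection estimate: a connected nearest-neighbor path crossing an annulus of bounded aspect ratio is intercepted by the 2D random walk with uniformly positive probability. My plan is to produce a local barrier from $\gamma$ at an intermediate scale via property~\ref{PropertyS}, and then argue that any walk crossing $A:=A(u,\rho_0^{-1}r,r)$ enters a neighborhood of that barrier with uniform positive probability. I would fix $\rho_0$ to be a small constant (say $\rho_0=4$) so that $A$ has bounded aspect ratio; this is crucial for the ``reaching the barrier'' step.

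\textbf{Local barrier.} Using that $\gamma$ is a nearest-neighbor path with locally controlled edge lengths (Remark~\ref{Rem:RvBound}), pick $v\in\gamma$ with $|v-u|\asymp r$ at the middle scale of $A$; such a $v$ exists because the distance-to-$u$ function along $\gamma$ changes by at most a bounded amount per step while $\gamma$ meets both bounding circles of $A$. Set $s\asymp r$ a small fraction of the annulus width and $B:=\BBsign^\G_{s}(v)\subset A$. The endpoints of $\gamma$ lie outside $B$, so the connected component $\tilde\gamma$ of $\gamma\cap B$ containing $v$ meets $\pa B$ in (at least) two points, partitioning $\pa B$ into two arcs whose angular extents sum to $2\pi$; at least one, call it $\alpha_+$, has extent $\ge\pi$. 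Denote the two discrete regions of $B\setminus\tilde\gamma$ by $R_+,R_-$ according to which arc they border. By property~\ref{PropertyS}, the walk from $v$ exits $B$ through any sub-arc of $\alpha_+$ of extent $\pi-\eta_0$ with probability $\ge c_0$.

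\textbf{Local hitting estimate.} For any $w\in\Int B'\cap R_-$ where $B':=\BBsign^\G_{s/2}(v)$, any trajectory starting at $w$ and exiting $B$ through $\alpha_+$ must cross $\tilde\gamma$ somewhere; since Proposition~\ref{Proposition:Harnack} transfers the lower bound on $P^v(\text{exit via }\alpha_+)$ to nearby $w$, this gives $P^w(\text{hit }\tilde\gamma\text{ before exiting }B)\ge \delta_*>0$. For $w\in\Int B'\cap R_+$ a symmetric lower bound is obtained if the smaller arc also has extent $\ge\pi-\eta_0$; if not, one may instead select $v$ slightly differently along $\gamma$ so that $\tilde\gamma$ exits $B$ at approximately diametrically opposed points. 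The freedom in choosing $v$ and $s$, combined if needed with iteration at sub-scales, delivers a uniform $\delta_*$ valid for all $w\in\Int B'\setminus\tilde\gamma$.

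\textbf{Reaching the barrier --- the main obstacle.} The hard part is to show that any walk crossing $A$ enters $B'$ with uniform positive probability, irrespective of the angular position of $v$ on the middle circle. Since $\rho_0$ was chosen so that $A$ has bounded aspect ratio, one can connect the walk's entry point $w_0\in\pa A$ to $v$ by a chain of $O(1)$ overlapping discs of radius $\asymp r$, each contained in $A$. Chaining Proposition~\ref{Proposition:Harnack} along this chain together with the lower bound in Lemma~\ref{Lemma:GreenInDiscs} yields $G_A(w_0;w)\ge c$ for $w\in B'$, while property~\ref{PropertyT} provides the matching upper bound of order $r^2$ on the $r_w^2$-weighted occupation time of $B'$ by the walk started at $v$ (conditionally on visiting $B'$). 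Combining these gives $P^{w_0}(\text{visit }B'\text{ before exiting }A)\ge c'>0$, and the desired $\delta_0:=c'\delta_*$ follows.
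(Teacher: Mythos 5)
Your proposal has genuine gaps, both structural and mathematical. Structurally, it runs against the paper's logical order: Proposition~\ref{Proposition:Harnack}, Lemma~\ref{Lemma:GreenInDiscs} and Lemma~\ref{Lemma:GreenEstimatesViaG} are themselves \emph{deduced from} Lemma~\ref{Lemma:AroundAnnulus} in the Appendix, so invoking Harnack, the Green's function bounds and \ref{PropertyT} here is circular within this development (the paper's proof uses \ref{PropertyS} alone). Mathematically, the ``reaching the barrier'' step fails as written: the walk performing the crossing enters $A$ at a vertex $w_0$ adjacent to $\pa A$, and for such $w_0$ the Green's function $G_A(w_0;\cdot)$ is \emph{not} bounded below, nor is $\P^{w_0}[\text{visit }B'\text{ before exiting }A]$ bounded below -- that probability tends to $0$ as $w_0$ approaches $\pa A$, since reaching the middle scale at all is then unlikely. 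What you actually need is a statement \emph{conditional on the crossing} (``any walk crossing $A$ enters $B'$ with positive probability''), and the unconditional occupation-time comparison you invoke does not deliver it; your final ``$\delta_0:=c'\delta_*$'' conflates the conditional and unconditional events. A second gap is the local hitting estimate: the \ref{PropertyS}+Harnack argument only controls starting points $w$ in the component $R_-$ separated from the large arc $\alpha_+$ by $\tilde\gamma$; for $w\in R_+$ your remedy (re-choosing $v$ so that $\tilde\gamma$ exits $B$ at roughly antipodal points, or ``iteration at sub-scales'') is asserted but not constructed -- a hairpin-shaped $\tilde\gamma$ can have both exits on a tiny arc for many choices of $v$, and a uniform two-sided hitting bound for an arbitrary connected crossing set is essentially the content of the lemma you are trying to prove. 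Finally, fixing $\rho_0=4$ is not legitimate in this setting: edge lengths are only \emph{locally} comparable, so a single edge (and a single step of the walk) may span a thin annulus; the paper must take $\rho_0=(\varkappa_0\nu_0+1)\tau_0^2$ precisely so that no edge crosses the middle sub-annulus and so that a controlled loop fits inside $A$, and your claim that the distance to $u$ changes by a small amount per step of $\gamma$ also relies on this.

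For comparison, the paper's proof avoids both obstacles: it locates the first vertex $v$ of the crossing in a middle sub-annulus (which every crossing must contain, by the choice of $\rho_0$), and then uses \ref{PropertyS} repeatedly to ``drive'' the walk, with probability at least $c_0^{N_0+M_0}$, through a prescribed sequence of moves that makes a full turn around $u$ inside $A$ and then escapes through the outer circle; a clockwise/counterclockwise dichotomy plus a topological argument guarantees that at least one of the two driven families of trajectories must intersect \emph{any} path crossing the annulus. This sidesteps both the ``which side of $\gamma$ is the walk on'' issue and the need to steer the walk to one particular ball centered on $\gamma$. If you want to salvage your scheme, you would need (i) a \ref{PropertyS}-only proof of the two-sided local hitting bound and (ii) a conditional (on crossing) visit estimate, at which point you would essentially be reconstructing the driving argument.
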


\begin{pf}
This easily follows from successive applications of \ref{PropertyS};
see the \hyperref[app]{Appendix} for details.
\end{pf}

%le2.5 #&#
\begin{lemma}[(Weak Beurling-type estimate)]
\label{Lemma:b0-weakBeurling}
Let $\beta_0:=-\frac{\log(1-\delta_0)}{\log\rho_0}$. Then, for any
simply connected discrete domain $\O$, an inner vertex $u\in\Int\O$ and
a set $E\subset\pa\O$, the following is fulfilled:
\[
\omega_\O(u;E) \le\biggl[\rho_0\cdot\frac{\dist(u;\partial\O)}{\dist
_{\O
}(u;E)}
\biggr]^{\b_0} \quad\mbox{and}\quad \omega_\O(u;E) \le\biggl[
\rho_0\cdot\frac{\diam E}{\dist_{\O
}(u;E)}\biggr]^{\b_0},
\]
where $\dist_\O(u;E):=\inf\{r \dvtx u \mbox{ and } E \mbox
{ are connected in } \O\cap B^\G_r(u)\}$.
Above we set $\diam E:=r_{x}$ if $E=\{x\}$ consists of a single
boundary vertex.
\end{lemma}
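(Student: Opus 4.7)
The plan is to iterate the annulus-crossing Lemma~\ref{Lemma:AroundAnnulus} along a geometric sequence of nested annuli, using the simple connectedness of $\O$ to exhibit, inside each annulus, a crossing piece of $\pa\O$ that blocks the random walk. For the first bound, set $r_0:=\dist(u;\pa\O)$ and $R:=\dist_\O(u;E)$; we may assume $R\ge\rho_0 r_0$, else the bound is trivial. Define $r_k:=\rho_0^k r_0$ and let $K$ be the largest integer with $r_K\le R$. Then $\rho_0^{K+1}>R/r_0$, so the definition $\beta_0=-\log(1\!-\!\delta_0)/\log\rho_0$ yields $(1\!-\!\delta_0)^K=\rho_0^{-K\beta_0}\le(\rho_0 r_0/R)^{\beta_0}$.

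The central geometric step is: for every $k=1,\ldots,K$, there exists a nearest-neighbor path $\gamma_k\ss\pa\O$ crossing the annulus $A(u;r_{k-1},r_k)$ in the sense required by Lemma~\ref{Lemma:AroundAnnulus}. Indeed, since $r_k\le R$, the definition of $\dist_\O(u;E)$ places $E$ outside the connected component $C_k\ni u$ of the finite graph $\O\cap B^\G_{r_k}(u)$. Because $\O$ is simply connected and $\G\ss\C$ is planar (with assumptions (a)--(d)), a standard planar topology argument extracts from $\pa C_k$ a path of $\pa\O$-vertices separating $C_k$ from $E$ inside $B^\G_{r_k}(u)$. The choice $r_0=\dist(u;\pa\O)$ ensures that $\pa\O$ already meets the ball $B(u,r_{k-1})$; combined with the requirement that the separating arc keep $E$ on its far side (forcing it to extend out to scale $r_k$), one extracts the desired $\gamma_k\ss\pa\O$ spanning $A(u;r_{k-1},r_k)$.

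With the $\gamma_k$'s in hand, the rest is routine: if the random walk started at $u$ ever hits some $\gamma_k$, it is stopped on $\pa\O\sm E$, so the walk contributes to $\hm\O uE$ only if, for every $k=1,\ldots,K$, it crosses $A(u;r_{k-1},r_k)$ without hitting $\gamma_k$. The strong Markov property combined with Lemma~\ref{Lemma:AroundAnnulus} bounds the conditional probability of each such crossing by $1\!-\!\delta_0$, whence $\hm\O uE\le(1\!-\!\delta_0)^K\le(\rho_0 r_0/R)^{\beta_0}$. The second estimate follows by running the identical argument with concentric annuli centered at a fixed vertex $x\in E$ and starting scale $\diam E$ (so $E\ss B(x,\diam E)$); the paths $\gamma_k$ now encircle $E$ and the walk approaching $E$ must pierce each in turn.

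\textbf{Main obstacle.} The delicate step is the topological extraction of $\gamma_k$: one must argue that the $\pa\O$-arc separating $u$ from $E$ inside $B^\G_{r_k}(u)$ really spans the full annulus $A(u;r_{k-1},r_k)$, rather than hugging its inner or outer circle. The matching of the starting scale to the relevant geometric datum ($\dist(u;\pa\O)$ for the first estimate, $\diam E$ for the second) is exactly what guarantees the presence of $\pa\O$ at the inner radius of every annulus in the telescoping product, supplying the inner endpoint of the crossing.
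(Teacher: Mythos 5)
Your argument is essentially the paper's own proof: the paper likewise iterates Lemma~\ref{Lemma:AroundAnnulus} over the geometric chain of annuli $A(u,\rho_0^{s-1}r,\rho_0^{s}r)$ with $r=\dist(u;\pa\O)$ for the first bound (resp.\ annuli centered at a fixed $x\in E$ with $r=\diam E$ for the second), the blocking path being the piece of $\pa\O$ that must cross each annulus because $\O$ is simply connected and $E$ is not reachable from $u$ inside the corresponding ball --- a step the paper leaves just as implicit as you do. One cosmetic caveat: in the second estimate the paths $\gamma_k$ are again crossings of the annuli by $\pa\O$ which the surviving walk must \emph{avoid} (not loops ``encircling'' $E$ that it must pierce), i.e.\ exactly the same mechanism you already use for the first estimate.
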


\begin{pf}
This immediately follows from Lemma~\ref{Lemma:AroundAnnulus}; see the
\hyperref[app]{Appendix} for details.
\end{pf}

For $u\in\O$ and $r>0$, we denote by $\BBsign^\O_r(u)$ the
%%connected component of $\O\cap\BBsign^\G_r(u)$ containing $u$ and
%call it
${r}$-\emph{neighborhood of} ${u}$ \emph{in} ${\O}$.
More rigorously,
we set $\Int\BBsign^\O_r(u)$ to be the connected component of $\Int
\O
\cap\Int\BBsign^\G_r(u)$ containing $u$ if $u\in\Int\O$, and containing
$x_{\mathrm{int}}$ if $u=x\in\pa\O$. In particular, we set $\BBsign
^\O
_r(x)=\Int\BBsign^\O_r(x)=\es$ if $x\in\pa\O$ and $r\le
|x_{\mathrm
{int}} - x|$. The next lemma allows us to control the behavior of
positive harmonic functions near a part of $\pa\O$ where they satisfy
Dirichlet boundary conditions.

%le2.6 #&#
\begin{lemma}[(Boundary behavior)]
\label{Lemma:BoundaryDecay}
Let $\Omega$ be a simply connected discrete domain, $u\in\Int\Omega$,
$r:=\dist(u;\partial\O)$ and $x\in\partial\Omega$ be the closest
boundary vertex to $u$ (so that $r=|u - x|$) and $\rL_{ux}$ denote
the path running from $u$ to $x$ constructed in Remark~\ref
{Rem:LasymptDist}. Let a vertex $u'\in\rL_{ux}$ be such that $|u' -
x|\le r':=\rho_0^{-1}r$ and $\rL_{ux}^{uu'}\subset\Int\O$, where
$\rL
^{uu'}_{ux}$ denotes the portion of $\rL_{ux}$ from $u$ to $u'$. Then,
for any nonnegative harmonic function $H\dvtx\BBsign^\O_{r}(x)\to\R_+$
vanishing on $\partial\Omega\cap\partial\BBsign^\O_{r}(x)$, one has
\[
H\bigl(v'\bigr) \le\delta_0^{-1}
\rho_0^{2\b_0}\cdot\bigl[ \bigl|v' - x\bigr|/r
\bigr]^{\b
_0}\cdot \max_{v\in\rL^{uu'}_{ux}}H(v) \qquad\mbox{for all }
v'\in{\BBsign^\O _{r'}(x)}.
\]
\end{lemma}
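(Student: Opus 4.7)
My strategy is to combine the weak Beurling-type estimate (Lemma~\ref{Lemma:b0-weakBeurling}) with a single application of Lemma~\ref{Lemma:AroundAnnulus} on the outermost annulus $A(x,r',r)$ taking $\gamma = \rL^{uu'}_{ux}$. Write $\tilde r := |v'-x|$ and $E := \pa\BBsign^\G_r(x)\cap\BBsign^\O_r(x)$. The first step is to establish the probabilistic estimate
\[
\rP_{v'}\bigl[\text{walk reaches $E$ avoiding $\pa\O\cup \rL^{uu'}_{ux}$}\bigr]\ \le\ \const\cdot [\tilde r/r]^{\b_0}.
\]
Indeed, Lemma~\ref{Lemma:b0-weakBeurling} applied in the sub-domain $\BBsign^\O_{r}(x)$ bounds the probability that the walk from $v'$ even reaches $\pa\BBsign^\G_{r'}(x)$ by $[\rho_0\tilde r/(r'-\tilde r)]^{\b_0}$, which is at most $\const\cdot\rho_0^{2\b_0}[\tilde r/r]^{\b_0}$ once one expresses $r'=\rho_0^{-1}r$; and Lemma~\ref{Lemma:AroundAnnulus} applied to $A(x,r',r)$ with $\gamma=\rL^{uu'}_{ux}$ (which crosses this annulus by construction, running from $u$ at distance $r$ to $u'$ at distance $\le r'$) contributes an extra factor $1-\delta_0=\rho_0^{-\b_0}$. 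By the strong Markov property the two factors combine, telescoping to the stated bound.

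Next, stopping the walk at $\tau := \inf\{n: X_n\in\pa\O\cup\rL^{uu'}_{ux}\cup E\}$, optional stopping applied to the $\Int\BBsign^\O_r(x)$-harmonic function $H$ yields
\[
H(v') = \E_{v'}[H(X_\tau)] \ \le\ M\cdot\rP_{v'}[X_\tau\in\rL^{uu'}_{ux}]+\max_E H\cdot\rP_{v'}[X_\tau\in E],
\]
using $H=0$ on $\pa\O\cap\pa\BBsign^\O_r(x)$. The second summand is controlled by the estimate above times $\max_E H$.

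The main obstacle is then to close the argument by converting $\max_E H$ into $M$ and simultaneously bounding the term $M\cdot\rP_{v'}[X_\tau\in\rL^{uu'}_{ux}]$. I would attack both via an elliptic Harnack chain (Proposition~\ref{Proposition:Harnack}) connecting any maximizer $a^*\in E$ to the distinguished vertex $u\in E\cap\rL^{uu'}_{ux}$ (at which $H(u)\le M$) along interior vertices of a slightly shrunk shell $\pa\BBsign^\G_{(1-c)r}(x)\cap\BBsign^\O_r(x)$, with the $\rL$-barrier ensuring that only a bounded number of Harnack steps is required even for rough $\O$ with thin fiords. The explicit constant $\delta_0^{-1}\rho_0^{2\b_0}$ in the conclusion matches this decomposition: one factor $\delta_0^{-1}$ from the single invocation of Property~\ref{PropertyS} in the Harnack step producing $\max_E H\le \const\cdot M$, and $\rho_0^{2\b_0}$ accounting for the weak Beurling overhead going from scale $r'$ to scale $r$.
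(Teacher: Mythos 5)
Your plan founders at exactly the point you flag as the ``main obstacle'', and that obstacle cannot be removed within your decomposition. Write $M:=\max_{v\in\rL_{ux}^{uu'}}H(v)$ and $E:=\pa\BBsign^\O_r(x)\setminus\pa\O$. First, the summand $M\cdot\rP_{v'}[X_\tau\in\rL_{ux}^{uu'}]$ carries no factor $[\tilde r/r]^{\b_0}$: the hypothesis only requires $|u'-x|\le r'$, so the slit may penetrate far below scale $r'$ (even to a neighbour of $x$), and for $v'$ adjacent to it one has $\rP_{v'}[X_\tau\in\rL_{ux}^{uu'}]\ge\varpi_0^2$, so this term alone only yields $H(v')\le\const\cdot M$ and the claimed decay is lost; the point is that $H$ on the deep portion of the slit is itself small, which is essentially the statement being proved, so bounding $H(X_\tau)$ there by $M$ is fatally lossy. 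Second, the inequality $\max_E H\le\const\cdot M$ that you hope to extract from a Harnack chain is simply false: take for $H$ the harmonic measure, in $\BBsign^\O_r(x)$, of a tiny sub-arc of $E$ lying close to $\pa\O$ and far from $u$; it satisfies all the hypotheses, $\max_E H=1$, while $M$ can be made arbitrarily small, so the second summand $\max_E H\cdot\rP_{v'}[X_\tau\in E]$ exceeds the claimed bound by an arbitrarily large factor. (Independently, the claim that a \emph{bounded} number of Harnack steps suffices along a shrunk shell is unjustified for rough $\O$: near $E$ the domain may consist of fiords in which no disc of radius comparable to $r$ fits, and the presence of $\rL_{ux}^{uu'}$ does not help there. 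Also your interpolation $[\r_0\tilde r/(r'-\tilde r)]^{\b_0}\le\const\,\r_0^{2\b_0}[\tilde r/r]^{\b_0}$ fails when $\tilde r$ is close to $r'$.)

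The paper's proof avoids both problems by making the comparison with $M$ at the intermediate scale $r'$, not at the outer shell $E$, and by never stopping the walk on the slit. Namely, if $v_\mathrm{max}$ is a maximizer of $H$ on $\pa\BBsign^\O_{r'}(x)$, the maximum principle gives a path $\gamma$ from $v_\mathrm{max}$ to $\pa\BBsign^\O_r(x)\setminus\pa\O$ along which $H\ge H(v_\mathrm{max})$; picking $u''\in\rL_{ux}^{uu'}$ with $\t_0 r'\le|u''-x|\le\t_0^{-1}r$ and applying the full-turn Lemma~\ref{Lemma:App_AroundAnnulus} in $A(x,r',r)$, the three mutually disjoint crossings $\rL_{ux}^{uu'}$, $\gamma$ and (a piece of) $\pa\O$ force, for topological reasons, the turning trajectory started at $u''$ to hit $\gamma$ before $\pa\O$, whence $M\ge H(u'')\ge\d_0\,H(v_\mathrm{max})$, i.e.\ $\max_{\pa\BBsign^\O_{r'}(x)}H\le\d_0^{-1}M$. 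Note that this step genuinely needs the room provided by the annulus $A(x,r',r)$, which is why the analogous bound on $E$ itself is unavailable. The weak-Beurling iteration is then run only from scale $|v'-x|$ up to $r'$, with $\pa\O$ (where $H$ vanishes) as the hitting set and without any reference to the slit, giving $H(v')\le[\r_0|v'-x|/r']^{\b_0}\max_{\pa\BBsign^\O_{r'}(x)}H$; combining the two inequalities yields the stated constant $\d_0^{-1}\r_0^{2\b_0}$. Your single application of Lemma~\ref{Lemma:AroundAnnulus} to the outer annulus with $\gamma=\rL_{ux}^{uu'}$ cannot substitute for this two-scale structure.
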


\begin{pf}
This follows from (a version of) Lemma~\ref{Lemma:AroundAnnulus}; see
the \hyperref[app]{Appendix} for details.
\end{pf}

The last fact that we use below is the following uniform bound for the
Green function $G_\O$ in an arbitrary $\Omega$ in terms of Green's
functions in the appropriate discs.

%le2.7 #&#
\begin{lemma} \label{Lemma:GreenEstimatesViaG} Let an integer $n_0$ be
chosen so that $(1 - \delta_0)^{n_0}\le\frac{1}{3}$ , $\O$ be a
simply connected discrete domain, $u\in\Int\Omega$, $r:=\dist(u;\pa
\O
)$ and $R:=\rho_0^{2n_0}r$. Then
\[
G_{\BBsign^\G_r(u)}(v;u)\le G_\O(v;u)\le2G_{\BBsign^\G_R(u)}(v;u)
\qquad\mbox{for all } v\in\Int\BBsign^\G_{r}(u).
\]
\end{lemma}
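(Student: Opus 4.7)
\emph{Lower bound.} Since $r=\dist(u;\pa\O)$, every vertex in $\Int\BBsign^\G_r(u)$ lies in $\Int\O$, so the disc is a sub-domain of $\O$. The function $F:=G_\O(\ccdot;u)-G_{\BBsign^\G_r(u)}(\ccdot;u)$ is discrete harmonic on all of $\Int\BBsign^\G_r(u)$, because the $-\mu_u^{-1}$ singularities at $u$ cancel; moreover $F\ge 0$ on $\pa\BBsign^\G_r(u)$, where $G_{\BBsign^\G_r(u)}\equiv 0$ while $G_\O\ge 0$. The discrete maximum principle yields $F\ge 0$ throughout $\BBsign^\G_r(u)$.

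\emph{Upper bound.} The key probabilistic input is that the random walk in $\O$ starting near $u$ hits $\pa\O$ before reaching $\pa\BBsign^\G_R(u)$ with high probability. Let $x\in\pa\O$ be a vertex with $|x-u|=r$. Since $\O$ is a simply connected bounded subdomain of the infinite graph~$\G$, the complement $\G\setminus\Int\O$ contains an unbounded connected piece adjacent to $x$; using the face-path construction of Remark~\ref{Rem:LasymptDist} one produces a nearest-neighbor path $\gamma\ss\G\setminus\Int\O$ that starts at~$x$ and eventually leaves $\BBsign^\G_R(u)$. By construction, $\gamma$ crosses each of the $2n_0$ concentric annuli $A(u,\rho_0^{k-1}r,\rho_0^k r)$, $k=1,\dots,2n_0$. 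Applying Lemma~\ref{Lemma:AroundAnnulus} successively (and using the strong Markov property between crossings), the probability that the walk traverses all of these annuli in either direction without hitting $\gamma$ is at most $(1-\d_0)^{2n_0}\le 1/9$. Since hitting $\gamma\ss\G\setminus\Int\O$ means hitting~$\pa\O$, both the probability that the walk from~$u$ escapes to $\pa\BBsign^\G_R(u)$ before $\tau_\O$ and the probability that a walk starting from any $y\in\pa\BBsign^\G_R(u)\cap\Int\O$ returns to $\Int\BBsign^\G_r(u)$ before $\tau_\O$ are bounded by some $q\le 1/9$.

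\emph{Packaging into a Green's-function bound.} Set $D:=\O\cap\BBsign^\G_R(u)$ and decompose via the strong Markov property at the first exit from~$D$:
\[
G_\O(v;u)\;=\;G_D(v;u)\;+\sum_{y\in\pa\BBsign^\G_R(u)\cap\Int\O}\omega_D(u;y)\,G_\O(v;y).
\]
The first term is dominated by $G_{\BBsign^\G_R(u)}(v;u)$ by monotonicity of Green's functions (the walk in $D$ is killed on a superset of $\pa\BBsign^\G_R(u)$). For the second term one applies the discrete maximum principle to $F(w):=G_\O(v;w)-2G_{\BBsign^\G_R(u)}(v;w)$, which is subharmonic on $\Int D$ (with $[\D F](v)=\mu_v^{-1}>0$), nonpositive on $\pa\O\cap\BBsign^\G_R(u)$, and, on the remaining boundary $\pa\BBsign^\G_R(u)\cap\Int\O$, bounded using symmetry and the return-probability estimate by $q\,G_\O(v;v)$. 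Evaluating the resulting inequality at $w=v$ first (to bound $G_\O(v;v)$ in terms of $G_{\BBsign^\G_R(u)}(v;v)$) and then at $w=u$, the choice $(1-\d_0)^{n_0}\le \frac{1}{3}$ (hence $q\le\frac19$) is exactly what is needed to close the resulting geometric-type iteration with final multiplicative constant~$2$.

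\emph{Main obstacle.} The delicate point is precisely this closing of the iteration: a careless application of the Markov decomposition produces a residual term of the form $\mathrm{const}\cdot G_{\BBsign^\G_R(u)}(v;v)$, which can be arbitrarily large compared with $G_{\BBsign^\G_R(u)}(v;u)$ when $v$ has a very small local scale~$r_v$. The fix is to invoke the maximum principle both at $w=u$ and at $w=v$ so that this self-Green term cancels, and this is where the quantitative gap $q\le 1/9$ provided by the definition of $n_0$ becomes essential. A subsidiary technical point is the existence of the path $\gamma\ss\G\setminus\Int\O$ crossing all $2n_0$ annuli, which is a standard consequence of the planar embedding and the simple connectedness of~$\O$.
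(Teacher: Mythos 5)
Your lower bound and your annulus-crossing estimates (existence of a path $\gamma\subset\G\setminus\Int\O$ crossing the annuli, and the resulting bound $q\le(1\!-\!\delta_0)^{2n_0}$ on escape/return probabilities) are fine and close in spirit to the paper's use of Lemma~\ref{Lemma:AroundAnnulus}. The genuine gap is in the ``packaging'' step, i.e.\ exactly at the point you flag as the main obstacle: the fix you propose does not work. With $F(w)=G_\O(v;w)-2G_{\BBsign^\G_R(u)}(v;w)$ and $D=\O\cap\BBsign^\G_R(u)$, the maximum principle gives $F\le q\,G_\O(v;v)$ on $D$ (the boundary values on $\pa\BBsign^\G_R(u)\cap\Int\O$ are genuinely positive, of order $q\,G_\O(v;v)$, for every choice of the comparison constant, since $G_{\BBsign^\G_R(u)}(v;\ccdot)$ vanishes there). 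Evaluating at $w=v$ only yields $G_\O(v;v)\le\frac{2}{1-q}G_{\BBsign^\G_R(u)}(v;v)$, and then at $w=u$ you get $G_\O(v;u)\le 2G_{\BBsign^\G_R(u)}(v;u)+\frac{2q}{1-q}\,G_{\BBsign^\G_R(u)}(v;v)$: the self-Green term is re-expressed, not cancelled. This residual term cannot be absorbed for any value of $q$: for $v\in\Int\BBsign^\G_r(u)$ one has $G_{\BBsign^\G_R(u)}(v;u)\le\const$ (cf.\ Lemma~\ref{Lemma:GreenInDiscs}), while $G_{\BBsign^\G_R(u)}(v;v)$ is unbounded as $r_v/r\to0$, so the ratio $G_{\BBsign^\G_R(u)}(v;v)/G_{\BBsign^\G_R(u)}(v;u)$ is not controlled by any uniform constant. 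The obstruction is structural, not quantitative: comparing $G_\O(v;\ccdot)$ against its own value at the pole $v$ introduces a quantity incommensurable with $G_{\BBsign^\G_R(u)}(v;u)$, so no choice of $n_0$ closes the iteration.

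The mechanism that does work (and is the paper's key idea, absent from your plan) keeps the pole at $u$ throughout and sets up a \emph{two-sided} comparison with one and the same intermediate quantity so that it cancels. One first enlarges the domain: let $\O'$ be the minimal simply connected domain with $\Int\O'\supset\Int\O\cup\Int\BBsign^\G_{R'}(u)$, $R'=\r_0^{n_0}r$. Decomposing at the exit from $\BBsign^\G_R(u)$ and crossing the $n_0$ annuli between $R$ and $R'$ gives $G_{\O'}(v;u)\le G_{\BBsign^\G_R(u)}(v;u)+(1\!-\!\delta_0)^{n_0}M'$, where $M':=\max_{v'\in\pa\BBsign^\G_{R'}(u)}G_{\O'}(v';u)$; and, because the whole disc $\BBsign^\G_{R'}(u)$ lies \emph{inside} $\O'$, the walk started at $v$ hits, before leaving $\BBsign^\G_{R'}(u)\subset\O'$, a path along which $G_{\O'}(\ccdot;u)\ge M'$ with probability at least $1-(1\!-\!\delta_0)^{n_0}$, whence $G_{\O'}(v;u)\ge(1-(1\!-\!\delta_0)^{n_0})M'$. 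Substituting the second bound into the first absorbs the error into $G_{\O'}(v;u)$ itself, and $(1\!-\!\delta_0)^{n_0}\le\frac13$ gives the factor $2$; monotonicity $G_\O\le G_{\O'}$ finishes. Note that this matching lower bound is precisely what fails in the original domain $\O$ (where $\pa\O$ is at distance $r$ from $u$), which is why the enlargement is essential and why your direct-in-$\O$ route leaves an uncancelled term.
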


\begin{pf}
This also follows from Lemma~\ref{Lemma:AroundAnnulus}; see the
\hyperref[app]{Appendix} for details.
\end{pf}

%re2.8 #&#
\begin{remark}
From now on, we think about the constants $\varpi_0,\eta_0,\varkappa_0$
used in assumptions (a)--(c) and all other constants that appeared in
this section [like $c_0=c_0(\varpi_0,\eta_0,\varkappa_0)$ and
$C_0=C_0(\varpi_0,\eta_0,\varkappa_0)$ in Properties \ref
{PropertyS}, \ref{PropertyT}, etc.] as \emph{fixed once forever}.
Thus, below we say, for example, ``with some uniform constants $\const
_1$ and $\const_2$,'' meaning that $\const_{1,2}$ may, in general,
depend on $\varpi_0,\eta_0,\varkappa_0$, but are independent of all
other parameters involved (like domain shape, location of boundary
points or particular graph structure).
\end{remark}

%s3 #&#
\section{Factorization theorem for the function \texorpdfstring{$\ZRW{\O}a{[bc]_\O}$}{$\ZRW{Omega}a{[bc]_{Omega}}$}}
\label{Sect:Factorization}

The main result of this section is Theorem~\ref{Thm:Zfact}. It deals
with a simply connected discrete domain $\O$ and three marked boundary
points $a,b,c\in\pa\O$ [no assumptions about actual geometry of $(\O
;a,b,c)$ are used] and provides a uniform up-to-constant factorization
of the three-point function $\ZRW\O{a}{[bc]_\O}$ via $\ZRW\O{a}{b}$,
$\ZRW\O{a}{c}$ and $\ZRW\O{b}{c}$. Actually, our proof is based on a
factorization of the latter two-point functions via some inner point
$u\in\Int\O$ which is ``not too close'' to any of the boundary arcs
$[ab]_\O$, $[bc]_\O$ and $[ca]_\O$. Thus our strategy to prove
Theorem~\ref{Thm:Zfact} can be described as follows:
\begin{itemize}
\item prove that the ratio $\ZRW{\O}au\ZRW{\O}ub/\ZRW{\O}ab$ is
uniformly comparable with the probability of the event that $\RW{\O}ab$
passes ``not very far'' from $u$ [namely, at distance less than $\frac
{1}{3}\dist(u;\pa\O)$]; see Proposition~\ref{Prop:ZuaZub/Zab=P};
\item prove that this probability is bounded below if $u$ is ``not too
close'' to any of the boundary arcs $[ab]_\O$ and $[ba]_\O$; see
Lemma~\ref{Lemma:LuCrossing} and Proposition~\ref{Prop:RWintersectsBu};
\item find an inner vertex $u$ which is ``not too close'' to any of
$[ab]_\O$, $[bc]_\O$ and $[ca]_\O$ (see Lemma~\ref{Lemma:HMuabc}) and
factorize all $\ZRWsign_\O$'s using this $u$.
\end{itemize}
Below we use the following notation. For a discrete domain $\O$ and
$u\in\Int\O$, let
%
%e3.1 #&#
\begin{equation}
\label{BBdef} \ddsign_\O(u) :=\dist(u;\pa\O)=\min
_{x\in\pa\O}|u - x|,\qquad \BBsign_\O(u) :=
\BBsign^\G_{\dd{\O}u/3}(u).
\end{equation}
Recall that (\ref{BBdef}) means $\Int\BB{\O}u=\{v\in\G\dvtx
|v-u|<\textfrac
{1}{3}\dist(u;\pa\O)\}$ (or, more accurately, a connected component of
this set; see Figure~\ref{Fig: grid-irreg}), and \mbox{$\pa\BB{\O
}u\ss\O
$} is the set of all vertices neighboring to $\Int\BB{\O}u$. We also
generalize notation (\ref{ZRWdef}) in the following way: for a given
subdomain $U\ss\O$ and a random walk path $\gamma=(u_0\sim u_1\sim
\cdots\sim
u_{n(\gamma)})$, let
\[
\TTsign_U(\gamma) :=\sum_{s=0}^{n(\gamma)}r_{u_s}^2
\1[u_s\in\Int U].
\]
%
%In other words, $\TT U\g$ is the ``total time'' spent by the (properly
%parameterized) random walk $\g$ in $U$.
Then, for $A,B\ss\O$, we define
\[
 \ZRWsign_\O[\TTsign_U](A;B) :=\sum
_{\gamma\in S_\O(A;B)}\mathrm {w}(\gamma ) \TTsign_U(\gamma) .
\]
Note that
\[
\frac{
\ZRWsign_\O[\TTsign_U](A;B)
}{\ZRW{\O}AB}=\E\bigl[ \TTsign_U\bigl(\RWsign_{\O}(A;B)
\bigr) \bigr]
\]
is the expected time spent in $U$ by a (properly parameterized) random
walk $\RW{\O}AB$.

%pr3.1 #&#
\begin{proposition}
\label{Prop:ZuaZub/Zab=P} Let $\O$ be a simply connected discrete
domain, $a,b\in\pa\O$, and
\mbox{$u\in\Int\O$}. Then the following double-sided estimate is fulfilled:
%
%e3.2 #&#
\begin{equation}
\label{ZuaZab/Zab=P} \frac{\ZRW{\O}ua\ZRW{\O}ub}{\ZRW{\O
}ab}\asymp \P\bigl[ \RWsign_{\O}(a;b)
\cap\Int \BBsign_\O(u)\ne\es\bigr] ,
\end{equation}
with some uniform (i.e., independent of $\O,a,b,u$) constants.
\end{proposition}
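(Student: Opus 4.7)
The plan is to decompose both sides of \eqref{ZuaZab/Zab=P} by conditioning random walk trajectories on their first entrance into the discrete disc $\Int\BB{\O}u$ (note that any path from $a$ to $u$ must enter $\Int\BB{\O}u$, since $u\in\Int\BB{\O}u$). For each $v\in\Int\BB{\O}u$, let $\ZRWsign^*_v$ denote the partition function of paths from $a$ in $\O$ which first enter $\Int\BB{\O}u$ at $v$ (equivalently, all earlier vertices lie in $\O\setminus\Int\BB{\O}u$). The strong Markov decomposition at the first entry time, combined with the concatenation rule $\mathrm{w}(\gamma_1\gamma_2)=\mu_v\mathrm{w}(\gamma_1)\mathrm{w}(\gamma_2)$ at the junction $v$ and Remark~\ref{Rem:Z=Harm}(ii), gives the two identities
\[
\ZRW{\O}au\ =\sum_{v\in\Int\BB{\O}u}\mu_v\,\ZRWsign^*_v\cdot G_\O(v;u),
\]
\[
\P\,[\RW{\O}ab\cap\Int\BB{\O}u\ne\es]\cdot\ZRW{\O}ab\ =\sum_{v\in\Int\BB{\O}u}\mu_v\,\ZRWsign^*_v\cdot\ZRW{\O}vb.
\]

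The next step is to absorb the $v$-dependent factors into $v$-independent multipliers. For $\ZRW{\O}vb=\mu_b^{-1}\hm{\O}vb$ (Remark~\ref{Rem:Z=Harm}(i)), which is nonnegative and discrete harmonic in $\Int\O$, the inclusion $\Int\BBsign^\G_{\dd{\O}u}(u)\subset\Int\O$ permits the elliptic Harnack inequality (Proposition~\ref{Proposition:Harnack}) with $\rho=3$ and $r=\dd{\O}u/3$, yielding $\ZRW{\O}vb\asymp\ZRW{\O}ub$ uniformly for $v\in\Int\BB{\O}u$. For $G_\O(v;u)$, I claim the uniform comparison $G_\O(v;u)\asymp 1$ over all first-entry vertices $v$. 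The lower bound $G_\O(v;u)\ge c_1(3)$ follows directly from the first estimate of Lemma~\ref{Lemma:GreenInDiscs} (with $\rho=3$, $r=\dd{\O}u/3$) together with the left inequality in Lemma~\ref{Lemma:GreenEstimatesViaG}. For the matching upper bound, one uses that a first-entry vertex $v$ has some neighbor in $\G\setminus\Int\BB{\O}u$, which forces $|v-u|\ge\dd{\O}u/3-\vk_0 r_v$; this places $v$ outside an inner sub-disc of the enlarged disc $\BBsign^\G_R(u)$ of Lemma~\ref{Lemma:GreenEstimatesViaG}, and a second application of Lemma~\ref{Lemma:GreenInDiscs} (with an appropriate ratio $\rho$) bounds $G_{\BBsign^\G_R(u)}(v;u)\le\const$; the right inequality of Lemma~\ref{Lemma:GreenEstimatesViaG} then transfers this bound to $G_\O(v;u)$.

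Setting $P^*:=\sum_{v\in\Int\BB{\O}u}\mu_v\ZRWsign^*_v$, the two uniform comparisons collapse the displayed identities to
\[
\ZRW{\O}au\asymp P^*\qquad\text{and}\qquad\P\,[\RW{\O}ab\cap\Int\BB{\O}u\ne\es]\cdot\ZRW{\O}ab\asymp\ZRW{\O}ub\cdot P^*.
\]
Multiplying the first by $\ZRW{\O}ub$ and dividing by $\ZRW{\O}ab$ produces exactly \eqref{ZuaZab/Zab=P}. The only delicate point is the uniform upper bound on $G_\O(v;u)$ for first-entry vertices: besides the ``bulk'' regime $\dd{\O}u\gg r_u$ sketched above, one must treat separately the degenerate regime $\dd{\O}u\lesssim r_u$ in which $\Int\BB{\O}u=\{u\}$ and the only first-entry vertex is $u$ itself, so the claim reduces to $G_\O(u;u)\asymp 1$; this in turn follows from $G_\O(u;u)\ge\mu_u^{-1}\ge\varpi_0$ together with Lemmas~\ref{Lemma:GreenInDiscs} and~\ref{Lemma:GreenEstimatesViaG} applied on a disc of bounded scale around $u$.
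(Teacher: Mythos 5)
Your proof is correct, but it follows a genuinely different route from the paper. The paper Harnack-averages the product $\ZRW{\O}ua\ZRW{\O}ub$ over $\Int\BB\O u$, identifies the result with the time-weighted partition function $\ZRWT{\O}{\BB\O u}ab$, splits that at the first entry into the disc, and then removes the time weight via the estimate $\sum_{v\in\Int\BB\O u}r_v^2\,G_\O(v;w)\asymp(\dd\O u)^2$, which invokes Property (T) and Lemma~\ref{Lemma:SumRvBoundI} directly. You instead write two \emph{exact} first-entry decompositions (for $\ZRW\O au$ and for the restricted partition function of $a$--$b$ walks meeting the disc) and reduce everything to two uniform comparisons: $\ZRW\O vb\asymp\ZRW\O ub$ on $\Int\BB\O u$ (Harnack, same as the paper) and $G_\O(v;u)\asymp 1$ on the first-entry set (Lemmas~\ref{Lemma:GreenInDiscs} and~\ref{Lemma:GreenEstimatesViaG}). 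This avoids the $\ZRWsign[\TTsign]$ bookkeeping and the two-sided estimate (\ref{zSumG=const}) entirely, at the cost of needing the pointwise upper bound on $G_\O(\cdot;u)$ near $\pa\BB\O u$; both routes ultimately draw on the same toolbox. Two small points to tighten: to place a first-entry vertex $v\ne u$ outside an inner sub-disc you should quote Remark~\ref{Rem:RvBound} ($r_v\le\nu_0|v\!-\!u|$), which upgrades $|v\!-\!u|\ge\dd\O u/3-\varkappa_0 r_v$ to $|v\!-\!u|\ge\const\cdot\dd\O u$; and your description of the degenerate case is not exhaustive: $u$ itself can be a first-entry vertex whenever it has a neighbour at distance $\ge\tfrac13\dd\O u$, i.e.\ whenever $\varkappa_0 r_u\ge\tfrac13\dd\O u$, in which case $\Int\BB\O u$ need not reduce to $\{u\}$. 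But then $r_u\asymp\dd\O u$, and $G_\O(u;u)\le\const$ follows from Lemma~\ref{Lemma:GreenEstimatesViaG}, the one-step identity $G(u;u)=\mu_u^{-1}+\sum_{u'\sim u}\varpi_{uu'}G(u';u)$ and Lemma~\ref{Lemma:GreenInDiscs} with inner radius $r_u$ (or directly from the upper bound in (\ref{BoundPropertyT})), so the argument stands after this rewording.
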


\begin{pf}
Recall that both functions $\ZRW{\O}{\ccdot}a$ %=\mu_a^{-1}\hm{\O}{
%\ccdot}a$
and $\ZRW{\O}{\ccdot}b$ %=\mu_b^{-1}\hm{\O}{\ccdot}b$
are discrete harmonic and positive inside $\O$. Therefore, Harnack's
principle (see Proposition~\ref{Proposition:Harnack}) gives
%
%e3.3 #&#
\begin{equation}
\label{xZuaZub=SumZvaZvb} \ZRW{\O}ua\ZRW{\O}ub\asymp \frac{\sum_{v\in\Int
\BBsign_\O(u)}r_v^2 \ZRW{\O}va\ZRW{\O}vb}{\sum_{v\in\Int
\BBsign_\O(u)}r_v^2}.
%\asymp\frac{\sum_{v\in\Int\BB{\O}u}r_v^2 \ZRW{\O}va\ZRW{\O}vb}{(\dd{
%\O}u)^2}.
\end{equation}
Recall that $\sum_{v\in\Int\BB{\O}u}r_v^2\asymp(\dd{\O}u)^2$
due to
(\ref{SumRvBound}).

Joining two random walk paths $\gamma_{av}$ (from $a$ to $v$) and
$\gamma_{vb}$ (from $v$ to $b$), and taking into account $\mathrm
{w}(\gamma_{av}\gamma_{vb})=\mu_v\cdot\mathrm{w}(\gamma
_{av})\mathrm
{w}(\gamma_{vb})\asymp\mathrm{w}(\gamma_{av})\mathrm{w}(\gamma_{vb})$,
it is easy to see
that
%
%e3.4 #&#
\begin{equation}
\label{xSumZvaZvb=ZTab} \sum_{v\in\Int
\BBsign_\O(u)} r_v^2
\ZRW{\O}va\ZRW{\O}vb\asymp { \ZRWsign_\O[\TTsign_{\BB{\O}u}](a;b)}
\end{equation}
[indeed, each of the vertices $u_s\in\RW{\O}ab$ contributing to
$\TTsign
_{{\BB{\O}u}}$ can be chosen as $v$ in order to split $\RW{\O}ab$
into two halves $\gamma_{av}$ and $\gamma_{vb}$].

Further, let $w$ denote the \emph{first} vertex $u_s\in\Int\BB{\O
}u$ of
$\RW{\O}ab$, if such a vertex exists. Since on the right-hand side of
(\ref{xSumZvaZvb=ZTab}) we do not count those paths which do not intersect
$\BB{\O}u$, by splitting $\RW{\O}ab$ into two halves at $w$, it can be
rewritten as
%
%e3.5 #&#
\begin{equation}
\label{zZTab=SumZawZTwb} { \ZRWsign_\O[\TTsign_{\BB{\O}u}](a;b)} \asymp
\sum_{w\in\Int\BB{\O}u} \ZRW{\O\setminus \BBsign_\O(u)}aw
\ZRWsign_\O[\TTsign_{\BB{\O}u}](w;b),
\end{equation}
where a (generally, doubly connected) discrete domain $\O':=\O
\setminus
\BB{\O}u$ should be understood so that $\Int\O'=\Int\O\setminus
\Int\BB
{\O}u$. %and $\pa\O'=\pa\O\cup\pa\BB{\O}u$.
It immediately follows from our definition of $\ZRWsign[\TTsign]$ and
Harnack's principle applied to the discrete harmonic function $\ZRW{\O
}{\ccdot}b$ that
%
%e3.6 #&#
\begin{eqnarray}
\label{zZTwb=Zwb} \ZRWsign_\O[\TTsign_{\BB{\O}u}](w;b) &\asymp&\sum
_{v\in\Int\BB{\O}u} r_v^2 \ZRW{\O}wv
\ZRW{\O}vb
\nonumber
\\[-8pt]
\\[-8pt]
\nonumber
& \asymp&\sum_{v\in\Int\BB
{\O}u} r_v^2
\ZRW{\O}wv\cdot\ZRW{\O}wb
\end{eqnarray}
(indeed, for each $u_s$ contributing to
$\TTsign_{{\BB{\O}u}}%{\RW{\O}wb}
=\sum_{s=0}^{n(\gamma)}r_{u_s}^2 \1[u_s\in\Int{
\BBsign_\O(u)}]$,
split the random path $\RW{\O}wb$ into two halves $\gamma
_{wv},\gamma_{vb}$ at
the point $v=u_s$ and use the up-to-constant multiplicativity $\mathrm
{w}(\gamma_{wv}\gamma_{vb})\asymp\mathrm{w}(\gamma_{wv})\mathrm
{w}(\gamma_{vb})$ once more). Moreover, it is easy to conclude that
%
%e3.7 #&#
\begin{equation}
\label{zSumG=const} \sum_{v\in\Int\BB{\O}u} r_v^2
\ZRW{\O}wv= \sum_{v\in
\Int\BB{\O}u} r_v^2
G_{\O}(v;w)\asymp\bigl( \ddsign_\O(u)\bigr)^2
\end{equation}
for any $w\in\Int\BB{\O}u$. Indeed, the upper bound follows from the
estimates $\frac{2}{3}\dd{\O}u\le\dd\O{w}\le\frac{4}{3}\dd
{\O}u$, the
inclusion $\BBsign_\O(u)\subset\BBsign^\G_{\dd\O{w}}(w)$, the upper
bound in Lemma~\ref{Lemma:GreenEstimatesViaG} and the upper bound in
(\ref{BoundPropertyT}). The lower bound is trivial if $r_w$ is
comparable to $\dd{\O}u$, and is guaranteed by Lemma~\ref
{Lemma:SumRvBoundI} and the lower bounds in Lemmas \ref
{Lemma:GreenInDiscs} and \ref{Lemma:GreenEstimatesViaG} if $r_w\ll\dd
{\O}u$. Combining (\ref{zSumG=const}) with (\ref
{xZuaZub=SumZvaZvb})--(\ref{zZTwb=Zwb}), one obtains
%
%e3.8 #&#
\begin{eqnarray}
\label{xZuaZub/Zab=} \frac{\ZRW{\O}ua\ZRW{\O}ub}{\ZRW{\O}ab}&\asymp&\bigl( \ddsign_\O(u)
\bigr)^{-2}\frac
{\ZRWT{\O}{{\BB{\O}u}}ab}{\ZRW{\O}ab}
\nonumber
\\[-8pt]
\\[-8pt]
\nonumber
&\asymp&\frac{\sum_{w\in\Int\BB{\O}u} \ZRW{\O\setminus
\BBsign_\O(u)}aw
\ZRW{\O}wb}{\ZRW{\O}ab}.
\end{eqnarray}
Finally, the numerator can be rewritten as
\[
\sum_{w\in\Int\BB{\O}u} \ZRW{\O\setminus \BBsign_\O(u)}aw
\ZRW{\O}wb \asymp\sum_{\gamma\in S_\O(a;b)\dvtx\gamma\cap\Int
\BB{\O
}u\ne\es
}\mathrm{w}(\gamma)
\]
[as above, denote by $w$ the first vertex $u_s\in\Int\BB{\O}u$ of
$\gamma$,
if it exists]. Thus (\ref{xZuaZub/Zab=}) is comparable to the
probability of the event $\gamma\cap\BB{\O}u\ne\es$.
\end{pf}

Let $u\in\Int\O$ be an inner vertex, $x\in\pa\O$ be the closest
boundary vertex to $u$ and $\rL_{ux}$ be the nearest-neighbor path from
$u$ to $x$ constructed in Remark~\ref{Rem:LasymptDist}. %(recall that $
%\Length(\rL_{ux})\le\n_0\dd{\O}u$).
For $v\in\rL_{ux}$, let $\rL_{ux}^{vx}$ denote the portion of $\rL
_{ux}$ from $v$ to $x$, and let $\Length(\rL_{ux}^{vx})$ be the
Euclidean length of $\rL_{ux}^{vx}$. It is easy to see that
%
%e3.9 #&#
\begin{equation}
\label{DistToBdAlongL} \Length\bigl(\rL_{ux}^{vx}\bigr)\le\const
\cdot\, \ddsign_\O(v) \qquad\mbox{for al } v\in\rL_{ux}\cap\Int\O.
\end{equation}
Indeed, let $v$ belong to a face $f$ and $[z;z']:=[u;x]\cap f\ne\es$;
see Remark~\ref{Rem:LasymptDist}. Then
\begin{eqnarray*}
\Length\bigl(\rL_{ux}^{vx}\bigr)&\le&\const\cdot\,|z - x|=
\const\cdot\,\dist (z;\pa\O),
\\
\dist(z;\pa\O)&\le&|z - v|+ \ddsign_\O(v)\quad \mbox{and}\quad |z - v|\le \const
\cdot\, r_v \le\const\cdot\, \ddsign_\O(v).
\end{eqnarray*}

We denote by $\rL_\O(u)\ss\Int\O$ the portion of $\rL
_{ux}$ from
$u$ \emph{to the first hit of} $\pa\O$; see the top-left picture in
Figure~\ref{Fig: domain-truncated}.
Below we also use the notation
\[
\P_\O^{a,b}\bigl[\rL_\O(u)\bigr]:=\P
\bigl[ \RWsign_{\O}(a;b) \cap\rL_{\O
}(u)\ne\es\bigr]
\]
and the similar notation
\[
\P_\O^{a,b}\bigl[ \BBsign_\O(u)\bigr]:=
\P\bigl[ \RWsign_{\O}(a;b) \cap\Int \BBsign_\O(u) \ne\es
\bigr]
\]
for the right-hand side of (\ref{ZuaZab/Zab=P}).

%le3.2 #&#
\begin{lemma}
\label{Lemma:LuCrossing} Let $\O$ be a simply connected discrete
domain, $u\in\Int\O$, $x\in\pa\O$ be the closest boundary vertex
to $u$
[so that $\dd{\O}u=|u - x|$] and $a,b\in\pa\O$ be such that
$a,b\notin\BBsign^\O_{\dd{\O}u}(x)$. Then, for a path $\rL_\O
(u)$ defined
above, one has
\[
\P_\O^{a,b}\bigl[\rL_\O(u)\bigr]\le\const\cdot\,
\P_\O^{a,b}\bigl[ \BBsign_\O(u)\bigr].
\]
\end{lemma}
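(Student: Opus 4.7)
The plan is to apply the Markov property of $\RW{\O}{a}{b}$ at the first vertex $v$ of $\rL_\O(u)$ it visits. Decomposing the paths in $S_\O(a;b)$ according to this first-hit vertex yields
\[
\P_\O^{a,b}[\BB{\O}{u}]\;\ge\;\Bigl(\min_{v\in\rL_\O(u)\setminus\Int\BB{\O}{u}}\P\,[\RW{\O}{v}{b}\cap\Int\BB{\O}{u}\ne\es]\Bigr)\cdot\P_\O^{a,b}[\rL_\O(u)],
\]
since walks whose first hit of $\rL_\O(u)$ lies in $\rL_\O(u)\cap\Int\BB{\O}{u}$ already contribute to the left-hand side. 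Thus the lemma reduces to proving the uniform pointwise bound $\P\,[\RW{\O}{v}{b}\cap\Int\BB{\O}{u}\ne\es]\ge\const$ for all $v\in\rL_\O(u)\setminus\Int\BB{\O}{u}$ and all $b\in\pa\O\setminus\BBsign^\O_{\dd{\O}{u}}(x)$.

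Set $\O'':=\O\setminus\Int\BB{\O}{u}$. Decomposing walks from $v$ to $b$ in $\O$ at their first entry into $\Int\BB{\O}{u}$ yields
\[
\hm{\O}{v}{b}-\hm{\O''}{v}{b}\;=\!\!\sum_{w\in\pa\BB{\O}{u}}\!\!\hm{\O''}{v}{w}\cdot\hm{\O}{w}{b}\;\asymp\;q(v)\cdot\hm{\O}{u}{b},
\]
where $q(v):=\hm{\O''}{v}{\pa\BB{\O}{u}}$ is the probability that the walk from $v$ reaches $\Int\BB{\O}{u}$ before $\pa\O$, and the comparison uses Harnack's inequality (Proposition~\ref{Proposition:Harnack}) applied to $\hm{\O}{\ccdot}{b}$ on a disc around $u$ contained in $\Int\O$ and encompassing $\pa\BB{\O}{u}$. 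Since the probability above equals $(\hm{\O}{v}{b}-\hm{\O''}{v}{b})/\hm{\O}{v}{b}$, the task reduces to
\[
\hm{\O}{v}{b}\;\le\;\const\cdot q(v)\cdot\hm{\O}{u}{b}\quad\text{for~all}~v\in\rL_\O(u)\setminus\Int\BB{\O}{u}. \qquad(*)
\]

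For $v\in\rL_\O(u)$ with $\dd{\O}{v}\ge\const\cdot\dd{\O}{u}$ (well separated from $\pa\O$), Harnack chaining on discs of radius $\asymp\dd{\O}{u}$, combined with Lemmas~\ref{Lemma:GreenInDiscs} and~\ref{Lemma:GreenEstimatesViaG}, gives $\hm{\O}{v}{b}\asymp\hm{\O}{u}{b}$ and $q(v)\asymp 1$, so $(*)$ is immediate. For vertices $v\in\rL_\O(u)$ close to $x$, i.e.\ $\dd{\O}{v}\asymp|v-x|\ll\dd{\O}{u}$, I would invoke Lemma~\ref{Lemma:BoundaryDecay} at the boundary vertex $x$ via the path $\rL_{ux}$, replacing $\max_{\rL^{uu'}_{ux}}\hm{\O}{\ccdot}{b}$ by $\const\cdot\hm{\O}{u}{b}$ using Harnack along the interior portion $\rL^{uu'}_{ux}$ (whose vertices lie at distance $\asymp\dd{\O}{u}$ from $\pa\O$), obtaining
\[
\hm{\O}{v}{b}\;\le\;\const\cdot\bigl[\,|v-x|/\dd{\O}{u}\,\bigr]^{\b_0}\cdot\hm{\O}{u}{b}.
\]
Thus $(*)$ reduces to the matching lower bound $q(v)\ge\const\cdot[\,|v-x|/\dd{\O}{u}\,]^{\b_0}$.

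The main obstacle is exactly this lower bound on $q(v)$ near $x$: Lemma~\ref{Lemma:BoundaryDecay} only furnishes upper bounds on positive harmonic functions vanishing on $\pa\O$, so a separate argument is required. A natural route is to cast $(*)$ directly as a discrete boundary Harnack inequality on $\O''$ for the pair $\hm{\O}{\ccdot}{b},\,q(\ccdot)$, which are both positive harmonic in $\O''$ and vanish on $\pa\O\setminus\{b\}$: their ratio equals $\hm{\O}{v^*}{b}\asymp\hm{\O}{u}{b}$ at any reference vertex $v^*\in\pa\BB{\O}{u}$ (where $q(v^*)=1$), and one propagates this ratio along $\rL_\O(u)$ towards $x$ by chaining Proposition~\ref{Proposition:Harnack} and Lemma~\ref{Lemma:BoundaryDecay} within discs of sizes $\asymp\dd{\O}{v'}$ centered at intermediate vertices $v'\in\rL_\O(u)$. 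The matching of decay exponents between $\hm{\O}{\ccdot}{b}$ and $q(\ccdot)$ near $x$ is natural since both rates are governed by the same constant $\b_0=-\log(1-\d_0)/\log\r_0$ coming from Lemma~\ref{Lemma:AroundAnnulus}.
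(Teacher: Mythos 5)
Your opening reduction is fine: conditioning $\RW{\O}ab$ on its first visit to $\rL_\O(u)$ and using the weight factorization does give $\P_\O^{a,b}[\BB{\O}u]\ge\bigl(\min_{v}\P[\RW{\O}vb\cap\Int\BB{\O}u\ne\es]\bigr)\cdot\P_\O^{a,b}[\rL_\O(u)]$. But this reduces the lemma to a \emph{stronger, pointwise} statement — a uniform lower bound on the probability that the walk conditioned to exit at $b$, started at a vertex $v\in\rL_\O(u)$ arbitrarily close to $x$, returns to $\Int\BB{\O}u$ — and that statement is exactly a uniform discrete boundary Harnack principle near $x$ for the pair $\hm\O{\ccdot}b$, $q(\ccdot)$. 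You correctly identify the missing lower bound $q(v)\ge\const\cdot[\,|v\!-\!x|/\dd\O{u}\,]^{\b_0}$ as ``the main obstacle'', but the justification you offer for it is not valid: the exponent $\b_0$ in Lemmas~\ref{Lemma:b0-weakBeurling} and~\ref{Lemma:BoundaryDecay} is only a \emph{one-sided} (upper) estimate on positive harmonic functions vanishing on $\pa\O$; the true decay rates of $\hm\O{\ccdot}b$ and $q(\ccdot)$ at $x$ are dictated by the local geometry of $\pa\O$ (near a narrow fiord or small-angle corner they decay much faster than $|v\!-\!x|^{\b_0}$), so no lower bound with exponent $\b_0$ holds in general, and nothing in the Section~\ref{SubSect:BasicFacts} toolbox forces the two functions to share the same rate. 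Establishing that they do is precisely a BHP-type result, which the paper never proves (indeed, Theorem~\ref{Thm:Zfact} and this very lemma are the paper's substitute for it), so your sketch of ``propagating the ratio by chaining Proposition~\ref{Proposition:Harnack} and Lemma~\ref{Lemma:BoundaryDecay}'' leaves the essential step unproven.

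The paper's own proof avoids any lower bound of this kind. It covers $\rL_\O(u)$ by a chain of discs $\BB\O{v_k}$ whose radii (and distances $|v_k\!-\!x|$) decay geometrically, applies Proposition~\ref{Prop:ZuaZub/Zab=P} to each disc to write $\P_\O^{a,b}[\BB\O{v_k}]\asymp\ZRW\O{v_k}a\ZRW\O{v_k}b/\ZRW\O ab$, and then uses only \emph{upper} bounds: Harnack for the uniformly bounded number of discs at scale $\asymp\dd\O{u}$, and Lemma~\ref{Lemma:BoundaryDecay} applied to both factors (this is where the hypothesis $a,b\notin\BBsign^\O_{\dd\O u}(x)$ enters), giving terms of size $(|v_k\!-\!x|/\dd\O u)^{2\b_0}$ whose sum is bounded independently of the length of the chain; the union bound then closes the argument. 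So the inequality is obtained in the ``easy'' direction where the toolbox applies, whereas your route requires the hard (lower-bound) direction. To salvage your approach you would need to prove the uniform pointwise bound $\hm\O vb\le\const\cdot q(v)\cdot\hm\O ub$ by some independent argument of comparable difficulty to the lemma itself; as written, the proposal has a genuine gap.
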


\begin{pf}
Let a sequence of vertices $u=v_0,v_1,\ldots,v_n\in\Int\O$ be defined
inductively by the
following rule: \mbox{$v_{k+1}\in\rL_\O(u)$} is the first vertex on
$\rL
_\O(u)$ after $v_k$ (when going toward $\pa\O$) which does not belong
to $\Int\BB\O{v_k}$. Thus each $v_{k+1}\in\pa\BB\O{v_k}$ and
\[
\rL_\O(u)\ss\bigcup_{k=0}^n
\Int \BBsign_\O(v_k)
\]
[note that the local finiteness assumption (d) guarantees $n<\infty$,
but we do not have any quantitative bound for this number]. Further,
let $u':=v_m$ be the first of those vertices such that $|v_k - x|\le
\rho
_0^{-1}\dd{\O}u$ for all $k\ge m$, where $\rho_0$ is the constant
used in
Lemma~\ref{Lemma:BoundaryDecay} [we set $m:=n$, if $|v_n - x|>\rho
_0^{-1}\dd{\O}u$]. It immediately follows from (\ref{DistToBdAlongL})
that $\Length(\rL_{ ux}^{v_kx})$ decays exponentially as $k$
grows. In particular, this implies a uniform estimate $m\le\const$.

%f2 #&#
\begin{figure}

\includegraphics{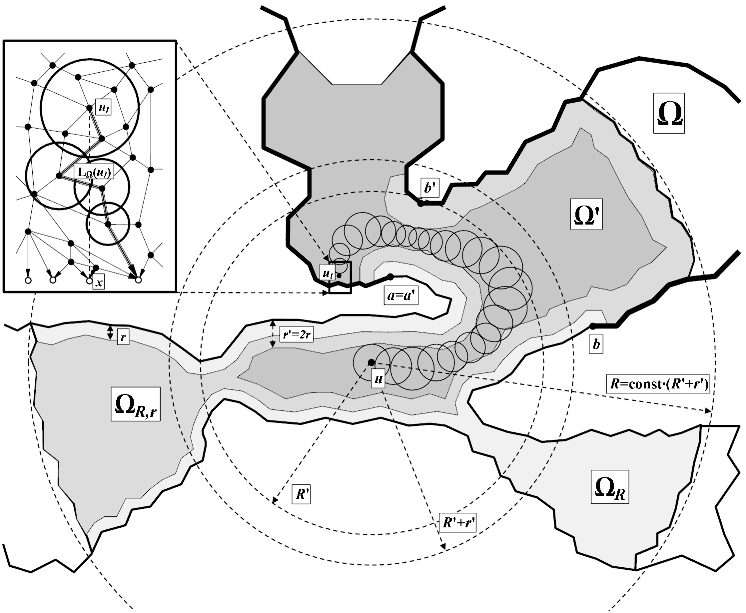}

\caption{The notation from the proof of
Proposition \protect\ref{Prop:RWintersectsBu}: a simply connected
domain $\O$
and its subdomains $\O_{R,r}\subset\O_R\subset\O$. The boundary arc
$[ba]_\O\subset\pa\O$ and a uniformly bounded number of discrete discs
$\BB\O{u_k}$ that cover a path $L'$ running from $u$ to $[ba]_\O$ in
$\O
'\subset\O_{R,r}$ are shown. The top-left picture: a vertex $u_l$, the
closest to $u_l$ boundary vertex $x$, the path $\rL_\O(u_l)\subset
\rL
_{u_lx}$ running from $u_l$ to $\pa\O$ and the sequence of discrete
discs $\BB\O{v_k}$ with exponentially decaying radii that cover $\rL
_\O(u_l)$.}
\label{Fig: domain-truncated}
\end{figure}

Let $H=\ZRW{\O}{\ccdot}a$ or $H=\ZRW{\O}{\ccdot}b$. The Harnack
principle (Proposition~\ref{Proposition:Harnack}) gives
\[
H(u)=H(v_0)\asymp H(v_1)\asymp\cdots\asymp
H(v_m).
\]
Moreover, by our assumption $a,b\notin\BBsign^\O_{\dd{\O}u}(x)$.
Thus Lemma~\ref{Lemma:BoundaryDecay} yields
\[
H(v_k)\le\const\cdot\,\bigl(|v_k - x|/
\ddsign_\O(u)\bigr)^{\beta_0}\cdot H(u),\qquad k\ge m.
\]
Then Proposition~\ref{Prop:ZuaZub/Zab=P} applied to each of the balls
$\BB{\O}{v_k}$ allows us to conclude that
%
%e3.10 #&#
\begin{eqnarray}
\label{xP[L]estimate} \P_\O^{a,b}\bigl[\rL_\O(u)
\bigr]&\le&\sum_{k=0}^{n}
\P_\O^{a,b}\bigl[ \BBsign_\O(v_k)
\bigr] \asymp\sum_{k=0}^{n}
\frac{\ZRW{\O}{v_k}a\ZRW{\O}{v_k}b}{\ZRW
{\O}ab}\nonumber
\\
& \le&\const\cdot\,\frac{\ZRW{\O}{u}a\ZRW{\O}{u}b}{\ZRW{\O
}ab}\cdot \Biggl[m+\sum
_{k=m}^n \biggl(\frac{|v_k-x|}{\dd{\O}u}
\biggr)^{ 2\beta
_0}\Biggr]
\\
& \le&\const\cdot\,\frac{\ZRW{\O}{u}a\ZRW{\O}{u}b}{\ZRW{\O
}ab}\asymp\P
_\O^{a,b}\bigl[ \BBsign_\O(u)\bigr]\nonumber
\end{eqnarray}
[recall that the distances $|v_k-x|\le\Length(\rL_{ ux}^{v_kx})$
decay exponentially for $k\ge m$, so the final bound does \emph
{not} depend on $n$].
\end{pf}

%pr3.3 #&#
\begin{proposition}
\label{Prop:RWintersectsBu} Let $\O$ be a simply connected discrete
domain, $a,b\in\pa\O$,
$u\in\Int\O$, and $\sigma>0$ be such that both
%\begin{equation} \label{xRWintersectsBu}
$\hm{\O}u{[ab]_\O}, \hm{\O}u{[ba]_\O}\ge\sigma$. %\end{equation}
Then the uniform estimate
%
%e3.11 #&#
\begin{equation}
\label{RWintersectsBu} \P_\O^{a,b}\bigl[ \BBsign_\O(u)
\bigr]\ge\const(\sigma)
\end{equation}
holds true, with some $\const(\sigma)>0$ independent of $\O,a,b,u$.
\end{proposition}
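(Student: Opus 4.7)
The first move is to invoke Proposition~\ref{Prop:ZuaZub/Zab=P}, which converts the desired bound $\P_\O^{a,b}[\BB{\O}{u}]\ge\const(\sigma)$ into the equivalent multiplicative estimate
\[
\ZRW{\O}{u}{a}\cdot\ZRW{\O}{u}{b}\ \ge\ \const(\sigma)\cdot\ZRW{\O}{a}{b}.
\]
By Harnack's inequality (Proposition~\ref{Proposition:Harnack}) applied to the positive harmonic functions $\hm{\O}{\ccdot}{[ab]_\O}$ and $\hm{\O}{\ccdot}{[ba]_\O}$, the hypothesis lifts to $\hm{\O}{v}{[ab]_\O},\hm{\O}{v}{[ba]_\O}\ge\const\cdot\sigma$ uniformly for $v\in\Int\BB{\O}{u}$; likewise $\ZRW{\O}{v}{a}\asymp\ZRW{\O}{u}{a}$ and $\ZRW{\O}{v}{b}\asymp\ZRW{\O}{u}{b}$ on $\Int\BB{\O}{u}$. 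Informally, every vertex of $\BB{\O}{u}$ is as ``central'' as $u$ itself.

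Next, I decompose $\ZRW{\O}{a}{b}$ according to the first entrance of the walk into $\BB{\O}{u}$, if any: writing $\O':=\O\setminus\BB{\O}{u}$,
\[
\ZRW{\O}{a}{b}\ =\ \ZRW{\O'}{a}{b}\ +\!\!\sum_{w\in\Int\BB{\O}{u}}\ZRW{\O'}{a}{w}\cdot\mu_w\cdot\ZRW{\O}{w}{b}.
\]
A Harnack/Green-function computation analogous to (\ref{xZuaZub=SumZvaZvb})--(\ref{zSumG=const}) in the proof of Proposition~\ref{Prop:ZuaZub/Zab=P} shows that the second term is $\asymp\ZRW{\O}{u}{a}\cdot\ZRW{\O}{u}{b}$. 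It therefore suffices to prove
\[
\ZRW{\O'}{a}{b}\ \le\ \const(\sigma)\cdot\ZRW{\O}{u}{a}\cdot\ZRW{\O}{u}{b},
\]
i.e., that walks from $a$ to $b$ entirely avoiding $\BB{\O}{u}$ do not dominate walks passing through it. The lifted hypothesis supplies a definite ``pressure'' toward both arcs: from any $v\in\pa\BB{\O}{u}$, the walk in $\O$ has probability $\ge\const\cdot\sigma$ of exiting through each of $[ab]_\O$ and $[ba]_\O$. By reversibility and a last-exit decomposition at $\pa\BB{\O}{u}$, this forces walks from $a$ to $b$ that remain in $\O'$ to carry at most $\const(\sigma)^{-1}$ times the mass of walks from $a$ and from $b$ reaching $\pa\BB{\O}{u}$, which in turn are comparable (via Harnack on $\Int\BB{\O}{u}$) to $\ZRW{\O}{u}{a}$ and $\ZRW{\O}{u}{b}$.

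The main obstacle is the execution of this last step. A natural alternative route is to prove the lower bound $\P_\O^{a,b}[\rL_\O(u)]\ge\const(\sigma)$ for the path $\rL_\O(u)$ introduced before Lemma~\ref{Lemma:LuCrossing}, and combine it with that lemma (which yields $\P_\O^{a,b}[\rL_\O(u)]\le\const\cdot\P_\O^{a,b}[\BB{\O}{u}]$) to conclude. Such a lower bound is plausible because the harmonic-measure hypothesis essentially says that $\rL_\O(u)$ lives in the ``central'' part of $\O$, but its rigorous derivation requires propagating the arc condition along $\rL_\O(u)$ using the Harnack/Beurling-type estimates in Section~\ref{SubSect:BasicFacts}. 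A secondary technical point is the corner case in which $a$ or $b$ lies inside $\BBsign^\O_{\dd{\O}{u}}(x)$ (where $x\in\pa\O$ is the nearest-to-$u$ boundary vertex), so that Lemma~\ref{Lemma:LuCrossing}'s hypothesis fails; there $\dd{\O}{u}\asymp|a\!-\!u|$ or $\dd{\O}{u}\asymp|b\!-\!u|$ and the estimate is closed by direct Harnack together with the weak Beurling bound (Lemma~\ref{Lemma:b0-weakBeurling}).
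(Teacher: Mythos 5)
Your reduction is fine as far as it goes: by Proposition~\ref{Prop:ZuaZub/Zab=P} and the first-entrance decomposition, the claim is indeed equivalent to the bound $\ZRW{\O\setminus\BB{\O}u}ab\le \const(\sigma)\cdot\ZRW{\O}ua\,\ZRW{\O}ub$, and the middle term of your decomposition is $\asymp\ZRW{\O}ua\,\ZRW{\O}ub$ exactly as in (\ref{zZTab=SumZawZTwb})--(\ref{zSumG=const}). But the step you yourself flag as the obstacle is a genuine gap, and the mechanism you sketch for it does not work. A walk from $a$ to $b$ that stays in $\O':=\O\setminus\BB{\O}u$ has no reason to visit $\pa\BB{\O}u$ or any fixed neighborhood of it, so a ``last-exit decomposition at $\pa\BB{\O}u$'' does not account for the mass $\ZRW{\O'}ab$ at all; and the fact that every $v\in\pa\BB{\O}u$ sees both arcs with probability $\ge\const\cdot\sigma$ gives information about walks started \emph{from} $v$, not about walks from $a$ to $b$ that never come near $v$. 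Some global (topological) input forcing the $a$-to-$b$ walk to interact with a set attached to $u$ is unavoidable, and your sketch contains none. Your fallback route has the same problem in disguise: $\rL_\O(u)$ runs from $u$ to the \emph{nearest} boundary point and does not separate $a$ from $b$, so there is no a priori reason that $\P_\O^{a,b}[\rL_\O(u)]\ge\const(\sigma)$; proving such a lower bound is essentially the proposition itself.

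The paper closes this by a different, two-sided construction which is the missing idea. After rescaling so that $\dd{\O}u=1$, the hypothesis $\hm{\O}u{[ab]_\O},\hm{\O}u{[ba]_\O}\ge\sigma$ is combined with the weak Beurling estimate (Lemma~\ref{Lemma:b0-weakBeurling}) to produce radii $R(\sigma),r(\sigma)$ such that $u$ stays connected to $[ba]_\O$ inside a truncated domain $\O_{R,r}$ obtained by cutting off everything at distance $>R$ from $u$ and everything within distance $r$ of $[ab]_\O$ (and symmetrically with the arcs exchanged). A discrete path from $u$ to $[ba]_\O$ inside this region is then covered by a \emph{uniformly bounded} number of discs $\BB\O{u_k}$ (bounded because each $u_k$ kept has $\dd\O{u_k}\ge\nu_0^{-1}r$ and all lie in $\BBsign^\G_R(u)$), plus a boundary tail $\rL_\O(u_l)$ whose nearest boundary point lies on $(ba)_\O$ with $a,b\notin\BBsign^\O_{\dd\O{u_l}}(x)$, so that Lemma~\ref{Lemma:LuCrossing} applies to it; Harnack (Proposition~\ref{Proposition:Harnack}) and Proposition~\ref{Prop:ZuaZub/Zab=P} then give $\P[\RW\O ab$ hits this path$]\le\const\cdot\P_\O^{a,b}[\BB{\O}u]$, exactly as in (\ref{xP[L]estimate}). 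Doing the same for the arc $[ab]_\O$ and observing that, for topological reasons, $\RW{\O}ab$ must cross at least one of the two paths joining $u$ to $[ba]_\O$ and to $[ab]_\O$, one gets $1\le\const\cdot\P_\O^{a,b}[\BB{\O}u]$, which is the claim. This separation argument, together with the $\sigma$-dependent truncation that makes the disc count bounded and legitimizes Lemma~\ref{Lemma:LuCrossing} near the boundary, is precisely what your proposal is missing; your final remark about the corner case of Lemma~\ref{Lemma:LuCrossing} is likewise handled in the paper by the truncation rather than by a separate Harnack/Beurling patch.
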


\begin{pf}
For simplicity, let us rescale the underlying graph $\G$ so that  \mbox{$\dd
{\O}u=1$}.
We begin the proof with the following claim that is a corollary of the
weak Beurling estimates (Lemma~\ref{Lemma:b0-weakBeurling}) and our
assumption on the harmonic measures of $[ab]_\O$ and $[ba]_\O$: there
exist two constants $R=R(\sigma)>0$ and $r=r(\sigma)>0$ such that $u$
remains connected to $[ba]_\O$ in a ``truncated'' domain $\Omega_{R,r}$
defined as
\[
%\begin{equation}\label{xIntO'def}
\Int\Omega_{R,r}:=\Int\BBsign^\O_{R}(u)
{}\Big\backslash{}\bigcup_{x\in
[ab]_{\O}}\Int\BBsign^\O_{r}(x)
\]
%
%\end{equation}
(more rigorously, $\Int\Omega_{R,r}$ is a connected component of this
set containing $u$; see Figure~\ref{Fig: domain-truncated}) and vice
versa with $[ba]_\O$ and $[ab]_\O$ interchanged. Let us emphasize that
$R(\s)$ \emph{and} $r(\s)$ \emph{can be chosen uniformly for all} $\O,a,b$
\emph{and} $u$.
%(below we usually omit $\sigma$, assuming that it is fixed throughout
%the proof).

Indeed, the first estimate in Lemma~\ref{Lemma:b0-weakBeurling} implies
that one can find a constant $R'=R'(\sigma)>0$ (independently of $\O
,a,b$ and $u$) so that
\[
\omega_{\O_{R'}}\bigl(u;\pa\BBsign^\G_{R'}(u)\cap
\pa\O_{R'}\bigr) \le\tfrac
{1}{2}\sigma \qquad\mbox{where }
\O_{R'}:=\BBsign^\O_{R'}(u)
\]
[note that, by definition, $\pa\O_{R'}\subset\pa\O\cup\pa\BBsign
^\G
_{R'}(u)$]. Let $a',b'\in\pa\O$ be chosen so that $[b'a']_\O\subset
[ba]_\O$ is the minimal boundary arc of $\O$ satisfying
\[
\bigl[b'a'\bigr]_\O\cap\pa
\O_{R'}=[ba]_\O\cap\pa\O_{R'};
\]
see Figure~\ref{Fig: domain-truncated}. Then, we set $R:=\const\cdot\,
(R' + r')$, where $r'\le1$ will be fixed later and the (uniform)
multiplicative constant is chosen according to Remark~\ref{Rem:RvBound}
so that no face of $\G$ crosses both boundaries of the annulus $A(u,R'
+ r',R)$. As above, denote $\O_{R}:=\BBsign^\O_{R}(u)$. It is easy to
see that $a',b'\in\pa\O_{R}$ and
\begin{eqnarray*}
\omega_{\O_{R}}\bigl(u;\bigl[b'a'
\bigr]_{\O_{R}}\bigr) & \ge& \omega_{\O_{R}}\bigl(u;
\bigl[b'a'\bigr]_{\O
}\cap\pa\O
_{R}\bigr) %\ge\hm{{\O_{R}}}u{[b'a']_{\O}\cap\pa\O_{R'}}
\ge \omega_{\O_{R'}}\bigl(u;[ba]_{\O}
\cap\pa\O_{R'}\bigr)
\\
& \ge& \omega_\O
\bigl(u;[ba]_{\O
}\bigr) - \omega_{\O_{R'}}\bigl(u;\pa
\BBsign^\G_{R'}(u)\cap\pa\O_{R'}\bigr) \ge
\tfrac
{1}{2}\sigma.
\end{eqnarray*}
In particular, $u$ is connected to the boundary arc $[b'a']_{\O_{R}}$
in $\O_{R}$.

The next step is to remove a thin neighborhood of the complementary arc
$[a'b']_{\O_{R}}$ from $\O_{R}$ so as to keep $u$ connected to
$[b'a']_{\O_R}$ in the remaining domain. Let
\[
\Int\O':=\Int\O_{R}{}\Big\backslash{}\bigcup
_{x\in[a'b']_{\O_{R}}}\Int \BBsign^{\O_{R}}_{r'}(x)
\]
(more rigorously, $\Int\O'$ is the connected component of this set
containing $u$; see Figure~\ref{Fig: domain-truncated}). Assume that
$[b'a']_{\O_R}\cap\pa\O'=\varnothing$. Then there exist two vertices
$x_1,x_2\in[a'b']_{\O_R}$ such that the set
\[
E:=\Int\BBsign^{\O_R}_{r'}(x_1)\cup\Int
\BBsign^{\O_R}_{r'}(x_2)
\]
separates $u$ from $[b'a']_{\O_R}$ in $\O_R$ (here we use the fact that
$[b'a']_{\O_R}$ is a boundary arc of a simply connected domain $\O_R$
and not just a subset of $\pa\O_R$; see also Figure~\ref{Fig:
domain-truncated}). Then $\Int\BBsign^{\O_R}_{r'}(x_1)$ and $\Int
\BBsign
^{\O_R}_{r'}(x_2)$ have to share a face which implies
$\diam E \le\const\cdot\, r'$. Provided that $r'=r'(\sigma)>0$ is
chosen small enough (independently of $\O,a,b$ and $u$), we arrive at
the contradiction between the lower bound $\hm{{\O_{R}}}u{[b'a']_{\O
_{R}}}\ge\frac{1}{2}\sigma$ and the second estimate in Lemma~\ref
{Lemma:b0-weakBeurling} [recall that we have rescaled $\G$ so that
$\dd
\O{u}=1$].

Further, if we set $r:=\frac{1}{2}r'$, then
\[
\Int\O'\subset\Int\O_{R,r}.
\]
Since $[b'a']_{\O_R}\cap\pa\O'\ne\varnothing$ and all faces of
$\G$
intersecting $\pa\BBsign^\G_{R}(u)$ are at distance at least $R'+r'$
from $u$, we conclude that $[b'a']_{\O}\cap\pa\O'\ne\varnothing$: indeed,
once reaching the set $[b'a']_{\O_R}\setminus[b'a']_\O\subset\pa
\BBsign
^\G_{R}$ (this is the upper boundary arc on Figure~\ref{Fig:
domain-truncated}) inside of $\O'$, one can continue walking along
faces touching $\pa\BBsign^\G_{R}$ and reach the arc $[b'a']_\O$
staying inside $\O'$. Thus $u$ remains connected to the arc $[ba]_\O
\supset[b'a']_\O$ in the truncated domain $\O_{R,r}\supset\O'$.

Now let $L$ be a discrete path running from $u$ to $[ba]_\O$ inside
$\O
'$. We define a sequence of vertices $u=u_0,u_1,\ldots,u_n\in L\cap
\Int\O
'$ inductively by the following rule: $u_{k+1}\in\Int\O'$ is the first
vertex on $L$ after $u_k$ (when going toward $[ba]_\O$) which does not
belong to $\bigcup_{s\le k}\Int\BB\O{u_s}$. Let $u_l$ be the first of
those $u_k$ satisfying $\dd{\O}{u_l}<\n_0^{-1}\cdot r$ (if such a
vertex $u_l$ exists, otherwise we set $l:=n$),
%where the constant $\n_0>0$ is fixed in Remark~\ref{Rem:LasymptDist}.
and let $L'$ denote the portion of $L$ from $u$ to $u_l$.

It follows from Remark~\ref{Rem:LasymptDist} that $|u_k - u_s|\ge\n
_0^{-1}\cdot\frac{1}{3}\dd\O{u_s}\ge\frac{1}{3}\n_0^{-2}r$ for all
$0\le s<k\le l$. As all $u_k$ lie inside $\BBsign^\G_{R}(u)$, this
implies that $l$ is uniformly bounded. Applying Harnack's principle and
Proposition~\ref{Prop:ZuaZub/Zab=P} similarly to (\ref{xP[L]estimate}),
we arrive at
\[
\sum_{k=0}^l \P_\O^{a,b}
\bigl[ \BBsign_\O(u_k)\bigr]\le\const\cdot\,
\P_\O ^{a,b}\bigl[ \BBsign_\O(u) \bigr].
\]
If $l=n$ [which means $L=L'\subset\bigcup_{k=0}^l \Int\BB\O{u_k}$],
this immediately gives the estimate $\P[\RW{\O}ab\cap L\ne\es]\le
\const\cdot\,\P_\O^{a,b}[\BB{\O}{u}]$. Otherwise, our definition
of $\O
_{R,r}$ guarantees that if $x$ is the closest boundary vertex to $u_l$,
then $x\in(ba)_\O$ and $a,b\notin\BBsign^{\O}_{\dd\O{u_l}}(x)$.
Together with Lemma~\ref{Lemma:LuCrossing}, this yields
\begin{eqnarray*}
\P\bigl[ \RWsign_{\O}(a;b) \cap\bigl[L' \cup
\rL_\O(u_l)\bigr]\ne\es\bigr] &\le&\sum
_{k=0}^l \P_\O^{a,b}\bigl[
\BBsign_\O(u_k)\bigr]+ \P_\O^{a,b}
\bigl[\rL_{\O}(u_l)\bigr]
\\
&\le& \const \cdot\,
\P_\O^{a,b}\bigl[ \BBsign_\O(u)\bigr].
\end{eqnarray*}
Clearly, one can repeat the same arguments for the other boundary arc
$[ab]_\O$. We complete the proof by saying that, due to topological
reasons, $\RW{\O}ab$ should cross at least one of those two paths
(connecting $u$ to $[ba]_\O$ and $[ab]_\O$, resp.).
\end{pf}

The last ingredient of the proof of Theorem~\ref{Thm:Zfact} is the
following simple lemma:

%le3.4 #&#
\begin{lemma}
\label{Lemma:HMuabc} There exists a constant $\sigma_0>0$ such that,
for any simply
connected discrete domain $\O$ and three boundary points $a,b,c\in\pa
\O
$ listed counterclockwise, one can find a vertex $u\in\Int\O$ so
that all
%\begin{equation} \label{HMuabc}
$\hm{\O}u{[ab]_\O}$, $\hm{\O}u{[bc]_\O}$, $\hm{\O}u{[ca]_\O
}\ge\sigma_0$.
%\end{equation}
\end{lemma}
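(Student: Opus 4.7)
My plan is to combine a simple probabilistic identity with a Knaster--Kuratowski--Mazurkiewicz (KKM) topological argument. Denote $f_1:=\hm{\O}\cdot{[ab]_\O}$, $f_2:=\hm{\O}\cdot{[bc]_\O}$, $f_3:=\hm{\O}\cdot{[ca]_\O}$. Since the random walk from any $u\in\Int\O$ almost surely reaches $\pa\O=[ab]_\O\cup[bc]_\O\cup[ca]_\O$ and the three arcs overlap only at the corners $a,b,c$, one has $f_1(u)+f_2(u)+f_3(u)\ge 1$ throughout $\O$; in particular $\max_i f_i(u)\ge\tfrac13$ everywhere. On $\pa\O$, each $f_i$ is the indicator of the $i$-th arc (equal to $1$ at the two corners it contains).

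Extend each $f_i$ to a continuous piecewise-affine function $\wt f_i$ on the closure of the polygonal representation $\O^*$ of $\O$, affine on each face of $\G$, and set $\wt C_i:=\{z\in\overline{\O^*}:\wt f_i(z)\ge\tfrac13\}$. These three closed sets cover $\overline{\O^*}$ by the sum inequality (preserved under affine interpolation). View $\overline{\O^*}$ as a topological triangle with corners $a,b,c$, and assign $\wt C_1,\wt C_2,\wt C_3$ to $a,b,c$ respectively. The inclusions $[ab]_\O\subset\wt C_1$, $[bc]_\O\subset\wt C_2$, $[ca]_\O\subset\wt C_3$ (each coming from $\wt f_i\equiv 1$ on the $i$-th arc) imply that every side of this triangle is contained in the union of the two $\wt C_i$'s assigned to its endpoints, so the KKM lemma yields a common point $z_0\in\wt C_1\cap\wt C_2\cap\wt C_3$. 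This $z_0$ lies in some face $F$ of $\G$, and since $\wt f_i$ on $F$ is a convex combination of $f_i$ at the vertices of $F$, for each $i$ there is a vertex $v_i\in F$ with $f_i(v_i)\ge\tfrac13$.

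To pass from the (possibly distinct) vertices $v_1,v_2,v_3\in F$ to a single inner vertex $u\in\Int\O$ with $f_i(u)\ge\sigma_0$ for all $i$, I will use local comparison of the $f_i$'s across $F$. By assumptions~(b)--(c), the face $F$ has $O(1)$ vertices all within Euclidean distance $\asymp r_v$ for any $v\in F$. If $F\subset\Int\O$ with $\dd{\O}{v}$ at least a fixed multiple of $r_v$, then applying the elliptic Harnack inequality (Proposition~\ref{Proposition:Harnack}) to each nonnegative discrete harmonic $f_i$ on an appropriate disc covering $F$ gives $f_i(u)\asymp f_i(v_i)\ge\tfrac13$ uniformly in $i$, so any inner $u\in F$ works. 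If $F$ instead meets $\pa\O$, one chains together the elementary bound $f_i(w)\ge\varpi_0^2 f_i(w')$ for adjacent $w\in\Int\O$ and $w'\in\O$ (an immediate consequence of the mean-value identity $f_i(w)=\sum_{w'}\varpi_{ww'}f_i(w')$) along the $O(1)$ edges of $F$; a boundary vertex $v_i\in\pa\O\cap F$ with $f_i(v_i)=1$ causes no trouble, since any inner neighbor then already satisfies $f_i\ge\varpi_0^2$.

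The principal obstacle I anticipate is the rigorous transfer of the KKM lemma to the polygonal representation (routine via a homeomorphism $\overline{\O^*}\to\overline{\Delta^2}$, or equivalently via a direct Sperner-type coloring on a triangulation refining $\G$ inside $\O$) together with the boundary-face bookkeeping above; beyond that, no estimates beyond Harnack's inequality and the one-step random-walk identity are required, and the resulting constant $\sigma_0$ depends only on the structural parameters $\varpi_0,\eta_0,\varkappa_0$.
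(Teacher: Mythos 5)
Your KKM set-up is appealing and genuinely different from the paper's argument, but there is a real gap at the final merging step, and it occurs exactly in the regime this paper is built for. After KKM you only know that the triple point $z_0$ lies in some face $F$ of $\G$ and that for each $i$ some vertex $v_i\in F$ has $f_i(v_i)\ge\tfrac13$; to merge these into one vertex you propagate lower bounds along edges of $F$ via $f_i(w)\ge\varpi_0^2 f_i(w')$ for $w\in\Int\O$. This propagation cannot pass through a vertex of $F$ lying on $\pa\O$ (no mean-value identity there, and the boundary value may be $0$), and two interior vertices of one face need not be adjacent, nor joined inside $\Int\O$ by a path of bounded length: a face sitting in a bottleneck or at the tip of a slit can have interior vertices on the two opposite sides of the pinch, separated along $\pa F$ by boundary vertices (on $\Z^2$, a unit square whose two diagonal corners belong to the two ``prongs'' of a hairpin-shaped $\O$, the other two corners being slit vertices of $\pa\O$). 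For such a pair the harmonic measures are not uniformly comparable at all, so neither Harnack (Proposition~\ref{Proposition:Harnack} needs harmonicity in a $\rho$-times larger disc, hence only covers faces well inside $\O$, as you note) nor one-step chaining produces a common $u$. Moreover, nothing in the KKM lemma lets you avoid such faces: on a pinch face the convex-combination interpolation mixes values coming from the two sides of the bottleneck, which bear no harmonic relation to each other, so a spurious triple point of $\wt f_1,\wt f_2,\wt f_3$ can appear there even though no vertex of $F$, and no interior vertex within bounded $\O$-distance of it, satisfies the conclusion. As written the argument therefore does not prove the lemma; the missing content is precisely a guarantee that some triple point can be chosen in a cell whose relevant vertices are joined by uniformly short paths inside $\Int\O$.

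For contrast, the paper avoids this by a one-dimensional continuity argument along the level set $\O_{[ab]}^{\,\sigma}=\{u:\hm{\O}{u}{[ab]_\O}\ge\sigma\}$: one walks along the boundary arc $L_{[ba]}^{\,\sigma}$ of this simply connected region from a vertex next to $b$ to a vertex next to $a$. There, consecutive boundary edges of $\O_{[ab]}^{\,\sigma}$ are linked around a common face of $\G$ by a uniformly bounded number of edges of $\O_{[ab]}^{\,\sigma}$ itself, i.e.\ through vertices that are interior in $\O$, so the comparability $\hm{\O}{y_\mathrm{int}}{[bc]_\O}\asymp\hm{\O}{y'_\mathrm{int}}{[bc]_\O}$ is legitimate; this interior-connectivity input is exactly what your face-based merging lacks. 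Your preliminary observations (the inequality $f_1+f_2+f_3\ge 1$, the covering by the sets $\{\wt f_i\ge\tfrac13\}$, the KKM boundary conditions) are fine modulo routine care with interpolation on non-triangular faces and with the fact that the boundary vertices are not literally contained in the polygonal representation, but by themselves they do not yield a single vertex where all three harmonic measures are bounded below.
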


%f3 #&#
\begin{figure}

\includegraphics{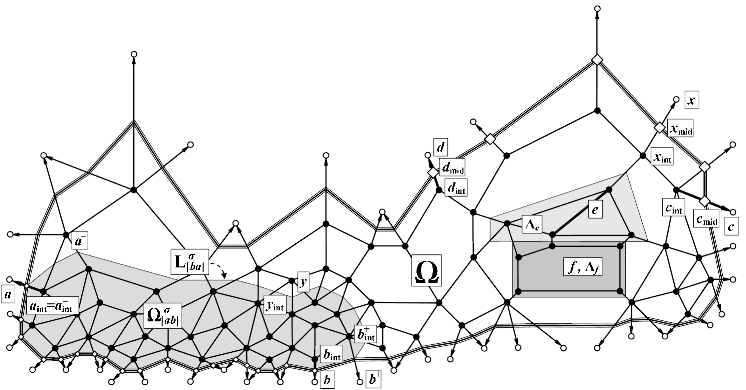}

\caption{An example of a simply connected discrete domain $\O$ and
its polygonal
representation (see Section \protect\ref{Sect:ExtremalLength}) with four
boundary points $a,b,c,d$ listed counterclockwise. Along the boundary
arcs $[ab]_\O$ and $[cd]_\O$, the midpoints $x_\mathrm{mid}$ of edges
$(x_\mathrm{int}x)$ are marked by small rhombii. In the left part of
$\O
$, the notation from the proof of Lemma \protect\ref{Lemma:HMuabc} is
shown: a
subdomain $\O_{[ab]}^{ \sigma}\ss\O$, the path $\rL_{[ba]}^{
\sigma}$
and the vertices $b^+,y,a^-$ on this path. In the right part of $\O$,
the notation from the proof of Proposition \protect\ref
{Prop:ELdiscr=ELcont} is
shown: the neighborhoods $\Lambda_e$, $\Lambda_f$ of an edge $e$ and a
face $f$, respectively.}
\label{Fig: domain-abcd+L}
\end{figure}

\begin{pf}
Recall that the ``no flat angles'' assumption (see Section~\ref
{SubSect:GraphAssumptions}) guarantees that all degrees of faces of $\G
$ are uniformly bounded. Let %$\sigma$ be sufficiently small and
\[
\Int\O_{[ab]}^{ \sigma}:=\bigl\{u\in\Int\O\dvtx
\omega_\O\bigl(u;[ab]_\O\bigr) \ge\sigma\bigr\}.
\]
If $\sigma$ is chosen small enough (independently of $\O,a,b$ and $c$),
then $\O_{[ab]}^{ \sigma}$ contains all the vertices of faces touching
$[ab]_\O$ and hence is connected (which means that $\Int\O_{[ab]}^{
\sigma}$ is a connected subgraph of $\Gamma$). Moreover $\O_{[ab]}^{
\sigma}$ is always simply connected due to the maximum principle. Let
\[
L_{[ba]}^{ \sigma}:=\pa\O_{[ab]}^{ \sigma}
\setminus[ab]_\O =(ba)_{\O
_{[ab]}^{ \sigma}}=\bigl[b^+a^-
\bigr]_{\O_{[ab]}^{ \sigma}} ,
\]
where $b^+\in L_{[ba]}^{ \sigma}$ denotes the next vertex on $\pa\O
_{[ab]}^{ \sigma}$ after $b$, and $a^-\in L_{[ba]}^{ \sigma}$ is the
vertex just before $a$ when going along $\pa\O_{[ab]}^{ \sigma}$
counterclockwise; see Figure~\ref{Fig: domain-abcd+L}. For $y\in
L_{[ba]}^{ \sigma}$, let $y_\mathrm{int}\in\Int\O_{[ab]}^{ \sigma}$
be the corresponding inner vertex. Then, for all $y\in L_{[ba]}^{
\sigma}$, one has
\begin{eqnarray*}
\omega_\O\bigl(y_\mathrm{int};[bc]_\O\bigr) +
\omega_\O\bigl(y_\mathrm{int};[ca]_\O\bigr) & \ge&
\omega_\O\bigl(y_{\mathrm{int}};(ba)_\O\bigr) \ge \const
\cdot\, \omega_\O\bigl(y;(ba)_\O\bigr)
\\
&=&\const\cdot\,
\bigl(1- \omega_\O\bigl(y;[ab]_\O\bigr) \bigr)\ge\const
\cdot\,(1 - \s)
\end{eqnarray*}
since, by definition, $y\notin\Int\O_{[ab]}^{ \sigma}$ implies $\hm
{\O}{y}{[ab]_\O}<\s$. Further, for any two consecutive vertices
$y,y'\in L_{[ba]}^{ \sigma}$, the corresponding vertices $y_\mathrm
{int}$ and $y'_\mathrm{int}$ share a face of $\G$. This implies
\[
\omega_\O\bigl(y'_\mathrm{int};[bc]_\O
\bigr) \asymp \omega_\O\bigl(y_\mathrm {int};[bc]_\O
\bigr) \quad\mbox{and}\quad \omega_\O\bigl(y'_\mathrm{int};[ca]_\O
\bigr) \asymp \omega_\O\bigl(y_\mathrm {int};[ca]_\O
\bigr).
\]
On the other hand, $\hm\O{b^+_\mathrm{int}}{[bc]_\O}\ge\const$
and $\hm
\O{a^-_\mathrm{int}}{[ca]_\O}\ge\const$ due to the same argument (e.g.,
$b^+_\mathrm{int}$ shares a face with $b_\mathrm{int}$). Therefore,
observing $L_{[ba]}^{ \sigma}$ step by step, one can find $y\in
L_{[ba]}^{ \sigma}$ such that both
$\hm{\O}{y_\mathrm{int}}{[bc]_\O}$ and $\hm{\O}{y_\mathrm
{int}}{[ca]_\O
}$ are bounded below by some constant independent of $\O,a,b$ and $c$.
Let $u:=y_\mathrm{int}$. To complete the proof, note that $\hm{\O
}u{[ab]_\O}\ge\s$ as $u\in\Int\O_{[ab]}^{ \sigma}$.
\end{pf}

%th3.5 #&#
\begin{theorem}
\label{Thm:Zfact} Let $\O$ be a simply connected discrete domain and
boundary points $a,b,c\in\pa\O$ be listed
counterclockwise. Then, the following double-sided estimate is fulfilled:
%
%e3.12 #&#
\begin{equation}
\label{Zfact}
\ZRWsign_{\O}\bigl(a;[bc]_\O\bigr)
\asymp\biggl[\frac{\ZRW{\O}ab
\ZRW{\O
}ac}{\ZRW{\O}bc}
\biggr]^{{1} /{2}},
\end{equation}
with some uniform (i.e., independent of $\O,a,b,c$) constants.
\end{theorem}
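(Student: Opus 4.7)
The three bullets above the theorem provide the roadmap; I will simply execute them. First, Lemma~\ref{Lemma:HMuabc} supplies a single interior vertex $u\in\Int\O$ at which each of the three counterclockwise arcs $[ab]_\O$, $[bc]_\O$, $[ca]_\O$ has harmonic measure at least $\sigma_0$. This $u$ will serve as the reference ``center'' in every factorization below.

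For each pair $(a,b),(a,c),(b,c)$, the two complementary arcs are either one of the three basic arcs or a union of two of them, so in every case both complementary harmonic measures from $u$ are bounded below by $\sigma_0$. Proposition~\ref{Prop:RWintersectsBu} therefore gives $\P_\O^{a,b}[\BB\O u]$, $\P_\O^{a,c}[\BB\O u]$, $\P_\O^{b,c}[\BB\O u]\ge\const$, and Proposition~\ref{Prop:ZuaZab/Zab=P} converts each of these into
\[
\ZRW\O a b\asymp \ZRW\O u a\,\ZRW\O u b,\quad \ZRW\O a c\asymp \ZRW\O u a\,\ZRW\O u c,\quad \ZRW\O b c\asymp \ZRW\O u b\,\ZRW\O u c.
\]
Multiplying the first two and dividing by the third yields $(\ZRW\O u a)^2\asymp \ZRW\O a b\,\ZRW\O a c/\ZRW\O b c$.

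It remains to identify $\ZRW\O a{[bc]_\O}$ with $\ZRW\O u a$ up to uniform constants. Proposition~\ref{Prop:ZuaZab/Zab=P} extends to a boundary set in the second slot by summation: collecting the pointwise estimate $\ZRW\O u a\,\ZRW\O u y\asymp \P_\O^{a,y}[\BB\O u]\cdot \ZRW\O a y$ over $y\in[bc]_\O$ gives
\[
\ZRW\O u a\cdot \ZRW\O u{[bc]_\O}\asymp \P\bigl[\RW\O a{[bc]_\O}\cap \Int\BB\O u\ne\es\bigr]\cdot \ZRW\O a{[bc]_\O}.
\]
For every $y\in[bc]_\O$ the arcs $[ay]_\O$ and $[ya]_\O$ respectively contain $[ab]_\O$ and $[ca]_\O$, so Proposition~\ref{Prop:RWintersectsBu} gives $\P_\O^{a,y}[\BB\O u]\ge\const$ uniformly in $y$; weighted-averaging against $\mathrm{w}(\gamma)$ transfers this lower bound to the set version. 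Finally $\ZRW\O u{[bc]_\O}=\sum_y\mu_y^{-1}\hm\O u y$ is squeezed between $\varpi_0\,\hm\O u{[bc]_\O}$ and $\varpi_0^{-1}$, hence $\ZRW\O u{[bc]_\O}\asymp 1$. Combining these observations, $\ZRW\O a{[bc]_\O}\asymp \ZRW\O u a\asymp [\ZRW\O a b\,\ZRW\O a c/\ZRW\O b c]^{1/2}$, as claimed. The only point that demands even a moment's care is the ``boundary set'' extension of Proposition~\ref{Prop:ZuaZab/Zab=P}, but this is immediate from the pointwise version by summation; no further geometric input is needed, since all such input has already been absorbed into Propositions~\ref{Prop:ZuaZab/Zab=P} and~\ref{Prop:RWintersectsBu} and into Lemma~\ref{Lemma:HMuabc}.
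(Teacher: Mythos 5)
Your proposal is correct and follows essentially the same route as the paper's proof: choosing the vertex $u$ from Lemma~\ref{Lemma:HMuabc} and then applying Propositions~\ref{Prop:ZuaZub/Zab=P} and~\ref{Prop:RWintersectsBu} pointwise (uniformly in $y\in[bc]_\O$, since $[ay]_\O\supset[ab]_\O$ and $[ya]_\O\supset[ca]_\O$) to show both sides of (\ref{Zfact}) are comparable to $\ZRW{\O}ua$, using $\ZRW\O u{[bc]_\O}\asymp\hm\O u{[bc]_\O}\asymp 1$. The brief detour through a ``set version'' of Proposition~\ref{Prop:ZuaZub/Zab=P} is harmless but unnecessary, as your uniform pointwise bound already suffices.
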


\begin{pf} Due to Lemma~\ref{Lemma:HMuabc}, one can find an inner
vertex $u\in\Int\O$ such that
all $\hm{\O}u{[ab]_\O},\hm{\O}u{[bc]_\O},\hm{\O}u{[ca]_\O}\ge
\sigma_0$,
where the constant \mbox{$\sigma_0>0$} is independent of $\O,a,b$ and
$c$. Note that, for any $x\in
[bc]_\O$, one has
\[
\omega_\O\bigl(u;[ax]_\O\bigr) \ge \omega_\O
\bigl(u;[ab]_\O\bigr) \ge\sigma_0 \quad\mbox{and}\quad
\omega_\O\bigl(u;[xa]_\O\bigr) \ge \omega_\O
\bigl(u;[ca]_\O\bigr) \ge\sigma_0.
\]
Therefore, Propositions \ref{Prop:ZuaZub/Zab=P} and \ref
{Prop:RWintersectsBu} imply
\begin{eqnarray*}
{\ZRWsign_{\O}\bigl(a;[bc]_\O\bigr)}
&=&\sum_{x\in[bc]_\O}\ZRW{\O}ax
 \asymp \sum_{x\in[bc]_\O}\ZRW{\O}ua\ZRW{\O}ux
\\
&= &\ZRW{
\O}ua
\ZRWsign_{\O}\bigl(u;[bc]_\O\bigr)
\asymp\ZRW{\O}ua,
\end{eqnarray*}
where we have used $\ZRW{\O}u{[bc]_\O}\asymp\hm{\O}u{[bc]_\O
}\asymp1$.
Similarly,
\begin{eqnarray*}
\biggl[\frac{\ZRW{\O}ab \ZRW{\O}ac}{\ZRW{\O}bc} \biggr]^{ {1}/ {2}}&\asymp& \biggl[
\frac{\ZRW{\O}ua\ZRW{\O}ub\cdot
\ZRW{\O}ua\ZRW{\O}uc}{\ZRW{\O}ub\ZRW{\O}uc}\biggr]^{{1}/ {2}}\\
&=& \ZRW{\O}ua.
\end{eqnarray*}
Thus, both parts of (\ref{Zfact}) are uniformly comparable to $\ZRW
{\O}ua$.
\end{pf}

%s4 #&#
\section{Discrete cross-ratios}
\label{Sect:XYandZ} %\setcounter{equation}{0}

The main purpose of this section is to obtain a uniform double-sided
estimate (\ref{ZlogYestim}) relating discrete analogues of two
conformal invariants defined for a simply connected discrete domain $\O
$ with four marked boundary points $a,b,c,d$: discrete cross-ratio $\xY
\O abcd$ (see Definition~\ref{Def:CrossRatio}) and the total partition
function $\ZRW\O{[ab]_\O}{[cd]_\O}$ of random walks connecting two
opposite boundary arcs. Note that the cross-ratio $\xYsign_\O$ changes
to its reciprocal when replacing boundary arcs $[ab]_\O$ and $[cd]_\O$
by ``dual'' ones ($[bc]_\O$ and $[da]_\O$), while the corresponding
change of $\ZRWsign_\O$ is more sophisticated; see (\ref{ZlogYestim}).

%\subsection{Monotonicity of $\mathbf{\ZRW{\O}xa/\ZRW{\O}xb}$ on the
%boundary and definition of discrete cross-ratios} \label{SubSect:DefXY}

Let two points $a,b$ (or, more generally, two disjoint arcs
$A=[a_1a_2]_\O$, $B=[b_1b_2]_\O$) on the boundary of a simply connected
discrete domain $\O$ be fixed. Then one can use the ratio $\ZRW\O xa /
\ZRW\O xb$ in order to ``track'' the position of $x$ with respect to
$a,b$. Being considered on $\pa\O$, this ratio has a monotonicity
property (see Lemma~\ref{Lemma:R-monotone} below), which allows one to
use it as a ``parametrization'' of $\pa\O$ between $A$ and $B$. Namely,
for $x\in\pa\O$, denote
\[
%\begin{equation} \label{RRdef}
{ \RRsign_\O(x;A,B) }:=\frac{\ZRW\O{x}A}{\ZRW\O{x}B}.
\]
%
%\end{equation}

%le4.1 #&#
\begin{lemma}
\label{Lemma:R-monotone} Let $\O$ be a simply connected discrete domain
and $A=[a_1a_2]_\O$, \mbox{$B=[b_1b_2]_\O$} denote two
disjoint boundary arcs of $\O$. Then the ratio $\RR\O\ccdot AB$
decreases along the boundary arc
$[a_2b_1]_\O$ and increases along the boundary arc $[b_2a_1]_\O$.
\end{lemma}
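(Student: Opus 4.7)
The plan is to reduce the claim to the sign pattern of an auxiliary discrete harmonic function. For $x\in\pa\O\setminus(A\cup B)$, decomposing every path in $S_\O(x;y)$ by its initial step gives $\ZRW\O xy=\varpi_{xx_\mathrm{int}}\ZRW\O{x_\mathrm{int}}y$, so $\RR\O{x}AB=h_A(x_\mathrm{int})/h_B(x_\mathrm{int})$, where $h_A(v):=\ZRW\O{v}{A}$ and $h_B(v):=\ZRW\O{v}{B}$ are the positive discrete harmonic functions in $\O$ (cf.~Remark~\ref{Rem:Z=Harm}) with boundary values $\mu_y^{-1}$ at $y\in A$ (resp.\ $y\in B$) and zero elsewhere on $\pa\O$.

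For any $\lambda>0$, consider $F_\lambda:=h_A-\lambda h_B$, discrete harmonic in $\O$ with boundary values that are strictly positive on $A$, strictly negative on $B$, and identically zero on $(a_2b_1)_\O\cup(b_2a_1)_\O$. Decreasing monotonicity of $\RR\O\ccdot AB$ along the open arc $(a_2b_1)_\O$ is equivalent to the statement that, for every $\lambda>0$, the set $\{x\in(a_2b_1)_\O:F_\lambda(x_\mathrm{int})>0\}$ is an initial segment of $(a_2b_1)_\O$ in the counterclockwise order from $a_2$ to $b_1$. The endpoints $a_2\in A$ and $b_1\in B$ are handled separately by direct inspection: the trivial-path contribution $\mu_{a_2}^{-1}$ (resp.\ $\mu_{b_1}^{-1}$) in $\ZRW\O{a_2}{A}$ (resp.\ $\ZRW\O{b_1}{B}$) pushes $\RR\O{a_2}AB$ strictly above, and $\RR\O{b_1}AB$ strictly below, the inner-neighbor ratios at the adjacent vertices.

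I would prove the key claim by contradiction: suppose $x$ precedes $x''$ on $(a_2b_1)_\O$ with $F_\lambda(x_\mathrm{int})<0$ and $F_\lambda(x''_\mathrm{int})>0$, and let $C\ni x_\mathrm{int}$ and $C''\ni x''_\mathrm{int}$ be the corresponding connected components of $\{F_\lambda<0\}\cap\Int\O$ and $\{F_\lambda>0\}\cap\Int\O$; these sets are disjoint. A standard discrete minimum-principle argument --- the minimum of $F_\lambda$ on the closure of $C$ is attained on $\pa C$, but the only boundary vertices of $\O$ where $F_\lambda<0$ lie in $B$ --- forces $C$ to have a neighbor in $B$; symmetrically, $C''$ has a neighbor in $A$. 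One can therefore select a nearest-neighbor path $\pi_1$ from $x$ to some $b\in B$ whose interior is contained in $C$, and $\pi_2$ from $x''$ to some $a\in A$ with interior in $C''$. Since the four endpoints appear on $\pa\O$ in the interlocking cyclic order $a,x,x'',b$ and $\O$ is a simply connected subdomain of the planar graph $\G$, these two paths must share at least one vertex, contradicting $C\cap C''=\es$. The argument for increasing monotonicity along $[b_2a_1]_\O$ is fully symmetric.

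The main obstacle is the last topological step: giving a clean discrete-combinatorial proof that two paths joining four interlocking boundary points of a simply connected planar domain must cross. This is a standard discrete Jordan-curve-type fact, but it needs to be deduced carefully from the simple-connectedness assumption on $\O$ (Sect.~\ref{SubSect:DiscreteDomains}) and the planarity of the embedding $\G\ss\C$.
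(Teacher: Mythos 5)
Your proposal is correct in substance and rests on the same engine as the paper's proof --- the discrete harmonic function $h_A-\lambda h_B$ (the paper's $H_t$) together with the maximum principle and a planarity argument --- but the way you finish is genuinely different. The paper fixes a boundary point $x$, tunes $t=t_x$ so that $H_{t_x}(x_\mathrm{int})=0$, produces a \emph{single} path from $x_\mathrm{int}$ to $A$ along which $H_{t_x}\ge 0$, and uses it as a barrier: since $\O$ is simply connected, this path separates the boundary points of $[a_2x)_\O$ from $B$, and the maximum principle in the cut-off region gives $H_{t_x}(y_\mathrm{int})\ge 0$ there, hence $\RR\O yAB\ge t_x\ge\RR\O xAB$. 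You instead fix an arbitrary level $\lambda$, extract a negative and a positive sign component, attach them to $B$ and $A$ respectively by the minimum/maximum principle, and derive a contradiction from \emph{two} disjoint paths joining interlocking boundary points. The two topological inputs are of equal strength (one-path separation versus two-path crossing), and the paper itself invokes the crossing fact elsewhere ``due to topological reasons'', so your reliance on it is consistent with the paper's level of rigor; working in the polygonal representation $\O^\C$, where boundary points are midpoints of boundary edges and the curves never reach the outer vertices, also disposes of the worry that two distinct boundary points may sit at the same vertex of $\G$. What the paper's tuning $t_x$ buys is a uniform treatment of the \emph{closed} arc: the extra boundary terms $\mu_{a_2}^{-1}\1[y=a_2]$ and $-t\mu_{b_1}^{-1}\1[x=b_1]$ enter with the favorable sign, so $a_2$ and $b_1$ require no separate case.

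The one point you should tighten is exactly that endpoint case. As written, ``the trivial-path contribution pushes $\RR\O{a_2}AB$ strictly above the inner-neighbor ratios at the adjacent vertices'' only gives $\RR\O{a_2}AB>h_A(a_{2,\mathrm{int}})/h_B(a_{2,\mathrm{int}})$, which does not by itself yield $\RR\O{a_2}AB\ge\RR\O yAB$ for \emph{all} later $y$ on the arc (your initial-segment claim compares boundary points of the open arc, not the interior vertex $a_{2,\mathrm{int}}$ with them). The fix is immediate with your own machinery: if $\RR\O{a_2}AB<\RR\O{x''}AB$ for some $x''\in(a_2b_1)_\O$, pick $\lambda$ strictly in between; then $F_\lambda(a_{2,\mathrm{int}})<-\mu_{a_2}^{-1}/\varpi_{a_2a_{2,\mathrm{int}}}<0$, and the same two-component crossing contradiction applies with $x:=a_2$ (the attachment point of the positive component to $A$ cannot be the boundary point $a_2$ itself, since that would force $a_{2,\mathrm{int}}$ into both components); the argument at $b_1$ is symmetric.
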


%re4.2 #&#
\begin{remark}
In particular, if $A=\{a\}$ and $B=\{b\}$ are just single boundary
points, then $\RR\O\ccdot
ab$ attains its maximal and minimal values on $\pa\O$ at $a$ and $b$,
respectively, being
monotone on both boundary arcs $[ab]_\O$ and $[ba]_\O$.
\end{remark}

\begin{pf}
Similar to the proof of Remark~\ref{Rem:Z=Harm}(i), for any given
$t>0$, we define a discrete harmonic (in $\O$) function
\[
H_t(u):= %
\cases{ \ZRW\O{u}A-t\ZRW\O{u}B, &\quad $u\in\Int\O$,
\vspace*{2pt}\cr
\mu_u^{-1}\bigl(\1 _A(u)-t
\1_B(u)\bigr), &\quad $u\in\pa\O$.} %
\]
Note that, for any $x\in\pa\O$, one has
\[
\ZRW\O{x}A-t\ZRW\O{x}B=H_t(x)+\varpi_{xx_\mathrm
{int}}H_t(x_\mathrm{int}).
\]

For a given boundary point $x\in(a_2b_1]_\O$, let $t_x>0$ be chosen
so that
\mbox{$H_{t_x}(x_\mathrm{int})=0$} [if $x\in(a_2b_1)_\O$, this means
$\RR\O xAB=t_x$ as $H_{t_x}(x)=0$, while $\RR\O
{b_1}AB<t_{b_1}$].

The function $H_{t_x}$ is discrete harmonic in $\O$, vanishes on
$\pa\O\setminus(A\cup B)$, is strictly positive on $A$ and strictly
negative on $B$. Therefore, there exists
a nearest-neighbor path $\gamma_{xA}$ running from $x_\mathrm{int}$ to
$A$ such that
\mbox{$H_{t_x}\ge0$} along $\gamma_{xA}$. Due to the maximum
principle, this implies
$H_{t_x}(y_\mathrm{int})\ge0$ for all intermediate boundary points
$y\in[a_2x)_\O$. In other
words,
\begin{eqnarray}
\ZRW\O{y}A-t_x\ZRW\O{y}B= \mu_{a_2}^{-1}\1[y =
a_2]+\varpi _{yy_\mathrm
{int}}H_{t_x}(y_\mathrm{int})
\ge 0 \nonumber\\
\eqntext{\mbox{for all } y\in[a_2x)_\O.}
\end{eqnarray}
Thus $\RR\O y AB \ge t_x \ge\RR\O xAB$ for all $y\in[a_2x)_\O$, which
means that $\RR\O\ccdot AB$ decreases
along $[a_2b_1]_\O$. The proof for the other boundary arc $[b_2a_1]_\O$
is similar.
\end{pf}

%de4.3 #&#
\begin{definition}
\label{Def:CrossRatio} Let $\O$ be a simply connected discrete domain
and boundary points $a,b,c,d\in\pa\O$ be
listed counterclockwise. We define their \emph{discrete cross-ratios} by
\begin{eqnarray*}
\xXsign_{\O}(a,b;c,d)&:=&\biggl[\frac{\ZRW{\O}ac\cdot\ZRW{\O
}bd}{\ZRW{\O}ab\cdot
\ZRW{\O}cd}
\biggr]^{{1} /{2}};
\\
\xYsign_\O(a,b;c,d) &:=&\biggl[\frac{\ZRW{\O}ad\cdot\ZRW{\O
}bc}{\ZRW{\O}ab\cdot
\ZRW{\O}cd}
\biggr]^{{1}/ {2}}.
\end{eqnarray*}
\end{definition}

%re4.4 #&#
\begin{remark}
Since $a,b,c,d$ are listed counterclockwise, Lemma~\ref
{Lemma:R-monotone} implies
\begin{eqnarray*}
\xXsign_{\O}(a,b;c,d)=\biggl[\frac{\RR{\O}acb}{\RR{\O}dcb}\biggr]^
{{1}/{2}}& \le&1 \quad\mbox{and}\\
 \frac{\xX{\O}abcd}{\xY{\O}abcd}=\biggl[\frac{\RR{\O}acd}{\RR{\O
}bcd}
\biggr]^{{1}/ {2}}& \le&1.
\end{eqnarray*}
Note that the cross-ratio $\xX{\O}abcd$ admits the following
probabilistic interpretation:
\[
\bigl(\xXsign_\O(a,b;c,d)\bigr)^2= \P\bigl[
\RWsign_{\O}(a;d) \cap \RWsign_{\O}(b;c) \ne\es\bigr].
\]
Indeed, any random walks running from $a$ to $c$ and from $b$ to $d$ in
$\O$ have to intersect for topological reasons. Rearranging the tails
of those walks after they meet, it is easy to see that $\ZRW{\O
}ac\cdot
\ZRW{\O}bd$ can be rewritten as a partition function of pairs of random
walks running from $a$ to $d$ and from $b$ to $c$ in $\O$ that
intersect each other.
\end{remark}

We include the exponent $\frac{1}{2}$ in Definition~\ref
{Def:CrossRatio} for two
(clearly related) reasons: first, it simplifies several double-sided
estimates given below,
and second, it makes the notation closer to the standard continuous
setup. Indeed, the
continuous analogue of the partition function $\ZRW{\O}ab$ for the
upper half-plane $\H$ (up
to a multiplicative constant) is given by $(b - a)^{-2}$, so the
quantities $\xXsign_\O$ and
$\xYsign_\O$ introduced above are ``discrete versions in $\O$'' of the
usual cross-ratios
\[
x_{\H}(a,b;c,d):=\frac{(b - a)(d - c)}{(c - a)(d - b)} \quad\mbox{and}\quad
 y_{\H}(a,b;c,d):=
\frac{(b - a)(d - c)}{(d - a)(c - b)}.
\]
In the continuous setup, the following is fulfilled:
%\begin{equation} \label{XwtXcont}
$(x_{\H}(a,b;c,d))^{-1} \equiv1+(y_{\H}(a,b;c,d))^{-1}$.
%\end{equation}
One clearly cannot hope that the same \emph{identity} remains valid on
the discrete level for
\emph{all} $\O$'s (even, say, if $\G$ is the standard square grid).
Nevertheless, below we
prove that the similar \emph{uniform double-sided estimate}
holds true for the discrete cross-ratios, with constants, in general,
depending on parameters fixed in assumptions (a)--(d) but \emph{not} on
the configuration $(\O;a,b,c,d)$ or the underlying graph $\G$ structure.

%pr4.5 #&#
\begin{proposition}
\label{Prop:XwtXestim} Let $\O$ be a simply connected discrete domain
and $a,b,c,d\in\pa\O$ be
listed counterclockwise. Then, the following double-sided estimate
holds true:
%
%e4.1 #&#
\begin{equation}
\label{XwtXestim} \bigl( \xXsign_{\O}(a,b;c,d)\bigr)^{-1}
\asymp1+\bigl( \xYsign_\O(a,b;c,d) \bigr)^{-1},
\end{equation}
with some uniform (i.e., independent of $\O,a,b,c,d$) constants.
\end{proposition}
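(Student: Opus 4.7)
My strategy is to derive the estimate algebraically from three applications of the factorization Theorem~\ref{Thm:Zfact} combined with the arc decomposition $[bd]_\O = [bc]_\O \cup [cd]_\O$. Throughout, I abbreviate $\xXsign_\O:=\xX{\O}abcd$ and $\xYsign_\O:=\xY{\O}abcd$.

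First, since $a,b,c,d$ are listed counterclockwise, so are each of the triples $(a,b,c)$, $(a,c,d)$ and $(a,b,d)$, and Theorem~\ref{Thm:Zfact} yields
\[
\ZRW{\O}a{[bc]_\O}\!\asymp\!\lt[\frac{\ZRW{\O}ab\ZRW{\O}ac}{\ZRW{\O}bc}\rt]^{\!\frac{1}{2}}\!,~~
\ZRW{\O}a{[cd]_\O}\!\asymp\!\lt[\frac{\ZRW{\O}ac\ZRW{\O}ad}{\ZRW{\O}cd}\rt]^{\!\frac{1}{2}}\!,~~
\ZRW{\O}a{[bd]_\O}\!\asymp\!\lt[\frac{\ZRW{\O}ab\ZRW{\O}ad}{\ZRW{\O}bd}\rt]^{\!\frac{1}{2}}\!.
\]
Since $[bd]_\O = [bc]_\O \cup [cd]_\O$ with $[bc]_\O \cap [cd]_\O = \{c\}$, summation over boundary vertices gives $\ZRW{\O}a{[bd]_\O} = \ZRW{\O}a{[bc]_\O} + \ZRW{\O}a{[cd]_\O} - \ZRW{\O}ac$; since $\ZRW{\O}ac$ is dominated by either of the arc partition functions (each of which already contains the $c$-term), this yields $\ZRW{\O}a{[bd]_\O} \asymp \ZRW{\O}a{[bc]_\O} + \ZRW{\O}a{[cd]_\O}$.

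Plugging the three factorizations into this comparison and using the elementary fact $(A+B)^2 \asymp A^2 + B^2$ for $A,B \ge 0$, I obtain
\[
\frac{\ZRW{\O}ab\ZRW{\O}ad}{\ZRW{\O}bd} \,\asymp\, \frac{\ZRW{\O}ab\ZRW{\O}ac}{\ZRW{\O}bc} + \frac{\ZRW{\O}ac\ZRW{\O}ad}{\ZRW{\O}cd}.
\]
Dividing both sides by $\ZRW{\O}ac$ and then multiplying by $\ZRW{\O}cd/\ZRW{\O}ad$ puts this in the form
\[
\frac{\ZRW{\O}ab\ZRW{\O}cd}{\ZRW{\O}ac\ZRW{\O}bd} \,\asymp\, 1 + \frac{\ZRW{\O}ab\ZRW{\O}cd}{\ZRW{\O}ad\ZRW{\O}bc},
\]
which is precisely $\xXsign_\O^{-2} \asymp 1 + \xYsign_\O^{-2}$. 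Taking square roots and using $\sqrt{1+t^2}\asymp 1+t$ for $t\ge 0$ delivers the desired relation~(\ref{XwtXestim}).

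The entire argument rests on Theorem~\ref{Thm:Zfact} as a black box; no new analytic input is needed. The only slightly delicate point is the decomposition step, where one must verify that the single-vertex overlap at $c$ is harmless — but this is immediate from $\ZRW{\O}ac \le \ZRW{\O}a{[bc]_\O},\ZRW{\O}a{[cd]_\O}$. Everything else is transparent algebra, which is natural given that Proposition~\ref{Prop:XwtXestim} is a discrete analogue of the polynomial identity $(c\!-\!a)(d\!-\!b) = (b\!-\!a)(d\!-\!c) + (d\!-\!a)(c\!-\!b)$ underlying the continuous cross-ratio relation $x_\H^{-1} \equiv 1 + y_\H^{-1}$.
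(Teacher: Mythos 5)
Your proof is correct and follows essentially the same route as the paper: the same decomposition $\ZRW{\O}a{[bd]_\O}\asymp\ZRW{\O}a{[bc]_\O}+\ZRW{\O}a{[cd]_\O}$ (with the same observation that the overlap at $c$ is harmless) combined with three applications of Theorem~\ref{Thm:Zfact}. The paper simply divides the resulting relation by $[\ZRW{\O}ab\,\ZRW{\O}ac\,\ZRW{\O}ad]^{1/2}$ and reads off (\ref{XwtXestim}) directly at the square-root level, whereas you square via $(A\!+\!B)^2\asymp A^2\!+\!B^2$ and then take roots via $\sqrt{1+t^2}\asymp 1+t$ --- an equivalent, if slightly more roundabout, piece of algebra.
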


\begin{pf}
We apply factorization (\ref{Zfact}) to both sides of the trivial estimate
\[
\ZRWsign_{\O}\bigl(a;[bd]_\O\bigr)
\asymp
\ZRWsign_{\O}\bigl(a;[bc]_\O\bigr)
+
\ZRWsign_{\O}\bigl(a;[cd]_\O\bigr)
,
\]
which is almost an identity besides the term $\ZRW{\O}ac$, counted once
on the left-hand side
and twice on the right-hand side. It is easy to check that, dividing by
$[\ZRW{\O}ab\ZRW{\O}ac\ZRW{\O}ad]^{1/2}$, one obtains the following
double-sided estimate:
%
%e4.2 #&#
\begin{eqnarray}
\label{xXwtXestim} &&\biggl[\frac{1}{\ZRW{\O}ac\ZRW{\O}bd}\biggr]^{ {1}/ {2}}
\nonumber
\\[-8pt]
\\[-8pt]
\nonumber
&&\qquad\asymp
\biggl[\frac{1}{\ZRW{\O}ad\ZRW{\O}bc}\biggr]^{{1}/ {2}} + \biggl[\frac{1}{\ZRW{\O}ab\ZRW{\O}cd}
\biggr]^{{1} /{2}} ,
\end{eqnarray}
which is equivalent to (\ref{XwtXestim}).
\end{pf}

%re4.6 #&#
\begin{remark}
\label{Rem:XY(Z)comparable}
It immediately follows from (\ref{XwtXestim}) that $\xX{\O
}abcd\asymp\xY
{\O}abcd$, if $\xYsign_{\O}\le\const$ (which means that arcs
$[ab]_\O$
and $[cd]_\O$ are ``not too close'' in $\O$). Moreover, the next
Proposition shows that, in this case, $\ZRW{\O}{[ab]_\O}{[cd]_\O
}\asymp
\xY{\O}abcd$ as well, since $\ZRWsign_\O$ is always squeezed (up to
multiplicative constants) by $\xXsign_\O$ and $\xYsign_\O$.
\end{remark}

%pr4.7 #&#
\begin{proposition}
\label{Prop:ZabcdXwtXestim} Let $\O$ be a simply connected discrete
domain and boundary points
$a,b,c,d\in\pa\O$ be listed counterclockwise. Then the following
estimates are
fulfilled:
%
%e4.3 #&#
\begin{equation}
\label{ZabcdXwtXestim} \const\cdot\, \xXsign_\O(a,b;c,d)\le
\ZRWsign_{\O}\bigl([ab]_\O;[cd]_\O\bigr)
\le\const\cdot\, \xYsign_\O(a,b;c,d)
,
\end{equation}
with some uniform (i.e., independent of $\O,a,b,c,d$) constants.
\end{proposition}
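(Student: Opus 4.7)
The argument is built from two applications of Theorem~\ref{Thm:Zfact} in each direction; the upper bound is essentially an exercise in Cauchy--Schwarz, while the lower bound is more delicate and exploits the monotonicity of Lemma~\ref{Lemma:R-monotone}.

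\emph{Upper bound.} For each $x\in[ab]_\O$, Theorem~\ref{Thm:Zfact} applied to the counterclockwise triple $(x;c,d)$ gives $\ZRW\O x{[cd]_\O}\le\const\cdot\sqrt{\ZRW\O xc\cdot\ZRW\O xd/\ZRW\O cd}$. Summing over $x$ and applying the Cauchy--Schwarz inequality to bring the sum inside the square root yields
\[
\ZRW\O{[ab]_\O}{[cd]_\O}\ \le\ \frac{\const}{\sqrt{\ZRW\O cd}}\sqrt{\ZRW\O{[ab]_\O}c\cdot\ZRW\O{[ab]_\O}d}.
\]
A second application of Theorem~\ref{Thm:Zfact} to $\ZRW\O c{[ab]_\O}$ and $\ZRW\O d{[ab]_\O}$ via the triples $(c;a,b)$ and $(d;a,b)$, together with routine algebra, produces $\ZRWsign_\O\le\const\cdot(\xXsign_\O\xYsign_\O)^{1/2}\le\const\cdot\xYsign_\O$, where the final inequality uses $\xXsign_\O\le\xYsign_\O$ (the remark after Definition~\ref{Def:CrossRatio}).

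\emph{Lower bound.} Here no reverse Cauchy--Schwarz is available, and one must exploit the fact (Lemma~\ref{Lemma:R-monotone}) that the ratio $\ZRW\O\cdot c/\ZRW\O\cdot d$ is monotone on $[ab]_\O$. Pick a vertex $b^*\in[ab]_\O$ where this ratio is closest to $1$, so that $\ZRW\O{b^*}c\asymp\ZRW\O{b^*}d$. On $[ab^*]_\O$ one then has $\ZRW\O xc\le\ZRW\O xd$, whence $\sqrt{\ZRW\O xc\,\ZRW\O xd}\ge\ZRW\O xc$, and symmetrically $\sqrt{\ZRW\O xc\,\ZRW\O xd}\ge\ZRW\O xd$ on $[b^*b]_\O$. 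Summing and invoking Theorem~\ref{Thm:Zfact} on each sub-arc (via $(c;a,b^*)$ and $(d;b^*,b)$) gives
\[
\ZRW\O{[ab]_\O}{[cd]_\O}\ \ge\ \frac{\const}{\sqrt{\ZRW\O cd}}\left(\sqrt{\frac{\ZRW\O ac\cdot\ZRW\O{b^*}c}{\ZRW\O a{b^*}}}\,+\,\sqrt{\frac{\ZRW\O{b^*}d\cdot\ZRW\O bd}{\ZRW\O{b^*}b}}\right).
\]
AM--GM together with the inequality $\ZRW\O a{b^*}\cdot\ZRW\O{b^*}b\le\const\cdot\ZRW\O ab$ (a consequence of Theorem~\ref{Thm:Zfact} applied to $(a;b^*,b)$ and $(b;a,b^*)$, combined with the trivial $\ZRW\O a{[b^*b]_\O}\ge\ZRW\O ab$) and the approximate equality $\ZRW\O{b^*}c\asymp\ZRW\O{b^*}d$ then deliver $\ZRWsign_\O\ge\const\cdot\xXsign_\O$.

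\textbf{Main obstacle.} The most delicate step is to show that the midpoint value $\tau:=\ZRW\O{b^*}c\asymp\ZRW\O{b^*}d$ is large enough, namely $\tau\ge\const\cdot(\ZRW\O ac\cdot\ZRW\O bd/\ZRW\O ab)^{1/2}$, so that the AM--GM estimate above really delivers $\xXsign_\O$. This requires iterating Theorem~\ref{Thm:Zfact} through the triples involving $b^*$, essentially by equating two expressions for $\ZRW\O c{[ab]_\O}$: the ``global'' factorization $\sqrt{\ZRW\O ac\ZRW\O bc/\ZRW\O ab}$ and the ``split'' one $\sqrt{\ZRW\O ac\cdot\tau/\ZRW\O a{b^*}}+\sqrt{\tau\cdot\ZRW\O bc/\ZRW\O{b^*}b}$. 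The degenerate case where the monotone ratio does not cross $1$ on $[ab]_\O$ (so $b^*$ coincides with a corner) is handled by the symmetric split of $[cd]_\O$ via the ratio $\ZRW\O\cdot a/\ZRW\O\cdot b$, monotone on $[cd]_\O$ by Lemma~\ref{Lemma:R-monotone}; and if neither arc admits a non-trivial split, Proposition~\ref{Prop:XwtXestim} forces $\xXsign_\O\asymp\xYsign_\O$ and the lower bound follows from a single one-sided factorization.
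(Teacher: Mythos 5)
Your upper bound is correct and is essentially the paper's own argument: termwise factorization of $\ZRW\O x{[cd]_\O}$ by Theorem~\ref{Thm:Zfact}, Cauchy--Schwarz, two further applications of Theorem~\ref{Thm:Zfact}, and $\xX\O abcd\,\xY\O abcd\le(\xY\O abcd)^2$. The lower bound is where the proposal breaks down, and it is also where the paper is much simpler: the paper never introduces a splitting point, but only uses Lemma~\ref{Lemma:R-monotone} at the endpoint $a$, namely $\ZRW\O xc/\ZRW\O xd\ge\ZRW\O ac/\ZRW\O ad$ for every $x\in[ab]_\O$, whence $(\ZRW\O xc\,\ZRW\O xd)^{1/2}\ge(\ZRW\O ac/\ZRW\O ad)^{1/2}\,\ZRW\O xd$; summing over $x\in[ab]_\O$ and applying Theorem~\ref{Thm:Zfact} once more to $\ZRW\O{[ab]_\O}d$ recombines the factors into $\xX\O abcd$ directly, with no case analysis.

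Your midpoint scheme has a genuine gap at exactly the step you flag as the ``main obstacle'': the claim $\tau=\ZRW\O{b^*}c\ge\const\cdot(\ZRW\O ac\,\ZRW\O bd/\ZRW\O ab)^{1/2}$ is not merely delicate, it is false. Take $\O$ to be (a discrete approximation of) a half-disc of radius $R$ with $b,c,d,a$ placed at $-2s,-s,s,2s$ on the straight part of the boundary, $s$ fixed and $R\gg s$; then $\ZRW\O ac,\ZRW\O bd,\ZRW\O ab\asymp s^{-2}$, so the right-hand side is $\asymp s^{-1}$, while the ratio $\ZRW\O xc/\ZRW\O xd$ crosses $1$ only near the perpendicular bisector of $cd$, i.e.\ $b^*$ sits on the circular arc at distance $\asymp R$ from $c$ and $d$, giving $\tau\asymp R^{-2}\to0$. (Your two-term display survives in this example only because $\ZRW\O a{b^*}$ is also of order $R^{-2}$; the route ``prove the lower bound on $\tau$, then AM--GM'' cannot be repaired.) The fallback for the degenerate case is also a non sequitur: Proposition~\ref{Prop:XwtXestim} yields $\xX\O abcd\asymp\xY\O abcd$ only when $\xY\O abcd\le\const$, and the non-crossing of the two monotone ratios does not imply this --- in $\H$ with $a<b<c<d$ and $b$ close to $c$, neither $((d-x)/(c-x))^2$ on $[a,b]$ nor $((y-b)/(y-a))^2$ on $[c,d]$ ever crosses $1$, yet $y_\H\to\infty$ while $x_\H\le1$. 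So as written the lower bound is not established; replacing the entire splitting construction by the one-line endpoint monotonicity estimate above fixes it.
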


\begin{pf}
Due to Theorem~\ref{Thm:Zfact}, one has
\begin{eqnarray*}
\ZRWsign_{\O}\bigl([ab]_\O;[cd]_\O\bigr)
&=&\sum
_{x\in[ab]_\O}
\ZRWsign_{\O}\bigl(x;[cd]_\O\bigr)
\\
& \asymp&
\frac{1}{(\ZRW{\O}cd)^{{1}/{2}}}\sum_{x\in[ab]_\O}\bigl(\ZRW{\O
}xc\bigr)^{ {1} /{2}}\bigl(\ZRW{\O}xd\bigr)^{{1}/ {2}}.
\end{eqnarray*}
It follows from Lemma~\ref{Lemma:R-monotone} that, for any $x\in
[ab]_\O$,
\begin{eqnarray*}
\bigl(\ZRW{\O}xc\bigr)^{{1} /{2}}\bigl(\ZRW{\O}xd\bigr)^{{1} /{2}}&=&
\frac{(\ZRW{\O}xc)^{{1}/{2}}}{(\ZRW{\O}xd)^{{1}/{2}}}\cdot \ZRW{\O }xd\\
&\ge &
\frac{(\ZRW{\O}ac)^{{1}/{2}}}{(\ZRW{\O}ad)^{{1}/{2}}}\cdot \ZRW{\O }xd.
\end{eqnarray*}
Therefore, summing and applying Theorem~\ref{Thm:Zfact} once more, one obtains
\begin{eqnarray*}
\ZRWsign_{\O}\bigl([ab]_\O;[cd]_\O\bigr)
&\ge&\const\cdot\,
\frac{(\ZRW{\O
}ac)^{{1}/{2}}}{
(\ZRW{\O}cd)^{{1}/{2}}(\ZRW{\O}ad)^{{1}/{2}}}\cdot
\ZRWsign_{\O}\bigl([ab]_\O;d\bigr)
\\
& \asymp&
\frac{(\ZRW{\O}ac)^{{1}/{2}}(\ZRW{\O}bd)^{{1}/{2}}}{
(\ZRW{\O}cd)^{{1}/{2}}(\ZRW{\O}ab)^{{1}/{2}}}= \xXsign_{\O}(a,b;c,d).
\end{eqnarray*}
On the other hand, Cauchy's inequality (and Theorem~\ref{Thm:Zfact}
again) gives
\begin{eqnarray*}
\bigl(
\ZRWsign_{\O}\bigl([ab]_\O;[cd]_\O\bigr)\bigr)^2 &
\le&\const\cdot\,\frac{\ZRW{\O
}{[ab]_\O
}c\ZRW{\O}{[ab]_\O}d}{\ZRW{\O}cd}
\\
&\asymp& \frac{(\ZRW{\O}ac\ZRW{\O}bc\ZRW{\O}ad\ZRW{\O}bd)^{
{1}/{2}}}{\ZRW{\O
}cd\ZRW{\O}ab}
\\
& =&
\xXsign_{\O}(a,b;c,d) \xYsign_\O(a,b;c,d) \le\bigl(
\xYsign_\O(a,b;c,d) \bigr)^2.
\end{eqnarray*}
\upqed\end{pf}

%th4.8 #&#
\begin{theorem}
\label{Thm:ZlogYestim} Let $\O$ be a simply connected discrete domain
and boundary points $a,b,c,d\in\pa\O$ be
listed counterclockwise. Then the following double-sided estimate holds true:
%
%e4.4 #&#
\begin{equation}
\label{ZlogYestim}
\ZRWsign_{\O}\bigl([ab]_\O;[cd]_\O\bigr)
\asymp\log
\bigl(1 + \xYsign_\O(a,b;c,d) \bigr) ,
\end{equation}
with some uniform (i.e., independent of $\O,a,b,c,d$) constants.
\end{theorem}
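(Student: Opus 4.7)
The plan is to case-split on the size of $Y := \xY{\O}{a}{b}{c}{d}$. Write $Z := \ZRW{\O}{[ab]_\O}{[cd]_\O}$ and $X := \xX{\O}{a}{b}{c}{d}$. In the easy regime $Y\le C_0$ (for some constant $C_0$ to be fixed later), I have $\log(1+Y) \asymp Y$; Proposition~\ref{Prop:XwtXestim} gives $X\asymp Y/(1+Y)\asymp Y$, and then Proposition~\ref{Prop:ZabcdXwtXestim} sandwiches $Z$ between $\const\cdot X$ and $\const\cdot Y$, giving $Z\asymp Y\asymp \log(1+Y)$. From now on I assume $Y>C_0$ and aim for $Z\asymp \log Y$.

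The main idea is to slice $[ab]_\O$ dyadically with respect to the monotone quantity $\rho(x):= \ZRW{\O}{x}{d}/\ZRW{\O}{x}{c}$, which decreases as $x$ traverses $[ab]_\O$ from $a$ to $b$ by Lemma~\ref{Lemma:R-monotone}. A direct computation from Definition~\ref{Def:CrossRatio} gives $(Y/X)^2=\rho(a)/\rho(b)$, and Proposition~\ref{Prop:XwtXestim} further yields $Y/X \asymp 1+Y$, so
\[
\rho(a)/\rho(b) \asymp (1+Y)^2,
\]
which is large once $C_0$ is chosen large. I will then select boundary vertices $a=a_0,a_1,\ldots,a_{K+1}=b$ along $[ab]_\O$ with $\rho(a_j)/\rho(a_{j+1})\asymp 4$ for every $j$, so that $K\asymp \log_4(\rho(a)/\rho(b))\asymp \log(1+Y)$.

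On each sub-quadrilateral $(\O;a_j,a_{j+1},c,d)$ the same cross-ratio identity yields $(Y_j/X_j)^2 = \rho(a_j)/\rho(a_{j+1}) \asymp 4$, and Proposition~\ref{Prop:XwtXestim} applied to this sub-quadrilateral (which gives $Y_j/X_j \asymp 1+Y_j$) then forces $Y_j\asymp X_j \asymp 1$. Proposition~\ref{Prop:ZabcdXwtXestim} consequently gives $\ZRW{\O}{[a_j a_{j+1}]_\O}{[cd]_\O}\asymp 1$ for each~$j$; summing over the $K+1$ sub-arcs (which tile $[ab]_\O$ up to shared endpoints, a bounded overcounting) produces $Z\asymp K+1\asymp \log(1+Y)$.

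The main obstacle is the construction of the dyadic chain itself: I will need $\rho$ not to drop by more than a uniform factor between consecutive boundary vertices of $[ab]_\O$, so that every target value $\rho(a)/4^{j+1}$ is realized up to a bounded multiplicative error. For consecutive $x,x'\in [ab]_\O$, the interior endpoints $x_{\mathrm{int}},x'_{\mathrm{int}}$ share a face of $\Gamma$ and both lie uniformly far from $c,d$ (which are on the opposite arc). I expect $\rho(x)/\rho(x')\asymp 1$ to follow from a boundary-Harnack comparison for the harmonic functions $\hm{\O}{\cdot}{c}$ and $\hm{\O}{\cdot}{d}$---via elliptic Harnack (Proposition~\ref{Proposition:Harnack}) applied on a disc of radius $\const\cdot r_{x_{\mathrm{int}}}$ around the shared face, combined with Lemma~\ref{Lemma:BoundaryDecay} to rule out wild decay of $\hm{\O}{\cdot}{c}$ and $\hm{\O}{\cdot}{d}$ toward $\pa\O\setminus\{c,d\}$---although laying out this comparison cleanly is the most delicate part of the argument.
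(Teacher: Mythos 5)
Your proposal is correct and follows essentially the same route as the paper: the same split into the regimes $\xYsign_\O\le\const$ and $\xYsign_\O$ large, the same sandwich $\const\cdot\xXsign_\O\le\ZRWsign_\O\le\const\cdot\xYsign_\O$ together with $\xXsign_\O^{-1}\asymp 1+\xYsign_\O^{-1}$ in the first regime, and in the second regime the same dyadic chain $a=a_0,\dots,a_{n}=b$ built from the monotone boundary ratio $\RR\O{\ccdot}cd$ (your $\rho$ is its reciprocal), with $\xY\O{a_k}{a_{k+1}}cd\asymp 1$, hence $\ZRW\O{[a_ka_{k+1}]_\O}{[cd]_\O}\asymp 1$, summed over $n\asymp\log\xYsign_\O$ pieces. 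The only divergence is the step you flag as delicate: the paper disposes of it with the elementary remark that consecutive boundary vertices of $[ab]_\O$ lie on a common face, so the positive harmonic functions $\ZRW\O{\ccdot}c$, $\ZRW\O{\ccdot}d$ take uniformly comparable values at the corresponding interior vertices (one-step harmonicity plus the uniform lower bound $\varpi_{vv'}\ge\varpi_0^2$ and bounded face degrees); no boundary-Harnack machinery is needed, and note that Proposition~\ref{Proposition:Harnack} as stated would not apply literally there, since a $\G$-disc of radius $\const\cdot r_{x_\mathrm{int}}$ around the shared face need not lie inside $\O$, while Lemma~\ref{Lemma:BoundaryDecay} only bounds decay from above rather than giving the two-sided comparison you want.
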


\begin{pf}
Denote $\xYsign_\O:=\xY{\O}abcd$, $\xXsign_\O:=\xX{\O}abcd$,
and let a
constant $M$ be chosen big enough [independently of $(\O;a,b,c,d)$]. If
$\xYsign_\O\le M$, Propositions \ref{Prop:XwtXestim}, \ref
{Prop:ZabcdXwtXestim} imply
%
%e4.5 #&#
\begin{eqnarray}
\label{xZlogYestim} %
\qquad\ZRWsign_{\O}\bigl([ab];[cd]\bigr)
 &\ge&\const
\cdot\,\xXsign_\O\asymp{(1 + \xYsign _\O)^{-1}{
\xYsign_\O}} \ge(1 + M)^{-1} \cdot{\log(1 +
\xYsign_\O )},
\nonumber
\\[-8pt]
\\[-8pt]
\nonumber
\ZRWsign_{\O}\bigl([ab];[cd]\bigr)
&\le&\const\cdot\,\xYsign_\O\le\const\cdot\, M\bigl[{\log (1 + M)}\bigr]^{-1}
\cdot\log(1 + \xYsign_\O)
\end{eqnarray}
(with constants independent of $M$). Thus, without loss of generality,
we can assume that $\xYsign_\O\ge M$
(i.e., $[ab]_\O$ and $[cd]_\O$ are ``very close'' to each other in
$\O
$). Let
\[
\RRsign_\O(x):= \RRsign_{\O}(x;c,d) =\frac{\ZRW{\O}xc}{\ZRW{\O}xd} ,\qquad
x\in [ab]_\O.
\]
Due to Lemma~\ref{Lemma:R-monotone}, $\RRsign_\O$ increases on
$[ab]_\O
$. Moreover, it follows
from Proposition~\ref{Prop:XwtXestim} [or directly from (\ref
{xXwtXestim})] that
\begin{eqnarray*}
\biggl[\frac{\RRsign_{\O}(b)}{\RRsign_{\O}(a)}\biggr]^{{1} /{2}}&=& \biggl[
\frac
{\ZRW{\O}bc\ZRW{\O}ad}{
\ZRW{\O}bd\ZRW{\O}ac}\biggr]^{{1}/ {2}}\asymp1+ \biggl[\frac{\ZRW
{\O}bc\ZRW
{\O}ad}{
\ZRW{\O}ab\ZRW{\O}cd}
\biggr]^{{1} /{2}}\\
&=&1+\xYsign_\O\asymp \xYsign_\O.
\end{eqnarray*}
%
%If $M$ is big enough, this implies
%\[
%\log\RRsign_\O(b)-\log\RRsign_\O(a)\asymp\log\xYsign_\O.
%\]
As any two consecutive boundary vertices $x,x'\in[ab]_\O$ belong to
the same face of $\G$, one has
$\ZRW\O{x}c\asymp\ZRW\O{x'}c$, $\ZRW\O{x}d\asymp\ZRW\O{x'}d$ and
\[
1\le\frac{\RRsign_\O(x')}{\RRsign_\O(x)}\le\const\!.
\]
Therefore, provided that $\xYsign_\O\ge M$ is big enough, one can find
a number $n\asymp\log\xYsign_\O$ and a sequence of
boundary points $a=a_0,a_1,\ldots,a_n=b$ such that
\[
4\le\frac{\RRsign_\O(a_{k+1})}{\RRsign_\O(a_k)}\le\const
\]
for all $k=0,\ldots,n - 1$. This can be easily rewritten as
\[
\const\le\biggl[\frac{\RRsign_\O(a_k)}{\RRsign_\O(a_{k+1})} \biggr]^{ {1}/ {2}}=
\xXsign_\O(a_k,a_{k+1};c,d) \le
\frac{1}{2} ,
\]
or, due to Proposition~\ref{Prop:XwtXestim}, as
$\xY\O{a_k}{a_{k+1}}cd \asymp1$. Hence if the constant $M$ was chosen
big enough, estimate (\ref{xZlogYestim}) implies
\[
\ZRWsign_{\O}\bigl([a_ka_{k+1}]_\O;[cd]_\O\bigr)
\asymp1
\]
for all $k=0,\ldots,n - 1$. This easily gives
%
%e4.6 #&#
\begin{equation}
\label{xxZlogYestim}
\ZRWsign_{\O}\bigl([ab]_\O;[cd]_\O\bigr)
\asymp\sum
_{k=0}^{n-1}
\ZRWsign_{\O}\bigl([a_ka_{k+1}]_\O;[cd]_\O\bigr)
\asymp n\asymp\log\xYsign_\O.
\end{equation}
Combining estimate (\ref{xZlogYestim}) with $\xYsign_\O\le M$ and
(\ref
{xxZlogYestim}) with $\xYsign_\O\ge M$, one arrives at~(\ref{ZlogYestim}).
\end{pf}

%s5 #&#
\section{Surgery technique}
\label{Sect:Surgery} %\setcounter{equation}{0}

The main purpose of this section is to illustrate how the tools
developed above can be used to construct cross-cuts of a simply
connected discrete domain $\O$ having some nice ``separation''
properties, without any reference to the actual geometry of $\O$. The
main result is Theorem~\ref{Thm:Separator} which claims the existence
of these ``separators.'' In Proposition~\ref{Prop:InclSeparators}, we
also give some simple monotonicity properties of such cross-cuts.

More precisely, let $A=[a_1a_2]_\O$ and $B=[b_1b_2]_\O$ be two disjoint
boundary arcs of a simply connected $\O$. We are interested in the
following question: is it possible to cut $\O$ along some cross-cut $L$
into two simply connected parts $\O_A,\O_B$, one containing $A$ and
the other
containing $B$, so that
%
%e5.1 #&#
\begin{equation}
\label{ZABasympZAG.ZGB} \ZRW\O AB \asymp\ZRW{\O_A} AL \ZRW{\O_B}
LB ?
\end{equation}
Moreover, we are interested not only in a \emph{single} cross-cut $L$,
but rather in a \emph{family} $L_k=L_A^B[k]$ such that, in addition to
factorization
(\ref{ZABasympZAG.ZGB}), one has
%
%e5.2 #&#
\begin{equation}
\label{ZAG/ZGB=k} \ZRW{\O_A} A{\rL_k} / \ZRW{
\O_B} {\rL_k} B \asymp k.
\end{equation}

Note that both $\ZRW{\O_A} A{\rL_k},\ZRW{\O_B}{\rL_k} B\ge\ZRW
\O AB$.
Thus (\ref{ZABasympZAG.ZGB}) certainly fails if $\ZRW\O AB\gg1$. For
a similar reason, one cannot hope for
(\ref{ZAG/ZGB=k}) if $k\ll\ZRW\O AB$ or $k\gg(\ZRW\O AB)^{-1}$.
However, being motivated by the continuous setup, one certainly hopes
for the positive answer in all other situations, and indeed,
Theorem~\ref{Thm:Separator} given below claims the existence of a
``separator'' $\rL_A^B[k]$ and provides a natural construction of this
slit for any given $\O,A,B$ and $k$.

Namely, let discrete domains $\O_A^B[k]$ and $\O_B^A(k^{-1})$ be
defined by
\begin{eqnarray*}
\Int\O_A^B[k]&:=&\biggl\{u\in\Int\O\dvtx\frac{\ZRW\O{u}A}{\ZRW\O
{u}B}
\ge k \biggr\},
\\
\Int\O_B^A\bigl(k^{-1}\bigr)&:=&\biggl\{u\in\Int
\O\dvtx\frac{\ZRW\O
{u}B}{\ZRW\O{u}A}> k^{-1}\biggr\}
\end{eqnarray*}
(we use square and round brackets to abbreviate $\ge$ and $>$
inequalities, resp.). Below we always work with $k$'s which are not
extremely big or extremely small, so that $\Int\O_A^B[k]$ contains all
vertices of faces touching $A$, while $\Int\O_B^A(k^{-1})$ contains
all vertices
near $B$. Then both $\O_A^B[k]$ and $\O_B^A(k^{-1})$ are connected and
simply connected (due to the maximum principle applied to the function
$\ZRW\O{\ccdot}A-k\ZRW\O{\ccdot}B$). Further, we denote the set of
edges %connecting $\O_A^B[k]$ and $\O_B^A(k^{-1})$ by
\[
L_A^B[k]=L_B^A
\bigl(k^{-1}\bigr):=\bigl\{(u_Au_B)\in
E^\O_\mathrm{int}\dvtx u_A\in \Int\O
_A^B[k], u_B\in\Int\O_B^A
\bigl(k^{-1}\bigr)\bigr\};
\]
see Figure~\ref{Fig: domain+slit}(A). According to our conventions
concerning the boundary of a discrete domain, this set can be
interpreted as a part of $\pa\O_A^B[k]$, as well as a part of $\pa\O
_B^A(k^{-1})$.

%f4 #&#
\begin{figure}

\includegraphics{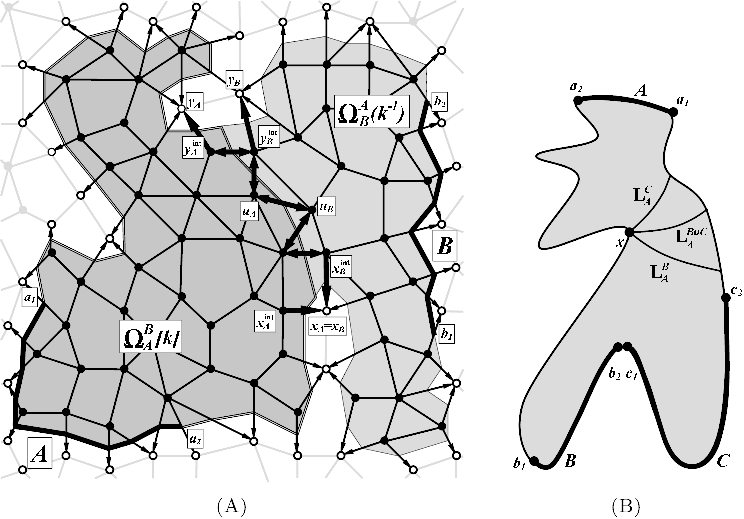}

\caption{\textup{(A)} A simply connected discrete
domain split into two parts, $\O_A^B[k]$ and $\O_B^A(k^{-1})$,
according to the ratio of harmonic measures of two marked boundary
arcs, $A=[a_1a_2]_\O$ and $B=[b_1b_2]_\O$. All edges $(u_Au_B)$ that
cross the slit $L_A^B[k]$ are marked, as well as four boundary edges
$(x_A^\mathrm{int}x_A),(x_B^\mathrm{int}x_B),(y_B^\mathrm
{int}y_B),(y_A^\mathrm{int}y_A)\in\pa\O$ neighboring $L_A^B[k]$.
\mbox{\textup{(B)} Notation} used in Proposition \protect\ref
{Prop:InclSeparators} and
schematic drawing of the monotonicity property $\O_A^C[x]\ss\O
_A^{B\cup
C}[x]\ss\O_A^B[x]$ for $x\in(a_2b_1)$.}
\label{Fig: domain+slit}
\end{figure}

%\begin{figure}
%\centering{
%%
%\begin{minipage}[b]{0.6\textwidth}
%\centering{\includegraphics[height=\textwidth,
%angle=270]{domain+slit.eps}}
%
%\centering{\textsc{(A)}}
%\end{minipage}
%%
%\hskip0.05\textwidth
%%
%\begin{minipage}[b]{0.3\textwidth}
%\centering{\includegraphics[height=\textwidth,
%angle=270]{separators.eps}}
%
%\centering{\textsc{(B)}}
%\end{minipage}
%%
%}
%
%\end{figure}

%th5.1 #&#
\begin{theorem}
\label{Thm:Separator} Let $\O$ be a simply connected discrete domain,
$A,B\ss\pa\O$ be two
disjoint boundary arcs, $\ZRWsign:=\ZRW\O AB$ and $k>0$ be chosen so
that both
$\O_A:=\O_A^B[k]$ and $\O_B:=\O_B^A(k^{-1})$ are connected (i.e.,
$\O
_A$ contains all inner
vertices around $A$ while $\O_B$ contains all inner vertices around
$B$). Then:
\begin{longlist}[(ii)]
\item[(i)] for any fixed (big) constant $K\ge1$, the
following is fulfilled:
if $\ZRWsign\le K$ and $K^{-1} \le k\le K$, then the cross-cut
$L_k:=\rL_A^B[k]$
satisfies conditions (\ref{ZABasympZAG.ZGB}), (\ref{ZAG/ZGB=k}), with
constants depending
on $K$ but independent of $\O,A,B,k$;

\item[(ii)] there exists a (small) constant $\k_0>0$ such that the
following is
fulfilled: if $\ZRWsign\le\k_0$ and $\k_0^{-1}\ZRWsign\le k\le\k
_0\ZRWsign^{-1}$, then the cross-cut $L_k$ satisfies conditions (\ref
{ZABasympZAG.ZGB}), (\ref{ZAG/ZGB=k}) with some uniform constants.
Moreover, in this case, both $\O_A$ and $\O_B$ are always connected.
\end{longlist}
\end{theorem}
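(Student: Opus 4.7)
The proof rests on one pointwise identity on $L_k$ together with first/last-crossing decompositions of $\ZRW\O AB$. For any edge $e=(u_A,u_B)\in L_k$, applying the Harnack principle (Proposition~\ref{Proposition:Harnack}) to the positive harmonic functions $\phi:=\ZRW\O\ccdot A$ and $\psi:=\ZRW\O\ccdot B$ at the neighboring vertices $u_A,u_B$ gives $\phi(u_A)\asymp\phi(u_B)$ and $\psi(u_A)\asymp\psi(u_B)$; combined with the defining inequalities $\phi(u_A)/\psi(u_A)\ge k>\phi(u_B)/\psi(u_B)$, this yields $\phi(u)\asymp k\psi(u)$ uniformly over $u\in\{u_A,u_B\}$ and over $e\in L_k$. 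This single identity is the engine of the proof.

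Since every $\gamma\in S_\O(A;B)$ crosses $L_k$, the multiplicativity $\mathrm{w}(\gamma_1\cdot e\cdot\gamma_2)=\mathrm{w}(\gamma_1)\,\mathrm{w}_e\,\mathrm{w}(\gamma_2)$ gives, by decomposing at the first and then at the last crossing of $L_k$,
\[
\ZRW\O AB\,=\,\sum_{e\in L_k}\mathrm{w}_e\,\ZRW{\O_A}A{u_A}\,\psi(u_B)\,=\,\sum_{e\in L_k}\mathrm{w}_e\,\phi(u_A)\,\ZRW{\O_B}{u_B}B,
\]
while the same kind of first-step analysis (using that the probability of first exit from $\O_A$ through a particular edge $e$ equals $\mathrm{w}_e\,G_{\O_A}(x_\mathrm{int};u_A)$) shows $\ZRW{\O_A}A{L_k}\asymp\sum_e\mathrm{w}_e\,\ZRW{\O_A}A{u_A}$ and $\ZRW{\O_B}{L_k}B\asymp\sum_e\mathrm{w}_e\,\ZRW{\O_B}{u_B}B$.

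Both~(\ref{ZABasympZAG.ZGB}) and~(\ref{ZAG/ZGB=k}) would then follow from two comparisons along $L_k$: (a)~the harmonic values $\phi(u_A)$ and $\psi(u_B)$ are comparable (up to bounded factors) to the restricted values $\ZRW{\O_A}A{u_A}/\ZRW{\O_A}A{L_k}$ and $\ZRW{\O_B}{u_B}B/\ZRW{\O_B}{L_k}B$, i.e., the random-walk ``leakage'' through $L_k$ does not dominate; and (b)~both functions are Harnack-comparable along $L_k$ itself, so that a single representative value suffices. Given (a)--(b), inserting $\phi\asymp k\psi$ into either decomposition rewrites $\ZRW\O AB$ as a representative value of $\phi$ (or of $\psi$) times $\ZRW{\O_A}A{L_k}$ (or $\ZRW{\O_B}{L_k}B$); equating the two expressions yields $\ZRW{\O_A}A{L_k}\asymp k\cdot\ZRW{\O_B}{L_k}B$ and then the product identity. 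Step~(a) is controlled by the weak Beurling estimate (Lemma~\ref{Lemma:b0-weakBeurling}) in the constrained regimes of~(i) and~(ii); step~(b) uses Theorem~\ref{Thm:Zfact} and the cross-ratio estimates of Section~\ref{Sect:XYandZ} on the sub-quadrilaterals $\O_A,\O_B$, together with the monotonicity Lemma~\ref{Lemma:R-monotone}. For part~(ii), connectedness of $\O_A,\O_B$ is automatic: Lemma~\ref{Lemma:b0-weakBeurling} applied to faces touching $A$ forces $\psi\le\const\cdot\ZRWsign$ there, so $\phi/\psi\ge\const\cdot\ZRWsign^{-1}\ge k$ once $k\le\kappa_0\ZRWsign^{-1}$ with $\kappa_0$ small, and symmetrically near $B$. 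The main obstacle I anticipate is~(b)---uniform Harnack control along what may be a long, wavy slit---where the defining level-set property $\phi/\psi\asymp k$ on $L_k$ is precisely what prevents $\phi$ or $\psi$ from drifting by more than a bounded factor along the slit.
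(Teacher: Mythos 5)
Your setup is sound and matches the paper's opening moves: the level-set identity $\ZRW\O{u}A\asymp k\,\ZRW\O{u}B$ for $u=u_A,u_B$ on $L_k$, the first/last-crossing decompositions of $\ZRW\O AB$, and your connectedness argument for part (ii) is essentially the paper's. But the core of the proof is missing, and the reduction you propose rests on a false claim. Your step (b) asserts that $\phi=\ZRW\O\ccdot A$ and $\psi=\ZRW\O\ccdot B$ are individually Harnack-comparable along $L_k$, ``so that a single representative value suffices,'' and your closing remark argues that the level-set property $\phi/\psi\asymp k$ prevents either function from drifting. It does not: $L_k$ is a level set of the \emph{ratio} only, and both $\phi$ and $\psi$ can simultaneously decay by arbitrarily large (e.g.\ exponential) factors along the slit --- think of $L_k$ diving into a deep fiord of $\O$ --- while their ratio stays $\asymp k$. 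Harnack chains along a long wavy slit give constants depending on its length, which is not uniformly bounded, so no single representative value can be extracted this way. Your step (a) is likewise not something the weak Beurling estimate delivers: comparing the full-domain values $\phi(u_A),\psi(u_B)$ with the restricted quantities in $\O_A,\O_B$ is essentially the content of the theorem itself (the ``leakage'' through $L_k$ genuinely matters: in the paper's notation $\ZRW{\O_A}A{\rL_k}\asymp\ZRWsign_A+k\,\ZRWsign_B$, and the second term is comparable to the first precisely because of the statement being proven).

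What the paper does instead, and what your proposal lacks, is an aggregation that avoids any pointwise control along the slit. It introduces $\ZRWsign_A:=\ZRW\O{A}{[x_By_B]_\O}$ and $\ZRWsign_B:=\ZRW\O{B}{[y_Ax_A]_\O}$ (partition functions \emph{in the original} $\O$ from $A$, resp.\ $B$, to the opposite piece of $\pa\O$ cut off by $L_k$), proves $\ZRW{\O_A}A{\rL_k}\asymp\ZRWsign_A+k\ZRWsign_B$ and $\ZRW{\O_B}{\rL_k}B\asymp k^{-1}\ZRWsign_A+\ZRWsign_B$ by crossing decompositions, and thereby reduces both (\ref{ZABasympZAG.ZGB}) and (\ref{ZAG/ZGB=k}) to the two aggregate estimates $\ZRWsign_A/\ZRWsign_B\asymp k$ and $\ZRWsign_A\ZRWsign_B\asymp\ZRWsign$. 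These are then proven with the cross-ratio machinery of Sections~\ref{Sect:Factorization}--\ref{Sect:XYandZ} applied at the slit endpoints $x,y\in\pa\O$, but only under the a priori bound $\ZRWsign_A,\ZRWsign_B\le\const$; that bound is itself nontrivial and is established first for $k\asymp1$ (giving part (i)) and then propagated over the whole range $\k_0^{-1}\ZRWsign\le k\le\k_0\ZRWsign^{-1}$ by a monotone continuity sweep in $k$, using that $\ZRWsign_A(k)$ is piecewise constant with jumps bounded by the factor $\varpi_0^{-2}$. Your proposal contains no substitute for this last mechanism, and without it there is no route to part (ii)'s full range of $k$; Beurling estimates and Harnack alone do not reach it.
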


\begin{pf}
Since $\ZRW\O{u_A}\ccdot\asymp\ZRW\O{u_B}\ccdot$, it is clear that
%
%e5.3 #&#
\begin{equation}
\label{HmA/HmB=k} \frac{\ZRW\O{u_A}A}{\ZRW\O{u_A}B}\asymp\frac
{\ZRW\O
{u_B}A}{\ZRW\O{u_B}B}\asymp k \qquad\mbox{for all } u=(u_Au_B)\in\rL_k.
\end{equation}

Let $\pa\O_A\cap\pa\O=[y_Ax_A]_\O$ and $\pa\O_B\cap\pa\O
=[x_By_B]_\O$
[see Figure~\ref{Fig: domain+slit}(A)], and let
%
%e5.4 #&#
\begin{equation}
\label{ZaZbDef} \ZRWsign_A:=
\ZRWsign_{\O}\bigl(A;[x_By_B]_\O\bigr)
,\qquad
\ZRWsign_B:=
\ZRWsign_{\O}\bigl(B;[y_Ax_A]_\O\bigr),
\end{equation}
where these partition functions are considered \emph{in the original
domain $\O$}.
%(and $[x_By_B]_\O$, $[y_Ax_A]_\O$ denote the corresponding boundary
%arcs \emph{of the original $\O$}).
Then
\begin{eqnarray*}
\ZRW{\O_A} {A} {\rL_k}&=&\sum
_{u\in\rL_k}\ZRW{\O_A} {A} {u_B}\asymp \sum
_{u\in
\rL_k}
\ZRWsign_{\O_A}(A;u_B)
\ZRWsign_{\O}\bigl(u_B;\pa\O\bigr)
\\
&=&\sum_{u\in\rL_k}
\ZRW{\O_A} {A} {u_B}\cdot\bigl(
\ZRWsign_{\O}\bigl(u_B;[x_By_B]_\O\bigr)
+
\ZRWsign_{\O}\bigl(u_B;[y_Ax_A]_\O\bigr)
\bigr)
\end{eqnarray*}
since $\ZRW{\O}{u_B}{\pa\O}\asymp1$ for any $u_B\in\Int\O$.
Note that
the sum of first terms can be rewritten as
\[
\sum_{u\in\rL_k}\ZRW{\O_A} {A}
{u_B}
\ZRWsign_{\O}\bigl(u_B;[x_By_B]_\O\bigr)
\asymp
\ZRWsign_{\O}\bigl(A;[x_By_B]_\O\bigr)
=
\ZRWsign_A.
\]
Indeed, each random walk path running from $A$ to $[x_By_B]_\O$ inside
$\O$ should pass through $\rL_k$ for
topological reasons, so denoting by $u$ the \emph{first} crossing, one
obtains the result.
Similarly, the second sum is comparable to the total partition
functions of those random walks,
which start from $A$, cross $\rL_k$ (possibly many times) and finish
back at $[y_Ax_A]_\O$. Denoting by
$v$ the \emph{last} crossing of $\rL_k$ and using (\ref{HmA/HmB=k}),
one obtains
\begin{eqnarray*}
\sum_{u\in\rL_k}\ZRW{\O_A} {A}
{u_B}
\ZRWsign_{\O}\bigl(u_B;[y_Ax_A]_\O\bigr)
&\asymp &\sum_{v\in\rL_k}\ZRW{\O} {A} {v_B}
\ZRWsign_{\O_A}\bigl(v_B;[y_Ax_A]_\O\bigr)
\\
&\asymp & k\sum_{v\in\rL_k}
\ZRWsign_{\O}(B;v_B)
\ZRWsign_{\O_A}\bigl(v_B;[y_Ax_A]_\O\bigr)
\\
&\asymp & k\ZRWsign_B ,
\end{eqnarray*}
since each random walk path running from $B$ to $[y_Ax_A]_\O$ inside
$\O
$ should cross~$\rL_k$.
Thus we arrive at the double-sided estimates
\[
\ZRW{\O_A} {A} {\rL_k}\asymp\ZRWsign_A+k
\ZRWsign_B ,
\]
and similarly, $\ZRW{\O_B}{\rL_k}{B}\asymp k^{-1}\ZRWsign
_A+\ZRWsign
_B$. Therefore, it is sufficient
to prove that
%
%e5.5 #&#
\begin{equation}
\label{ZaZbSufficient} \ZRWsign_A/\ZRWsign_B\asymp k \quad\mbox{and}\quad
\ZRWsign_A\ZRWsign_B\asymp\ZRWsign.
\end{equation}

It directly follows from (\ref{HmA/HmB=k}) that
%
%e5.6 #&#
\begin{equation}
\label{ZxA/ZxB=ZyA/ZyB=k} \frac{\ZRW\O{x}A}{\ZRW\O{x}B}\asymp k\asymp \frac{\ZRW\O{y}A}{\ZRW\O{y}B}
\end{equation}
(here and below we omit subscripts of $x $ and $y$, all the claims hold
true for both $x=x_A,x_B$ and, similarly, $y=y_A,y_B$, since the values of
$\ZRW\O{x_A}{\cdot}$ and $\ZRW\O{x_B}{\cdot}$ are uniformly
comparable). Let $A=[a_1a_2]_\O$, $B=[b_1b_2]_\O$ and denote
\begin{eqnarray*}
\xYsign_A&:=& \xYsign_\O(a_1,a_2;x,y)
, \qquad \xYsign_B:= \xYsign_\O(b_1,b_2;y,x)
,
\\
\xXsign_A&:=& \xXsign_\O(a_1,a_2;x,y),
\qquad\xXsign_B:= \xXsign_\O(b_1,b_2;y,x),
\end{eqnarray*}
where all discrete cross-ratios are considered \emph{in the original
domain $\O$}. Using
Theorem~\ref{Thm:Zfact} and (\ref{ZxA/ZxB=ZyA/ZyB=k}), it is easy to
check that
%
%e5.7 #&#
\begin{equation}
\label{YaXa/YbXb=k} \biggl[\frac{\xYsign_A\xXsign_A}{\xYsign
_B\xXsign_B} \biggr]^{{1}/ {2}}\asymp
\biggl[\frac{\ZRW\O{x}A\ZRW\O{y}A}{\ZRW\O{x}B\ZRW\O{y}B} \biggr]^{ {1} /{2}}\asymp k.
\end{equation}
The rest of the proof is divided into three steps:
\begin{itemize}
\item First, we prove (\ref{ZaZbSufficient}) assuming that both
$\ZRWsign_A,\ZRWsign_B$ are bounded above by some absolute constant
(roughly speaking, this means that $x$ and $y$ are ``not too close'' to
both $A,B$). In some sense this is the most conceptual step, based on
discrete cross-ratios techniques from Section~\ref{Sect:XYandZ}.
\item Second, we use discrete cross-ratios once again to show that,
indeed, one has $\ZRWsign_A,\ZRWsign_B\le\const$ if $k\asymp1$ [in
particular, this implies (i)].
\item Finally, we analyze general case in (ii) by starting with $k=1$
and then increasing it until $\ZRWsign_A$ becomes $\asymp1$, which, as
we show, cannot happen before $k\asymp\ZRWsign^{-1}$.
\end{itemize}

\emph{Step} (1) \emph{The proof of} (\ref{ZaZbSufficient}) \emph{under
assumption} $\ZRWsign_A,\ZRWsign_B\le\const$. In this case
Theorem~\ref{Thm:ZlogYestim} guarantees that $\xYsign_A,\xYsign_B\le
\const$ as well, and Remark~\ref{Rem:XY(Z)comparable} says that
\[
\ZRWsign_A\asymp[\xYsign_A\xXsign_A]^{1/2}\quad
\mbox{and}\quad \ZRWsign _B\asymp[\xYsign_B
\xXsign_B]^{1/2}.
\]
Therefore, (\ref{YaXa/YbXb=k}) immediately gives the first part of
(\ref
{ZaZbSufficient}). Moreover, one has $\xXsign_A\asymp\xYsign_A$ and
$\xXsign_B\asymp\xYsign_B$, which is equivalent to saying that
%
%e5.8 #&#
\begin{equation}
\label{Zxaa/Zyaa=Zxbb/Zybb} \frac{\ZRW\O x{a_1}}{\ZRW\O y{a_1}}\asymp\frac{\ZRW\O
x{a_2}}{\ZRW\O
y{a_2}} \quad\mbox{and}\quad
\frac{\ZRW\O x{b_1}}{\ZRW\O
y{b_1}}\asymp\frac{\ZRW\O x{b_2}}{\ZRW\O y{b_2}}.
\end{equation}
In addition, Theorem~\ref{Thm:Zfact} applied to (\ref
{ZxA/ZxB=ZyA/ZyB=k}) gives
\[
\frac{\ZRW\O{x}{a_1}\ZRW\O{x}{a_2}}{\ZRW\O{y}{a_1}\ZRW\O
{y}{a_2}}\asymp \frac{\ZRW\O{x}{b_1}\ZRW\O{x}{b_2}}{\ZRW\O{y}{b_1}\ZRW\O{y}{b_2}},
\]
thus upgrading (\ref{Zxaa/Zyaa=Zxbb/Zybb}) to
%
%e5.9 #&#
\begin{equation}
\label{Zxaabb/Zyaabb=} \frac{\ZRW\O x{a_1}}{\ZRW\O y{a_1}}\asymp \frac
{\ZRW\O x{a_2}}{\ZRW\O
y{a_2}}\asymp
\frac{\ZRW\O x{b_1}}{\ZRW\O y{b_1}}\asymp\frac{\ZRW
\O
x{b_2}}{\ZRW\O y{b_2}}.
\end{equation}
%
%(roughly speaking, this means that $x,y$ are ``not too close'' to
%\emph{any} of $a_1,a_2,b_1,b_2$).

As $\ZRWsign\le\const$, we also have $\ZRWsign\asymp\xX\O
{a_1}{a_2}{b_1}{b_2}$. Rearranging
factors, one obtains
\[
\frac{\ZRWsign_A\ZRWsign_B}{\ZRWsign}\asymp \frac{[\xYsign_A\xXsign_A\xYsign_B\xXsign_B]
^{{1}/{2}}}{
\xXsign_\O(a_1,a_2;b_1,b_2)}\asymp [\RRsign_1
\RRsign_2]^{{1}/ {4}},
\]
where
\[
\RRsign_j:=\frac{\ZRW\O{a_j}{x}\ZRW\O{x}{b_j}\ZRW\O{b_j}{y}\ZRW
\O
{y}{a_j}}{(\ZRW\O{a_j}{b_j}\ZRW\O{x}{y})^2}.
\]
Finally, it follows from (\ref{Zxaabb/Zyaabb=}) that $\xY\O
{a_j}{x}{b_j}{y}\asymp1$. %\asymp\xY\O{x}{b_j}{y}{a_j}$.
Due to Proposition~\ref{Prop:XwtXestim}, this also implies
$\xX\O{a_j}{x}{b_j}{y}\asymp1$ and, similarly, $\xX\O
{x}{b_j}{y}{a_j}\asymp1$. Therefore,
\[
R_j= \bigl[ \xXsign_\O(a_j,x;b_j,y)
\xXsign_\O(x,b_j;y,a_j)
\bigr]^{-
{1}/{2}}\asymp1,
\]
that is, ${\ZRWsign_A\ZRWsign_B}\asymp{\ZRWsign}$ [which is the second
part of (\ref{ZaZbSufficient})], and we are done.

\emph{Step} 2. \emph{Proof of} $\ZRWsign_A,\ZRWsign_B\le\const$,
\emph{if} $k\asymp1$.
In this case, Proposition~\ref{Prop:XwtXestim} and (\ref
{YaXa/YbXb=k}) give
\[
{\xYsign_A^2}({1 + \xYsign_A})^{-1}
\asymp\xYsign_A\xXsign_A\asymp \xYsign_B
\xXsign_B\asymp {\xYsign_B^2}({1 +
\xYsign_B})^{-1}.
\]
Thus if, say, $\xYsign_A\le\const$, then $\xYsign_B\le\const$ as
well, and
$\ZRWsign_A,\ZRWsign_B\le\const$ due to Theorem~\ref{Thm:ZlogYestim}.
Hence, without
loss of generality, we may assume that \emph{both} $\xYsign_A,\xYsign_B$
\emph{are bounded away from zero}, which is equivalent to saying that both
$\xXsign_A,\xXsign_B\asymp1$, that is,
\begin{eqnarray*}
\ZRW\O{x} {a_1}\ZRW\O{y} {a_2}&\asymp&\ZRW
\O{a_1} {a_2}\ZRW\O{x} {y} ,
\\
\ZRW\O{x} {b_2}\ZRW\O{y} {b_1}&\asymp&\ZRW
\O{b_1} {b_2}\ZRW\O{x} {y}.
\end{eqnarray*}
Using Theorem~\ref{Thm:Zfact} and (\ref{ZxA/ZxB=ZyA/ZyB=k}), we
obtain
\begin{eqnarray*}
\frac{\ZRW\O{x}{a_2}}{\ZRW\O{y}{a_2}}&\asymp& \frac{\ZRW\O{x}{a_2}\ZRW\O{x}{a_1}}{\ZRW\O{a_1}{a_2}\ZRW\O
{x}{y}}\asymp \frac{(\ZRW\O{x}{A})^2}{\ZRW\O{x}{y}}
\\
&
\asymp&\frac{(\ZRW\O
{x}{B})^2}{\ZRW\O{x}{y}} \asymp \frac{\ZRW\O{x}{b_1}\ZRW\O{x}{b_2}}{\ZRW\O{b_1}{b_2}\ZRW\O
{x}{y}}\asymp \frac{\ZRW\O{x}{b_1}}{\ZRW\O{y}{b_1}} ,
\end{eqnarray*}
which means $\xY{\O}{a_2}x{b_1}y\asymp1$. Then, Remark~\ref
{Rem:XY(Z)comparable} applied to the quadrilateral $(\O;a_2,x;b_1,y)$
gives $1\asymp\xX{\O}{a_2}x{b_1}y\asymp\xY{\O}{a_2}x{b_1}y$ which can
be rewritten as
\[
{\ZRW\O{x} {a_2}} {\ZRW\O{y} {b_1}}\asymp{\ZRW\O{x} {y}}
{\ZRW\O {a_2} {b_1}}\asymp {\ZRW\O{x} {b_1}}
{\ZRW\O{y} {a_2}}.
\]
Similarly, one has
\[
{\ZRW\O{x} {a_1}} {\ZRW\O{y} {b_2}}\asymp{\ZRW\O{x} {y}}
{\ZRW\O {a_1} {b_2}}\asymp {\ZRW\O{x} {b_2}}
{\ZRW\O{y} {a_1}}.
\]
Then, using $\xXsign_A,\xXsign_B\asymp1$ and rearranging factors, one
arrives at
\[
\xYsign_A\xYsign_B\asymp \xYsign_A
\xXsign_A\xYsign_B\xXsign_B\asymp
\frac{\ZRW\O
{a_1}{b_2}\ZRW\O
{a_2}{b_1}}{
\ZRW\O{a_1}{a_2}\ZRW\O{b_2}{b_1}}= \xYsign_\O(a_1,a_2;b_1,b_2)
.
\]
As $\ZRWsign$ is bounded above, Theorem~\ref{Thm:ZlogYestim} ensures
that $\xY\O{a_1}{a_2}{b_1}{b_2}\le\const$.
Taking into account $\xYsign_A,\xYsign_B\ge\const$, we get $\xYsign
_A,\xYsign_B\asymp1$, and so $\ZRWsign_A,\ZRWsign_B\asymp1$.

\emph{Step} 3. \emph{Proof of the general case in} (ii). Let $\ZRWsign
_A(k)$ and
$\ZRWsign_B(k)$ be defined by~(\ref{ZaZbDef}) for a given $k$. Note that
$\ZRWsign_A(k)$, $\ZRWsign_B(k)$ are piecewise-constant left-continuous
functions of $k$ which jump no more than
by some constant factor $\varpi_0^{-2}>1$ [see assumption (a) in
Section~\ref{SubSect:GraphAssumptions}], when domain $\O_A^B[k]$ [and,
simultaneously, $\O_B^A(k^{-1})$] changes.

We will fix $\k_0$ at the end of the proof, but in any case it will be
less than $1$.
Since $\ZRWsign\le1$, step 2 ensures that $\ZRWsign_A(1),\ZRWsign
_B(1)\le\zeta_0$ for some
absolute constant $\zeta_0$ [actually, $\ZRWsign_A(1)$ and $\ZRWsign_A(1)$
are much smaller, being of order
$\ZRWsign^{1/2}$]. Now let us start to increase the parameter $k$.
Since $\O_A^B[k']\subset\O_A^B[k]$ for $k'>k$, the partition function
$\ZRWsign_A(k)$ increases, while $\ZRWsign_B(k)$ decreases. Let
\[
k_\mathrm{max}:=\max\bigl\{k\ge1\dvtx\ZRWsign_A(k)\le
\zeta_0\bigr\}.
\]
Due to step 1, there exists a positive constant $c_0\le1$ such that
the following is fulfilled:
\[
c_0 k\le\ZRWsign_A(k)/\ZRWsign_B(k)\le
c_0^{-1} k \quad\mbox{and}\quad c_0\ZRWsign\le
\ZRWsign_A(k)\ZRWsign_B(k)\le c_0^{-1}
\ZRWsign
\]
for any $k\in[1,k_\mathrm{max}]$. Moreover, one has $\ZRWsign
_A(k_\mathrm{max})\ge\varpi_0^{2}\zeta_0$,
since the function $\ZRWsign_A(\ccdot)$ cannot jump too much at the
point $k_\mathrm{max}$.
Therefore, we obtain the estimate
\[
k_\mathrm{max}\ge c_0\cdot\frac{\ZRWsign_A(k_\mathrm
{max})}{\ZRWsign
_B(k_\mathrm{max})}\ge
c_0^2\cdot\frac{(\ZRWsign_A(k_\mathrm{max}))^2}{\ZRWsign}\ge \varpi
_0^{4}\zeta_0^2c_0^2
\cdot\ZRWsign^{-1}.
\]
Thus, if $\k_0\le\min\{1,\varpi_0^{4}\zeta_0^2c_0^2\}$, then (ii) holds
true for all
\mbox{$k\in[1;\k_0\ZRWsign^{-1}]$} (and similar arguments can be
applied for $k\in[\k_0^{-1}\ZRWsign;1]$).

Finally, for all vertices near $A$, one has
\[
\ZRW\O\ccdot A\ge\const\quad\mbox{and}\quad \ZRW\O\ccdot B \le\const \cdot\,\ZRWsign.
\]
Thus, choosing $\k_0$ small enough (independently of $\O,A,B$), one
ensures that $\O_A^B[\k_0\ZRWsign^{-1}]$ is connected (and so $\O
_A^B[k]$ is connected for all $k\ge\k_0\ZRWsign^{-1}$).
\end{pf}

Dealing with more involved configurations (e.g., simply connected
discrete domains with many marked boundary points), in addition to
Theorem~\ref{Thm:Separator}, it is useful to have some information
concerning mutual ``topological'' properties of cross-cuts separating
$A$ and $B$, corresponding to \emph{different} pairs $A,B$. In order to
shorten the notation below, for $x\in\pa\O\setminus(A\cup B)$, we set
\[
\O_A^B[x]:=\O_A^B\bigl[
\RRsign_\O(x;A,B) \bigr]=\biggl\{u\in\O\dvtx\frac{\ZRW\O
{u}A}{\ZRW\O
{u}B} \ge
\frac{\ZRW\O{x}A}{\ZRW\O{x}B}\biggr\}.
\]
Roughly speaking, $\O_A^B[x]$ is the set of those $u\in\O$ which are
``not further in $\O$''
from $A$ compared to $B$ than a reference point $x$. Note that
since the function $\RR\O\ccdot AB$ is monotone on the boundary arcs
$(a_2b_1)_\O$ and $(b_2a_1)_\O$
(see Lemma~\ref{Lemma:R-monotone}), $\O_A^B[x]$ also behaves in a
monotone way when $x$ runs along
$\pa\O\setminus(A\cup B)$.

%pr5.2 #&#
\begin{proposition}
\label{Prop:InclSeparators}
Let $\O$ be a simply connected discrete domain, disjoint boundary arcs
$A=[a_1a_2]_\O$, $B=[b_1b_2]_\O$ and $C=[c_1c_2]_\O$ be
listed counterclockwise, and $B\cup C=[b_1c_2]_\O$ [i.e., $b_2$ and
$c_1$ are consecutive points of $\pa\O$; see Figure~\ref{Fig:
domain+slit}\textup{(B)}]. Then
\begin{eqnarray*}
\O_A^C[x]&\ss&
\O_A^{B\cup C}[x]\ss\O_A^B[x] \qquad \mbox{for
any } x\in(a_2b_1)_\O,
\\
\O_A^B[y]&\ss& \O _A^{B\cup C}[y]\ss
\O_A^C[y] \qquad \mbox{for any } y\in(b_2a_1)_\O.
\end{eqnarray*}
\end{proposition}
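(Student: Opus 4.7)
My plan exploits the $B\leftrightarrow C$, $x\leftrightarrow y$ symmetry of the statement to reduce to $x\in(a_2b_1)_\O$. I will first observe that the full chain $\O_A^C[x]\ss \O_A^{B\cup C}[x]\ss \O_A^B[x]$ follows from the single inclusion $\O_A^C[x]\ss \O_A^B[x]$. The first inclusion follows from the elementary ``mediant'' fact $\O_A^C[x]\cap \O_A^B[x]\ss \O_A^{B\cup C}[x]$ (just add the two defining ratio inequalities), and the second inclusion follows by splitting $u\in\O_A^{B\cup C}[x]$ according to whether $u\in\O_A^C[x]$: in the complementary case $\ZRW\O xA\ZRW\O uC>\ZRW\O uA\ZRW\O xC$ and the defining inequality of $\O_A^{B\cup C}[x]$ directly force $u\in\O_A^B[x]$.

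The reduced inclusion $\O_A^C[x]\ss \O_A^B[x]$ I plan to prove by applying the discrete maximum principle to the harmonic function
\[
H_B(u) := \ZRW\O uA - t_B \ZRW\O uB, \qquad t_B := \ZRW\O xA / \ZRW\O xB,
\]
whose positivity region is exactly $\O_A^B[x]$, on the simply connected discrete subdomain $\O_A^C[x]\ss\O$. The boundary $\pa\O_A^C[x]$ splits into two pieces. Its $\pa\O$-portion is, by Lemma~\ref{Lemma:R-monotone} applied to $\RR\O\ccdot AC$, an arc $[y_C,x]_\O$ with $y_C\in (c_2a_1)_\O$; this arc avoids $B$, so $H_B$ there takes the boundary values $\mu_a^{-1}>0$ on $A$ and $0$ on the complementary subarcs of $(c_2a_1)_\O$ and $(a_2b_1)_\O$, all nonnegative.

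The heart of the proof is the interior cross-cut portion of $\pa\O_A^C[x]$. Setting also
\[
H_C(u) := \ZRW\O uA - t_C\ZRW\O uC, \qquad t_C := \ZRW\O xA / \ZRW\O xC,
\]
this interior portion lives along the nodal cross-cut $\{H_C = 0\}$. To conclude $H_B\ge 0$ along it I will compare the two nodal cross-cuts $\{H_B = 0\}$ and $\{H_C = 0\}$: both emanate from the common boundary endpoint $x\in (a_2b_1)_\O$ and terminate at boundary points $y_B,y_C\in (c_2a_1)_\O$ respectively. Lemma~\ref{Lemma:R-monotone} applied to $\RR\O\ccdot BC$ along the long arc $[c_2b_1]_\O$ shows that $y_B$ precedes $y_C$ counterclockwise on $(c_2a_1)_\O$. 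Combined with the simple connectedness of both $\O_A^B[x]$ and $\O_A^C[x]$ (which itself follows from a maximum-principle argument as in the proof of Theorem~\ref{Thm:Separator}) and their shared endpoint $x$, this boundary ordering forces $\{H_C = 0\}\ss \overline{\O_A^B[x]}$, yielding $H_B\ge 0$ on the interior cross-cut.

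I expect this last, topological step to be the main obstacle: a priori two discrete harmonic functions on $\O$ can have transversal nodal crossings in $\Int\O$, and what rules this out here is the combined input of the shared boundary endpoint $x$, the boundary ordering of $y_B$ and $y_C$, and the simple connectedness of the $\O_A^D[x]$'s. To implement this cleanly I plan to introduce the auxiliary harmonic function $\tilde H := H_B-H_C = t_C\ZRW\O\ccdot C - t_B\ZRW\O\ccdot B$, whose boundary values vanish except for being negative on $B$ and positive on $C$, and to track its sign along $\{H_C=0\}$ using the boundary ordering.
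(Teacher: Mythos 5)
Your algebraic reduction of the whole chain to the single inclusion $\O_A^C[x]\ss\O_A^B[x]$ is correct (both the ``mediant'' step and the complementary-case step check out), and it is a genuine simplification: the paper proves the two inclusions of the chain separately. But the proof of the reduced inclusion is where the proposal stops short. The step you yourself flag as the main obstacle --- that the sign-change interface of $H_C$ stays inside the closed nonnegativity region of $H_B$ --- is equivalent to the statement being proved, and the justification you offer (common endpoint $x$, ordering of the far endpoints $y_B,y_C$, simple connectedness of the sublevel domains) is not a valid topological argument: two interfaces of different harmonic functions sharing the endpoint $x$, with correctly ordered far endpoints, can still cross each other an even number of times, the $H_C$-interface making excursions into $\{H_B<0\}$ and back; nothing purely topological forbids this. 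Ruling out such crossings requires an analytic input, and this is exactly what the paper supplies: assuming a wrong sign at some $b_\mathrm{int}$ (resp.\ $c_\mathrm{int}$), it builds for the kernel $H(v)=\ZRW\O uA\cdot\ZRW\O xv-\ZRW\O xA\cdot\ZRW\O uv$ a path of negative values ending at $u$ (where $H$ is subharmonic) and a path of nonnegative values ending at $x$, notes that these cannot intersect, concludes via simple connectedness that one of them separates $u$ from $A$ (or from $B$), and then derives a contradiction (or the desired membership) from the maximum principle plus summation along the arc. Your auxiliary function $\tilde H=t_C\ZRW\O\ccdot C-t_B\ZRW\O\ccdot B$ is a nice observation --- its interface also passes through $x$, since $t_C/t_B=\RR\O xBC$ --- but ``tracking its sign along $\{H_C=0\}$'' merely restates the same unproved comparison (namely $\RR\O\ccdot BC\le\RR\O xBC$ along the $H_C$-interface); it is not a mechanism for proving it.

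Two secondary inaccuracies. First, the assertion that the $H_B$-interface terminates at a point $y_B\in(c_2a_1)_\O$ is unjustified and can fail: if $x$ is close to $b_1$, then $t_B=\RR\O xAB$ is small and, by Lemma~\ref{Lemma:R-monotone}, the point of $(b_2a_1)_\O$ where $\RR\O\ccdot AB$ crosses $t_B$ lies inside $C$; in that case your ordering conclusion holds trivially, but the argument as written asserts something false. Second, at the discrete level the interior portion of $\pa\O_A^C[x]$ consists of vertices where $H_C<0$, not $H_C=0$; at such vertices $H_B\ge0$ is neither claimed by the proposition nor true in general (near $x$ the two interfaces touch), so even granting the interface comparison, the ``minimum principle on the subdomain $\O_A^C[x]$'' framing needs to be reworked (for instance by arguing across interface edges, or by running a contradiction argument in the spirit of the paper). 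In short: the reduction is fine, but the core of the proof --- preventing the two interfaces from crossing --- is missing, and it cannot be closed by topology alone.
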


\begin{pf}
Let $x\in(a_2b_1)_\O$ (the second case is similar) and $u\in\Int\O
_A^C[x]$ which, by definition, means
%
%e5.10 #&#
\begin{equation}
\label{UinOABx1} {\ZRW\O{u}A}\cdot{\ZRW\O{x}C}\ge{\ZRW\O{x}A}\cdot{\ZRW\O{u}C}.
\end{equation}
We need to check that $u\in\Int\O_A^{B\cup C}[x]$ which is
equivalent to
%
%e5.11 #&#
\begin{equation}
\label{UinOABx2} {\ZRW\O{u}A}\cdot{\ZRW\O{x} {B \cup C}}\ge{\ZRW\O{x}A}\cdot {\ZRW
\O {u} {B \cup C}}.
\end{equation}
%
%By definition,
%\begin{equation}
%\label{UinOABx}
%\begin{array}{rl}
% u\in\Int\O_A^C[x] & \Lra\ {\ZRW\O{u}A}\cdot{\ZRW\O{x}C}\ge
%{\ZRW\O{x}A}\cdot{\ZRW\O{u}C} , \vphantom{\big|_|}\\
%u\in\Int\O_A^{B\cup C}[x]& \Lra\ {\ZRW\O{u}A}\cdot{\ZRW\O{x}{B \cup
% C}}\ge{\ZRW\O{x}A}\cdot{\ZRW\O{u}{B \cup C}}. \vphantom{\big|^|}
%\end{array}
%\end{equation}
Since $\ZRW\O\ccdot{B \cup C}=\ZRW\O\ccdot B + \ZRW\O\ccdot C$, it
is sufficient to prove
that, for any $b\in B$,
\[
\frac{\ZRW\O{x}b}{\ZRW\O{u}b}=\frac{\ZRW\O{x}{b_\mathrm
{int}}}{\ZRW\O
{u}{b_\mathrm{int}}} \ge\frac{\ZRW\O{x}A}{\ZRW\O{u}A}.
\]

For $v\in\O$, denote
\[
H(v):= %
\cases{ {\ZRW\O{u}A}\cdot{\ZRW\O{x}v} - {\ZRW\O{x}A}\cdot{\ZRW
\O {u}v}, & \quad $v\in\Int\O$,
\vspace*{2pt}\cr
\mu_x^{-1}\1[v = x], &\quad $v\in
\pa\O$.} %
%H(v):= {\hm\O{u}A}\cdot{G_\O({x_\mathrm{int}};v)}-{\hm\O{x_
%\mathrm{int}}A}\cdot
%{G_\O(u;v)}.
\]
Suppose that, on the contrary, $H(b_\mathrm{int})<0$ for some $b\in B$.
Since the function $H$
is harmonic everywhere in $\O$ except $u$ (where it is subharmonic),
and vanishes on $\pa\O$ everywhere except $x$ (where it is strictly
positive), there exists a
nearest-neighbor path $\gamma_{bu}$ running from $b_\mathrm{int}$ to $u$
such that $H<0$ along
$\gamma_{bu}$. On the other hand, $H(c_\mathrm{int})\ge0$ for at least
one $c\in C$
[otherwise, summation along the arc $C$ gives a contradiction with
(\ref
{UinOABx1})]. Hence,
there exists a nearest-neighbor path $\gamma_{cx}$ running from
$c_\mathrm
{int}$ to
$x$ such that $H\ge0$ along $\gamma_{cx}$. Since these two paths cannot
cross each
other, and $\O$ is simply connected, $\gamma_{cx}$ should separate
$u$ and
$A$. Then the maximum principle implies
$H(a_\mathrm{int})>0$ for any $a\in A$. Summing along the arc~$A$, one
arrives at the
inequality
%
%e5.12 #&#
\begin{equation}
\label{HmAcontradiction} \ZRW\O uA\cdot\ZRW\O{x}A > \ZRW\O {x}A\cdot\ZRW \O uA,
\end{equation}
which is a contradiction. Thus $\O_A^C[x]\subset\O_A^{B\cup C}[x]$.

Now let $u\in\O_A^{B\cup C}[x]$. Arguing as above, in order to
deduce $u\in\O_A^B[x]$ from~(\ref{UinOABx2}), it is sufficient to prove
that, for all $c\in C$,
\[
\frac{\ZRW\O{x}c}{\ZRW\O{u}c}=\frac{\ZRW\O{x}{c_\mathrm
{int}}}{\ZRW\O
{u}{c_\mathrm{int}}} \le\frac{\ZRW\O{x}A}{\ZRW\O{u}A}.
\]
Suppose, on the contrary, that $H(c_{\mathrm{int}})>0$ for some $c\in
C$. Then there exists a path $\gamma_{cx}$ running from
$c_{\mathrm{int}}$ to $x$ such that $H>0$ along $\gamma_{cx}$. Now there
are two
cases. If $\gamma_{cx}$ separates $u$ and $A$, then the maximum
principle implies
$H(a_{\mathrm{int}})>0$ for all $a\in A$, which leads to the same contradiction
(\ref{HmAcontradiction}). But if $\gamma_{cx}$ does not separate $u$ and
$A$, then it separates
$u$ and $B$. Therefore, $H(b_\mathrm{int})>0$ for all $b\in B$, which
directly gives
$u\in\O_A^B[x]$ by summation along $B$.
\end{pf}

%s6 #&#
\section{Extremal lengths}
\label{Sect:ExtremalLength}

In this section we recall the notion of a discrete extremal length $\EL
\O{[ab]_\O}{[cd]_\O}$ between two opposite boundary arcs of a discrete
simply connected domain $\O$ (which is nothing but the resistance of
the corresponding electrical network), first discussed by Duffin in
\cite{Duf62}. Note that $\ELsign_\O$ can be defined in two \emph
{equivalent} ways: (a) via some extremal problem (see Definition~\ref
{Def:ExtLengthVar}) and (b) via solution to a Dirichlet--Neumann
boundary value problem; see Proposition~\ref{Prop:EL=U/I} and
Remark~\ref{Rem:DualFct}.
%We also refer the reader to \cite{BV12}, where this equivalence and a
%connection to the Uniform Spanning Tree model are discussed in more
%details.
The most important feature of (a) is that it allows one to estimate
$\ELsign_\O$ ``in geometric terms.'' In particular, we show that
$\ELsign_\O$ is uniformly comparable to its continuous counterpart,
extremal length of the corresponding polygonal quadrilateral; see
Proposition~\ref{Prop:ELdiscr=ELcont} and Corollary~\ref{Cor:ELduality}
for details. At the same time, approach (b) allows us to relate
$\ELsign
_\O$ to the random walk partition function $\ZRWsign_\O$ discussed
above; see Proposition~\ref{Prop:ZleL-1}. Note that this connection is
of crucial importance for the next section, which starts with the
complete set of uniform double-sided estimates relating $\xYsign_\O
,\ZRWsign_\O$ and $\ELsign_\O$; see Theorem~\ref{Thm:DblSidedEstimates}.

Let $\O$ be a discrete domain and $E^\O=E^\O_\mathrm{int}\cup E^\O
_\mathrm{bd}$ be the set of edges of $\O$. For a given
function (``discrete metric'') $g\dvtx E^\O\to[0;+\iy)$, we define the
``$g$-area'' of $\O$ by
\[
A_g(\O):=\sum_{e\in E^\O}
\mathrm{w}_eg_e^2,
\]
where $\mathrm{w}_e$ denote weights of edges of $\G$; see
Section~\ref{SubSect:GraphAssumptions}. Further, for a given subset
$\gamma\ss E^\O$
(e.g., a nearest-neighbor path running in $\O$), we define its
``$g$-length'' by
\[
L_g(\gamma):=\sum_{e\in\gamma}g_e.
\]
Finally, for a family $\cE$ of lattice paths in $\O$, we set $L_g(\cE
):=\inf_{\gamma\in\cE}L_g(\gamma)$.

%de6.1 #&#
\begin{definition}
\label{Def:ExtLengthVar} The \emph{discrete extremal length} of the
family $\cE$ is given by
%
%e6.1 #&#
\begin{equation}
\label{ELdisc=sup} \ELsign[\cE]:=\sup_{g\dvtx E^\O\to[0;+\iy)}\frac{[L_g(\cE
)]^2}{A_g(\O)},
\end{equation}
where the supremum is taken over all $g$'s such that $0<A_g(\O
)<+\infty
$. In particular, if
$\O$ is simply connected, $a,b,c,d\in\pa\O$ are listed
counterclockwise, and $b\ne c$, $d\ne a$, then
we define $\EL{\O}{[ab]_\O}{[cd]_\O}$ as the extremal
length of
the family
$(\O;[ab]_\O\leftrightarrow[cd]_\O)$ of all lattice
paths connecting the
boundary arcs $[ab]_\O$ and $[cd]_\O$ inside
$\O$.
\end{definition}

Note that the discrete extremal metric $g^\mathrm{max}$ [that provides
a maximal value in the right-hand side of (\ref{ELdisc=sup})] always
exists and is unique up to a multiplicative constant. Indeed, by
homogeneity, it is enough to consider only those $g$ that satisfy the
additional assumption $A_g(\O)=1$ and the set of all such discrete
metrics is compact in the natural topology (as $E^\O$ is finite).
Moreover, if $g,g'$ are two extremal metrics such that $A_g(\O
)=A_{g'}(\O)=1$, then the metric $g'':=\frac{1}{2}(g+g')$ satisfies
$L_{g''}(\mathcal{E})\ge L_g(\mathcal{E})=L_{g'}(\mathcal{E})$ and we
have $A_{g''}(\O)<1$ unless $g=g'$. Thus if $g\ne g'$, then $g''$
provides a larger value in (\ref{ELdisc=sup}).

Definition~\ref{Def:ExtLengthVar} easily allows one to estimate $\EL
\O
{[ab]_\O}{[cd]_\O}$ {from below}, since for this purpose it is
sufficient to take any ``discrete metric'' $g$ in $\O$ and estimate
$A_{g}(\O)$ and $L_{g}(\O;[ab]_\O\leftrightarrow[cd]_\O)$ for
this particular $g$.
Note that the most natural way to give an {upper} bound is to use (some
form of) the duality between the extremal lengths $\EL{\O}{[ab]_\O
}{[cd]_\O}$ and $\EL{\O}{[bc]_\O}{[da]_\O}$; see Corollary~\ref
{Cor:ELduality} below.

For a (simply connected) discrete domain $\O\ss\G$, we denote its
\emph{
polygonal representation} as the open (simply connected) set $\O^\C
\ss\C$ bounded by the polyline $x^0_\mathrm{mid}x^1_\mathrm
{mid}\cdots x^n_\mathrm{mid}x^0_\mathrm{mid}$ passing through all \emph
{middle points} $x^k_\mathrm{mid}:=\frac{1}{2}(x^k + x^k_\mathrm
{int})$ of boundary edges $(x^k_\mathrm{int}x^k)\in\pa\O$ in their
natural order (counterclockwise with respect to $\O$); see Figure~\ref
{Fig: domain-abcd+L}. For $a,b\in\pa\O$, $a\ne b$, we denote by
$[ab]_\O^\C\ss\pa\O^\C$ the part of this polyline from
$a_\mathrm
{mid}$ to $b_\mathrm{mid}$, viewed as a boundary arc of $\O^\C$. In
case $a=b$, we slightly modify this definition, setting, say, $[aa]_\O^\C:=[\frac{1}{2}(a^-_\mathrm{mid}+a_\mathrm
{mid});a_\mathrm
{mid}]\cup[a_\mathrm{mid};\frac{1}{2}(a^+_\mathrm{mid}+a_\mathrm
{mid})]$, where $a^\mp$ denote the boundary points of $\O$ just before
and next to $a$.

Let $\ELsign_\O^\C:=\ELsign_\O^\C([ab]_\O^\C;[cd]_\O^\C)$
denote the
classical extremal distance between the opposite arcs of a topological
quadrilateral $(\O^\C;a_\mathrm{mid},b_\mathrm{mid},c_\mathrm
{mid},d_\mathrm{mid})$ in the complex plane; for example, see \cite{Ahl73},
Chapter~4, or \cite{GM05}, Chapter IV. Note that our
Definition~\ref{Def:ExtLengthVar} replicates the classical one, which says
%
%e6.2 #&#
\begin{equation}
\label{ELcont=sup} \ELsign_\O^\C\bigl([ab]_\O^\C;[cd]_\O^\C
\bigr) =\sup_{g:\O^\C\to
[0;+\iy)}\frac
{ [\inf_{\gamma:[ab]_\O^\C\leftrightarrow[cd]_\O^\C}\int_\gamma g\,ds
]^2}{\iint_\O g^2\,dx\,dy},
\end{equation}
where the supremum is taken over all $g$ such that $0<\iint_\O
g^2\,dx\,dy<+\infty$, and the infimum is over all curves connecting
$[ab]_\O
^\C$ and $[cd]_\O^\C$ inside $\O^\C$; see \cite{Ahl73,GM05}. It
is well
known that the extremal metric $g^\mathrm{max}$ [providing a maximal
value in the right-hand side of (\ref{ELcont=sup})] exists, is unique
up to a multiplicative constant and is given by $g^\mathrm
{max}(z)\equiv|\phi'(z)|$ where $\phi$ conformally maps $\O^\C$ onto
the rectangle
%
%e6.3 #&#
\begin{eqnarray}
\label{xPhiO->Rect} %
\phi\dvtx\O^\C&\to&\bigl
\{z\dvtx0<\Re z< 1, 0<\Im x< \ELsign_\mathrm {cont}^{-1}\bigr\},
\nonumber
\\[-8pt]
\\[-8pt]
\nonumber
a&\mapsto& i\ELsign_\mathrm{cont}^{-1},\qquad b
\mapsto0,\qquad c\mapsto 1,\qquad d\mapsto1+i\ELsign_\mathrm{cont}^{-1}.
\end{eqnarray}

%pr6.2 #&#
\begin{proposition}
\label{Prop:ELdiscr=ELcont}
Let $\O$ be a simply connected discrete domain and $a,b,c,d\in\pa\O$,
$b\ne c$, $d\ne a$, be listed in the counterclockwise order. Then
%
%e6.4 #&#
\begin{equation}
\label{Eldiscr=ELcont} \ELsign_{\O}\bigl([ab]_\O;[cd]_\O
\bigr) \asymp\ELsign _\O^\C \bigl([ab]_\O^\C;[cd]_\O^\C
\bigr)
\end{equation}
with some uniform (i.e., independent of $\O,a,b,c,d$) constants.
\end{proposition}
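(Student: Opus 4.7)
The plan is to prove the two inequalities $\ELsign_\O\lesssim\ELsign_\O^\C$ and $\ELsign_\O^\C\lesssim\ELsign_\O$ separately by transporting admissible metrics between the discrete and continuous settings. The common geometric setup is a controlled cover $\O^\C=\bigcup_e \Lambda_e\cup\bigcup_f\Lambda_f$, where $\Lambda_e\subset\O^\C$ is a thin strip-like neighborhood of each edge $e$ (say, the intersection of $\O^\C$ with the Euclidean disc of radius $\tfrac12|e|$ about the midpoint of $e$) and $\Lambda_f\subset\O^\C$ is the (suitably truncated) face $f$, exactly as pictured in the right half of Fig.~\ref{Fig: domain-abcd+L}. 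Assumptions~(b)--(c) on $\G$ guarantee that $\Area(\Lambda_e)\asymp|e|^2$, that each $\Lambda_e$ has transverse width $\asymp|e|$ about the segment $e$, and that the multiplicity of this cover is uniformly bounded; assumption~(a) additionally gives $\mathrm{w}_e\asymp 1$.

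For the easier direction $\ELsign_\O^\C\gtrsim\ELsign_\O$, take any admissible discrete metric $g^\O$ and set $g^\C:=g^\O_e/|e|$ on each $\Lambda_e$ (and $0$ elsewhere). Bounded overlap and $\mathrm{w}_e\asymp 1$ immediately yield $\iint_{\O^\C}(g^\C)^2\,dA\lesssim\sum_e(g^\O_e)^2\asymp A_{g^\O}(\O)$. A continuous curve $\gamma^\C$ in $\O^\C$ joining $[ab]_\O^\C$ and $[cd]_\O^\C$ traverses a sequence of faces and therefore determines a nearest-neighbor lattice path $\gamma^\O\in(\O;[ab]_\O\lra[cd]_\O)$; each transverse crossing of an edge $e$ contributes at least $\const\cdot|e|\cdot(g^\O_e/|e|)=\const\cdot g^\O_e$ to $\int_{\gamma^\C}g^\C\,ds$ by the transverse-width estimate, so $L_{g^\C}(\gamma^\C)\gtrsim L_{g^\O}(\gamma^\O)$. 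Passing to the infima and then to the supremum over $g^\O$ proves the bound.

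For the reverse direction $\ELsign_\O\gtrsim\ELsign_\O^\C$, given a continuous admissible $g^\C$, I define the discrete metric by the averaged $L^2$-mass
\[
(g^\O_e)^2\ :=\ \frac{|e|^2}{\Area(\Lambda_e)}\iint_{\Lambda_e}(g^\C)^2\,dA,
\]
so that summation with bounded multiplicity gives $A_{g^\O}(\O)\lesssim\iint_{\O^\C}(g^\C)^2\,dA$. A discrete path $\gamma^\O$ extends canonically to a polygonal curve $\wt\gamma\subset\O^\C$ joining $[ab]_\O^\C$ and $[cd]_\O^\C$ (appending half-edge segments to the boundary midpoints at the two endpoints); Fubini applied to the pencil of parallel translates of $e$ inside $\Lambda_e$ together with Cauchy--Schwarz shows that $g^\O_e$ dominates $\int_{e_t}g^\C\,ds$ for a typical translate $e_t$. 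The main technical hurdle, which I view as the crux of the argument, is to choose these translates simultaneously for all $e_i\in\gamma^\O$ and to glue them into a single continuous curve $\wt\gamma^*$ inside $\O^\C$ whose $g^\C$-length is bounded by $\sum_i g^\O_{e_i}=L_{g^\O}(\gamma^\O)$. Because assumption~(b) prevents flat angles while (c) keeps neighboring edge lengths comparable, the gluing inside each $\Lambda_{e_i}\cup\Lambda_{e_{i+1}}$ costs only a bounded multiplicative constant, and the boundary half-edge contributions at the two ends are absorbed into the same constant; this yields $L_{g^\C}\bigl([ab]_\O^\C\lra[cd]_\O^\C\bigr)\lesssim L_{g^\O}(\gamma^\O)$ and completes the proof.
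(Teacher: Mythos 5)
Both halves of your argument contain genuine gaps, even though the overall strategy (transporting admissible metrics in both directions) is the right one. In the direction $\ELsign_\O^\C\gtrsim\ELsign_\O$ your length accounting does not work with the thin neighborhoods you chose: a curve may cross an edge $e$ transversally near one of its endpoints and spend an arbitrarily short time in the disc of radius $\frac12|e|$ about the midpoint of $e$, so such a crossing need not contribute $\const\cdot g^\O_e$; more seriously, the lattice path $\gamma^\O$ extracted from the sequence of faces met by $\gamma^\C$ typically uses edges that $\gamma^\C$ never crosses at all (the curve can enter and leave a face through the same edge, or slip past a vertex), so summing contributions of transversally crossed edges does not bound $L_{g^\O}(\gamma^\O)$. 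This is exactly why the paper takes $\L_e^\C$ to be the polygonal neighborhood of \emph{both} faces adjacent to $e$ (so that $\L_f^\C\subset\L_e^\C$ whenever $e\sim f$) and charges, for every face $f$ touched by the curve, the forced crossing of the annulus $\L_f^\C\setminus f$, whose Euclidean length is $\gtrsim r_e$ for each $e\sim f$; only then does one get $\int_{\gamma^\C}g\,ds\gtrsim\sum_{e\in E^{\gamma}}g_e$ with $E^{\gamma}$ containing an admissible discrete path. With that modification your first half becomes the paper's part (i).

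In the direction $\ELsign_\O\gtrsim\ELsign_\O^\C$ your route is genuinely different from the paper's, and the step you yourself call the crux is precisely what is missing. Knowing that a typical translate $e_t$ of each edge satisfies $\int_{e_t}g^\C ds\lesssim g^\O_e$ does not yet produce an admissible continuous curve: the chosen translates of consecutive edges do not meet, and the cost of joining them near the common vertex must be measured in the \emph{arbitrary} metric $g^\C$, not in Euclidean length, so assumptions (b)--(c) alone cannot bound it --- $g^\C$ may be concentrated exactly where you want to glue. One needs a second selection argument (e.g.\ circular arcs around each vertex of $\gamma^\O$ at a radius chosen by averaging and Cauchy--Schwarz in an annulus of scale $r_v$, charged to the neighboring $g^\O_e$'s), a check that the selected translates and arcs actually intersect so the pieces concatenate inside $\O^\C$, and some care at the two ends so the curve lands on $[ab]_\O^\C$ and $[cd]_\O^\C$ (delicate when the path starts at $a$ or $b$). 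None of this is in your text, and the justification you give addresses the wrong quantity. The paper sidesteps the whole issue by testing the discrete problem only against the \emph{extremal} continuous metric $g^\mathrm{max}=|\phi'|$: with $g_e:=\int_{e\cap\O^\C}|\phi'|\,ds$ discrete lengths dominate continuous ones for free, and the area bound $\sum_e\mathrm{w}_eg_e^2\lesssim\iint_{\O^\C}|\phi'|^2$ follows from distortion estimates for the univalent map $\phi$ together with its behaviour at boundary corners. Your averaging approach can be completed and has the merit of avoiding conformal mapping altogether, but as written the gluing step is an assertion, not a proof.
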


\begin{pf}
Let $\ELsign_\mathrm{disc}:=\EL\O{[ab]_\O}{[cd]_\O}$ and $\ELsign
_\mathrm{cont}:=\ELsign_\O^\C({[ab]_\O^\C};{[cd]_\O^\C})$. We
prove two
one-sided estimates separately, taking a solution to either discrete
(\ref{ELdisc=sup}) or continuous (\ref{ELcont=sup}) extremal problem,
and constructing some related metric for the other one, thus obtaining
a lower bound for the other (continuous or discrete) extremal length.

(i) $\ELsign_\mathrm{cont}\ge\const\cdot\,\ELsign
_\mathrm{disc}$. Let $g^\mathrm{max}_e$, $e\in E^\O$, be the extremal
metric in (\ref{ELdisc=sup}). For a face $f$ of $\G$ (considered as a
convex polygon in $\C$), let $\L_f\ss\G$ be defined by saying that
$\Int
\L_f$ consists of all vertices incident to $f$, and $\L_f^\C$ be the
polygonal representation of $\L_f$. Further, for an edge $e\in E^\G$
separating two faces $f$ and $f'$, let $\Int\L_e:=\Int\L_f\cup\Int
\L
_{f'}$ and $\L_e^\C$ be the polygonal representation of $\L_e$; see
Figure~\ref{Fig: domain-abcd+L}. We set
\[
g(z):=\sum_{e\in E^\O} g^\mathrm{max}_er_e^{-1}
\1_{\L_e^\C}(z),\qquad z\in\O^\C,
\]
where $r_e$ denotes the length of $e$. Since each point in $\O^\C$
belongs to a uniformly bounded number of edge neighborhoods $\L_e^\C$
(recall that degrees of faces and vertices of $\G$ are uniformly
bounded), one has
%
%e6.5 #&#
\begin{equation}
\label{xAcont=Adisc} \iint_\O g^2 \,dx\,dy \asymp\sum
_{e\in E^\O} \bigl(g^\mathrm {max}_e
\bigr)^2r_e^{-2}\Area\bigl(
\L_e^\C\cap\O^\C\bigr)\asymp\sum
_{e\in E^\O
} \mathrm {w}_e \bigl(g^\mathrm{max}_e
\bigr)^2,
\end{equation}
as $r_e^{-2}\Area(\L_e^\C\cap\O^\C)\asymp1\asymp\mathrm{w}_e$;
see our
assumptions on $\G$ listed in Section~\ref{SubSect:GraphAssumptions}.

Now let $\gamma$ be any continuous curve crossing $\O^\C$ from
$[ab]_\O^\C$
to $[cd]_\O^\C$, $F^\gamma$ be the set of all (closed) \emph{faces}
touched by $\gamma$, and $E^\gamma\ss E^\O$ be the set of all
edges of $\O$
incident to those faces. It is clear that $E^\gamma$ contains a discrete
nearest-neighbor path running from $[ab]_\O$ to $[cd]_\O$. Thus it is
sufficient to estimate $\int_\gamma g\,ds$ (from below) via $\sum_{e\in E^\gamma
}g^\mathrm{max}_e$.
Note that, for any $f\in F^\gamma$,
\[
\gamma \mbox{ should cross the annulus type polygon } \L_f^\C
\setminus f
\mbox{ at least once}.
\]
Let $\gamma_f$ denote this crossing (there is one exceptional
situation: if, say, $b$ and $c$ are two consecutive boundary points,
and $f$ is a boundary face between them, then $\gamma$ may not cross the
annulus $\L_f^\C\setminus f$, so we denote by $\gamma_f$ the corresponding
crossing of $\L_f^\C$ itself). As degrees of vertices and faces of
$\G$
are uniformly bounded, each piece of $\gamma$ belongs to a bounded number
of $\gamma_f$. Since $\Length(\gamma_f)\ge\const\cdot\, r_e$ for
any $e\sim f$
(all those $r_e$ are comparable to each other due to our assumptions),
we arrive at
\begin{eqnarray*}
\int_\gamma g\,ds&\ge&\const\cdot\,\sum
_{f\in F^\gamma}\int_{\gamma_f} g\,ds
\\
&\ge& \const\cdot\,
\sum_{e\sim f\in F^\gamma}\Length(\gamma _f)g^\mathrm{max}_er_e^{-1}
\ge\const\cdot\,\sum_{e\in E^\gamma} g^\mathrm{max}_e.
\end{eqnarray*}
Together with (\ref{xAcont=Adisc}), this allows us to conclude that
\[
\ELsign_\mathrm{cont}\ge\frac{[\inf_{\gamma}\int_\gamma
g\,ds]^2}{\iint_\O
g^2\,dx\,dy} \ge\const\cdot\,
\frac{ [\inf_{\gamma}L_{g^\mathrm
{max}}(E^\gamma
)]^2}{A_{g^\mathrm{max}}(\O)}\ge\const\cdot\,\ELsign_\mathrm{disc}.
\]

(ii) $\ELsign_\mathrm{disc}\ge\const\cdot\,\ELsign
_\mathrm{cont}$. Let $g^\mathrm{max}\dvtx\O^\C\to\R^+$ be the extremal
metric in (\ref{ELcont=sup}). Recall that $g^\mathrm{max}(z)\equiv
|\phi
'(z)|$, where the conformal mapping $\phi$ is given by (\ref{xPhiO->Rect}).
We set
\[
g_e:=\int_{\O^\C\cap e} g^\mathrm{max}\,ds,\qquad e\in
E^\O.
\]
Since we have
$\sum_{e\in\gamma}g_e=\int_{\gamma} g^\mathrm{max}\,ds$ for each
nearest-neighbor
path $\gamma$ in $\O$, it is sufficient to estimate $\sum_{e\in\O
_e}\mathrm{w}_e g_e^2$ (from above) via $\iint(g^\mathrm{max})^2\,dx\,dy$.

Let $z_e$ denote the mid-point of an \emph{inner} edge $e$. As $\phi$
is a univalent holomorphic function (in $\L_e^\C\cap\O^\C$), all values
$|\phi'(z)|$ for $z\in e$ are uniformly comparable to each other (and
comparable to all other values $|\phi'(z)|$ for $z$ near $z_e$); for
example, see \cite{Ahl73}, Chapter~5, Theorem~5-3, or \cite{GM05}, Chapter~1,
Theorem~4.5. In particular, this implies
\[
g_e^2\asymp r_e^2\bigl|
\phi'(z_e)\bigr|^2\le\const\cdot\,
\iint_{\L_e^\C\cap
\O^\C
}\bigl|\phi'\bigr|^2\,dx\,dy.
\]
It is easy to see that the same holds true for \emph{boundary} edges:
if $\O^\C$ has an inner angle $\theta_x\in(\eta_0;2\pi]$ at the
boundary point $x_\mathrm{mid}\in\pa\O$, then $\phi$ behaves like
$(z-x_\mathrm{mid})^{\pi/\theta_x}$ near $x$ [or $(z-x_\mathrm
{mid})^{\pi/2\theta_x}$, if $x$ is one of the corners $a,b,c,d$]. Hence
$|\phi'|$ blows up not faster than $|z - x_\mathrm{mid}|^{-1/2}$ (or
$|z - x_\mathrm{mid}|^{-3/4}$, resp.) when $z$ approaches $x_\mathrm
{mid}$, which means $g_e\asymp r_e|\phi'(x_\mathrm{int})|$.

As each point in $\O^\C$ belongs to a uniformly bounded number of $\L
_e^\C$, we obtain
\[
\sum_{e\in E^\O}\mathrm{w}_eg_e^2
\le\const\cdot\,\iint_{\O^\C
}\bigl|\phi'\bigr|^2\,dx\,dy.
\]
Therefore,
\[
\ELsign_\mathrm{disc}\ge\frac{[\inf_{\gamma}\sum_{e\in\gamma
}g_e]^2}{\sum_{e\in E^\O}\mathrm{w}_eg_e^2} \ge\const\cdot\,
\frac{[\inf_\gamma
\int_{\gamma}
g^\mathrm{max}\,ds]^2}{\iint_{\O^\C}(g^\mathrm{max})^2\,dx\,dy}\ge \const\cdot\, \ELsign_\mathrm{cont}.
\]
\upqed\end{pf}

%co6.3 #&#
\begin{corollary}
\label{Cor:ELduality} Let $\O$ be a simply connected discrete domain
and four distinct boundary points $a,b,c,d\in\pa\O$ be listed
counterclockwise. Then
%
%e6.6 #&#
\begin{equation}
\label{ELduality} \ELsign_{\O}\bigl([ab]_\O;[cd]_\O
\bigr) \cdot \ELsign_{\O}\bigl([bc]_\O;[da]_\O
\bigr) \asymp1
\end{equation}
with some uniform (i.e., independent of $\O,a,b,c,d$) constants.
\end{corollary}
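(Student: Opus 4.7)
The plan is to reduce the claim directly to the classical duality for continuous extremal lengths, using Proposition~\ref{Prop:ELdiscr=ELcont} as the bridge between the discrete and continuous worlds. This is why Proposition~\ref{Prop:ELdiscr=ELcont} was stated and proved first: once both discrete extremal lengths are uniformly comparable to their continuous counterparts in the polygonal representation $\O^\C$, the identity for continuous extremal lengths on a topological quadrilateral gives (\ref{ELduality}) for free.

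Concretely, I would first invoke Proposition~\ref{Prop:ELdiscr=ELcont} twice, for the two dual pairs of opposite boundary arcs, to write
\[
\EL{\O}{[ab]_\O}{[cd]_\O}\,\asymp\, \ELsign_\O^\C([ab]_\O^\C;[cd]_\O^\C),\qquad \EL{\O}{[bc]_\O}{[da]_\O}\,\asymp\, \ELsign_\O^\C([bc]_\O^\C;[da]_\O^\C),
\]
with uniform constants depending only on the structural parameters $\varpi_0,\eta_0,\varkappa_0$ fixed in Sect.~\ref{SubSect:GraphAssumptions}. Multiplying these two double-sided estimates, it suffices to prove the \emph{exact} continuous identity
\[
\ELsign_\O^\C([ab]_\O^\C;[cd]_\O^\C)\cdot \ELsign_\O^\C([bc]_\O^\C;[da]_\O^\C)\,=\,1.
\]

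This last identity is classical and follows from conformal invariance of extremal length. Indeed, the conformal map $\phi$ from (\ref{xPhiO->Rect}) sends $\O^\C$ onto the rectangle $R=(0,1)\times(0,\ELsign^{-1})$, where $\ELsign:=\ELsign_\O^\C([ab]_\O^\C;[cd]_\O^\C)$, with the four marked boundary points $a_{\mathrm{mid}},b_{\mathrm{mid}},c_{\mathrm{mid}},d_{\mathrm{mid}}$ mapped to the four corners. By conformal invariance, $\ELsign_\O^\C([bc]_\O^\C;[da]_\O^\C)$ equals the extremal length in $R$ of the family of curves joining the bottom side $[0;1]$ to the top side $[i\ELsign^{-1};1+i\ELsign^{-1}]$, which is well known to be $\ELsign^{-1}$ (both by the trivial admissible metric $g\equiv\ELsign$ for the lower bound and by the horizontal slicing argument for the upper bound; see, e.g., \cite[Chapter~4]{Ahl73} or \cite[Chapter~IV]{GM05}).

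I do not expect any genuine obstacle here: all the substantive analytic work was already carried out in Proposition~\ref{Prop:ELdiscr=ELcont}, and the continuous side is a textbook computation on a rectangle. The only thing to watch for is that the degenerate cases of (\ref{ELduality}) (when, e.g., $b=c$ or $d=a$, excluded in Proposition~\ref{Prop:ELdiscr=ELcont}) do not arise since the statement of the corollary assumes four \emph{distinct} boundary points, so both dual extremal length problems are nondegenerate.
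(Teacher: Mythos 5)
Your proposal is correct and coincides with the paper's own argument: the paper proves Corollary~\ref{Cor:ELduality} precisely by applying Proposition~\ref{Prop:ELdiscr=ELcont} to both factors and invoking the exact duality $\ELsign_\O^\C([ab]_\O^\C;[cd]_\O^\C)\cdot\ELsign_\O^\C([bc]_\O^\C;[da]_\O^\C)=1$ of continuous extremal lengths. Your extra details (the rectangle computation via the map (\ref{xPhiO->Rect}) and the remark that distinctness of $a,b,c,d$ keeps both problems nondegenerate) are fine but not a different route.
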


\begin{pf}
The proof directly follows from (\ref{Eldiscr=ELcont}) applied to both
factors and the \emph{exact} duality
\[
\ELsign_\O^\C\bigl([ab]_\O^\C;[cd]_\O^\C
\bigr)\cdot\ELsign_\O^\C \bigl([bc]_\O^\C
;[da]_\O^\C\bigr)=1
\]
of continuous extremal lengths.
\end{pf}

We now move on to the second approach, the notion of extremal length
via solution to the following Dirichlet--Neumann boundary value problem
[which corresponds to the real part $\Re\phi$ of the uniformization map
(\ref{xPhiO->Rect})].

Let $\O$ be simply connected and $a,b,c,d\in\pa\O$, $b\ne c$, $d\ne a$,
be listed
counterclockwise. Denote by $V=V_{(\O;[ab]_\O,[cd]_\O)}\dvtx\O\to[0;1]$
the unique discrete harmonic in $\O$ function (electric potential) such
that $V\equiv0$ on $[ab]_\O$, $V\equiv1$ on $[cd]_\O$, and $V$
satisfies Neumann boundary conditions [i.e., $V(x_\mathrm{int})=V(x)$]
for $x\in\pa\O\setminus([ab]_\O\cup[cd]_\O)$. We also set
\[
I(V):=\sum_{x\in[ab]_\O} \mathrm{w}_{xx_\mathrm
{int}}V(x_\mathrm{int})=
\sum_{x\in[cd]_\O} \mathrm{w}_{xx_\mathrm
{int}}\bigl(1 -
V(x_\mathrm{int})\bigr)
\]
[note that $\sum_{x\in\pa\O} \mathrm{w}_{xx_\mathrm{int}}(V(x) -
V(x_\mathrm{int}))=\sum_{u\in\Int\O}\mu_u[\D V](u)=0$].

The next proposition rephrases $\EL\O{[ab]_\O}{[cd]_\O}$ via $I(V)$
(which is nothing but the electric current in the corresponding
network). Note that contrary to the classical setup, this identity does
\emph{not} allow to replace double-sided estimate (\ref{ELduality}) by
an equality. Indeed, mimicking the continuous case, one can pass from
$V$ to its harmonic conjugate function $V^*$ that solves the similar
boundary value problem for dual arcs, but this $V^*$ is defined on a
dual graph $\G^*$, leading to the extremal length of some \emph{other}
discrete quadrilateral (drawn on $\G^*$) rather than $\O\ss\G$ itself;
see also Remark~\ref{Rem:DualFct}.

%pr6.4 #&#
\begin{proposition}
\label{Prop:EL=U/I}
For any simply connected discrete domain $\O$ and any $a,b,c,d\in\pa
\O
$, $b\ne c$, $d\ne a$, listed counterclockwise, the following is fulfilled:
%
%e6.7 #&#
\begin{equation}
\label{EL=I^-1} \ELsign_{\O}\bigl([ab]_\O;[cd]_\O
\bigr) = \bigl[I(V_{(\O;[ab]_\O,[cd]_\O)})\bigr]^{-1}.
\end{equation}
\end{proposition}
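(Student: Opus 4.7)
The plan is to establish~(\ref{EL=I^-1}) via two matching one-sided bounds, with the harmonic potential $V:=V_{(\O;[ab]_\O,[cd]_\O)}$ serving both as an extremal competitor in the variational definition~(\ref{ELdisc=sup}) and as the minimiser of a mixed Dirichlet-Neumann energy. The cornerstone is the following discrete Green identity: summing by parts while using (i)~harmonicity of $V$ inside $\O$, (ii)~the Neumann condition $V(x)=V(x_\mathrm{int})$ on $\pa\O\setminus([ab]_\O\cup[cd]_\O)$, and (iii)~the Dirichlet values $V\equiv 0$ on $[ab]_\O$, $V\equiv 1$ on $[cd]_\O$, one obtains for every $f:\O\to\R$ the identity
\begin{align*}
\sum_{(vv')\in E^\O}\!\mathrm{w}_{vv'}(V(v)-V(v'))(f(v)-f(v'))\ &=\ \sum_{x\in[cd]_\O}\! f(x)\mathrm{w}_{xx_\mathrm{int}}(1-V(x_\mathrm{int})) \\
&\phantom{=}\ -\,\sum_{x\in[ab]_\O}\! f(x)\mathrm{w}_{xx_\mathrm{int}}V(x_\mathrm{int})\,.
\end{align*}
Specialising to $f:=V$ collapses the right-hand side to $I(V)$, so $\sum\mathrm{w}_{vv'}(V(v)-V(v'))^2=I(V)$; applying the identity instead to $f-V$ (for any $f$ with $f\equiv 0$ on $[ab]_\O$ and $f\equiv 1$ on $[cd]_\O$) immediately yields the \emph{Dirichlet minimum principle}: $V$ minimises the energy $\sum\mathrm{w}_{vv'}(f(v)-f(v'))^2$ among all such admissible $f$, the minimum value being exactly $I(V)$.

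For the lower bound $\ELsign_\O\ge[I(V)]^{-1}$, I would plug the test metric $g^\star_e:=|V(v)-V(v')|$, $e=(vv')\in E^\O$, into Definition~\ref{Def:ExtLengthVar}. For any lattice path $\g=(u_0,\dots,u_n)$ with $u_0\in[ab]_\O$ and $u_n\in[cd]_\O$, a telescoping estimate gives $L_{g^\star}(\g)\ge|V(u_n)-V(u_0)|=1$, so $L_{g^\star}(\O;[ab]_\O\lra[cd]_\O)\ge 1$; the Green identity simultaneously gives $A_{g^\star}(\O)=I(V)$, whence $\ELsign_\O\ge 1/I(V)$.

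For the upper bound $\ELsign_\O\le[I(V)]^{-1}$, given an arbitrary admissible metric $g$ with $L:=L_g(\O;[ab]_\O\lra[cd]_\O)>0$, I will introduce the shortest-path function
\[
\tilde V(v)\,:=\,\inf\{L_g(\g):\g\text{ is a lattice path in }\O\text{ from }[ab]_\O\text{ to }v\}.
\]
It vanishes on $[ab]_\O$, is bounded below by $L$ on $[cd]_\O$, and obeys the edge Lipschitz bound $|\tilde V(v)-\tilde V(v')|\le g_{vv'}$. Consequently the truncation $f:=\min(\tilde V/L,1)$ is admissible in the Dirichlet minimum principle and satisfies $(f(v)-f(v'))^2\le g_{vv'}^2/L^2$, hence
\[
I(V)\,\le\,\sum_{(vv')\in E^\O}\mathrm{w}_{vv'}(f(v)-f(v'))^2\,\le\,A_g(\O)/L^2,
\]
i.e.\ $L^2/A_g(\O)\le[I(V)]^{-1}$; taking the supremum over $g$ completes the proof. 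The only delicate step will be the Green identity itself: one must split the boundary contribution according to which of the three regimes~---~$[ab]_\O$, $[cd]_\O$, or the Neumann part~---~each boundary vertex falls into, exploiting the fact that each $x\in\pa\O$ has exactly one neighbour $x_\mathrm{int}$ in $\O$. Once this bookkeeping is carried out, the rest of the argument is the classical Duffin/Beurling-Ahlfors variational proof transported to the weighted-graph setting.
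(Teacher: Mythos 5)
Your proof is correct, but it takes a different route from the paper's. The paper follows Duffin's original argument: it starts from the extremal metric $g^\mathrm{max}$ (whose existence and uniqueness are established just after Definition~\ref{Def:ExtLengthVar}), defines $V$ as the $g^\mathrm{max}$-distance to $[ab]_\O$, shows by local ``improve the metric'' perturbations that this $V$ is harmonic, constant on $[cd]_\O$ and Neumann on the side arcs (so that, after normalization, it coincides with $V_{(\O;[ab]_\O,[cd]_\O)}$ and $L_{g^\mathrm{max}}=1$), and then gets $\ELsign_\O^{-1}=A_{g^\mathrm{max}}(\O)=I(V)$ by one summation by parts. You instead never touch the extremal metric: you prove the two inequalities separately, the lower bound by plugging the gradient metric $g^\star_e=|V(v)-V(v')|$ into the variational definition (telescoping gives $L_{g^\star}\ge 1$, the Green identity gives $A_{g^\star}=I(V)$), and the upper bound by feeding the truncated, rescaled $g$-distance function into the Dirichlet minimum principle, which you derive from the same Green identity applied to $f-V$. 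Your argument is more self-contained --- it needs neither the existence of the extremal metric nor the somewhat delicate perturbation steps (checking $V(u')-V(u)=\pm g^\mathrm{max}_{uu'}$ edge by edge, improving at the boundary) --- at the cost of being a two-sided estimate rather than an explicit identification; note, however, that the extremality of $g^\star$ (which the paper's construction exhibits directly and later uses, e.g.\ in the proof of Proposition~\ref{Prop:ELgAsympELag}) still follows from your proof a posteriori, since $[L_{g^\star}]^2/A_{g^\star}\ge[I(V)]^{-1}=\ELsign_\O$ forces $g^\star$ to attain the supremum. The only step you flag as delicate, the boundary bookkeeping in the Green identity, is indeed fine here because each $x\in\pa\O$ carries exactly one edge of $E^\O$ and the corners $a,b,c,d$ are assigned to the Dirichlet arcs.
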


\begin{pf}
See \cite{Duf62}. The core idea is to construct the function $V$
explicitly in terms of the extremal discrete metric $g^\mathrm{max}$ of
the family $(\O;[ab]_\O\leftrightarrow[cd]_\O)$. Namely, let $(\O
;u \leftrightarrow[ab]_\O)$
denote the family of all discrete paths running from $u\in\O$ to the
boundary arc $[ab]_\O$ inside $\O$, and
\[
V(u):=L_{g^\mathrm{max}}{\bigl(\O;u \leftrightarrow[ab]_\O\bigr)}.
\]
Then $V$ is constant on $[cd]_\O$ and satisfies Neumann boundary
conditions on both $(bc)_\O$ and $(da)_\O$ [if one of these properties
fails, then one can improve $g^\mathrm{max}$ on the corresponding
boundary edge so that $L_g(\cE)$ does not change while $A_g(\O)$
decreases]. In particular, one can normalize $g^\mathrm{max}$ so that
$V\equiv1$ on $[cd]_\O$.

Moreover, $V$ is discrete harmonic in $\O$. Indeed, note that $V(u')
- V(u)=\pm g^\mathrm{max}_{uu'}$ for any $(uu')\in E^\O$ (otherwise,
one can improve $g^\mathrm{max}_{uu'}$). Then, for a given $u\in\Int
\O
$, replacing $g^\mathrm{max}_{uu'}$ by $g^\mathrm{max}_{uu'} + \ve$
on all edges $(uu')\in E^\O$ such that $V(u') > V(u)$ and,
simultaneously, replacing $g^\mathrm{max}_{uu'}$ by $g^\mathrm
{max}_{uu'} - \ve$ on all $(uu')\in E^\O$ such that $V(u') < V(u)$,
one does not change global distances [and, in particular, does not
change $L_g(\cE)$], while the area $A_g(\O)$ changes by $\ve\mu
_u[\D
V](u) + O(\ve^2)$.

Finally, using discrete integration by parts and $[\D V](u)\equiv0$,
one concludes that
\begin{eqnarray*}
\ELsign_\O^{-1} & = &A_{g^\mathrm{max}}(\O)= \sum
_{e=(uu')\in
E^\O}\mathrm{w}_{e}\bigl(V\bigl(u'
\bigr) - V(u)\bigr)^2 %= -\sum_{u\in\O,[uu']\in E^\O}\mathrm{w}_{uu'}(V(u') - V(u))V(u)
\\
& =& -\sum
_{u\in\Int\O} \mu_u [\D V](u)V(u) - \sum
_{x\in
\pa\O} \mathrm{w}_{xx_\mathrm{int}}\bigl(V(x_\mathrm{int})
- V(x)\bigr)V(x)
\\
& =&\sum_{x\in[cd]_\O}
\mathrm{w}_{xx_\mathrm{int}}\bigl(1 - V(x_\mathrm{int})\bigr)=I(V).
\end{eqnarray*}
%
%which is equal to $I(V)$ as $\sum_{x\in\pa\O} \mathrm{w}_{xx_
%\mathrm{int}}(V(x) - V(x_\mathrm{int}))=\sum_{u\in\Int\O}\mu_u[\D
%V](u)=0$.
\upqed\end{pf}

Note that, for any discrete harmonic in $\O$ function $V$, one can
construct a discrete \emph{harmonic conjugate} function $V^*$ which is
uniquely defined (up to an additive constant) on faces of $\O$
(including boundary ones) by saying
%
%e6.8 #&#
\begin{equation}
\label{V*def} H\bigl(f_{vv'}^\mathrm{left}\bigr) - H
\bigl(f_{vv'}^\mathrm{right}\bigr):=\mathrm {w}_{vv'}
\cdot\bigl(H\bigl(v'\bigr) - H(v)\bigr)
\end{equation}
for any oriented edge $(vv')\in E^\O$, where $f_{vv'}^\mathrm{left}$
and $f_{vv'}^\mathrm{right}$ denote faces to the left and to the right
of $(vv')$, respectively. The function $V^*$ is well defined locally
(if and only if $\D V=0$), and hence well defined globally, as $\O$ is
simply connected. Moreover, for any inner face $f$ in $\O$, it
satisfies a discrete harmonicity condition
%
%e6.9 #&#
\begin{equation}
\label{V*harm} \sum_{f'\sim f}\mathrm{w}_{ff'}
\bigl(V^*\bigl(f'\bigr)-V^*(f)\bigr)=0,
\end{equation}
where $\mathrm{w}_{ff'}:=\mathrm{w}_{vv'}^{-1}$ for any couple of dual
edges $(ff')=(vv')^*$.

%re6.5 #&#
\begin{remark}
\label{Rem:DualFct}
If one takes $V=V_{(\O;[ab]_\O,[cd]_\O)}$, then the harmonic conjugate
function $V^*$ is constant along boundary arcs $(bc)_\O$ and $(da)_\O$
(since $V$ satisfies Neumann boundary conditions on these arcs). Fixing
an additive constant so that $V^*\equiv0$ on $(bc)_\O$ and tracking
the increment of $V^*$ along $[ab]_\O$, one obtains $V^*\equiv
I(V) \mbox{ on } (da)_\O$. Further, Dirichlet boundary conditions for
$V$ on $[ab]_\O$ and $[cd]_\O$ can be directly translated into Neumann
conditions for $V^*$ (one can easily see that $V^*$ satisfies (\ref
{V*harm}) with a smaller number of terms at all faces touching $[ab]_\O
$ or $[cd]_\O$). Thus $[I(V)]^{-1}\cdot V^*$ solves the same
Dirichlet--Neumann boundary value problem for the \emph{dual}
quadrilateral drawn on $\G^*$. Moreover,
\[
\sum_{(ff')^*\in E^\O}\mathrm{w}_{ff'}\bigl(V^*
\bigl(f'\bigr) - V^*(f)\bigr)^2= \sum
_{(vv')\in E^\O}\mathrm{w}_{vv'} \bigl(V\bigl(v'
\bigr) - V(v)\bigr)^2=\ELsign_\O^{-1},
\]
and hence the %corresponding
dual extremal length $\ELsign_\O^*$ is \emph{equal} to
$[I(V)^{-2}\ELsign_\O^{-1}]^{-1} =\ELsign_\O^{-1}$.
\end{remark}

The last proposition in this section gives an estimate for the
partition function $\ZRW\O{[ab]_\O}{[cd]_\O}$ of random walks joining
$[ab]_\O$ and $[cd]_\O$ in $\O$ via the extremal length $\EL\O
{[ab]_\O
}{[cd]_\O}$ [note that the latter can be thought about as the
(reciprocal of) similar partition function for random walks \emph
{reflecting} from the dual boundary arcs].

%pr6.6 #&#
\begin{proposition}
\label{Prop:ZleL-1} Let $\O$ be a simply connected discrete domain, and
$a,b,c,d\in\pa\O$, $b\ne c$, $d\ne a$, be listed counterclockwise. Then
%%the following is fulfilled:
%
\[
\ZRWsign_{\O}\bigl([ab]_\O;[cd]_\O\bigr)
\le\const\cdot\,\bigl(
\ELsign_{\O}\bigl([ab]_\O;[cd]_\O\bigr)
\bigr)^{-1},
\]
where the constant does not depend on $\O,a,b,c,d$. Moreover, if we
additionally assume that $\EL\O{[ab]_\O}{[cd]_\O}\le\const$,
%(i.e., the boundary arcs $[ab]_\O$ and $[cd]_\O$ are ``not very far''
%from each other),
then
\[
\ZRWsign_{\O}\bigl([ab]_\O;[cd]_\O\bigr)
\asymp\bigl(
\ELsign_{\O}\bigl([ab]_\O;[cd]_\O\bigr)
\bigr)^{-1}
\]
(with constants in $\asymp$ depending on the upper bound for $\ELsign
_\O
$ but independent of $\O,a,b,c,d$).
\end{proposition}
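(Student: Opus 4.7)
The plan is to relate $\ZRW\O{[ab]_\O}{[cd]_\O}$ to the electric current $I(V)=\ELsign_\O^{-1}$ given by Proposition~\ref{Prop:EL=U/I}, where $V=V_{(\O;[ab]_\O,[cd]_\O)}$ is the Dirichlet--Neumann potential. The common starting point for both bounds is to rewrite the partition function as a weighted boundary current: combining the first-step identity $\ZRW\O{x}{y}=\varpi_{xx_\mathrm{int}}\mu_y^{-1}\hm\O{x_\mathrm{int}}{y}$ (which follows from Remark~\ref{Rem:Z=Harm}(i) applied after one step) with the uniform bounds $\varpi,\mu\asymp\mathrm{w}\asymp 1$ from assumption~(a), one gets
\[
\ZRW\O{[ab]_\O}{[cd]_\O}\ \asymp\ \sum_{x\in[ab]_\O}\mathrm{w}_{xx_\mathrm{int}}\hm\O{x_\mathrm{int}}{[cd]_\O}\ =:\ I(\hm_{cd}),
\]
where $\hm_{cd}(u):=\hm\O{u}{[cd]_\O}$ is the usual Dirichlet harmonic measure.

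For the upper bound (i), I would observe that $\hm_{cd}\le V$ pointwise in $\O$: the function $V-\hm_{cd}$ is discrete harmonic in $\O$, vanishes on $[ab]_\O\cup[cd]_\O$ and equals $V\ge 0$ on $(bc)_\O\cup(da)_\O$, so it is nonnegative by the maximum principle. Therefore $I(\hm_{cd})\le I(V)=\ELsign_\O^{-1}$, which together with the display above yields (i).

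For the lower bound in~(ii) under $\ELsign_\O\le K$, I would expand $V$ via the discrete Poisson formula
\[
V(u)\ =\ \hm_{cd}(u)\ +\ \sum_{y\in(bc)_\O\cup(da)_\O}V(y)\,\hm\O{u}{y},\qquad u\in\Int\O,
\]
evaluate at $u=x_\mathrm{int}$, multiply by $\mathrm{w}_{xx_\mathrm{int}}$ and sum over $x\in[ab]_\O$. Applying the reciprocity $\mathrm{w}_{xx_\mathrm{int}}\hm\O{x_\mathrm{int}}{y}=\mathrm{w}_{yy_\mathrm{int}}\hm\O{y_\mathrm{int}}{x}$ (a direct consequence of $\ZRW\O{x}{y}=\ZRW\O{y}{x}$) together with the trivial bound $V(y)\le 1$ yields
\[
\ELsign_\O^{-1}\ =\ I(V)\ \le\ \const\cdot\ZRW\O{[ab]_\O}{[cd]_\O}\ +\ \const\cdot\ZRW\O{[ab]_\O}{(bc)_\O\cup(da)_\O}.
\]
Thus (ii) reduces to the auxiliary estimate $\ZRW\O{[ab]_\O}{(bc)_\O\cup(da)_\O}\le\const(K)\cdot\ZRW\O{[ab]_\O}{[cd]_\O}$ whenever $\ELsign_\O\le K$.

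The main obstacle is this last inequality. I would attack it via Proposition~\ref{Prop:ELdiscr=ELcont}: the assumption $\ELsign_\O\le K$ forces $\ELsign_\O^\C\le\const$, so the polygonal representation $\O^\C$ is conformally equivalent to a rectangle of bounded aspect ratio in which $[ab]^\C,[cd]^\C$ are the ``long'' sides while $(bc)^\C,(da)^\C$ are the ``short'' ones; in that picture the walks ending on the short sides carry an exponentially smaller flux than the $[ab]\to[cd]$ one. Transferring this back to the discrete setting should be possible using Theorem~\ref{Thm:ZlogYestim}, the cross-ratio monotonicity of Lemma~\ref{Lemma:R-monotone}, and part~(i) already proved; the resulting constant naturally depends on $K$ and blows up as $K\to\infty$, consistently with the fact that (ii) genuinely fails for large $\ELsign_\O$.
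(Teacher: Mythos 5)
Your treatment of the first inequality is fine and is essentially the paper's own argument: comparing $V=V_{(\O;[ab]_\O,[cd]_\O)}$ with the absorbing harmonic measure $\hm\O{\ccdot}{[cd]_\O}$ (the paper phrases this via reflecting vs.\ absorbing walks, you via the maximum principle) and summing against $\mathrm{w}_{xx_\mathrm{int}}$ using Proposition~\ref{Prop:EL=U/I}.

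The second part, however, has a genuine gap, and it is not just a missing detail: the auxiliary inequality to which you reduce (ii), namely $\ZRW\O{[ab]_\O}{(bc)_\O\cup(da)_\O}\le\const(K)\cdot\ZRW\O{[ab]_\O}{[cd]_\O}$ whenever $\EL\O{[ab]_\O}{[cd]_\O}\le K$, is false. Take $\G=\Z^2$ and let $\O$ be an $N\times N$ square with a corridor of width $2$ and length $M\gg N$ attached along the bottom-right, with $a$ the bottom-left corner of the square, $b$ the far end of the corridor, and $c,d$ the top-right and top-left corners of the square. Then $[ab]_\O$ is the bottom of the square together with the bottom of the corridor, $[cd]_\O$ is the top of the square, and one has $\EL\O{[ab]_\O}{[cd]_\O}\asymp 1$ and $\ZRW\O{[ab]_\O}{[cd]_\O}\asymp 1$, while $\ZRW\O{[ab]_\O}{(bc)_\O}\ge\const\cdot M$, since each of the $\asymp M$ bottom vertices of the corridor sees the opposite (top) side of the corridor, which lies on $(bc)_\O$, with harmonic measure $\asymp 1$. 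So no combination of Proposition~\ref{Prop:ELdiscr=ELcont}, Theorem~\ref{Thm:ZlogYestim} and Lemma~\ref{Lemma:R-monotone} can establish it. The loss occurs at the step $V(y)\le 1$ on the Neumann arcs: in exactly such configurations $V$ is exponentially small on the part of $(bc)_\O\cup(da)_\O$ that carries the large partition function (deep fiords adjacent to $[ab]_\O$), and discarding the weight $V(y)$ destroys the estimate. (Even if the auxiliary inequality were true, the closing ``conformal rectangle'' sketch is only a heuristic, not a proof.)

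For comparison, the paper closes (ii) by a different route that avoids the side arcs altogether: by Corollary~\ref{Cor:ELduality}, $\ELsign_\O\le\const$ is equivalent to $\EL\O{[bc]_\O}{[da]_\O}\ge\const$; part (i) applied to the dual pair gives $\ZRW\O{[bc]_\O}{[da]_\O}\le\const$, hence $\xY\O bcda\le\const$ by Theorem~\ref{Thm:ZlogYestim}, hence $\xY\O abcd\ge\const$ and $\ZRWsign_\O\ge\const$; in particular $\ZRWsign_\O\asymp 1$ whenever $\ELsign_\O\asymp 1$. For small $\ELsign_\O$ the paper then splits $[ab]_\O$ into $n\asymp\ELsign_\O^{-1}$ consecutive sub-arcs $[a_ka_{k+1}]_\O$ with $\EL\O{[a_ka_{k+1}]_\O}{[cd]_\O}\asymp 1$ (using $I(V_{(\O;[a_ka_{k+1}]_\O,[cd]_\O)})$ and monotonicity of boundary conditions), applies the unit-order case to each piece and sums. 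If you want to salvage your scheme, you would have to keep the weights $V(y)$ in the Poisson expansion and show that the weighted sum over the Neumann arcs is dominated by $\ZRW\O{[ab]_\O}{[cd]_\O}$, which is a statement of comparable difficulty to the proposition itself; the duality-plus-splitting argument is the cleaner path.
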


\begin{pf}
It is easy to see that, for any $u\in\Int\O$, $V(u)$ is equal to the
probability of the event that the random walk started at $u$ and
reflecting from complementary arcs $(bc)_\O$,$(da)_\O$ exists $\O$
through $[cd]_\O$ (indeed, this probability is a discrete harmonic
function which satisfies the same boundary conditions as $V$). Hence,
for any $x\in[ab]_\O$,
\[
V(x_\mathrm{int})\ge\const\cdot
\,
\ZRWsign_{\O}\bigl(x_\mathrm{int};[cd]_\O\bigr)
\asymp
\ZRWsign_{\O}\bigl(x;[cd]_\O\bigr),
\]
since the right-hand side is (up to a constant) the same probability
for the random walk with absorbing boundary conditions on $(bc)_\O$ and
$(da)_\O$. Thus (\ref{EL=I^-1}) gives
\[
\bigl( \ELsign_{\O}\bigl([ab]_\O;[cd]_\O\bigr)
\bigr)^{-1}=\sum_{x\in[ab]_\O} \mathrm
{w}_{xx_\mathrm{int}}V(x_\mathrm{int}) \ge\const\cdot\,
\ZRWsign_{\O}\bigl([ab]_\O;[cd]_\O\bigr).
\]

Further, let $\ELsign_\O:=\EL\O{[ab]_\O}{[cd]_\O}\le\const$.
Due to
Corollary~\ref{Cor:ELduality}, it is equivalent to $\EL\O{[bc]_\O
}{[da]_\O}\ge\const$. We have seen above that this implies
$\ZRW\O{[bc]_\O}{[da]_\O}\le\const$ which is equivalent to $\xY
{\O
}bcda\le\const$ due to Theorem~\ref{Thm:ZlogYestim}. Therefore,
\begin{eqnarray*}
\ZRWsign_\O:=
\ZRWsign_{\O}\bigl([ab];[cd]\bigr)
&\asymp&\log\bigl(1 +
\xYsign_{\O}(a,b;c,d)\bigr)
\\
& =& \log\bigl(1 + \bigl(
\xYsign_\O(b,c;d,a) \bigr)^{-1}\bigr)\ge\const.
\end{eqnarray*}
Since $\ZRWsign_\O\le\const\cdot\,\ELsign_\O^{-1}$ in any case, this
implies $\ZRWsign_\O\asymp1$, if $\ELsign_\O\asymp1$.

Thus we are mostly interested in the situation when $\ELsign_\O$ is
very small (i.e., boundary arcs $[ab]_\O$ and $[cd]_\O$ are ``very
close'' to
each other in $\O$). Our strategy in this case is similar to the proof
of Theorem~\ref{Thm:ZlogYestim}: we split $[ab]_\O$ into several
smaller pieces $[a_ka_{k+1}]_\O$ such that $\EL\O{[a_ka_{k+1}]_\O
}{[cd]_\O}\asymp1$ and apply the result obtained above to each of
these smaller arcs.
Recall that
\[
\ELsign_\O^{-1}= \sum_{x\in[ab]_\O}
\mathrm{w}_{xx_\mathrm
{int}}V(x_\mathrm{int}).
\]
We construct boundary points $a=a_0,a_1,\ldots,a_{n+1}=b\in\pa\O$
inductively by the following procedure: if $a_k$ is already chosen, we
move $a_{k+1}$ further along the boundary arc $[ab]_\O$ step by step
until the first vertex $a_{k+1}$ such that
\[
\bigl( \ELsign_{\O}\bigl([a_ka_{k+1}]_\O;[cd]_\O\bigr)
\bigr)^{-1}= \sum_{x\in[a_ka_{k+1}]}
\mathrm{w}_{xx_\mathrm{int}}V_{(\O
;[a_ka_{k+1}]_\O,[cd]_\O)}(x_\mathrm{int})\ge1
\]
(or $a_{k+1}=b$). Note that this sum cannot increase by more than some
uniform constant on each step (as we increase the absorbing boundary
$[a_ka_{k+1}]_\O$, all terms decreases, while the new (last) term is no
greater than $\mathrm{w}_{xx_\mathrm{int}}\le\const$). Therefore,
$\EL
\O{[a_ka_{k+1}]_\O}{[cd]_\O}\asymp1$ for all $k$, possibly except the
last one (when we are forced to choose $a_{n+1}=b$ before the sum
becomes large). As we have seen above, this implies
\[
\ZRWsign_{\O}\bigl([a_ka_{k+1}]_\O;[cd]_\O\bigr)
\asymp1 \qquad\mbox {for all } k=0,1,\ldots,n - 1.
\]
Note that $V=V_{(\O;[ab]_\O,[cd]_\O)}\le V_{(\O;[a_ka_{k+1}]_\O
,[cd]_\O
)}$ due to monotonicity of boundary conditions (the absorbing boundary
is larger in the first case). This gives
\[
\sum_{x\in[a_ka_{k+1}]}\mathrm{w}_{xx_\mathrm{int}}V(x_\mathrm
{int})\le\sum_{x\in[a_ka_{k+1}]}\mathrm{w}_{xx_\mathrm{int}}V_{(\O
;[a_ka_{k+1}]_\O,[cd]_\O)}(x_\mathrm{int})
\le\const
\]
for all $k=0,1,\ldots,n$, thus $\ELsign^{-1}\le\const\cdot\,(n +
1)\asymp
n$, %(provided $\ELsign_\O$ is smaller than some absolute constant).
which implies the inverse estimate
\[
\ZRWsign_\O\asymp\sum_{k=0}^n
\ZRWsign_{\O}\bigl([a_ka_{k+1}]_\O;[cd]_\O\bigr)
\ge \const\cdot\, n \asymp\ELsign_\O^{-1}.
\]
\upqed\end{pf}

%s7 #&#
\section{Double-sided estimates of harmonic measure}
\label{Sect:Estimates} \setcounter{equation}{0}

We start this section with Theorem~\ref{Thm:DblSidedEstimates} which
combines uniform estimates obtained above for cross-ratios~$\xYsign_\O
$, partition functions $\ZRWsign_\O$ and extremal lengths $\ELsign
_\O$
of discrete quadrilaterals $(\O;a,b,c,d)$. Then we show how tools
developed in our paper can be used to obtain exponential double-sided
estimates in terms of appropriate extremal lengths for the {discrete}
harmonic measure $\hm\O{u}{[ab]_\O}$ of a ``far'' boundary arc (similar
to the classical ones due to Ahlfors, Beurling and going back to
Carleman; see \cite{Ahl73}, Sections 4-5 and 4-14, and \cite{GM05},
Sections IV.5, IV.6). The main result is given by Theorem~\ref
{Thm:LogHm=ELdiscr}. In particular, it allows us to obtain a \emph
{uniform} double-sided estimate of $\log\hm\O{u}{[ab]_\O}$ via
$\log\hm
{{\O^\C}}u{[ab]_\O^\C}$, where $\omega_{\O^\C}$ denotes the
\emph
{continuous} harmonic measure in a polygonal representation of $\O$;
see Corollary~\ref{Cor:LogHmDisc=LogHmCont}. Note that one cannot hope
to prove the similar estimate for $\hm\O{u}{[ab]_\O}$ itself: dealing
with thin fiords, one is faced with exponentially small harmonic
measures which are highly sensitive to the widths of those fiords.

%th7.1 #&#
\begin{theorem}
\label{Thm:DblSidedEstimates} Let $\O$ be a simply connected discrete
domain and distinct boundary points
$a,b,c,d\in\pa\O$ be listed counterclockwise. Denote
\begin{eqnarray*} \xYsign&:=& \xYsign_\O(a,b;c,d) , \qquad
\ZRWsign:=
\ZRWsign_{\O}\bigl([ab]_\O;[cd]_\O\bigr)
, \qquad \ELsign :=
\ELsign_{\O}\bigl([ab]_\O;[cd]_\O\bigr),
\\
\xYsign'&:=& \xYsign_\O(b,c;d,a) , \qquad \ZRWsign':=
\ZRWsign_{\O}\bigl([bc]_\O;[da]_\O\bigr)
, \qquad\ELsign ':=
\ELsign_{\O}\bigl([bc]_\O;[da]_\O\bigr).
\end{eqnarray*}

\textup{(i)} If at least one of the estimates
%
%e7.1 #&#
\begin{eqnarray}
\label{EstimEquiv} %
 \xYsign&\le&\const, \qquad \ZRWsign\le
\const, \qquad \ELsign\ge\const,
\nonumber
\\[-8pt]
\\[-8pt]
\nonumber
\xYsign'&\ge&\const, \qquad
\ZRWsign'\ge\const, \qquad \ELsign'\le\const
\end{eqnarray}
holds true, then all these estimates hold true (with
constants depending on the initial bound but independent of $\O
,a,b,c,d$). Moreover, if at least one of $\xYsign$, $\xYsign'$,
$\ZRWsign$, $\ZRWsign'$, $\ELsign$, $\ELsign'$ is of order $1$ (i.e.,
admits the double-sided estimate
$\asymp1$), then they all are of order $1$.

\textup{(ii)} If (\ref{EstimEquiv}) holds true, then the following
double-sided estimates
are fulfilled:
\[
%\begin{equation}\label{ZexpLestim}
\ZRWsign\asymp\xYsign\quad\mbox{and}\quad \log\bigl(1 + \xYsign ^{-1}
\bigr)\asymp \ELsign.
\]
%
%\end{equation}
In particular, there exist some constants $\b_{1,2},C_{1,2}>0$ such
that the uniform estimate
%
%e7.2 #&#
\begin{equation}
\label{BeurlingForZ} C_1 \cdot\exp[-\b_1\ELsign] \le\ZRWsign
\le C_2 \cdot\exp[-\b_2\ELsign]
\end{equation}
holds true for any discrete quadrilateral $(\O;a,b,c,d)$ satisfying
(\ref{EstimEquiv}).
\end{theorem}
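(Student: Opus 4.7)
The plan is to derive both parts from three ``exchange symmetries'' between the primed and unprimed quantities, only one of which is genuinely new: two are algebraic/duality identities, and the third is an application of Theorem~\ref{Thm:ZlogYestim}. First, a direct computation using $\ZRW{\O}xy=\ZRW{\O}yx$ and cancellation in the square root gives the algebraic identity $\xYsign\cdot\xYsign'=1$. Second, Corollary~\ref{Cor:ELduality} furnishes $\ELsign\cdot\ELsign'\asymp 1$. Third, Theorem~\ref{Thm:ZlogYestim} applied to both $(\O;a,b,c,d)$ and the cyclically shifted quadrilateral $(\O;b,c,d,a)$ yields $\ZRWsign\asymp\log(1+\xYsign)$ and $\ZRWsign'\asymp\log(1+\xYsign')$. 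Taken together, these three symmetries already give that within each primed/unprimed ``column'' of (\ref{EstimEquiv}) the $\xYsign$- and $\ZRWsign$-conditions are equivalent, and that the $\xYsign$- and $\ELsign$-conditions interchange under passing from unprimed to primed.

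To close the loop for part (i), I would link the $\ELsign$-column to the $\ZRWsign$-column via Proposition~\ref{Prop:ZleL-1}. The unconditional bound $\ZRWsign\le\const\cdot\ELsign^{-1}$ immediately gives $\ELsign\ge\const\Ra\ZRWsign\le\const'$. Conversely, the second part of the same proposition asserts $\ZRWsign\asymp\ELsign^{-1}$ as soon as $\ELsign$ drops below a fixed threshold; contrapositively, any uniform upper bound on $\ZRWsign$ forces a uniform lower bound on $\ELsign$. Combined with the $\ELsign\cdot\ELsign'\asymp 1$ duality and with the primed version of the same argument, this completes the cycle of implications among all six conditions, and the assertion that if any one of $\xYsign,\xYsign',\ZRWsign,\ZRWsign',\ELsign^{-1},(\ELsign')^{-1}$ is of order~$1$ then so are the rest.

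For part (ii), under (\ref{EstimEquiv}) we may assume $\xYsign\le\const$, hence $\log(1+\xYsign)\asymp\xYsign$, and the first estimate $\ZRWsign\asymp\xYsign$ is immediate from $\ZRWsign\asymp\log(1+\xYsign)$. For the second estimate I would use the chain
\[
\log(1+\xYsign^{-1})\ =\ \log(1+\xYsign')\ \asymp\ \ZRWsign'\ \asymp\ (\ELsign')^{-1}\ \asymp\ \ELsign,
\]
where the equality uses $\xYsign'=\xYsign^{-1}$, the first $\asymp$ is Theorem~\ref{Thm:ZlogYestim} applied to the primed quadrilateral, the second $\asymp$ uses the second part of Proposition~\ref{Prop:ZleL-1}, valid because $\ELsign'\le\const$ under (\ref{EstimEquiv}), and the last $\asymp$ is Corollary~\ref{Cor:ELduality}. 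Finally, the Beurling-type bound (\ref{BeurlingForZ}) follows by exponentiating $\log(1+\xYsign^{-1})\asymp\ELsign$: when $\ELsign$ is large, $1+\xYsign^{-1}\asymp\xYsign^{-1}$ and one gets $\xYsign\asymp \exp[-\b\ELsign]$ with two-sided constants; when $\ELsign$ is bounded, both $\xYsign$ and $\exp[-\b\ELsign]$ are comparable to $1$; in either case $\ZRWsign\asymp\xYsign$ then gives the double-sided exponential estimate.

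The only delicate point is making sure that the contrapositive direction in Step~2 (recovering a uniform lower bound on $\ELsign$ from a uniform upper bound on $\ZRWsign$) really comes with constants independent of $(\O;a,b,c,d)$; this is automatic because Proposition~\ref{Prop:ZleL-1} is uniform. Everything else is a dictionary between $\xYsign$, $\ZRWsign$ and $\ELsign$ built from the three dualities above, so I expect no further geometric input beyond what already went into Theorems~\ref{Thm:Zfact} and~\ref{Thm:ZlogYestim} and Propositions~\ref{Prop:ELdiscr=ELcont} and~\ref{Prop:ZleL-1}.
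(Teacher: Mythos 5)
Your proposal is correct and follows essentially the same route as the paper: the identity $\xYsign\xYsign'=1$, the duality $\ELsign\ELsign'\asymp1$ (Corollary~\ref{Cor:ELduality}), Theorem~\ref{Thm:ZlogYestim} for $\ZRWsign\asymp\log(1+\xYsign)$ and its primed version, and Proposition~\ref{Prop:ZleL-1} to tie $\ZRWsign$ to $\ELsign^{-1}$, with the same chain $\log(1+\xYsign^{-1})=\log(1+\xYsign')\asymp\ZRWsign'\asymp(\ELsign')^{-1}\asymp\ELsign$ in part~(ii). The only (harmless) deviation is that in part~(i) you close the cycle by the uniform contrapositive of the second half of Proposition~\ref{Prop:ZleL-1} ($\ZRWsign\le\const\Rightarrow\ELsign\ge\const$), whereas the paper routes this implication through the primed column; both rest on the same lemmas and constants.
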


\begin{pf}
(i) It follows from Theorem~\ref{Thm:ZlogYestim} and Proposition~\ref
{Prop:ZleL-1} that
\[
\log(1 + \xYsign)\asymp\ZRWsign\le\const\cdot\,\ELsign^{-1} \quad\mbox{and}\quad
\log\bigl(1 + \xYsign'\bigr)\asymp\ZRWsign'\le\const
\cdot\,\bigl(\ELsign'\bigr)^{-1}.
\]
Moreover, $\xYsign\xYsign'=1$ by definition, and $\ELsign\ELsign
'\asymp
1$ due to
Corollary~\ref{Cor:ELduality}. Therefore, one has
\[
%\label{ArrowsPicture}
%
\begin{array} {ccccc} \xYsign\le\const& \Lra& \ZRWsign\le
\const& \La& \ELsign\ge \const
\\
\Uda& & & & \Uda
\\
\xYsign'\ge\const& \Lra& \ZRWsign'\ge\const& \Ra&
\ELsign'\le \const, \end{array} %
\]
which gives the equivalence of all six bounds. Interchanging $\xYsign
$, $\ZRWsign$, $\ELsign$ and $\xYsign'$, $\ZRWsign'$, $\ELsign'$, one
obtains the same equivalence of inverse estimates. Thus, if at least
one of these quantities is $\asymp1$, then all others are $\asymp1$
as well.

(ii) Since $\xYsign\le\const$, Remark~\ref
{Rem:XY(Z)comparable} guarantees that $\ZRWsign\asymp\xYsign$. Further,
since $\ELsign'\le\const$, Proposition~\ref{Prop:ZleL-1} gives
$\ZRWsign
'\asymp(\ELsign')^{-1}$, and hence
\[
\log\bigl(1 + \xYsign^{-1}\bigr)=\log\bigl(1 + \xYsign'
\bigr) \asymp\ZRWsign'\asymp \bigl(\ELsign'
\bigr)^{-1}\asymp\ELsign.
\]
Thus, we have $\exp[\b_2\ELsign]\le1 + \xYsign^{-1}\le\exp[\b
_1\ELsign]$ for some $\b_{1,2}>0$, and we also know that $1 + \xYsign
^{-1}\asymp\xYsign^{-1}\asymp\ZRWsign^{-1}$.
\end{pf}

Now let $u\in\Int\O$ and $[ab]_\O\ss\O$ be some boundary arc of
$\O$
which should be thought about as lying ``very far'' from $u$ [so that
the harmonic measure $\hm\O{u}{[ab]_\O}$ is small]. In order to be able
to apply exponential estimate (\ref{BeurlingForZ}) to this harmonic
measure, one should first compare the partition function of random
walks running from $u$ to $[ab]_\O$ in $\O$ with a partition function
of random walks running between opposite sides of some quadrilateral.

Recall that we denote by $\dd\O{u}$ the (Euclidean) distance from $u$
to $\pa\O$ and
%$\BBsign_{\varrho_0\dd\O{u}}^\G(u)$ is a discrete disc around $u$ of
%radius $\varrho_0\dd\O{u}$, where a (small) constant $\varrho_0>0$
%will be fixed below.
%(recall that above we used the similar notation $\BB\O{u}$ for the
%disc of radius $\frac{1}{3}\dd\O{u}$, see (\ref{BBdef})). Further,
let a discrete domain $\Asign_\O=\A\O{u}$ be defined by
\[
\Int \Asign_{\O}(u):=\Int\O\setminus\Int\BBsign_{\varrho_0\dd\O
{u}}^\G(u),
\]
where $\varrho_0=\varrho_0(\varpi_0,\eta_0,\varkappa_0)>0$ is a fixed
constant. If $\varrho_0$ is chosen small enough, Remark~\ref
{Rem:RvBound} implies that for any $\O$ and $u\in\Int\O$:
\begin{itemize}
\item either $u$ belongs to a face touching $\pa\O$;
\item or $\A\O{u}$ is doubly connected [in other words, $\Int\A\O{u}$
contains a cycle surrounding $u$].
\end{itemize}

%re7.2 #&#
\begin{remark}
Throughout most of this section (until Theorem~\ref{Thm:LogHm=ELdiscr})
we assume that $\A\O{u}$ is {doubly connected}. Otherwise, one can
apply an appropriate version of Lemma~\ref{Lemma:HmAsympZa}, which
relates $\hm\O{u}{[ab]_\O}$ to the partition function of random walks
running in $\A\O{u}$, and directly estimate the latter partition
function by the corresponding discrete extremal length using (\ref
{BeurlingForZ}); see the proof of Corollary~\ref{Cor:LogHmDisc=LogHmCont}.
\end{remark}

Below we rely upon the following property of the Green function $G_\O
(\cdot;u)$, which is guaranteed by Lemmas \ref
{Lemma:GreenEstimatesViaG} and \ref{Lemma:GreenInDiscs}:
%
%e7.3 #&#
\begin{equation}
\label{GboundOnC} G_\O(v;u)\asymp1\qquad \mbox{for all } v\in
\Csign_\O=\Csign_\O (u):=\pa\Asign_\O(u)
\setminus\pa\O,
\end{equation}
where the constants in $\asymp$ are independent of $\O$, $u$ and $v$.
Note that $\Csign_\O(u)$ can be naturally identified with
$\pa\BBsign_{\varrho_0\dd\O{u}}^\G(u)$ if $\A\O{u}$ is doubly connected.

%le7.3 #&#
\begin{lemma}
\label{Lemma:HmAsympZa} Let a simply connected discrete domain $\O$ and
$u\in\Int\O$ be such that $\A{\O}u$ is doubly connected, and
$[ab]_\O\ss
\pa\O$. Then,
%
%e7.4 #&#
\begin{equation}
\label{HmAsympZa} \omega_\O\bigl(u;[ab]_\O\bigr) \asymp
\ZRWsign_{\Asign_\O(u)}\bigl(\Csign_\O (u);[ab]_\O\bigr).
\end{equation}
\end{lemma}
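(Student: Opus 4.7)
The plan is to reduce the claim to an exact combinatorial identity coming from a path-decomposition, combined with the Green's function bound (\ref{GboundOnC}). First, by Remark~\ref{Rem:Z=Harm}(i) and the uniform boundedness $\mu_b\asymp 1$ from assumption~(a), one has $\hm\O{u}{[ab]_\O}\asymp\ZRW\O{u}{[ab]_\O}$, so it suffices to show
\[
\ZRW\O{u}{[ab]_\O}\,\asymp\,\ZRW{\Asign_\O(u)}{\Csign_\O(u)}{[ab]_\O}.
\]

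Second, for each nearest-neighbor path $\gamma=(u=u_0\sim\dots\sim u_n=b)\in S_\O(u;b)$ with $b\in[ab]_\O$, let $k$ be the largest index such that $u_k\in\Int\BBsign^\G_{\varrho_0\dd{\O}{u}}(u)$; such $k$ exists (since $u_0=u$) and is strictly less than $n$ (since $|b\!-\!u|\ge\dd{\O}{u}>\varrho_0\dd{\O}{u}$). I claim that $v:=u_k\in\Csign_\O(u)$. Indeed, by Remark~\ref{Rem:RvBound} one has $|u_{k+1}\!-\!u|\le(\varkappa_0\nu_0\!+\!1)|u_k\!-\!u|<(\varkappa_0\nu_0\!+\!1)\varrho_0\dd{\O}{u}<\dd{\O}{u}$ provided $\varrho_0\le(\varkappa_0\nu_0\!+\!1)^{-1}$, which precludes $u_{k+1}\in\pa\O$ and forces $u_{k+1}\in\Int\Asign_\O(u)$. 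Hence $v$ is a vertex of $\Int\BBsign^\G_{\varrho_0\dd{\O}{u}}(u)$ with a neighbor in $\Int\Asign_\O(u)$, i.e.\ $v\in\Csign_\O(u)$. Splitting $\gamma=\gamma_1\cdot\gamma_2$ at $v$, with $\gamma_1\in S_\O(u;v)$ arbitrary and $\gamma_2\in S_{\Asign_\O(u)}(v;b)$ (the tail automatically staying in $\Asign_\O(u)$ by maximality of $k$), yields a bijection between $\gamma$'s and pairs $(\gamma_1,\gamma_2)$; together with the factorization $\mathrm{w}(\gamma)=\mu_v\mathrm{w}(\gamma_1)\mathrm{w}(\gamma_2)$, summation over $b\in[ab]_\O$ produces the exact identity
\[
\ZRW\O{u}{[ab]_\O}\;=\sum_{v\in\Csign_\O(u)}\mu_v\cdot\ZRW\O{u}{v}\cdot\ZRW{\Asign_\O(u)}{v}{[ab]_\O}.
\]

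Finally, by Remark~\ref{Rem:Z=Harm}(ii) one has $\ZRW\O{u}{v}=G_\O(v;u)$, which is uniformly of order $1$ on $\Csign_\O(u)$ thanks to (\ref{GboundOnC}). Combined with $\mu_v\asymp 1$, this immediately reduces the above identity to the desired double-sided estimate $\ZRW\O{u}{[ab]_\O}\asymp\ZRW{\Asign_\O(u)}{\Csign_\O(u)}{[ab]_\O}$. The only delicate step is the topological observation in the second paragraph, and it relies solely on the geometric estimate of Remark~\ref{Rem:RvBound}: $\varrho_0$ is chosen small enough so that no single edge joins $\Int\BBsign^\G_{\varrho_0\dd{\O}{u}}(u)$ directly to $\pa\O$, and consequently the last excursion of any path from $u$ into this inner ball must end at a vertex of $\Csign_\O(u)$.
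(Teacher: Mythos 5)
Your proof is correct and follows essentially the same route as the paper: the paper also splits each walk from $u$ to $[ab]_\O$ at its last visit to $\Csign_\O(u)$ (which is the same splitting vertex as your ``last vertex in $\Int\BBsign^\G_{\varrho_0\dd\O{u}}(u)$''), uses the multiplicativity of $\mathrm{w}(\gamma)$, and then invokes $G_\O(\ccdot;u)\asymp 1$ on $\Csign_\O(u)$. You merely make the path-decomposition an exact identity and spell out, via Remark~\ref{Rem:RvBound} and the choice of $\varrho_0$, the ``topological reasons'' the paper leaves implicit.
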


\begin{pf} For a random walk running from $u$ to $[ab]_\O$ in $\O$,
let $v$ denote its last vertex on $\Csign_\O$ (such a vertex exists due
to topological reasons). Splitting this path into two halves (before
$v$ and after $v$, resp.), one concludes that
\[
\omega_\O\bigl(u;[ab]_\O\bigr) \asymp
\ZRWsign_{\O}\bigl(u;[ab]_\O\bigr)
\asymp\sum_{v\in
\Csign_\O}
\ZRWsign_{\O}(u;v)
\ZRWsign_{\Asign_{\O}}\bigl(v;[ab]_\O\bigr).
\]
As $\ZRW{\O}uv=G_\O(u;v)\asymp1$ for any $v\in\Csign_\O(u)$, this
gives (\ref{HmAsympZa}).
\end{pf}

%f5 #&#
\begin{figure}[b]

\includegraphics{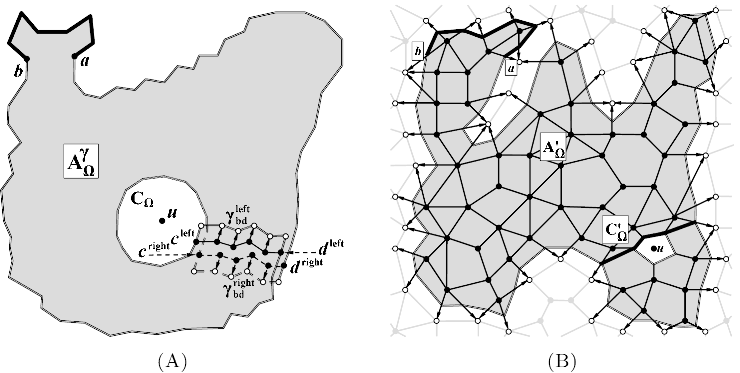}

\caption{\textup{(A)} In order to analyze the discrete
extremal length between $\Csign_\O$ and $[ab]_\O$, we cut a doubly
connected domain $\Asign_\O$ along some nearest-neighbor path $\gamma$
running from $c\in\Csign_\O$ to $d\in(ba)_\O$, so that \emph{two}
identical copies of $\gamma$ are included into a simply connected
domain $\Asign_\O^\gamma$ (which is drawn on the universal cover of
$\Asign
_\O$). Thus the boundary $\pa\Asign_\O^\gamma$ is formed by the
outer part
$(d^\mathrm{right}d^\mathrm{left})=\pa\O$, the inner part
$(c^\mathrm
{left}c^\mathrm{right})=\Csign_\O$ and two paths $\gamma^\mathrm{left}$
and $\gamma^\mathrm{right}$ consisting of vertices neighboring to~$\gamma$.
\textup{(B)} If a vertex $u$ is close to $\pa\O$, it might happen that
$\Asign_\O$ is simply connected or even disconnected. Then we denote by
$\Asign'_\O$ the proper connected component of $\Asign_\O$, and by
$\Csign'_\O$ the corresponding part of $\pa\Asign'_\O$.}
\label{Fig: annulus}
\end{figure}

%\begin{figure}
%\centering{
%%
%\begin{minipage}[b]{0.45\textwidth}
%\centering{\includegraphics[height=\textwidth,
%angle=270]{annulus-1.eps}}
%
%\centering{\textsc{(A)}}
%\end{minipage}
%%
%\hskip0.05\textwidth
%%
%\begin{minipage}[b]{0.45\textwidth}
%\centering{\includegraphics[height=\textwidth,
%angle=270]{annulus_deg-1.eps}}
%
%\centering{\textsc{(B)}}
%\end{minipage}
%%
%}
%
%\end{figure}

In order to relate the partition function (\ref{HmAsympZa}) of random
walks in the annulus $\Asign_\O(u)$ to a partition function of random
walks in some \emph{simply connected} domain, below we cut $\Asign_\O
(u)$ along the appropriate nearest-neighbor paths $\gamma=(c_\mathrm
{int}\sim\cdots\sim d_\mathrm{int})$ such that $c\in\Csign_\O$ and
$d\in
\pa\O\setminus[ab]_\O$. For a given $\gamma$ (which is always
assumed to be
a nonself-intersecting path on the \emph{universal cover} $\Asign
^\circlearrowleft_\O$ of $\Asign_\O$), we define a simply connected
domain $\Asign_\O^\gamma$ [see Figure~\ref{Fig:
annulus}(A)] as follows:
\[
\begin{tabular}{p{330pt}@{}}
\emph{if} $\gamma^\mathrm{left}$, $\gamma^\mathrm{right}$ \emph{are two
copies of} $\gamma$
\emph{lying on consecutive sheets of} ${\Asign}^\circlearrowleft_\O$,
\emph{then}
$\Int\Asign_\O^\gamma:= \gamma^\mathrm{left} \cup
[(\Int\Asign _\O)\setminus\gamma ] \cup\gamma^\mathrm{right}
\ss\Int\Asign^\circlearrowleft_\O$.
\end{tabular}
\]
In other words, we cut $\Asign_\O$ along $\gamma$, accounting both
sides of
the slit as \emph{interior} parts of a discrete domain $\Asign_\O
^\gamma$
(which is, in particular, always connected and simply connected). We
then denote by $\gamma^\mathrm{left}_\mathrm{bd}$ and $\gamma
^\mathrm
{right}_\mathrm{bd}$ the corresponding parts of $\pa\Asign_\O
^\gamma$, thus
\[
\pa\Asign_\O^\gamma=\bigl(d^\mathrm{left}d^\mathrm{right}
\bigr)_{{\Asign
}^\circlearrowleft_\O}\cup\gamma^\mathrm{left}_\mathrm{bd}\cup
\bigl(c^\mathrm{right}c^\mathrm{left}\bigr)_{{\Asign}^\circlearrowleft_\O
}\cup
\gamma^\mathrm{right}_\mathrm{bd} ,
\]
where disjoint parts\vspace*{1pt} of $\pa\Asign_\O^\gamma$ are listed counterclockwise
with respect to $\Asign_\O^\gamma$. We also use simpler notation
$(d^\mathrm{left}d^\mathrm{right})_{{\Asign}^\circlearrowleft_\O
}=\pa\O
$ and $(c^\mathrm{right}c^\mathrm{left})_{{\Asign}^\circlearrowleft
_\O
}=\Csign_\O$, if no confusion arises.

%co7.4 #&#
\begin{corollary}
\label{Cor:HMviaZg}
Let a simply connected discrete domain $\O$ and $u\in\Int\O$ be such
that $\A{\O}u$ is doubly connected, and $[ab]_\O\ss\pa\O$. Then, for
any nearest-neighbor path $\gamma$ running from $\Csign_\O(u)$ to
$(ba)_\O
$, the following is fulfilled:
\[
%\begin{equation} \label{HmLeGeZag}
\const\cdot\,
\ZRWsign_{\Asign_\O^\gamma}\bigl(\Csign_\O;[ab]_\O\bigr)
\le \omega_\O
\bigl(u;[ab]_\O\bigr) \le\const\cdot\,
\ZRWsign_{\Asign_\O^\gamma}\bigl(\gamma^\mathrm {left}_\mathrm {bd}\cup\Csign_\O\cup
\gamma^\mathrm{right}_\mathrm{bd};[ab]_\O\bigr).
\]
%
%\end{equation}
\end{corollary}

\begin{pf}
Indeed,
\[
\ZRWsign_{\Asign_\O^\gamma}\bigl(\Csign_\O;[ab]_\O\bigr)
\le
\ZRWsign_{\Asign_\O}\bigl(\Csign_\O;[ab]_\O\bigr)
\le
\ZRWsign_{\Asign_\O^\gamma}\bigl(\gamma^\mathrm {left}_\mathrm{bd}\cup \Csign_\O\cup
\gamma^\mathrm{right}_\mathrm{bd};[ab]_\O\bigr)
\]
due to simple monotonicity properties of the random walk partition
function $\ZRWsign_\O$ with respect to domain $\O$; for example, for
the left bound, one forbids the random walks running from $\Csign_\O$
to $[ab]_\O$ to cross $\gamma$ (still allowing them to touch
$\gamma$ or to run
along it).
\end{pf}

Theorem~\ref{Thm:DblSidedEstimates} [namely, (\ref{BeurlingForZ})] allows
one to estimate both partition functions $\ZRW{\Asign_\O^\gamma
}{\Csign_\O
}{[ab]_\O}$ and $\ZRW{\Asign_\O^\gamma}{\gamma^\mathrm
{left}_\mathrm
{bd}\cup\Csign_\O\cup\gamma^\mathrm{right}_\mathrm{bd}}{[ab]_\O
}$ via
corresponding discrete extremal lengths. We now prove that one can
choose $\gamma$ so that \emph{both} those extremal lengths are comparable
to the extremal length of nearest-neighbor paths connecting $\Csign_\O$
and $[ab]_\O$ in the annulus $\Asign_\O$.
%(which is uniformly comparable to its continuous counterpart due to
%Proposition~\ref{Prop:ELdiscr=ELcont} and %Remark~
%\ref{Rem:ELdiscr=ELcont=U/Iannulus}).

%re7.5 #&#
\begin{remark}
\label{Rem:ELdiscr=ELcont=U/Iannulus}
Below we apply Propositions \ref{Prop:ELdiscr=ELcont} and \ref
{Prop:EL=U/I} to a doubly connected discrete domain $\Asign_\O$ and its
inner boundary $\Csign_\O$ instead of a boundary arc $[cd]_\O$ of a
simply connected domain $\O$. It is worth noting that we did not use
any ``topological'' arguments in the proofs of those statements.
\end{remark}

%pr7.6 #&#
\begin{proposition}
\label{Prop:ELgAsympELag}
Let a simply connected discrete domain $\O$ and $u\in\Int\O$ be such
that $\A{\O}u$ is doubly connected, and $[ab]_\O\ss\pa\O$. Then:
\begin{longlist}[(ii)]
\item[(i)] there exists a nearest-neighbor path $\gamma$ running from
$\Csign_\O$ to $(ba)_\O$ such that
\[
\ELsign_{\Asign_\O^\gamma}\bigl(\Csign_\O;[ab]_\O\bigr)\le2
\ELsign_{\Asign_\O}\bigl(\Csign_\O;[ab]_\O\bigr);
\]

\item[(ii)] for any given $q>1$, either $\EL{\Asign_\O}{\Csign_\O
}{[ab]_\O}< q^2 \EL{\Asign_\O}{\Csign_\O}{\pa\O}$ (i.e., the arc
$[ab]_\O$ is not so far from $u$), or there exists a nearest-neighbor
path $\gamma$ running from $\Csign_\O$ to $(ba)_\O$ such that
\[
\ELsign_{\Asign_\O^\gamma}\bigl(\gamma^\mathrm{left}_\mathrm{bd}\cup
\Csign_\O\cup \gamma^\mathrm{right}_\mathrm{bd};[ab]_\O
\bigr) \ge\bigl(1 - q^{-1}\bigr)^2 \ELsign_{\Asign_\O}
\bigl(\Csign_\O;[ab]_\O\bigr).
\]
\end{longlist}
\end{proposition}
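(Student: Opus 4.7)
Both parts rely on the potential $\tilde V$ associated to $\ELsign := \EL{\Asign_\O}{\Csign_\O}{[ab]_\O}$: the unique discrete harmonic function on $\Asign_\O$ with $\tilde V \equiv 0$ on $\Csign_\O$, $\tilde V \equiv 1$ on $[ab]_\O$, and Neumann conditions on $(ba)_\O$. Proposition~\ref{Prop:EL=U/I}, applied in the annular setting as in Remark~\ref{Rem:ELdiscr=ELcont=U/Iannulus}, yields $I(\tilde V) = \ELsign^{-1}$. Its harmonic conjugate $\tilde V^*$, defined via~(\ref{V*def}) on the faces of the universal cover $\Asign^\circlearrowleft_\O$ of $\Asign_\O$, is single-valued on the cover, constant on each lift of the Neumann boundary $(ba)_\O$, and picks up a period equal to $I(\tilde V)$ upon circling $\Csign_\O$ once.

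For~(i), the plan is to pick $\gamma$ as an approximate streamline of the flow, i.e., an approximate level curve of $\tilde V^*$. For each level $c \in \R$, collecting those primal edges $(vv')$ in $\Asign_\O$ whose two adjacent faces take $\tilde V^*$-values straddling $c$ produces a nearest-neighbor path $\gamma_c$ from $\Csign_\O$ to $(ba)_\O$. The $\tilde V$-flux across $\gamma_c$ equals, by~(\ref{V*def}), the sum of $\tilde V^*$-jumps over its edges, so by a pigeonhole-type averaging in $c$ over one period of $\tilde V^*$ there is a level $c_0$ with $\tilde V$-flux across $\gamma_{c_0}$ at most $\tfrac{1}{2}I(\tilde V)$. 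Set $\gamma := \gamma_{c_0}$. A projection argument---mapping any admissible trial function $w$ on $\Asign_\O^\gamma$ to a function $\bar w$ on $\Asign_\O$ by averaging its two values on the copies of each $\gamma$-vertex, and bounding $\sum_e \mathrm{w}_e (\bar w(u')\!-\!\bar w(u))^2$ via the elementary inequality $(a+b)^2 \leq 2(a^2+b^2)$ with the flux controlling the residual jump terms---then yields $I(\tilde V) \leq 2\,I(V)$ where $V$ is the potential of $\EL{\Asign_\O^\gamma}{\Csign_\O}{[ab]_\O}$, giving~(i).

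For~(ii), assume the second disjunct $\ELsign \geq q^2\,\EL{\Asign_\O}{\Csign_\O}{\pa\O}$; the plan is to locate $\gamma$ inside the sublevel set $\{v : \tilde V(v) \leq q^{-1}\}$. Let $K_0$ be the connected component of this sublevel set containing $\Csign_\O$. If $K_0$ did not reach $(ba)_\O$, then $K_0 \cap \pa\O \subseteq \Csign_\O$ (as $\tilde V = 1 > q^{-1}$ on $[ab]_\O$), and the function $\tilde W$ defined to equal $\min(q\tilde V,\,1)$ on $K_0$ and on $\{\tilde V > q^{-1}\}$, and equal to $1$ on any other connected component of $\{\tilde V \leq q^{-1}\}$, would be an admissible trial function for the Dirichlet problem defining $\EL{\Asign_\O}{\Csign_\O}{\pa\O}$ with Dirichlet energy at most $q^2\,I(\tilde V) = q^2/\ELsign$, contradicting the hypothesis via Proposition~\ref{Prop:EL=U/I}. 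Hence $K_0$ reaches $(ba)_\O$, so there is a nearest-neighbor path $\gamma \subseteq K_0$ from $\Csign_\O$ to $(ba)_\O$ along which $\tilde V \leq q^{-1}$. Then $\widehat W := \max\{0,\,(\tilde V - q^{-1})/(1-q^{-1})\}$ lifted to $\Asign_\O^\gamma$ is admissible for the extremal problem on the LHS of~(ii): it vanishes on $\gamma^\mathrm{left}_\mathrm{bd}\cup \Csign_\O\cup \gamma^\mathrm{right}_\mathrm{bd}$ (since $\tilde V \leq q^{-1}$ along $\gamma$ and on $\Csign_\O$) and equals $1$ on $[ab]_\O$. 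Its Dirichlet energy on $\Asign_\O^\gamma$ coincides with that on $\Asign_\O$ (as $\widehat W$ vanishes on both copies of $\gamma$) and is at most $(1-q^{-1})^{-2}\,I(\tilde V)$, yielding the claimed $(1-q^{-1})^2$-factor lower bound.

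The main obstacle is the projection argument in~(i): one must carefully control the extra Dirichlet-energy contribution from the edges of $\Asign_\O$ that cross $\gamma$, absorbing the jump terms $|\bar w^\mathrm{left}-\bar w^\mathrm{right}|$ into a controlled multiple of the original energy via the flux bound. Part~(ii) is more direct, though some care is needed to handle the possible disconnectedness of the sublevel set $\{\tilde V \leq q^{-1}\}$ via the modified trial function $\tilde W$ above.
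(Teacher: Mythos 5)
Your part (ii) is essentially correct and close in substance to the paper's argument: where the paper bounds $\EL{\Asign_\O}{\Csign_\O}{\pa\O}$ from below with the explicit metric $|V(v')-V(v)|$ and produces the slit via the maximum principle, you use the Dirichlet-principle (trial-function) phrasing and a sublevel-set component. The repairs needed are cosmetic: define $K_0$ as the \emph{union} of the components of $\{\tilde V\le q^{-1}\}$ meeting $\Csign_\O$ (boundary vertices of $\Csign_\O$ are not adjacent to one another, so ``the'' component is ill-defined); set $\widehat W:=0$ on $\g^{\mathrm{left}}_{\mathrm{bd}}\cup\g^{\mathrm{right}}_{\mathrm{bd}}$, which costs no energy because $\widehat W=0$ on both copies of $\g$; and mind strict versus non-strict inequalities in the dichotomy.

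Part (i), however, has a genuine gap, in three places. First, for a generic level $c$ your edge set $\g_c$ is a discrete streamline joining $\Csign_\O$ to $[ab]_\O$, not to $(ba)_\O$: since $\tilde V^*$ is constant along the Neumann arc $(ba)_\O$ while it sweeps a full period along each of $[ab]_\O$ and $\Csign_\O$, only the exceptional level equal to that constant can terminate on $(ba)_\O$ --- which is exactly the paper's choice and leaves no pigeonhole freedom in $c$. Second, the pigeonhole bound itself fails: integrating your flux over one period gives $\sum_{e=(vv')}\mathrm{w}_e^2(\tilde V(v')-\tilde V(v))^2$, so the average flux is at most $\max_e\mathrm{w}_e\le\varpi_0^{-1}$, which in the regime of interest ($\ELsign$ large, $I(\tilde V)=\ELsign^{-1}$ small) is far larger than $\tfrac12 I(\tilde V)$. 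Third, and most seriously, the projection step does not close: the residual terms are the squared jumps $(w(v^{\mathrm{left}})-w(v^{\mathrm{right}}))^2$ of the trial function $w$ of the \emph{cut} problem across the slit, and these are controlled neither by the flux of the different function $\tilde V$ across $\g$ nor, with any useful constant, by the Dirichlet energy of $w$ (the two copies of $\g$ are far apart inside $\Asign_\O^\g$), so the inequality $I(\tilde V)\le 2\,I(V_{\mathrm{cut}})$ does not follow. The paper never compares energies across the cut: it takes $\g$ separating the signs of $V^*$ (the level line through the value attained on $(ba)_\O$), endows the dual edges of $\Asign_\O^\g$ with the metric $g_{ff'}=|V^*(f')-V^*(f)|$, uses the monodromy of $V^*$ to bound the $g$-distance between the two lateral sides from below by $I(V)$, and gets the factor $2$ only because the slit edges are counted twice in the $g$-area; the conclusion then follows from the exact duality $\EL{\Asign_\O^\g}{\Csign_\O}{[ab]_\O}=(\ELsign^*)^{-1}$ of Remark~\ref{Rem:DualFct}. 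You need this (or some other) mechanism in place of the flux/averaging scheme.
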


%re7.7 #&#
\begin{remark} (i) The constant $2$ in the first estimate is a big
overkill: as can be seen from the proof, both sides are almost equal to
each other for a proper slit~$\gamma$.

(ii) Since discrete and continuous extremal lengths are
uniformly comparable to each other, for any $\O$ and $u$, one has
\[
\ELsign_{\Asign_\O}(\Csign_\O;\pa\O) \asymp
\ELsign_{\Asign_\O^\C}\bigl(\Csign_\O^\C;\pa
\O^\C\bigr) \asymp1.
\]
\end{remark}

\begin{pf} Let $V=V_{(\Asign_\O;[ab]_\O,\Csign_\O)}\dvtx\Asign
_\O\to
[0;1]$ be the unique discrete harmonic function such that $V\equiv0$
on $[ab]_\O$, $V\equiv1$ on $\Csign_\O$, and $V$ satisfies Neumann
boundary conditions on $\pa\O\setminus[ab]_\O$. Recall that
Proposition~\ref{Prop:EL=U/I} (see also Remark~\ref
{Rem:ELdiscr=ELcont=U/Iannulus}) says
\begin{eqnarray*}
\bigl( \ELsign_{\Asign_\O}\bigl(\Csign_\O;[ab]_\O
\bigr)\bigr)^{-1}=I(V) & =& \sum_{x\in
[ab]_\O}
\mathrm{w}_{xx_\mathrm{int}}V(x_\mathrm{int})
\\
&= &\sum_{(vv')\
\mathrm{in}\ {\Asign_\O^\gamma}}\mathrm{w}_{vv'} \bigl(V
\bigl(v'\bigr) - V(v)\bigr)^2. %\\& = \sum_{x\in[ab]_\O} \mathrm{w}_{xx_\mathrm{int}}V(x_
%\mathrm{int}) =\sum_{x\in\Csign_\O} %\mathrm{w}_{xx_\mathrm{int}}(1 -
% V(x_\mathrm{int})).
\end{eqnarray*}

(i) Let $V^*$ denote a harmonic conjugate function to $V$
[see (\ref{V*def}), (\ref{V*harm}) and Remark~\ref{Rem:DualFct}] which
is defined on the universal cover ${\Asign}^\circlearrowleft_\O$ of
$\Asign_\O$. Tracking its increment along $[ab]_\O$, one easily
concludes that $V^*$ has an additive monodromy $I(V)$ when passing
around $\Csign_\O$ counterclockwise. Moreover, as $V\in[0;1]$
everywhere in $\Asign_\O$, the boundary values of $V^*$ increases when
going counterclockwise along $\Csign_\O$, as well as along $\pa\O$
(recall that $V^*$ satisfies Neumann boundary conditions on $\Csign_\O$
and $[ab]_\O$).

Let an additive constant in definition of $V^*$ be chosen so that
$V^*\equiv0$ on $\pa\O\setminus[ab]_\O$ (on some sheet of ${\Asign
}^\circlearrowleft_\O$). Then, there exists a nonself-intersecting
nearest-neighbor path $\gamma$ running from $\Csign_\O$ to $\pa\O
\setminus
[ab]_\O$ in ${\Asign}^\circlearrowleft_\O$ which separates nonnegative
(to the left of $\gamma$) and nonpositive (to the right of $\gamma
$) values of
$V^*$. We cut $\Asign_\O$ along $\gamma$ and choose a branch of
$V^*$ in
$\Asign_\O^\gamma$ so that
\begin{eqnarray*} V^*&\le&0 \qquad\mbox{at faces touching }
\gamma^\mathrm{right}_\mathrm {bd},\qquad V^*\ge I(V) \qquad\mbox{at faces
touching } \gamma^\mathrm{left}_\mathrm{bd} ,
\\
V^*
&\equiv&0 \qquad \mbox{at faces touching } [da]_\O, \qquad V^*\equiv I(V) \qquad \mbox
{at faces touching } [bd]_\O
\end{eqnarray*}
(recall that $V^*$ satisfies Neumann boundary conditions on $[ab]_\O$).
Putting on dual edges $(ff')=(vv')^*$ of $\Asign_\O^\gamma$ a
discrete metric
\[
g_{ff'}:=\bigl|V^*\bigl(f'\bigr)-V^*(f)\bigr|=
\mathrm{w}_{vv'}\bigl|V\bigl(v'\bigr)-V(v)\bigr| ,
\]
one obtains the following estimate for the \emph{dual} discrete length
$\ELsign^*$ (see Remark~\ref{Rem:DualFct}) between opposite sides
$\gamma
^\mathrm{right}\cup[da]_\O$ and $[bd]_\O\cup\gamma^\mathrm
{left}$ of $\Asign
_\O^\gamma$:
\[
\ELsign^*\ge\frac{[I(V)]^2}{\sum_{(vv')\ \mathrm{in}\ {\Asign_\O
^\gamma
}}\mathrm{w}_{vv'}|V(v') - V(v)|^2}\ge\frac
{[I(V)]^2}{2I(V)}= \frac{1}{2}I(V)
\]
(the constant $2$ is a big overkill, since each edge of $\Asign_\O$
except $\gamma$ is counted once in $\Asign_\O^\gamma$, and only those
constituting $\gamma$ are counted twice). Therefore,
\[
\ELsign_{\Asign_\O^\gamma}\bigl(\Csign_\O;[ab]_\O\bigr) =
\bigl(\ELsign^*\bigr)^{-1} \le 2\bigl[I(V)\bigr]^{-1}=2
\ELsign_{\Asign_\O}\bigl(\Csign_\O;[ab]_\O\bigr).
\]

(ii) Let $d\in\pa\O\setminus[ab]_\O$ be a boundary vertex
where $V$ attains its maximum on $\pa\O$ (recall that $V\equiv0$ on
$[ab]_\O$). If $V(d)<1 - q^{-1}$, then the metric $g_{vv'}:=|V(v')
- V(v)|$ [which is extremal for the family $(\Asign_\O;\Csign_\O
\leftrightarrow[ab]_\O)$; see Proposition~\ref{Prop:EL=U/I}] provides
an estimate
\[
\ELsign_{\Asign_\O}(\Csign_\O;\pa\O) >\frac{q^{-2}}{I(V)}=q^{-2}
\ELsign_{\Asign_\O}\bigl(\Csign_\O;[ab]_\O\bigr) .
\]
If $V(d)\ge1 - q^{-1}$, let $\gamma$ denote a nearest-neighbor path
running from $d$ to $\Csign_\O$ such that $V\ge1 - q^{-1}$ along
this path ($\gamma$ exists due to the maximum principle). Then the same
metric as above (we assign zero weights to all edges constituting
$\gamma
^\mathrm{left},\gamma^\mathrm{right}$ and corresponding boundary
ones) gives
\begin{eqnarray*}
\ELsign_{\Asign_\O^\gamma}\bigl(\gamma^\mathrm{left}_\mathrm{bd}\cup
\Csign_\O\cup \gamma^\mathrm{right}_\mathrm{bd};[ab]_\O
\bigr) &\ge&\frac{(1 -
q^{-1})^2}{I(V)}\\
&=&\bigl(1 - q^{-1}\bigr)^2
\ELsign_{\Asign_\O}\bigl(\Csign_\O;[ab]_\O\bigr) .
\end{eqnarray*}
\upqed
\end{pf}

Combining estimates given above, we are now able to prove a uniform
double-sided estimate relating the logarithm of the discrete harmonic
measure $\hm\O{u}{[ab]_\O}$ in a simply connected domain $\O$ and the
discrete extremal length $\EL{\Asign_\O}{\Csign_\O}{[ab]_\O}$ in the
annulus-type domain $\Asign_\O(u)$.

%th7.8 #&#
\begin{theorem}
\label{Thm:LogHm=ELdiscr}
Let a simply connected discrete domain $\O$ and $u\in\Int\O$ be such
that $\A{\O}u$ is doubly connected, and $[ab]_\O\ss\pa\O$. Then
%
%e7.5 #&#
\begin{equation}
\label{Hm=ELdiscr} \log\bigl(1 + \bigl( \omega_\O\bigl(u;[ab]_\O
\bigr)\bigr)^{-1}\bigr)\asymp \ELsign_{\Asign_\O
(u)}\bigl(
\Csign_\O (u);[ab]_\O\bigr),
\end{equation}
with constants independent of $\O$, $u$ and $[ab]_\O$.
\end{theorem}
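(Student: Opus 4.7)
The plan is to combine the three ingredients already assembled in this section: Lemma~\ref{Lemma:HmAsympZa} (which reduces the harmonic measure to a random walk partition function in the annulus $\Asign_\O$), Corollary~\ref{Cor:HMviaZg} together with Proposition~\ref{Prop:ELgAsympELag} (which replace this annular partition function by partition functions in the simply connected ``slit'' domain $\Asign_\O^\g$ for cleverly chosen slits $\g$), and finally the exponential estimate \eqref{BeurlingForZ} from Theorem~\ref{Thm:DblSidedEstimates}. Throughout, write $\ELsign_\star:=\EL{\Asign_\O(u)}{\Csign_\O(u)}{[ab]_\O}$, and note that by Remark following Proposition~\ref{Prop:ELgAsympELag} the reference quantity $\EL{\Asign_\O}{\Csign_\O}{\pa\O}\asymp 1$.

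First, I would dispose of the easy case $\ELsign_\star\le\const$. In that case, Proposition~\ref{Prop:ELgAsympELag}(i) produces a slit $\g$ with $\EL{\Asign_\O^\g}{\Csign_\O}{[ab]_\O}\le 2\ELsign_\star\le\const$, so Proposition~\ref{Prop:ZleL-1} (applied in $\Asign_\O^\g$ regarded as a simply connected quadrilateral with one pair of opposite sides $\Csign_\O$ and $[ab]_\O$) gives $\ZRW{\Asign_\O^\g}{\Csign_\O}{[ab]_\O}\asymp (\EL{\Asign_\O^\g}{\Csign_\O}{[ab]_\O})^{-1}\ge\const$. The lower bound in Corollary~\ref{Cor:HMviaZg} then yields $\hm\O{u}{[ab]_\O}\ge\const$, so both sides of \eqref{Hm=ELdiscr} are uniformly of order $1$.

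In the remaining ``deep fjord'' regime $\ELsign_\star\ge\const$, I would prove the two sides of \eqref{Hm=ELdiscr} separately. For the lower bound on $\hm\O{u}{[ab]_\O}$, pick the slit $\g$ supplied by Proposition~\ref{Prop:ELgAsympELag}(i); Theorem~\ref{Thm:DblSidedEstimates}(ii), applied to the discrete quadrilateral $\Asign_\O^\g$ with opposite sides $\Csign_\O$ and $[ab]_\O$, gives
\[
\ZRW{\Asign_\O^\g}{\Csign_\O}{[ab]_\O}\ \ge\ C_1\exp[-\b_1\EL{\Asign_\O^\g}{\Csign_\O}{[ab]_\O}]\ \ge\ C_1\exp[-2\b_1\ELsign_\star],
\]
and Corollary~\ref{Cor:HMviaZg} translates this into the same lower bound for $\hm\O{u}{[ab]_\O}$. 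For the upper bound, fix any $q>1$ (say $q=2$) and appeal to Proposition~\ref{Prop:ELgAsympELag}(ii): either $\ELsign_\star<q^2\cdot\EL{\Asign_\O}{\Csign_\O}{\pa\O}\le\const$ (already handled above), or there is a slit $\g$ for which the discrete extremal length from the thickened inner arc $\g^\mathrm{left}_\mathrm{bd}\cup\Csign_\O\cup\g^\mathrm{right}_\mathrm{bd}$ to $[ab]_\O$ in $\Asign_\O^\g$ is at least $(1-q^{-1})^2\ELsign_\star$. Applying \eqref{BeurlingForZ} to this quadrilateral and using the upper bound in Corollary~\ref{Cor:HMviaZg} then gives
\[
\hm\O{u}{[ab]_\O}\ \le\ \const\cdot\exp[-\b_2(1-q^{-1})^2\ELsign_\star].
\]

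Combining the two bounds and taking logarithms gives $-\log\hm\O{u}{[ab]_\O}\asymp\ELsign_\star$ whenever $\ELsign_\star\ge\const$; in this regime $\hm\O{u}{[ab]_\O}$ is small, so $\log(1+\hm^{-1})\asymp -\log\hm\asymp\ELsign_\star$, which together with the bounded-$\ELsign_\star$ case finishes \eqref{Hm=ELdiscr}. The principal technical obstacle is precisely the need to pass between the doubly connected annulus $\Asign_\O$ (where $\ELsign_\star$ lives) and the simply connected slit domains $\Asign_\O^\g$ (which are the only places the exponential estimate \eqref{BeurlingForZ} is available), so that \emph{both} inequalities in Corollary~\ref{Cor:HMviaZg} feed into \eqref{BeurlingForZ} with extremal lengths uniformly comparable to $\ELsign_\star$; this is exactly what the two clauses of Proposition~\ref{Prop:ELgAsympELag} were designed to deliver.
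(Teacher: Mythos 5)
Your proposal is correct and follows essentially the same route as the paper's proof: Lemma~\ref{Lemma:HmAsympZa} and Corollary~\ref{Cor:HMviaZg} to reduce $\hm\O{u}{[ab]_\O}$ to partition functions in the slit quadrilaterals $\Asign_\O^\g$, Proposition~\ref{Prop:ELgAsympELag} to choose the two slits, then the exponential estimate (\ref{BeurlingForZ}) of Theorem~\ref{Thm:DblSidedEstimates}, with the same dichotomy between $\ELsign\le\l_0$ (both sides $\asymp 1$) and $\ELsign\ge\l_0$. The only step you leave implicit is verifying hypothesis (\ref{EstimEquiv}) before invoking (\ref{BeurlingForZ}) for the quadrilateral $(\Asign_\O^\g;\Csign_\O,[ab]_\O)$ in the lower bound; this is immediate, since $\const\cdot\ZRW{\Asign_\O^\g}{\Csign_\O}{[ab]_\O}\le\hm\O{u}{[ab]_\O}\le 1$ forces $\ZRW{\Asign_\O^\g}{\Csign_\O}{[ab]_\O}\le\const$ (the paper deduces the same from the upper-bound column instead).
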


\begin{pf} Let $\ELsign:=\EL{\Asign_\O}{\Csign_\O}{[ab]_\O}$ and
$\omega
:=\hm\O{u}{[ab]_\O}$. Corollary~\ref{Cor:HMviaZg}, Theorem~\ref
{Thm:DblSidedEstimates} and Proposition~\ref{Prop:ELgAsympELag} provide
us the following diagram (for some proper discrete cross-cuts $\gamma$
which can be different for lower and upper bounds):
\[
\begin{array} {ccc} \const\cdot\,
\ZRWsign_{\Asign_\O^\gamma}\bigl(\Csign_\O;[ab]_\O\bigr)
& \le \omega\le&
\ZRWsign_{\Asign_\O^\gamma}\bigl(\gamma^\mathrm{left}_\mathrm{bd}
\cup \Csign_\O\cup \gamma^\mathrm{right}_\mathrm{bd};[ab]_\O\bigr)
\\
\updownarrow& & \updownarrow
\\
2\ELsign
\ge \ELsign_{\Asign_\O^\gamma}\bigl(\Csign_\O;[ab]_\O\bigr)
&& \ELsign_{\Asign_\O
^\gamma}\bigl(\gamma^\mathrm{left}_\mathrm{bd}
\cup\Csign_\O\cup \gamma^\mathrm {right}_\mathrm{bd};[ab]_\O
\bigr) \ge\frac{1}{2} \ELsign
\end{array} %
\]
[the last inequality holds true if $\ELsign\ge\lambda_0$, where
$\lambda_0$ is
some absolute constant: recall that $\EL{\Asign_\O}{\Csign_\O}{\pa
\O
}\asymp1$ for all $\O$ and $u$]. Above, the arrows ``$ \updownarrow
$'' mean double-sided estimates of $\ZRWsign_{\Asign_\O^\gamma}$ via
$\ELsign_{\Asign_\O^\gamma}$ given by Theorem~\ref{Thm:DblSidedEstimates}.
Recall that it is \emph{inverse} monotone: an upper bound for $\ELsign
_{\Asign_\O^\gamma}$ gives a lower bound for $\ZRWsign_{\Asign_\O
^\gamma}$ and
vice versa.

In particular, if $\ELsign\ge\lambda_0$, condition (\ref
{EstimEquiv}) holds
for both (right, and therefore, left) columns. Thus, in this case, one
can replace both ``$ \updownarrow$'' by (\ref{BeurlingForZ}),
arriving at $\log\omega\asymp-\ELsign$.
If $\ELsign<\lambda_0$, then the left column gives $\omega\ge
\const$, and both
sides of~(\ref{Hm=ELdiscr}) are uniformly comparable to $1$ [note that
$\ELsign$ is uniformly bounded below by $\EL{\Asign_\O}{\Csign_\O
}{\pa\O
}\asymp1$].
\end{pf}

%co7.9 #&#
\begin{corollary}
\label{Cor:LogHmDisc=LogHmCont}
Let $\O$ be a simply connected domain, $u\in\Int\O$ and {$[ab]_\O
\in\pa
\O$}. Denote $\omega_\mathrm{disc}:=\hm\O{u}{[ab]_\O}$ and
$\omega_\mathrm
{cont}:=\hm{\O^\C}{u}{[ab]_\O^\C}$. Then
\[
%\begin{equation} \label{logHmdiscr=logHmcont}
\log\bigl(1+\omega_\mathrm{disc}^{-1}\bigr)\asymp
\log\bigl(1+\omega_\mathrm {cont}^{-1}\bigr)
\]
%
%\end{equation}
with some uniform (i.e., independent of $\O,u,a,b$) constants.
\end{corollary}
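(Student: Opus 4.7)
The plan is to reduce both sides to appropriate extremal lengths and then apply (an annular version of) Proposition~\ref{Prop:ELdiscr=ELcont}. I~assume first that $\Asign_\O(u)$ is doubly connected; the remaining case, in which $u$ sits in a face touching $\pa\O$, is treated at the end. On the discrete side, Theorem~\ref{Thm:LogHm=ELdiscr} gives immediately
\[
\log(1\!+\!\o_\mathrm{disc}^{-1})\ \asymp\ \EL{\Asign_\O(u)}{\Csign_\O(u)}{[ab]_\O}.
\]
On the continuous side, I~would invoke the classical Ahlfors--Beurling--Carleman estimate (see \cite[Theorem~IV.5.2]{GM05} and the historical discussion on~\cite[p.~150]{GM05}) in the annular region $\Asign^\C_\O(u):=\O^\C\setminus\overline{D_{\varrho_0\dd\O u}(u)}$ obtained by removing from $\O^\C$ a Euclidean disc of the same radius $\varrho_0\dd\O u$ that enters in the definition of~$\Asign_\O(u)$; this yields
\[
\log(1\!+\!\o_\mathrm{cont}^{-1})\ \asymp\ \ELsign^\C_{\Asign^\C_\O(u)}(\pa D_{\varrho_0\dd\O u}(u);\,[ab]^\C_\O).
\]

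The second step is to compare these two extremal lengths. Proposition~\ref{Prop:ELdiscr=ELcont} is stated for simply connected quadrilaterals, but its proof is entirely local: one constructs a continuous test metric from a discrete extremal one (and conversely) by spreading weights over the edge neighborhoods $\L_e^\C$. Nothing in that construction uses simple connectedness, so the same argument applies verbatim to the annular pair $(\Asign_\O(u);\Csign_\O(u),[ab]_\O)$ and its continuous counterpart, once the discrete ``circle'' $\Csign_\O(u)$ is identified with a polyline uniformly close to $\pa D_{\varrho_0\dd\O u}(u)$ (the gap is $O(r_v)$, negligible on the relevant scale~$\dd\O u$). Concatenating the three bounds yields the claim.

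The main obstacle is the boundary case when $\Asign_\O(u)$ fails to be doubly connected, i.e., $u$ lies in a face adjacent to $\pa\O$. If $[ab]_\O$ touches this face, both $\o_\mathrm{disc}$ and $\o_\mathrm{cont}$ are bounded below by uniform constants (by \ref{PropertyS} and its continuous counterpart) and both sides of the estimate are $\asymp 1$. Otherwise, one replaces $\Asign_\O(u)$ by its proper simply connected component $\Asign'_\O$ and $\Csign_\O(u)$ by the corresponding portion $\Csign'_\O\ss\pa\Asign'_\O$ (see the Remark preceding Theorem~\ref{Thm:LogHm=ELdiscr} and Fig.~\ref{Fig: annulus}B). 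In this simply connected setting, the obvious adaptation of Lemma~\ref{Lemma:HmAsympZa} combined with the direct exponential estimate~(\ref{BeurlingForZ}) of Theorem~\ref{Thm:DblSidedEstimates} replaces the use of Theorem~\ref{Thm:LogHm=ELdiscr}, and Proposition~\ref{Prop:ELdiscr=ELcont} then applies as stated to produce the desired comparison with the continuous extremal length in the analogous modification of~$\O^\C$.
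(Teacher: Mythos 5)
Your proposal is correct and follows essentially the same route as the paper's proof: Theorem~\ref{Thm:LogHm=ELdiscr} together with the annular extension of Proposition~\ref{Prop:ELdiscr=ELcont} (the paper's Remark~\ref{Rem:ELdiscr=ELcont=U/Iannulus}) on the discrete side, the classical Ahlfors--Beurling--Carleman-type estimates on the continuous side, and the same reduction via $\Asign'_\O$, $\Csign'_\O$, the adaptation of Lemma~\ref{Lemma:HmAsympZa} and (\ref{BeurlingForZ}) in the degenerate case. One small caveat: your justification that the gap between $\Csign_\O(u)$ and the Euclidean circle is ``$O(r_v)$, negligible on the scale $\dd\O{u}$'' is not quite accurate, since local scales near $u$ may themselves be comparable to $\dd\O{u}$; the correct observation is that the discrepancy is confined to an annulus of bounded modulus around $u$, so it shifts the relevant extremal lengths only by a bounded additive amount, which is harmless because $\EL{\Asign_\O}{\Csign_\O}{[ab]_\O}\ge\EL{\Asign_\O}{\Csign_\O}{\pa\O}\asymp 1$.
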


\begin{pf}
First, let us assume that $\A\O{u}$ is doubly connected, so $\O$ and
$u$ fit the setup of Theorem~\ref{Thm:LogHm=ELdiscr}. Let $\ELsign
_\mathrm{disc}:=\EL{\Asign_\O}{\Csign_\O}{[ab]_\O}$ and $\ELsign
_\mathrm
{cont}:=\EL{\Asign_\O^\C}{\Csign_\O^\C}{[ab]_\O^\C}$ be its continuous
counterpart. Recall that $\ELsign_\mathrm{disc}\asymp\ELsign
_\mathrm
{cont}$ due to Proposition~\ref{Prop:ELdiscr=ELcont} (and Remark~\ref
{Rem:ELdiscr=ELcont=U/Iannulus}). Then
\[
\log\bigl(1 + \omega_\mathrm{disc}^{-1}\bigr)\asymp
\ELsign_\mathrm {discr}\asymp \ELsign_\mathrm{cont}\asymp\log\bigl(1
+ \omega_\mathrm{cont}^{-1}\bigr),
\]
where the last estimate is an easy corollary of the classical estimates
for harmonic measure via extremal lengths; for example, see \cite{GM05},
Theorem~5.2.

If $\A\O{u}$ is \emph{not} doubly connected, then $u$ belongs to a face
touching $\pa\O$. If $u$ shares a face with $[ab]_\O$, then $\omega
_\mathrm
{disc}\ge\const$ and $\omega_\mathrm{cont}\ge\const$ as well.
%(indeed, the Brownian motion started at $u$ has a positive chance to
%hit $[ab]_\C$ traveling nearby of the uniformly bounded number of
%corresponding lattice edges).

Thus, without loss of generality, we may assume that both $\omega
_\mathrm
{disc}$ and $\omega_\mathrm{cont}$ are uniformly bounded away from
$1$, and
there exists a connected (and simply connected) component of $\Int\A
\O
{u}$ whose boundary contains the whole arc $[ab]_\O$. Let $\Asign_\O'$
denote this component of $\Asign_\O$ and $\Csign_\O^{ \prime}\ss
\pa
\Asign_\O'$ be the corresponding part of $\Csign_\O$ slightly enlarged
so that it includes two nearby boundary points of $\pa\O$; see
Figure~\ref{Fig: annulus}(B). Further, let $\ELsign_\mathrm
{disc}':=\EL
{\Asign_\O'}{\Csign_\O^{ \prime}}{[ab]_\O}$ and $\ELsign_\mathrm
{cont}':=\EL{\Asign_\O^{\prime\C}}{\Csign_\O^{ \prime\C
}}{[ab]_\O
^\C}$ denote its continuous counterpart. It is easy to see that one
still has
\[
\omega_\mathrm{discr}\asymp
\ZRWsign_{\O}\bigl(u;[ab]_\O\bigr)
\asymp
\ZRWsign_{\Asign _\O'}\bigl(\Csign _\O\cap\pa
\Asign'_\O;[ab]_\O\bigr)
\asymp
\ZRWsign_{\Asign_\O'}\bigl(\Csign _\O^{
\prime};[ab]_\O\bigr)
\]
the proof of Lemma~\ref{Lemma:HmAsympZa} works well without any
changes, and replacing $\Csign_\O\cap\pa\Asign'_\O$ by $\Csign
'_\O$
costs no more than an absolute multiplicative constant). Applying (\ref
{BeurlingForZ}) and Proposition~\ref{Prop:ELdiscr=ELcont}, one obtains
\[
\log\omega_\mathrm{disc}\asymp-\ELsign'_\mathrm{discr}
\asymp -\ELsign '_\mathrm{cont}\asymp\log
\omega_\mathrm{cont} ,
\]
[to prove the last estimate, e.g., draw a circle ${c}_u\ss\O^\C$
of radius $\frac{1}{2}r_u\asymp\dd\O{u}$ around $u$, then $-\log
\omega
_\mathrm{cont}\asymp\EL{\O^\C}{{c}_u}{[ab]_\O^\C}\asymp\ELsign
'_\mathrm
{cont}$].
\end{pf}

%s8 #&#
\begin{appendix}
\section*{Appendix}\label{app}
\setcounter{equation}{0}

In order to make the presentation self-contained, in this appendix we
provide proofs of all the statements from Section~\ref
{SubSect:BasicFacts} based on %assumptions (a)--(d) on $\G\subset\C$ and
properties \ref{PropertyS}, \ref{PropertyT} of the random walk (\ref
{RWtransprobaDef}) on $\G$.
We begin with a slightly weaker version of Lemma~\ref
{Lemma:AroundAnnulus}, then prove Lemma~\ref{Lemma:AroundAnnulus}
itself and deduce all the other statements from these lemmas.

%le8.1 #&#
%\setcounter{lemma}{0}
\begin{lemmaa} \label{Lemma:App_AroundAnnulus}
There exist constants $\t_0=\t_0(\varpi_0,\eta_0,\varkappa_0)>1$ and
$\ve_0=\ve_0(\varpi_0,\eta_0,\varkappa_0)>0$ such that,
for any two vertices $v,w\in\Gamma$, $v\ne w$, the probability of the
event that the random walk (\ref{RWtransprobaDef}) started at $v$ makes
a full turn around $w$ in a given direction (clockwise or
counterclockwise) staying in $A(w,\t_0^{-1}|v - w|, \t_0|v - w|)$
is at least $\ve_0$.
\end{lemmaa}

\begin{pf} Denote $v_0:=v$. We intend to ``drive'' the trajectory of
the random walk using a finite sequence of the following ``moves''
based on Property~\ref{PropertyS} [see also Figure~\ref{Fig: RW-in-annulus}(A)]:
\begin{itemize}
\item if $v_k$ is the current position of the random walk, apply \ref
{PropertyS} to a disc of radius $(\varkappa_0\nu_0 + 1)^{-1}|v_k -
w|$ centered at $v_k$ and the interval of directions
\[
I:=\bigl[\arg(v_k - w)+\tfrac{1}{2}\eta_0 ;
\arg(v_k - w)+\pi- \tfrac{1}{2}\eta_0\bigr],
\]
and denote by $v_{k+1}\in\pa\BBsign^\G_{(\varkappa_0\nu
_0+1)^{-1}|v_k-w|}$ the corresponding terminal vertex.
\end{itemize}
Using Remark~\ref{Rem:RvBound}, one can find two constants \mbox
{$\theta
_0=\theta_0(\varpi_0,\eta_0,\varkappa_0)>0$} and $\alpha_0=\alpha
_0(\varpi_0,\eta_0,\varkappa_0)>1$ such that, for all $k$, $\arg
(v_{k+1} - w)-\arg(v_k - w)\ge\theta_0$ and the random walk does
not leave the annulus $A(w,\alpha_0^{-1}|v_k - w|,\alpha_0|v_k -
w|)$ during the $k$th ``move'' described above. Thus the claim holds
true with \mbox{$\t_0:=\alpha_0^{N_0}$} and $\ve_0:=c_0^{N_0}$, where
\mbox{$N_0:={\lfloor2\pi/\theta_0\rfloor+1}$} is the maximal
number of
moves needed to perform the full turn.
\end{pf}

%f6 #&#
\begin{figure}

\includegraphics{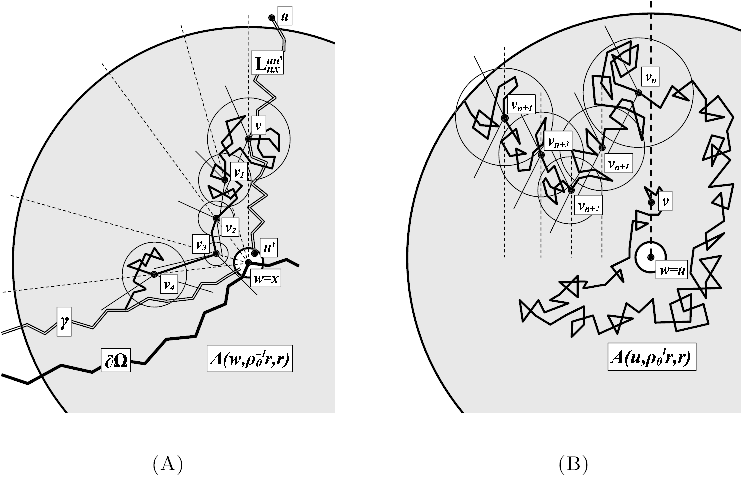}

\caption{\textup{(A)} A schematic drawing of a sequence of ``moves'' used in
the proofs
of Lemma \protect\ref{Lemma:App_AroundAnnulus} and Lemma \protect
\ref
{Lemma:BoundaryDecay}. For each vertex $v_k$, the corresponding disc
around $v_k$ and the interval of directions are shown. Applying
property \protect\ref{PropertyS} to these discs step by step, one can
``drive''
a trajectory of the random walk around $w$, uniformly with respect to
the local sizes $r_{v_k}$ (e.g., $v_4$ is a neighboring vertex to $v_3$
on the picture). For the proof of Lemma \protect\ref
{Lemma:BoundaryDecay}, the
paths $\rL^{uu'}_{ux}$, $\gamma$ and a part of $\pa\O$ are shown: the
random walk trajectory constructed in this way must hit $\gamma$
before $\pa
\O$.
\textup{(B)} A schematic drawing of an additional sequence of ``moves'' used in
the proof of Lemma \protect\ref{Lemma:AroundAnnulus}. It may happen
that the
random walk trajectory constructed in this way does not disconnect two
boundary components of $A(u,\rho_0^{-1}r,r)$ and does not intersect a
path $\gamma$ that crosses $A(u,\rho_0^{-1}r,r)$. Nonetheless, the
union of such a ``counterclockwise'' trajectory and a similar
``clockwise'' one must intersect $\gamma$.}
\label{Fig: RW-in-annulus}
\end{figure}

%\begin{figure}
%\centering{
%%
%\begin{minipage}[b]{0.42\textwidth}
%\centering{\includegraphics[height=\textwidth,
%angle=270]{RW-annulus.eps}}
%
%\centering{\textsc{(A)}}
%\end{minipage}
%%
%\hskip0.08\textwidth
%%
%\begin{minipage}[b]{0.42\textwidth}
%\centering{\includegraphics[height=\textwidth,
%angle=270]{RW-annulus+.eps}}
%
%\centering{\textsc{(B)}}
%\end{minipage}
%%
%\hskip0.02\textwidth}
%
%\end{figure}

\begin{pf*}{Proof of Lemma~\ref{Lemma:AroundAnnulus}}
Let
$\rho_0:=(\varkappa_0\n_0 + 1)\t_0^2$. If $r':=\rho_0^{-1}r\le r_u$,
then there is nothing to prove as $\gamma$ should start at $u$ which is
the unique vertex inside of $A(u,r',r)$. Thus it is sufficient to
consider the case $r'>r_u$. In this case, Remark~\ref{Rem:RvBound}
implies that there is no edge of $\Gamma$ crossing the annulus $A(u,\t
_0r',(\varkappa_0\nu_0 + 1)\t_0 r')$. Let $v$ denote the first vertex
visited of the random walk (\ref{RWtransprobaDef}) traveling across the
annulus $A(u,r',r)$. Thus it is sufficient to prove that, being
re-started at~$v$, the random walk (\ref{RWtransprobaDef}) hits a
cross-cut $\gamma$ before exiting $A(u,r',r)$ with a probability
uniformly bounded below. Similar to the proof of Lemma~\ref
{Lemma:App_AroundAnnulus}, we intend to ``drive'' the trajectory of the
random walk using a finite sequence of ``moves'' provided by \ref
{PropertyS} so that it necessarily intersects $\gamma$:
\begin{itemize}
\item first, we follow the proof of Lemma~\ref{Lemma:App_AroundAnnulus}
(with $w:=u$) and perform $n\le N_0$ moves around $u$ in the
counterclockwise direction so that the random walk remains in
$A(u,r',r)$, and its terminal vertex $v_n$ satisfies
\[
\arg(v_n - u)-\arg(v_0 - u)\ge2\pi;
\]
\item second, we continue the trajectory by performing yet another
finite sequence of similar moves in the fixed range of directions
\[
I:=\bigl[\arg(v_0 - u)-\tfrac{1}{2}(\pi- \eta_0) ;
\arg(v_0 - u)+\tfrac{1}{2}(\pi- \eta_0)\bigr]
\]
until the trajectory hits the outer boundary of $(A,r',r)$; see
Figure~\ref{Fig: RW-in-annulus}(B).
\end{itemize}
Note that the number of moves needed to perform the second part is
uniformly bounded by some constant $M_0$: the distance from the line
passing through $v_0$ and $u$ becomes comparable to $r$ after the first
move of the second part and then grows exponentially. In principle, it
might happen that a ``counterclockwise'' trajectory constructed above
does not hit the cross-cut $\gamma$; see Figure~\ref{Fig:
RW-in-annulus}(B). Nonetheless, if this happens (for some trajectory),
then all the similar ``clockwise'' trajectories must hit $\gamma$ for
topological reasons. Thus the result follows with $\delta_0:=c_0^{N_0+M_0}$.
\end{pf*}

\begin{pf*}{Proof of Proposition~\ref{Proposition:Harnack}}
Denote by $v_\mathrm{max}$ and $v_\mathrm{min}$ the vertices in $\Int
\BBsign^\G_{r}(u)$ where $H$ attains its maximal and minimal values,
respectively. First, let $\rho\ge\rho_0$, where $\rho_0=\rho
_0(\varpi
_0,\eta_0,\varkappa_0)$ is the constant from Lemma~\ref
{Lemma:AroundAnnulus}. Since $H$ is a discrete harmonic function, there
exists a path $\gamma$ running from $v_\mathrm{max}$ to $\pa\BBsign
^\G
_{\rho_0r}(u)$ such that $H( \cdot)\ge H(v_\mathrm{max})$ along this
path. Applying Lemma~\ref{Lemma:AroundAnnulus}, one easily obtains
\[
H(v_\mathrm{min})\ge\delta_0\cdot H(v_\mathrm{max}),
\]
which gives the desired estimate for all $\rho\ge\rho_0$ with
$c(\rho
)=c(\rho_0)=\delta_0$. To obtain the result for $\rho<\rho_0$,
note that
the path joining $v_\mathrm{max}$ and $v_\mathrm{min}$ in $\Int
\BBsign
^\G_r(u)$ can be covered by a uniformly bounded number $N=N(\rho)$ of
discrete discs $\Int\BBsign^\G_{r'}(v_k)$ with $v_k\in\Int\BBsign
^\G
_r(u)$ and $r':=\rho_0^{-1}(\rho- 1)r$. Since $\BBsign^\G_{\rho
_0r'}(v_k)\subset\BBsign^\G_{\rho r}(u)$ and the values of $H$ at
neighboring vertices belonging to consecutive discs are uniformly
comparable with the constant $\varpi_0^2$, one can apply the already
proven estimate in each of these discs and arrive at the inequality
$H(v_\mathrm{min})\ge c(\rho)H(v_\mathrm{max})$ with the constant
$c(\rho)=(\varpi_0^2\delta_0)^{N(\rho)}$.
\end{pf*}

\begin{pf*}{Proof of Lemma~\ref{Lemma:GreenInDiscs}}
For
$u\in\G$ and $R>r>0$, let
\[
M(u,r,R):=\max_{v\in\pa\BBsign^\G_r(u)} G_{\BBsign^\G_R(u)}(v;u).
\]
%
%First, we prove a uniform upper bound for $M(u,r,\rho r)$.
It is easy to see that
%
%e8.1 #&#
\begin{equation}\qquad
\label{xGRviaMrRBound} G_{\BBsign^\G_{\rho r}(u)}\bigl(v';u\bigr) \ge
\delta_0M(u,r,\rho r) \qquad\mbox {for all } v'\in\Int
\BBsign_{r'}^\G(u), r':=\rho_0^{-1}r.
\end{equation}
Indeed, the maximum principle implies that $G_{\BBsign^\G_{\rho
r}(u)}(\cdot;u)\ge M(u,r,\rho r)$ along some nearest-neighbor path
$\gamma$ starting at some $v\in\pa\BBsign^\G_r(u)$ and going to
$u$. As
$\gamma$ crosses the annulus $A(u,r',r)$, estimate (\ref
{xGRviaMrRBound}) follows from Lemma~\ref{Lemma:AroundAnnulus}. It is
easy to see that the uniform upper bound $M(u,r,\rho r)\le c_2(\rho)$
now follows from~(\ref{xGRviaMrRBound}), estimate (\ref{SumRvBound})
and the upper bound in (\ref{BoundPropertyT}).

The next step is to prove that $M(u,(2C_0)^{-1}R,R)$ is uniformly
bounded from below for $u\in\G$ and $R>r_u$, where $C_0$ is the
constant from (\ref{BoundPropertyT}). If $r:=(2C_0)^{-1}R\le r_u$, then
there is nothing to prove as $G_{\BBsign^\G_R(u)}(v;u)\ge\mu
_v^{-1}\mathrm{w}_{vu}\mu_u^{-1}\ge\varpi_0^3$ for all vertices
$v\sim
u$ lying on $\pa\BBsign^\G_r(u)\cap\Int\BBsign^\G_R(u)\ne
\varnothing$.
For $r>r_u$, the maximum principle implies
\[
G_{\BBsign^\G_R(u)}(v;u) \le %
\cases{ M(u,r,R), &\quad  $v\in\Int
\BBsign^\G_R(u)\setminus\Int\BBsign^\G
_r(u)$;
\vspace*{2pt}\cr
M(u,r,R)+G_{\BBsign^\G_r(u)}(v;u), &\quad  $v\in\Int
\BBsign^\G_r(u)$.} %
\]
Therefore,
\[
C_0^{-1}R^2\le\sum
_{v\in\Int\BBsign^\G_R(u)}r_v^2G_{\BBsign
^\G_R(u)}(v;u) \le
M(u,r,R)\cdot\sum_{v\in\Int\BBsign^\G
_R(u)}r_v^2
+ C_0r^2,
\]
which can be rewritten as $M(u,r,R)\ge\frac{3}{4}C_0^{-1}R^2\cdot
[\sum_{v\in\Int\BBsign^\G_R(u)}r_v^2]^{-1}$. Due to (\ref{SumRvBound}), the
latter quantity is uniformly bounded away from $0$. Taking into account
(\ref{xGRviaMrRBound}), we arrive at the uniform estimate
\[
G_{\BBsign^\G_{R}(u)}\bigl(v';u\bigr) \ge c_1^{(0)}\qquad
\mbox{for all } v'\in \Int\BBsign_{r'}^\G(u),
r':=(2C_0\rho_0)^{-1}R
\]
with some constant $c_1^{(0)}>0$ (note that this estimate remains true
if $R\le r_u$). Thus we have shown that $G_{\BBsign^\G_{\rho
r}}(v';u)\ge c_1^{(0)}$ for all $v'\in\Int\BBsign^\G_{r}(u)$ provided
that $\rho\ge2C_0\rho_0$.

The case $\rho<2C_0\rho_0$ can now be handled similarly to the proof of
Proposition~\ref{Proposition:Harnack}. For $v\in\Int\BBsign^\G_r(u)$,
let $\gamma$ be a nearest-neighbor path connecting $v$ to $u$ inside of
$\BBsign^\G_r(u)$, and let $v'$ denote the first vertex of $\gamma$
belonging to $\Int\BBsign^\G_{r'}(u)$, where $r':=(2C_0\rho
_0)^{-1}\cdot
\rho r$. The portion of $\gamma$ joining $v$ and the vertex just before
$v'$ can be covered by a uniformly bounded number $N=N(\rho)$ of
discrete discs $\Int\BBsign_{r''}^\G(v_k)$, where $r'':=\min\{
(2C_0\rho
_0)^{-1},(1-\rho^{-1})\}\cdot r$. Since $\Int\BBsign_{\rho r''}^\G
(v_k)\subset\Int\BBsign_{\rho r}^\G(u)\setminus\{u\}$,
Proposition~\ref{Proposition:Harnack} applied to each of these discs yields
\begin{eqnarray*}
G_{\BBsign^\G_{\rho r}}(v;u) &\ge&\bigl(\varpi_0^2 c(\rho)
\bigr)^{N(\rho
)}\cdot G_{\BBsign^\G_{\rho r}}\bigl(v';u\bigr) \ge
c_1(\rho)\\
&:=&\bigl(\varpi_0^2 c(\rho )
\bigr)^{N(\rho)}\cdot c_1^{(0)}.
\end{eqnarray*}
\upqed\end{pf*}

\begin{pf*}{Proof of Lemma~\ref{Lemma:b0-weakBeurling}}
To derive the first estimate, set $r:=\dist(u;\partial\O)$, and note
that the probability of the event that the random walk started at $u$
crosses an annulus $A(u,\rho_0^{s-1}r,\rho_0^{s} r)$, $s=1,\ldots
,\lfloor
\log(r^{-1}\dist_{\O}(u;E))/\log\rho_0 \rfloor$, is bounded from above
by $1-\delta_0$. To prove the second estimate, set $r:=\diam E$, and
consider crossings of the annuli $A(x,\rho_0^{s-1}r,\rho_0^sr)$
centered at a fixed vertex $x\in E$.
\end{pf*}

\begin{pf*}{Proof of Lemma~\ref{Lemma:BoundaryDecay}}
Recall
that $r'=\rho_0^{-1}r$.
The proof is divided into two steps. First, it follows from Lemma~\ref
{Lemma:App_AroundAnnulus} that all values of $H$ in the
$r'$-neighborhood of $x$ in $\Omega$ are bounded from above by $\delta
_0^{-1}\max_{v\in\rL_{ux}^{uu'}}H(v)$. Indeed, let $v_\mathrm
{max}\in\pa\BBsign^\O_{r'}(x)$ be the vertex where $H$ attains its
maximal value in $\BBsign^\O_{r'}(x)$. Then $H( \cdot)\ge
H(v_\mathrm
{max})$ along some path $\gamma$ running from $v_\mathrm{max}$ to
$\pa
\BBsign^\O_r\setminus\pa\O$. If $\gamma$ intersects $\rL_{ux}^{uu'}$,
then there is nothing to prove. Otherwise, there are three mutually
disjoint paths crossing the annulus $A(x,r',r)$: $\rL
_{ux}^{uu'}$, $\gamma$ and the corresponding part of $\pa\O$ which has
to cross $A(x,r',r)$ since $\Omega$ is simply connected. Let us assume
that these paths are ordered counterclockwise (the other case is
similar). Due to Remark~\ref{Rem:RvBound}, there exists a vertex
$u''\in
\rL_{ux}^{uu'}$ such that $\t_0r'\le|u''-x|\le\t_0^{-1}r$, where
$\t
_0=(\rho_0/(\varkappa_0\n_0 + 1))^{1/2}$. For topological reasons,
each of the random walk trajectories constructed in Lemma~\ref
{Lemma:App_AroundAnnulus}, started at $v=u''$ and running in $A(x,\t
_0^{-1}|u'' - x|,\t_0|u'' - x|)\subset A(x,r',r)$ in the
counterclockwise direction, must hit $\gamma$ before $\pa\O$ [which
must happen before it makes the full turn and reaches the path $\rL
_{ux}^{uu'}\subset\Int\O$ again; see Figure~\ref{Fig: RW-in-annulus}(A)].
Note also that those trajectories cannot hit $\pa\O$ during first
``moves'' due to (\ref{DistToBdAlongL}). Therefore, Lemma~\ref
{Lemma:App_AroundAnnulus} gives
\[
\max_{v\in\rL_{ux}^{uu'}}H(v) \ge H\bigl(u''
\bigr) \ge\delta _0H(v_\mathrm{max}) = \delta_0\max
_{v\in\pa\BBsign^\O
_{r'}(x)}H(v).
\]
Second, similar to the proof of Lemma~\ref{Lemma:b0-weakBeurling},
one can easily derive from Lemma~\ref{Lemma:AroundAnnulus} the
following uniform estimate:
\[
H\bigl(v'\bigr) \le\bigl[ \rho_0\cdot\bigl|v'
- x\bigr|/r' \bigr]^{\beta_0}\cdot\max_{v\in\pa\BBsign^\O_{r'}(x)}H(v)
\]
for all $v'\in\BBsign^\O_{r'}(x)$. Being combined, these two
inequalities yield the claim.
\end{pf*}

\begin{pf*}{Proof of Lemma~\ref{Lemma:GreenEstimatesViaG}}
The lower bound is trivial, as $\BBsign^\G_r(u)\subset\O$ and the Green
function $G_\O$ is monotone with respect to $\O$. To prove the upper
bound, recall that $R=\rho_0^{2n_0}r$, and denote by $\O'$ the minimal
simply connected domain $\O'\supset\O$ such that
\[
\Int\O'\supset\Int\O\cup\Int\BBsign^\G_{R'}(u),\qquad
\mbox{where } R'=\rho_0^{n_0}r.
\]
Note that $\pa\O'\cap\pa\BBsign^\G_{R'}(u)\ne\varnothing$ since
$\O$ is
simply connected and $\dist(u;\pa\O)=r<R'$. It follows from
Lemma~\ref
{Lemma:AroundAnnulus} that
\[
G_{\O'}(v;u)\le G_{\BBsign^\G_{R}(u)}(v;u)+(1-\delta_0)^{n_0}
\cdot \max_{v'\in\pa\BBsign^\G_{R'}(u)}G_{\O'}\bigl(v';u\bigr)
\]
for any $v\in\Int\BBsign^\G_r(u)$. Indeed, if the random walk started
at $v$ reaches $\pa\BBsign^\G_{R}(u)$ before hitting $\pa\O'$,
then it
has the chance $(1 - \delta_0)^{n_0}$ to hit $\pa\O'$ before coming
back to $\pa\BBsign^\G_{R'}(u)$. Moreover, since $G( \cdot;u)\ge
\max_{v'\in\pa\BBsign^\G_{R}(u)}G_{\O'}(v';u)$ along some
path $\gamma$ running from $\pa\BBsign^\G_{R'}(u)$ to $u$ (this
follows from the maximum principle), Lemma~\ref{Lemma:AroundAnnulus}
also implies
\[
G_{\O'}(v;u)\ge\bigl(1-(1 - \delta_0)^{n_0}\bigr)
\cdot\max_{v'\in\pa
\BBsign^\G_{R'}(u)}G_{\O'}\bigl(v';u\bigr)
\]
[indeed, the probability of the event that the random walk started at
$v$ hits $\gamma$ before exiting $\BBsign^\G_{R'}(u)\subset\O'$ is at
least $1-(1 - \delta_0)^{n_0}$]. Therefore,
\[
G_{\O'}(v;u) \le\biggl[1-\frac{(1 - \delta_0)^{n_0}}{1-(1 - \delta
_0)^{n_0}}\biggr]^{-1}G_{\BBsign^\G_{R}(u)}(v;u)
\le2G_{\BBsign^\G
_{R}(u)}(v;u),
\]
and we complete the proof by noting that $G_\O(v;u)\le G_{\O'}(v,u)$
since $\O\subset\O'$.
\end{pf*}
\end{appendix}

\section*{Acknowledgments}
The author would like to thank Hugo Duminil-Copin for several fruitful
and motivating discussions, and many other colleagues, particularly
Omer Angel, Ori Gurel-Gurevich, Cl\'ement Hongler, Konstantin Izyurov,
Antti Kemppainen, Eugenia Malinnikova, Asaf Nachmias, Marianna
Russkikh, Stanislav Smirnov, Mikhail Sodin and the unknown referee, for
helpful comments and remarks.

Some parts of this paper were written at the
IH\'ES, Bures-sur-Yvette (October 2010), CRM, Bellaterra (May 2011) and
Universit\'e de Gen\`eve (October 2012). The author is grateful to
these institutes for their hospitality.

% imsref loaded by akundreckaite, 2015-01-05 08:14:41
% imsref loaded by akundreckaite, 2015-01-05 08:32:38
% imsref loaded by akundreckaite, 2015-01-05 08:39:00
%

\printaddresses
\end{document}